\setlist{nosep}
\setlist[itemize]{leftmargin=*}
\setlist[enumerate]{leftmargin=*,align=left,nolistsep}
\newtheorem{theorem}{Theorem}[section]
\newtheorem{corollary}[theorem]{Corollary}
\newtheorem{definition}[theorem]{Definition}
\newtheorem{question}[theorem]{Question}
\newtheorem{lemma}[theorem]{Lemma}
\newtheorem{proposition}[theorem]{Proposition}
\theoremstyle{definition}
\newtheorem{example}[theorem]{Example}
\newtheorem*{remark}{Remark}
\newcommand{\vp}{\mathfrak{p}}
\newcommand{\vn}{\mathfrak{n}}
\newcommand{\vq}{\mathfrak{q}}
\newcommand{\sph}{\mathbb{S}^2}
\newcommand{\lb}{\mathcal{I}_{\vp}}
\newcommand{\lbt}{\mathcal{I}_{\vp(t)}}
\newcommand{\lbc}{\mathcal{I}_{\vp}^{\mathbb{C}}}
\newcommand{\lbct}{\mathcal{I}_{\vp(t)}^{\mathbb{C}}}
\newcommand{\Rr}{\mathbb{R}}
\newcommand{\re}{\mathfrak{R}}
\newcommand{\dis}{\mathrm{dist}}
\newcommand{\ord}{\mathrm{ord}}
\newcommand{\sob}{\mathcal{L}_1^2(\lb)}
\newcommand{\sobc}{\mathcal{L}_1^2(\lbc)}
\newcommand{\SO}{\mathrm{SO}(3)}
\newcommand{\osb}{H_{-}^1}
\newcommand{\rst}{\Sigma_t}
\newcommand{\Cn}{\mathcal{C}_{2n}}
\newcommand{\ccn}{\overline{\mathcal{C}}_{2n}}
\newcommand{\mul}{\mathrm{mul}}
\newcommand{\ei}{\mathbf{e}}
\newcommand{\lan}{\langle }
\newcommand{\ran}{\rangle}
\newcommand{\Tr}{\mathrm{Tr}}
\newcommand{\pa}{\partial}
\newcommand{\Ric}{\mathrm{Ric}}
\newcommand{\ME}{\mathcal{E}}
\newcommand{\na}{\nabla}
\newcommand{\RR}{\mathbb R}
\newcommand{\CC}{\mathbb C}
\newcommand{\lam}{\lambda}
\newcommand{\MI}{\mathcal{I}}
\newcommand{\ZT}{\mathbb{Z}_2}
\newcommand{\dist}{\mathrm{dist}}
\newcommand{\ML}{\mathcal{L}}
\newcommand{\LS}{\lesssim}
\newcommand{\mbR}{\mathbb{R}}
\newcommand{\mbC}{\mathbb{C}}
\DeclareMathAlphabet{\mathcalligra}{T1}{calligra}{m}{n}
\declaretheoremstyle[
headfont=\color{blue}\normalfont\bfseries,
bodyfont=\color{blue}\normalfont\itshape,
]{colored}
\title{On the existence and rigidity of critical $\mathbb{Z}_2$ eigenvalues}
\begin{document}

\author{Jiahuang Chen, Siqi He} 

\address{AMSS}
\email{chenjiahuang23@mails.ucas.ac.cn,sqhe@amss.ac.cn }	

\maketitle
\begin{abstract}
In this article, we study the eigenvalues and eigenfunction problems for the Laplace operator on multivalued functions, defined on the complement of the 2n points on the round sphere. These eigenvalues and eigensections could also be viewed as functions on the configuration spaces of points, introduced and systematically studied by Taubes-Wu. Critical eigenfunctions, which serve as local singularity models for gauge theoretical problems, are of particular interest. 

Our study focuses on the existence and rigidity problems pertaining to these critical eigenfunctions. We prove that for generic configurations, the critical eigenfunctions do not exist. Furthermore, for each $n>1$, we construct infinitely many configurations that admit critical eigensections. Additionally, we show that the Taubes-Wu tetrahedral eigensections are deformation rigid and non-degenerate. Our main tools are algebraic identities developed by Taubes-Wu and finite group representation theory.
\end{abstract}
\section{Introduction}
Let $\Cn$ be the space of unordered $2n$ distinct points of $\sph$, for each $\vp\in \Cn$, there exists a unique flat $\mbR$ bundle $\MI_{\vp}\to \sph\setminus \vp$, which has monodromy $-1$ on any embedded circle in $\sph\setminus \vp$ linking any given point from $\vp$. A smooth section $f\in C^{\infty}(\MI_{\vp})$ could also be considered as a two valued function over $\sph\setminus \vp$. On the other hand, when considering the $2$-fold branched covering, namely $\pi:\Sigma_\vp\to \sph$, determined by the same monodromy, a section $f$ can be lifted to an odd function $\tilde{f}$ defined on the surface $\Sigma_\vp$. 

As $\MI_{\vp}$ is a flat bundle, using the flat structure, the Laplace operator $\Delta_{\sph}$ for the round metric on $\sph$ naturally extends to $\MI_{\vp}$. And a section $f\in C^{\infty}(\MI_{\vp})$ is called a $\ZT$ eigensection if $-\Delta_{\sph} f=\lam f$ and $\int_{\sph\setminus \vp}|df|^2$ is bounded. In this case, $\lam$ is called a $\ZT$ eigenvalue of $\lb$. The round metric $\mathrm{ds}^2$ of $\sph$ pulled back to be a conical metric with cone angle $4\pi$ at each configuration point. Thus a $\ZT$ eigensection can be characterized as an eigenfunction $\tilde{f}$ of the singular Laplacian $\Delta_{\pi^\ast\mathrm{ds}^2}$ and $\lam$ is its eigenvalue. Conical surfaces $(\Sigma_\vp,\pi^\ast\mathrm{ds}^2)$ and their Laplacian spectra are of particular interest in mathematical physics \cite{Landau} and spectral geometry \cite{MR,NayataniI,shin2019metrics}.

For a $\ZT$ eigensection $f$, Taubes and Wu \cite[Proposition 2.1]{taubeswu2020examples} show that $|f|$ extends to a H\"older continuous function on $\sph$ and vanishes at $\vp$. Moreover, a $\ZT$ eigensection with $|df|$ extending to a H\"older continuous function on $\sph$ and vanishing at $\vp$, is the central object of the work of Taubes and Wu, as well as this article. We name such eigensections as \textbf{critical eigensections}. If for a configuration $\vp\in \Cn$, there exists a critical eigensection, then $\vp$ is called a critical configuration. 
The motivation for studying critical eigensections stems from gauge theory and calibrated geometry, where they can be interpreted as local model of $\ZT$ harmonic $1$-forms and spinors. These concepts were initially introduced and emphasized by Taubes \cite{taubes2013compactness,taubes2014zero,Taubescompactness}, leading to various important follow-up studies \cite{haydyswalpuski2015compactness, taubes2017behavior,  zhang2017rectifiability,doan2017existence,walpuskizhang2019compactness,sun2022z2,donaldson2022closed,parker2023concentrating}.

Given a critical eigensection $f$ with eigenvalue $\lam$, let $R$ be the radius in $\mbR^3$. By constructing a $\ZT$ harmonic 1-form $v=d(R^{\mu}\pi^*f)$, where $\pi:\mbR^3\setminus {0}\to \sph$ is the projection to the unit sphere and  $\mu=(-1+(1+4\lambda)^{1/2})/2$, one obtains a local model characterized by $2n$ distinct rays from the origin in $\mbR^3$ intersecting $\sph$ at $\vp$. Taubes \cite{taubes2014zero} showed that tangent cones at zeros of harmonic $1$-forms defined on a $3$-manifold are always given by critical $\ZT$ eigensections.

Assuming that the zero loci of $\ZT$ harmonic $1$-forms or spinors are smooth, fascinating results have been proven \cite{takahashi2015moduli,indextakahashi,donaldsondeformation2019,parker2024gluing}. However, it is not a priori true that zero loci are submanifolds.  In \cite{taubeswu2020examples}, Taubes-Wu construct critical configurations for $n=1,2,4,6,10$, using the symmetry groups of the Platonic solids, indicating the existence of singular zero loci. Additionally, an index theorem has been developed for $\ZT$ harmonic spinors with graphic singularities in \cite{haydys2023index}.

To understand the local structure of $\ZT$ harmonic $1$-forms and spinors, and follows from Taubes-Wu \cite{taubeswu2020examples,taubeswu2021topological}, fundamental questions arise regarding the existence of such singularity models:
\begin{question}
For each $n$, does a critical configuration exist in $\Cn$?
\end{question}
It's worth mentioning that a brilliant min-max approach to the existence problem was introduced in Taubes-Wu \cite{taubeswu2021topological}.

In this paper, we explore a family of configurations with dihedral group symmetry for each $n>1$. We observe that for these configurations, the existence of two summands of different one dimensional irreducible representations of the dihedral group, implies the existence of critical eigensections. Combining finite group representation theory with a spectral flow argument, we establish the existence of an infinite number of critical configurations within this family. Specifically, we prove:
\begin{theorem}
For each $n>1$, there exist infinitely many critical configurations in $\Cn$.
\end{theorem}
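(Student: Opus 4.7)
The plan is to restrict to a one-parameter family $\{\vp(t)\}_{t\in(0,\pi/2)}\subset\Cn$ of dihedral-symmetric configurations and to obtain critical configurations as the parameter values $t_\ast$ at which two eigenvalue branches, belonging to distinct one-dimensional irreducible representations of the dihedral group $D_n$, cross each other. Concretely, for $t\in(0,\pi/2)$ take $\vp(t)$ to consist of $n$ equally-spaced points at latitude $t$ and $n$ equally-spaced points at latitude $-t$, so that a dihedral subgroup $D_n\subset\SO$ of order $2n$ preserves $\vp(t)$ for every $t$. This $D_n$-action lifts to $\lbt$ in a manner compatible with $-\Delta$, so every eigenspace of $-\Delta$ on $\mathcal{L}_1^2(\lbt)$ is a representation of $D_n$, and the spectrum splits into continuous (in fact, real-analytic) branches $t\mapsto\lam_k^\rho(t)$ indexed by the irreducible representations $\rho$ of $D_n$.

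The first ingredient is the representation-theoretic observation from the introduction: if two distinct one-dimensional irreducible representations $\rho_1\neq\rho_2$ of $D_n$ both appear as summands of a common eigenspace of $-\Delta$ at some $t_\ast$, then $\vp(t_\ast)$ is critical. One proves this by studying the $D_n$-equivariant leading-jet map that sends an eigensection to its collection of leading $r^{1/2}$-coefficients at the $2n$ configuration points; the source and target decompose into isotypic components whose characters can be computed explicitly, and a count using that $\rho_1,\rho_2$ are distinct one-dimensional representations forces the leading-jet map restricted to $\rho_1\oplus\rho_2$ to have a nontrivial kernel. Any element of this kernel is, by the Taubes--Wu characterization of criticality, a critical eigensection.

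Granted this observation, the second ingredient is a spectral-flow argument producing infinitely many such coincidences. Fix two distinct one-dimensional irreps $\rho_1,\rho_2$ of $D_n$ and compare the sequences $\{\lam_k^{\rho_1}(t)\}_{k\ge 0}$ and $\{\lam_j^{\rho_2}(t)\}_{j\ge 0}$, which are real-analytic in $t$ by Kato--Rellich perturbation theory applied to the analytic family of Friedrichs extensions. As $t\to 0$ and $t\to\pi/2$, pairs of configuration points collide and the spectra in each isotypic component diverge, at rates depending on how $\rho_j$ restricts to the collapsing stabilizers. Comparing these rates at the two endpoints of $(0,\pi/2)$ shows that infinitely many pairs of branches swap their relative order, and the intermediate value theorem yields an interior crossing for each such swap. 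Each crossing gives a critical configuration, so $\Cn$ contains infinitely many of them.

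The chief obstacle is the spectral-flow step, which splits into two parts: ruling out the non-generic possibility that a $\rho_1$-branch and a $\rho_2$-branch coincide identically along the whole family, and quantifying the asymptotic divergence of eigenvalues as configuration points collide. We expect to handle the first by invoking the algebraic identities of Taubes--Wu, which force eigenvalues in distinct isotypic components to satisfy inequivalent algebraic relations in the configuration coordinates, and the second by a blow-up analysis near the collision, where each $\rho_j$-isotypic eigenfunction concentrates along a distinct local model and hence obeys a different Rayleigh-quotient estimate.
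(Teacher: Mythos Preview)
Your high-level strategy---restrict to a one-parameter dihedral-symmetric family, split the spectrum by irreducible representations, and produce critical configurations at crossings of branches belonging to distinct one-dimensional irreps---is exactly the paper's. But the execution diverges from the paper's in two essential places, and one of them contains a genuine gap.

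\textbf{The family.} The paper does not use your two-ring configuration. It takes $\vp(t)=\{0,\,t,\,te^{2\pi i/m},\ldots,\,te^{2(m-1)\pi i/m}\}\subset\overline{\mathbb C}$ with $m=2n-1$: one fixed point at the south pole and $m$ polygon vertices at a variable latitude. The relevant symmetry group is $D_{2m}$ with $m$ odd, and the presence of the distinguished central point $p_{2n}=0$ is used repeatedly (see below). The endpoints of the path are the empty configuration ($t\to 0$, $\mathbb Z_2$-limit is $\mathbb S^2$) and the antipodal configuration ($t\to\infty$), both of whose spectra are known explicitly.

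\textbf{The spectral-flow step: the gap.} You assert that as points collide the spectra in each isotypic component \emph{diverge}, and propose to compare divergence rates. This is false: the paper proves (continuity under $\mathbb Z_2$-degeneration, Theorem~\ref{continuousztdegenerate}) that $\lambda_k(\vp_l)\to\lambda_k(\vq)$ whenever $\vp_l$ $\mathbb Z_2$-degenerates to $\vq$, so eigenvalues converge to finite limits, not to infinity. The paper's mechanism for forcing crossings is entirely different. Using the Taubes--Wu first-variation formula together with the explicit form of leading coefficients in each irrep (for $f$ in the trivial irrep $U$ the leading pair at each polygon vertex is $(a,a)$ with $a\in\mathbb R$; for $f$ in the alternating irrep $U'$ it is $(ia,-ia)$), one computes that every branch $\lambda_k^{+}(t)$ in $U$ is \emph{strictly increasing} and every branch $\lambda_k^{-}(t)$ in $U'$ is \emph{strictly decreasing}. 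The explicit endpoint spectra then show that each $U$-branch begins below and ends above infinitely many $U'$-branches, so the intermediate value theorem gives infinitely many crossings. Strict monotonicity also disposes of your worry about branches coinciding identically.

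\textbf{The representation-theoretic step.} Your jet-map sketch is too vague; a pure character/dimension count does not obviously force a kernel. The paper's argument (Lemma~\ref{symmemulres}) filters the eigenspace by vanishing order $V_\lambda=V_\lambda^0\supset V_\lambda^1\supset\cdots$ and shows, via the Taubes--Wu integration-by-parts identity applied to $\phi_1=f_+$ and $\phi_2=J_3^{p_{2n}}f_-$, that $U$ and $U'$ cannot both occur in the same graded piece $W_\lambda^j$. Here the central point $p_{2n}$ enters through the rotation field $J_3^{p_{2n}}$. Hence if both $U$ and $U'$ occur in $V_\lambda$, one of them must lie in $V_\lambda^1$, i.e.\ be critical. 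Your two-ring family lacks a fixed central point, so this particular argument would not transfer without modification.
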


Another question arises from a talk by Taubes and the recent paper \cite[Page 4]{haydys2023new}, suggesting an expectation of a limited number of critical configurations. Using the deformation formula developed by Taubes-Wu \cite{taubeswu2021topological}, we establish that for generic configurations, all eigenvalues must have multiplicity one, contrary to the fact that critical eigenvalues must have higher multiplicity. 

\begin{theorem}
    For each $n$, generic $\vp\in \Cn$ is non-critical. 
\end{theorem}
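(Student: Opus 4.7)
The strategy, foreshadowed in the paragraph preceding the theorem, combines two applications of the Taubes-Wu deformation formula: one to show that generic configurations have simple spectrum, and one to show that, within the simple-spectrum regime, the criticality condition is non-trivial.

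Near each configuration point $p_i\in\vp$, every $\ZT$ eigensection $f$ admits a unique leading asymptotic expansion of the form $f\sim r^{1/2}(a_i\cos(\theta/2)+b_i\sin(\theta/2))$ in adapted polar coordinates. By Proposition 2.1 of Taubes-Wu combined with the defining property that $|df|$ extend H\"older continuously and vanish at $\vp$, criticality of $f$ is equivalent to $(a_i,b_i)=0$ for every $i$. Consequently, on the open subset $\Cn^{(k)}\subset\Cn$ where the $k$-th eigenvalue $\lam_k$ is simple, one obtains a real-analytic \emph{boundary map} $L^{(k)}\colon\Cn^{(k)}\to\mbR^{4n}$ whose zero locus is precisely the set of configurations in $\Cn^{(k)}$ admitting a critical $k$-th eigensection.

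The Taubes-Wu deformation formula expresses $\dot\lam_j$ along a smooth deformation $\vp(t)$ as an explicit pairing of the point-velocities $\dot p_i(0)$ with a quadratic form in the leading coefficients $(a_i,b_i)$ of $f_j$. Two consequences follow: (i) any eigenvalue coincidence $\lam_j=\lam_k$ can be split by an infinitesimal perturbation of $\vp$, so each collision locus is a proper closed real-analytic subvariety of $\Cn$, and a countable union over pairs $(j,k)$ shows that the simple-spectrum locus is residual; (ii) if $L^{(k)}$ vanished identically on an open subset of $\Cn^{(k)}$, the deformation formula would force $\lam_k$ to be locally constant there, contradicting the divergence of eigenvalues as pairs of configuration points coalesce. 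Hence each $(L^{(k)})^{-1}(0)$ is a proper real-analytic subvariety of $\Cn^{(k)}$, and is therefore nowhere dense.

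Combining these observations, the set of critical configurations is contained in the union of the meager multiple-spectrum locus and the countable union $\bigcup_k (L^{(k)})^{-1}(0)$ of closed nowhere-dense real-analytic sets, hence is itself meager. The main technical obstacle lies in the non-constancy claim of (ii): one must extract from the deformation formula enough information to verify that, for each index $k$, the normalized $k$-th eigensection has non-vanishing leading asymptotic data at some $p_i$ at some simple-spectrum configuration. A natural route is to degenerate two configuration points to coincidence, compare the limiting spectrum with a reduced problem on a smaller $\Cn$, and read off the required non-vanishing from the resulting singular behavior of $\lam_k(\vp)$.
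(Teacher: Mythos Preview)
Your step (i)—generic simplicity of the spectrum—matches the paper's argument (Theorem~\ref{residualsimple}). But from there the paper takes a much shorter route: it simply invokes Taubes--Wu's Lemma~2.7, restated here as Lemma~\ref{mulle4}, which asserts that any critical eigenvalue has multiplicity at least two. Hence a configuration whose spectrum is entirely simple admits no critical eigensections whatsoever, and the theorem follows immediately from the residuality of the simple-spectrum locus. This multiplicity fact is precisely what the introductory sentence you cite means by ``contrary to the fact that critical eigenvalues must have higher multiplicity''; you quote that sentence but then do not use it.

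Your alternative route through the boundary maps $L^{(k)}$ is therefore unnecessary, and the non-constancy argument in (ii) has a genuine gap as stated. Eigenvalues do \emph{not} diverge as configuration points coalesce: the paper proves (Theorem~\ref{continuousztdegenerate}) that each $\lambda_k$ extends continuously to the full compactification $\ccn$, so when a pair of points merges the $k$-th eigenvalue converges to the finite $k$-th eigenvalue of the limiting configuration in $\mathcal{C}_{2n-2}$. Thus ``$\lambda_k$ locally constant on an open set'' is not contradicted by any divergence at the boundary. One could attempt to repair (ii) by comparing the distinct spectra at different boundary strata of $\ccn$, but this would require additional work on real-analyticity of the eigenvalues and connectedness of $\Cn^{(k)}$, and is moot once the multiplicity observation is in hand.
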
 

The most intriguing critical eigensections arise from the symmetric group of tetrahedron, which we name as Taubes-Wu tetrahedral eigensections. These eigensections are closely related to the theory of Harvey-Lawson cones and the problem of extremal eigenvalues \cite{Nadirashvili}, suggesting similar rigidity properties to those in \cite{Haskins2004}. 

On the other hand, a critical eigensection is called non-degenerate if the first leading coefficients are non-vanishing near each configuration point. The non-degenerate condition for $\ZT$ harmonic $1$-forms and spinors are also geometrically in great interest, which has been studied wildly \cite{donaldsondeformation2019,he2023branched,parker2023deformations,takahashi2015moduliZ2,parker2024gluing}. In particular, we establish the following results, indicating that Taubes-Wu tetrahedral eigensections are indeed geometrically interest in these viewpoints. 

\begin{theorem}
    All Taubes-Wu tetrahedral eigensections are deformation rigid and non-degenerate.
\end{theorem}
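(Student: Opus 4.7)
The plan is to combine the representation theory of the tetrahedral symmetry group with the deformation identities developed by Taubes-Wu. Fix a tetrahedral configuration $\vp\in \mathcal{C}_4$; the subgroup of $\SO$ stabilizing $\vp$ is the tetrahedral rotation group $A_4$, whose complex irreducible representations have dimensions $1,1,1,3$. Denote the two nontrivial one-dimensional characters (which are complex conjugates) by $\chi_1,\chi_2$ and the three-dimensional irrep by $U$. The isotropy at each vertex is the cyclic group $\ZTH\subset A_4$, acting on the tangent plane via the standard rotation representation.

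The first step is to decompose both the tangent space $T_\vp\mathcal{C}_4$ and the relevant eigenspace into $A_4$-isotypic components. Frobenius reciprocity, applied to the induced representation from $\ZTH$ to $A_4$, gives
\[
T_\vp\mathcal{C}_4 \;\cong\; \chi_1\oplus\chi_2\oplus 2U.
\]
Quotienting by the three-dimensional $\SO$-action (one copy of $U$) leaves a five-dimensional effective moduli tangent space $\chi_1\oplus\chi_2\oplus U$. In parallel, I would identify the Taubes-Wu tetrahedral eigensection as spanning a two-dimensional real subspace of the $\lam$-eigenspace that transforms as $\chi_1\oplus\chi_2$; this multiplicity two also matches the general fact that a critical eigenvalue cannot be simple.

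For non-degeneracy, I would analyze the local expansion of a $\ZT$ eigensection near each vertex $p_i$. Its leading term is a two-valued harmonic polynomial times a half-integer power of the radial distance, with coefficient lying in a specific one-dimensional $\ZTH$-subrepresentation determined by the $A_4$-type of the eigensection, as forced by Schur's lemma. Since $A_4$ acts transitively on the four vertices, this leading coefficient is either nowhere vanishing or uniformly zero on $\vp$. In the latter case, the Taubes-Wu asymptotic classification would force strictly higher vanishing order at every configuration point, placing the eigensection in a different isotypic summand of the spectrum and contradicting its explicit tetrahedral form. Hence the leading coefficients are nonzero, i.e., the eigensection is non-degenerate.

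For deformation rigidity, I would invoke the Taubes-Wu deformation identity, which represents the infinitesimal variation of $\lam$ and of the eigensection under $\delta\vp$ as bilinear pairings against the leading coefficients at each vertex. This produces an $A_4$-equivariant map from $T_\vp\mathcal{C}_4$ into the endomorphism algebra of the two-dimensional critical eigenspace $\chi_1\oplus\chi_2$. Using the non-degeneracy just proved, Schur's lemma applied summand-by-summand on the moduli tangent space $\chi_1\oplus\chi_2\oplus U$ shows that the induced endomorphism is nonzero on every nontrivial irreducible piece, so any nontrivial deformation either shifts the eigenvalue off $\lam$ or breaks the criticality condition at first order. Hence the only criticality-preserving infinitesimal deformations come from the $\SO$-orbit, which yields rigidity.

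The principal obstacle is making the criticality obstruction in the Taubes-Wu identity sufficiently explicit on each $A_4$-isotypic summand: criticality is a regularity constraint beyond the eigenvalue equation, so one must simultaneously track the variation of the eigenvalue and the persistence of the H\"older decay of $|df|$ at each vertex, and then show that the resulting system of equivariant pairings is non-degenerate. The non-degeneracy established above is exactly what ensures that these pairings are nonzero on each summand, so that the Schur's lemma argument yields genuine contradictions rather than vacuous statements.
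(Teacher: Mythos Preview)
Your proposal has a concrete misidentification that breaks the argument. The Taubes--Wu tetrahedral eigensection is \emph{invariant} under every rotation $\sigma_l\in A_4$ (this is exactly how it is constructed in the paper, and follows from $\sigma_l.f=f$ for $1\le l\le 4$); hence the critical subspace $V_\lambda^c$ is the \emph{trivial} $A_4$-representation, not $\chi_1\oplus\chi_2$. With the correct identification, your Schur's-lemma argument runs in the wrong direction: the equivariant map from the moduli tangent space $\chi_1\oplus\chi_2\oplus U$ into $\mathrm{End}(V_\lambda^c)\cong\mathbf{1}$ is forced to vanish on every nontrivial summand, so the Taubes--Wu bilinear form restricted to $V_\lambda^c$ is identically zero (indeed it is, since a critical section has no order-$\tfrac12$ term). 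No rigidity can be extracted from $V_\lambda^c$ alone.

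The paper's mechanism is different and you are missing it. One first uses the full symmetry group $S_4$ (not $A_4$) together with a sequence of exclusion lemmas---proved via the integration-by-parts identity, not Schur---to show that the \emph{entire} eigenspace $V_\lambda$ is exactly $4$-dimensional (real) whenever $\lambda$ is critical, namely $V_\lambda\cong U\oplus V'$ or $U'\oplus V$. One then computes the Taubes--Wu bilinear form on the $3$-dimensional non-critical summand $V$ or $V'$ explicitly and checks it is nonzero for every direction transverse to the $\SO$-orbit. Any perturbation therefore splits the four eigenvalues, dropping the multiplicity below $4$; since a critical eigenvalue must have multiplicity $\ge 4$ (Proposition~\ref{mulge4}), no nearby configuration is critical. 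Rigidity comes from this multiplicity obstruction on the full eigenspace, not from an obstruction on the critical section itself.

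Your non-degeneracy argument also has a gap. The $\mathbb{Z}_3$-isotropy at a vertex forces $2n+1\equiv 0\pmod 3$, so the allowed orders are $n=1,4,7,\dots$; vanishing of the $n=1$ coefficient simply pushes you to $n=4$, which is still $A_4$-invariant and does not ``place the eigensection in a different isotypic summand.'' The paper rules out $n\ge 2$ by the algebraic identity of Lemma~\ref{lemmalocal} with vector fields $J_3^{p_l}$ (Lemma~\ref{npf==1}), and this step cannot be replaced by representation theory alone.
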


Additionally, we conduct a comprehensive study of the algebraic identities developed by Taubes-Wu \cite{taubeswu2021topological} on configurations in $\mathcal{C}_4$. Using the $\mathbb{Z}_2^{\oplus 3}$ symmetry, we obtain the following restriction on critical configurations:

\begin{theorem}
    Over $\mathcal{C}_4$, suppose the configuration $\vp=\{p_1,-p_1,p_2,p_3\}$ for some $p_1,p_2,p_3\in\sph$, i.e. $\vp$ contains a pair of antipodal points, then $\vp$ is non-critical. 
\end{theorem}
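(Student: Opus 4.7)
The plan is to argue by contradiction: assume $\vp=\{p_1,-p_1,p_2,p_3\}$ supports a critical eigensection $f$ with eigenvalue $\lambda$, and derive a contradiction using the Taubes-Wu algebraic identities from \cite{taubeswu2021topological}, with the antipodal structure playing the decisive role. First I would normalize coordinates so $p_1=(0,0,1)$, $-p_1=(0,0,-1)$, and record $p_2,p_3$ in spherical coordinates. Let $\ell_1,\ell_1',\ell_2,\ell_3\in\mathbb{C}$ denote the leading coefficients of $f$ at $p_1,-p_1,p_2,p_3$ in the trivializations of $\MI_\vp$; criticality promotes these from being the coefficients of $r^{1/2}$-terms to the next allowable leading order. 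The algebraic data to be analyzed is therefore the quadruple $(\ell_1,\ell_1',\ell_2,\ell_3)$.

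The main step is to pair $f$ against low-degree spherical harmonics $Y_k^m$ on $\sph$ (lifted to $\Sigma_\vp$ via $\pi$ and paired with the branched-cover lift $\tilde f$) and perform integration by parts on $\Sigma_\vp\setminus\bigsqcup B_\epsilon(p_i)$, letting $\epsilon\to 0$. Since $\tilde f$ is odd under the deck involution while $\pi^\ast Y_k^m$ is even, the bulk term vanishes, and the boundary contributions at $\pi^{-1}(p_i)$ yield Taubes-Wu-type identities linear in the $\ell_i$ with coefficients depending algebraically on $(p_i, Y_k^m)$. The antipodal pair gives the crucial simplification: for each $Y_k^m$, the combined contribution of $p_1,-p_1$ is
\[
\ell_1 Y_k^m(p_1)+\ell_1' Y_k^m(-p_1)=\bigl(\ell_1+(-1)^k\ell_1'\bigr)Y_k^m(p_1),
\]
so odd-$k$ and even-$k$ harmonics decouple the system into two blocks governing $\ell_1-\ell_1'$ and $\ell_1+\ell_1'$ respectively. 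The $\mathbb{Z}_2^{\oplus 3}$ sign structure on $\mathcal{C}_4$ refines this pairing further. Varying $m$ while $p_2,p_3$ remain fixed produces more identities than unknowns, and I would argue that the resulting linear system is overdetermined in $(\ell_1,\ell_1')$, forcing $\ell_1=\ell_1'=0$.

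Once $\ell_1=\ell_1'=0$, the local expansion at $\pm p_1$ has no leading piece, and by the regularity statement in \cite[Proposition 2.1]{taubeswu2020examples} the section $f$ extends smoothly across $\pm p_1$ as a genuine eigenfunction there. Thus $f$ becomes a $\ZT$ eigensection for the reduced configuration $\{p_2,p_3\}\in\mathcal{C}_2$, and is in fact critical there. However, Taubes-Wu show that critical configurations in $\mathcal{C}_2$ form a specific small family (realized by antipodal pairs on the equator at prescribed spectral values), and the same parity-decoupling argument applied to $\{p_2,p_3\}$ with the ambient eigenvalue $\lambda$ inherited from $\vp$ yields $\ell_2=\ell_3=0$ as well. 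Unique continuation for the Laplacian then forces $f\equiv 0$, contradicting the assumption.

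The step I expect to be the main obstacle is verifying that enough of the Taubes-Wu identities remain linearly independent uniformly over all $p_2,p_3$. The algebraic coefficients involve transcendental functions of the geometry and could degenerate on special subvarieties of the parameter space (for instance when $p_2,p_3$ lie on a great circle through $p_1$, or when they are themselves antipodal). Handling these degenerate strata will likely require either enriching the family of test sections (beyond low-degree spherical harmonics) or treating the sub-cases separately using the extra symmetries they enjoy, in which case finite-group representation theory on the residual dihedral/$\mathbb{Z}_2^{\oplus 3}$ action will furnish the contradiction directly.
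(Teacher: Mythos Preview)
Your plan has two genuine gaps that would prevent it from going through.

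First, the integration-by-parts mechanism you describe does not produce any identities. If $f$ is critical then $f\sim r^{3/2}$ near each $p_i$, and a smooth test function $Y_k^m$ is bounded with bounded gradient; on a circle of radius $\epsilon$ the boundary term $(\partial_\nu f)\,Y-f\,(\partial_\nu Y)$ is $O(\epsilon^{1/2})$, so the residues all vanish as $\epsilon\to 0$. The actual Taubes-Wu identities (Lemma~\ref{lemmalocal} and Lemma~\ref{4.10} in the paper) do not pair $f$ against spherical harmonics: they pair $\phi_1=V_1 f$ against $\phi_2=V_2V_3 f$, where the $V_j$ are rotation vector fields. The point is that $\phi_2\sim r^{-1/2}$ near the configuration points, which is what creates a nontrivial residue. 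For this reason the identities are \emph{quadratic} in the leading coefficients (they involve $a_p^2$ and $b_p^2$), not linear as you assume.

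Second, and more fatally, the claim that $\ell_1=\ell_1'=0$ forces $f$ to extend smoothly across $\pm p_1$ is false. The monodromy of $\mathcal{I}_\vp$ around $p_1$ is $-1$ by construction; this is topological and does not depend on any coefficients. Vanishing of the $r^{3/2}$-coefficient simply promotes the expansion to $f\sim r^{5/2}$, still a half-integer power. There is no way to ``remove'' $\pm p_1$ from the configuration and pass to $\mathcal{C}_2$.

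The paper's argument is quite different. When exactly one antipodal pair is present (Proposition~\ref{partialanti}), one observes that the rotation $J_3^{p_1}=-J_3^{-p_1}$ preserves $\vn_{p_1}$ and $\vn_{-p_1}$ while lowering the order at the other two points; this lets one reduce to $\vn_{p_4}(f)=1$, and then the choice $V_1=J_3^{p_1}$, $V_2=J_3^{p_2}$, $V_3=J_3^q$ with $q$ varying in Lemma~\ref{4.10} kills the $p_1,p_3$ contributions and forces $a_4=b_4=0$. When both pairs are antipodal (the cross, Section~\ref{5.4}), this trick is unavailable and the paper instead lifts a $\mathbb{Z}_2^{\oplus 3}$ action to $\Sigma_\vp$, decomposes the critical eigenspace into characters, and uses the shifting operator $J_+=J_1+iJ_2$ in Lemma~\ref{lemmalocal} to rule out each character type separately.
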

This result yields an intriguing corollary suggesting limitations on the singularity of $\ZT$ harmonic 1-forms over 3-manifolds: the tangent cone of a singularity must not be two transverse intersecting crossings lines. Consequently, the zero locus of a $\ZT$ harmonic $1$-form over $3$-manifold can not be a union of two knots that transverse intersect with each other. However, the critical configuration that is the vertices of cube, which was constructed in \cite{taubeswu2020examples}, indicates that four curves that intersect at one point transversally is possible.

\textbf{Acknowledgements.} 
	The authors wish to express their gratitude to a great many people for their interest and helpful comments. Among them are Mark Haskins, Kaihan Lin, Zhenhua Liu, Clifford Taubes and Rafe Mazzeo. Part of this article is based on the bachelor thesis of J.~Chen, completed at Sun Yat-sen University. S.~He is supported by National Key R\&D Program of China (No.2023YFA1010500) and NSFC grant (No.12288201). J.~Chen is supported by National Key R\&D Program of China (No.2023YFA1010500).


\section{$\ZT$ eigenvalues and $\ZT$ eigensections}
In this section, we will introduce some background of the theory of $\ZT$ eigensections, which had been comprehensively studied in Taubes-Wu \cite{taubeswu2020examples,taubeswu2021topological}.
\subsection{$\ZT$ eigensections and spectral theory}
Let $\Cn$ be the space of unordered $2n$ points of $\sph$, for each $\vp\in \Cn$, one has a monodromy representation $\rho:\pi_1(\sph\setminus\vp)\to\ZT$ such that any generator of $\pi_1(\sph\setminus\vp)$ represented by a loop surrounding one point of $\vp$ does not lie in $\ker\rho$. This representation determines a flat real line bundle $\lb\to \sph\setminus\vp$. Locally, restricting the flat real line bundle $\lb$ on any small loop around $p\in \vp$ yields a M\"obius line bundle. A section $f$ of $\lb$, called a $\ZT$-section, may be seen as a multivalued function on $\sph\setminus\vp$. Using the flat structure on $\lb$, the Laplace operator extends to $\Delta:C^{\infty}(\lb)\to C^{\infty}(\lb)$, where $C^{\infty}(\lb)$ is the space of smooth sections of $\lb$.  

To begin with, we will introduce the spectral theory of $\Delta$ on $\lb$, follows from \cite[Proposition 1.1]{taubeswu2021topological}. Let $C^{\infty}_c(\lb)$ be the set of smooth sections of $\lb$ that are compact supported on $\sph\setminus\vp$. We can define the $L^2$ norm for a section $f$ by $||f||^2_{L^2}=\int_{\sph\setminus\vp}|f|^2$, and define the Sobolev norm by  
\begin{equation}
    \begin{split} ||f||^2_{\ML_1^2}=\int_{\sph\setminus\vp}|f|^2+|df|^2.
    \end{split}
\end{equation}

The space $L_c^2(\lb)$ and $\sob$ are naturally defined as completions of $C_c^\infty(\lb)$ in $L^2$ norm and Sobolev norm respectively. A section $f\in \sob$ is called a $\ZT$ eigensetion if $\Delta f+\lam f=0$ is weakly satisfied and $\lam$ is called a $\ZT$ eigenvalue. Moreover, the eigenvalues have the min-max characterisation as usual, where the $k$-th eigenvalue of $\lb$ is given by 
\begin{equation}
\label{eq_minmaxZ2eigenvalues}
\lambda_k(\vp)=\inf_{H\subset \sob}\sup_{f\in H\setminus\{0\}}||df||^2_{L^2}/||f||^2_{L^2},
\end{equation}
where $H$ runs through $k$-dimensional subspaces of $\sob$. As a constant function is a section of $\lb$ if and only if the constant is $0$, the first $\ZT$ eigenvalue is always positive. 

The $\ZT$ eigenvalues as well as eigensections could be interpreted as usual eigensection problem over surfaces with conical metric. Let $\pi:\Sigma_{\vp}\to \sph$ be the double branched covering defined by $\ker\rho$. When $|\vp|=2n$, $\Sigma_{\vp}$ is a closed surface with genus $n-1$. Let $\mathrm{ds}^2$ be the round metric of $\sph$. Then $\pi^\ast\mathrm{ds}^2$ is a metric on $\Sigma_{\vp}$ with conical singularities $\vp$, whose cone angles are all $4\pi$. 

In addition, as $\pi:\Sigma_\vp\to \sph$ is a $2$-fold branched covering, there is a canonical involution $\ei$ on $\Sigma_{\vp}$ with fixed points $\vp$. It is clear that $\pi^\ast \lb$ is a trivial bundle and hence a section $f$ of $\lb$ can be identified as an odd function $\tilde{f}$ on $\Sigma_{\vp}\setminus \vp$, i.e. functions satisfying $\tilde{f}(\ei(x))=-\tilde{f}(x), ~\forall x\in \Sigma_{\vp}\setminus\vp$. 

Let $H^1(\Sigma_{\vp})$ denote the $\ML^{2}_1$ Sobolev space. The $\pm 1$ eigenvalues of the involution $\ei$ induce an orthogonal decomposition of $H^1(\Sigma_\vp)$ into $H^1_{+}(\Sigma_{\vp})\oplus H^1_{-}(\Sigma_{\vp})$, where $H^1_{-}(\Sigma_{\vp})$ comprises functions odd with respect to $\ei$. Furthermore, since the singular Laplacian $\Delta_{\pi^*ds^2}$, denoted by $\Delta$ for simplicity, commutes with the involution $\ei$, the spectrum of $\Delta$ on $H^1(\Sigma_{\vp})$ can be separated into two parts:
$$\mathrm{Spec}(\Delta) = \mathrm{Spec}(\Delta|_{H^1_+(\Sigma_\vp)}) \cup \mathrm{Spec}(\Delta|_{H^1_{-}(\Sigma_\vp)}) = \mathrm{Spec}(\sph) \cup \mathrm{Spec}(\lb),$$
where $\mathrm{Spec}(\sph)$ represents the Laplacian spectrum of the round sphere and $\mathrm{Spec}(\lb)$ consists of $\ZT$ eigenvalues of $\lb$. Consequently, the spectral theory of $\Delta$ on $\lb$ aligns with the general study of the conical metric, as detailed in \cite[Lemma 3.1]{de2021schauder} and \cite[Section 1]{taubeswu2021topological}. 

\begin{proposition}
\label{prop_descrte_spectrum}
Given $\vp\in \Cn$, the spectrum of Friedrichs' extension of $\Delta:C_c^\infty(\lb)\to C_c^\infty(\lb)$ to $\sob$ is discrete, with finite multiplicities and no accumulation points. The $\ZT$ eigensections provide an orthonormal basis for $L_c^2(\lb)$.
\end{proposition}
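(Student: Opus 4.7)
The plan is to reduce the proposition to the standard spectral theory of the (Friedrichs) Laplacian on the closed conical surface $\Sigma_\vp$, and then to extract the odd isotypic component under the involution $\ei$. The double cover $\pi:\Sigma_\vp\to\sph$ has already been set up in the excerpt; crucially, since $\pi^\ast\lb$ is canonically trivial, pullback defines an isometric identification (up to the factor $\sqrt 2$) of $C^\infty_c(\lb)$ with the space $C^\infty_{c,-}(\Sigma_\vp\setminus\vp)$ of compactly supported smooth odd functions. Under this identification the quadratic form $f\mapsto\int_{\sph\setminus\vp}|df|^2$ matches the standard Dirichlet energy $\tilde f\mapsto\tfrac12\int_{\Sigma_\vp}|d\tilde f|_{\pi^\ast\mathrm{ds}^2}^2$, so the closure of the two forms coincide and Friedrichs' extensions get intertwined.

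The first analytic step I would carry out is to show that the closure of $C^\infty_c(\Sigma_\vp\setminus\vp)$ in the Sobolev norm is all of $H^1(\Sigma_\vp)$ with the conical metric. Because we are in real dimension two and the conical metric is locally bi-Lipschitz to a flat cone $dr^2+(2r)^2d\phi^2$ of finite volume, the singular points have vanishing $2$-capacity: explicit logarithmic cutoffs $\chi_\ep(r)=\min(1,\log(r/\ep)/\log(1/\ep))$ have $H^1$-norms that vanish as $\ep\to 0$. Combined with the fact that $\ei$ commutes with cutoffs, this shows both that the Friedrichs extension is well defined and that it preserves the $\pm$ eigenspaces of $\ei$, so we obtain the orthogonal splitting
\begin{equation*}
H^1(\Sigma_\vp)=H^1_+(\Sigma_\vp)\oplus H^1_-(\Sigma_\vp),
\end{equation*}
with $H^1_-(\Sigma_\vp)\cong\sob$.

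The main analytic content is the compact embedding $H^1(\Sigma_\vp)\hookrightarrow L^2(\Sigma_\vp)$ for the conical metric on the closed surface $\Sigma_\vp$. I would establish this by a partition-of-unity argument: away from $\vp$ the metric is smooth and Rellich--Kondrachov gives local compactness; near each cone point one passes to the local model of a finite Euclidean cone, where a direct verification (or a standard Kondratiev/weighted Sobolev estimate in two dimensions) yields that bounded sequences in $H^1$ have $L^2$-convergent subsequences. This is the step I expect to be the main obstacle, since it is what turns an abstract self-adjoint operator into one with a compact resolvent. Once compactness is in hand, the resolvent of the Friedrichs extension is a compact self-adjoint operator on $L^2(\Sigma_\vp)$, so its spectrum consists of eigenvalues of finite multiplicity with no finite accumulation point, and the eigenfunctions form an orthonormal basis of $L^2(\Sigma_\vp)$.

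Finally, since $\ei$ is an isometry commuting with $\Delta$ and preserving the Friedrichs domain, each eigenspace decomposes into its $\pm$ subspaces. Restricting the basis to $H^1_-(\Sigma_\vp)$ and pushing down to $\sph\setminus\vp$ via $\pi$ yields an orthonormal basis of $L^2_c(\lb)$ consisting of $\ZT$-eigensections; the discreteness and finite-multiplicity properties transfer immediately. This completes the proposal, with the only genuinely non-trivial input being the Rellich compactness on the conical surface.
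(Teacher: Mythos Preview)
Your proposal is correct and follows essentially the same route the paper takes: the paper reduces the problem to the spectral theory of the singular Laplacian on the closed conical surface $(\Sigma_\vp,\pi^\ast\mathrm{ds}^2)$ via the double cover and the $\pm$ decomposition under $\ei$, then invokes the known spectral theory for conical metrics by citing \cite[Lemma 3.1]{de2021schauder} and \cite[Section 1]{taubeswu2021topological} rather than writing out a proof. Your outline simply makes explicit the two ingredients those references supply---the log-cutoff density argument and the Rellich compactness on finite cones---so there is no substantive difference in strategy.
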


Define $\lambda_k(\vp)$ $(k\ge 1)$ to be the $k$-th $\ZT$ eigenvalue of $\lb$. We shall discuss the regularity of the function $\lam_k(-)$ in the next section.

\subsection{Local analytic theory of $\ZT$ eigensections}
In this subsection, we will introduce the local analytic theory of the $\ZT$ eigensections, follows from \cite[Section 4]{taubeswu2020examples}, \cite[Section 2]{taubeswu2021topological} and \cite[Section 5]{taubes2014zero}. We also refer \cite[Section 4]{haydys2023index} for various generalization.

Given $p\in \vp$, let $z=re^{i\theta}$ be the complex coordinate centered at $p\in \vp$ defined by the stereographic projection from $-p$, then the Laplacian could locally be expressed as $\Delta=(\frac{r^2+1}{2})^2(\pa_r^2+\frac1r\pa_r+\frac{1}{r^2}\pa_{\theta}^2).$ In this coordinate, let $f\in\sob$ be an eigensection, then Taubes and Wu got an asymptotic description of $f$ near $\vp$:
\begin{lemma}{\cite[Section 4]{taubeswu2020examples}}
For eigensection $f$, near $p$, we have the following local expansions $f(z,\bar{z})=\sum_{n=0}^{+\infty} \re\,(a_n(1+u_n(r))z^{n+\frac{1}{2}})$, where $u_n$ are real analytic functions, and $a_n$ are complex constants. 
\end{lemma}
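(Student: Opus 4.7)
My plan is a separation-of-variables argument in polar coordinates near $p$, combined with a Frobenius analysis at the regular singular point $r=0$, with the $\sob$ regularity used to discard the singular branch of each Fourier mode. First I would use the Laplacian formula displayed just before the lemma to rewrite $-\Delta f = \lam f$ on a small punctured disk around $p$ as
\[ -\Bigl(\pa_r^2+\tfrac{1}{r}\pa_r+\tfrac{1}{r^2}\pa_\theta^2\Bigr)f \;=\; V(r)\,f, \qquad V(r) \;=\; \frac{4\lam}{(1+r^2)^2}, \]
observing that $V$ is real analytic as a function of $r^2$. Because $\lb$ is the Möbius line bundle on the punctured disk, any real section expands in the half-integer Fourier basis $\{e^{i(n+1/2)\theta}\}_{n\in\mathbb{Z}}$; using reality of $f$ to pair conjugate modes, one writes
\[ f(r,\theta) \;=\; \sum_{n=0}^{\infty}\re\!\bigl(g_n(r)\,e^{i(n+1/2)\theta}\bigr), \]
so that for each $n\ge 0$ the coefficient $g_n$ must satisfy the decoupled radial ODE
\[ g_n''(r) + \tfrac{1}{r}g_n'(r) + \Bigl(V(r) - \tfrac{(n+1/2)^2}{r^2}\Bigr) g_n(r) \;=\; 0. \]

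Next I would run the standard Frobenius procedure at $r=0$. The indicial equation reads $\alpha^2=(n+1/2)^2$, whose roots $\pm(n+1/2)$ differ by the odd integer $2n+1$; because $V$ is analytic in $r^2$, the coefficient recursion couples only same-parity terms, so no logarithmic solution is produced. Consequently there are two independent solutions
\[ g_n^{+}(r) \;=\; r^{n+1/2}\bigl(1+u_n(r)\bigr), \qquad g_n^{-}(r) \;=\; r^{-(n+1/2)}\bigl(1+v_n(r)\bigr), \]
with $u_n,v_n$ real analytic and vanishing at $0$. The hypothesis $f\in\sob$ eliminates the singular branch: a direct estimate gives $|\na g_n^{-}|^2\sim r^{-2n-3}$, whose $r\,dr$-integral diverges for every $n\ge 0$. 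Hence $g_n(r) = a_n\,r^{n+1/2}(1+u_n(r))$ for some $a_n\in\mathbb{C}$, and substituting back into the Fourier series with $z^{n+1/2}=r^{n+1/2}e^{i(n+1/2)\theta}$ yields the claimed expansion.

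The main obstacle is analytic rather than algebraic: one must verify that the Fourier expansion genuinely converges to $f$ on a small punctured disk. For this I would argue as follows. Elliptic regularity gives $f\in C^\infty(\sph\setminus\vp)$, so each Fourier coefficient $g_n$ is smooth on $(0,r_0)$ and solves the ODE in the classical sense. Parseval in $\theta$ together with the $\sob$ bound on $f$ gives $\sum_n\|g_n\|_{\sob}^2<\infty$, which forces each $g_n$ into the regular branch by the preceding Sobolev calculation. Finally, estimating the Frobenius recursion coefficients using only the analyticity of $V$ shows that the radii of convergence of the $u_n$ are bounded below independently of $n$, and that the reassembled series converges uniformly on compact subsets of the punctured disk.
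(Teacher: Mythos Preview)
The paper does not prove this lemma; it simply quotes it from \cite[Section 4]{taubeswu2020examples}, so there is no in-paper argument to compare against. Your separation-of-variables plus Frobenius analysis is exactly the standard route (and is the one Taubes--Wu take): half-integer Fourier modes forced by the M\"obius monodromy, the radial ODE with regular singular point at $r=0$, indicial roots $\pm(n+\tfrac12)$, the parity observation that $V$ is analytic in $r^2$ so the recursion skips the odd step at which the roots could resonate (hence no logarithm), and the $\sob$ bound killing the $r^{-(n+1/2)}$ branch. All of this is correct, including the convergence remarks: the Frobenius radius is governed by the singularities of $V$, hence uniform in $n$, and smoothness of $f$ on any fixed circle controls the decay of the $a_n$.
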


\begin{remark}
    In this paper, we will always assume that a complex coordinate $z$ near some point $p$ in $\sph$ is derived from some seterographic projection from $-p$. This convention is applied throughout the paper. It's important to note that it only determines the complex coordinate up to a complex unit.
\end{remark}

\begin{definition}
\label{def_leading_coefficient_pf}
For each $p\in \vp$, let $\vn_p(f)$ be the first $n$ in the local expansion of $f$ with non-vanishing coefficients, we define the order of $f$, $\ord(f):=\min_{p\in\vp}\vn_p(f).$ In addition, we write $\vp_f=\{p\in\vp|\vn_p(f)=0\}.$
\end{definition}

Consider a vector field $J$ on $\sph$ that commutes with the Laplacian $\Delta$. Then, for a $\ZT$ eigensection $f$ of $\lb$, with eigenvalue $\lam$, $Jf$ satisfies $\Delta Jf+\lam Jf=0$ pointwise on $\sph\setminus\vp$ as well. However, it could happen that $\int_{\sph\setminus\vp}|dJf|^2$ is unbounded. Thus $Jf$ may not be a $\ZT$ eigensection of $\lb$. On the other hand, there are still asymptotic expansions for $Jf$ near $p\in\vp$ (cf. \cite[Lemma 1]{Victor}). Thus $\ord(Jf)$ and $\vn_p(Jf)$ are still well-defined.

Among all the $\ZT$ eigenvalues and eigensections, we have special interest in the following kind:
\begin{definition}
An eigensection $f$ of $\lb$ is called \textbf{critical} if $|df|$ can be extended to a H\"{o}lder continuous function on $\sph$ that vanishes at points of $\vp$. The corresponding eigenvalue is called a critical eigenvalue of $\lb$. 
\end{definition}
According to the local expansion of $f$, it is critical if and only if $\vp_f=\varnothing$. The nomenclature ``critical" originates from Taubes-Wu's construction \cite{taubeswu2021topological}, where a functional $\mathcal{\ME}(\vp,\{\pm f\}):=\int_{\sph}|df|^2$ for $f\in \sob$ satisfying $||f||_{L^2}=1$, was introduced. Taubes-Wu proved that $f$ is critical if and only if the pair $(\vp,\{\pm f\})$ is a critical point for the functional $\mathcal{\ME}$.

A particularly intriguing subset of critical eigensections are the non-degenerate ones, which can be understood as generic critical eigensections. We refer \cite{donaldsondeformation2019,takahashi2015moduliZ2} for further interesting discussions towards this concept.

\begin{definition}
A critical eigensection $f$ of $\lb$ is called \textbf{non-degenerate} if for any $p\in \vp$, we have $\vn_p(f)=1$. 
\end{definition}

\section{$\ZT$ eigensections and their deformations}
In this section, we will study the behavior of $\ZT$ eigenvalues as the configurations vary. We introduce the variation formula proved in \cite{taubeswu2021topological} and establish a generic property of $\ZT$ eigensections based on this. In addition, we extend the continuity of eigenvalue functions $\lam_k(\vp)(k\ge 1)$ to the compactification of the configuration spaces, using the method of \cite{ABoper}.

\subsection{The rigidity problem of critical eigensections}
The configuration space $\Cn$  has a natural topology and can be considered as a complex manifold. Tangent space at a configuration $\vp\in\Cn$ can be identified as $\oplus_{p\in\vp}T_p\sph$. Let $\vec{v} \in T_{\vp}\Cn$ be a tangent vector. Given complex coordinates for the configuration points. Then, near a specific point $p \in \vp$, we can express $\vec{v}(p)$ as $v(p)\frac{\partial}{\partial z_p} + \bar{v}(p)\frac{\partial}{\partial \bar{z}_p}$, where $v(p) \in \CC$ and $z_p$ is the given complex coordinate. With this notation, we represent $\vec{v}$ as the $2n$-tuple $(v(p))_{p\in\vp}$.

\begin{definition}
    A configuration $\vp\in\Cn$ that admits a critical eigensection on $\lb$ is called a critical configuration.
\end{definition} 

Next, we introduce the concept of isolation of critical configurations within the space $\Cn$. We recognize that $\Cn$ naturally accommodates an $\SO$-action, which preserves the spectrum and asymptotic behavior of eigensections near $p\in\vp$. Hence, it becomes necessary to factor out this $\SO$-action.

Given a configuration $\vp\in\Cn$, where $\vp=\{p_1,\cdots,p_{2n}\}$, and $A\in\SO$, we denote the $\SO$ action as $A.\vp=\{A(p_1),\cdots,A(p_{2n})\}$. Let $\vp\in\Cn$, we define $\delta_0(\vp):=\min_{i,j}\dis(p_i,p_j)$. For any other configuration $\vq\in\Cn$, we define the distance between them  $\dis(\vp,\vq):=\max_i\min_j\dis(p_i,q_j)$.
\begin{definition}
  A critical configuration $\vp\in\Cn$ is called $M$-isolated for $M>0$, if there exists some $\epsilon>0$ such that for any $\{q_1,\cdots,q_{2n}\}=\vq\in\Cn$ that satisfies $\dis(\vp,\vq)<\epsilon$ and $\forall A\in\SO, A.\vp\ne\vq$, the line bundle $\mathcal{I}_\vq$ has no critical eigenvalues in $(0,M)$. A critical configuration is called isolated in $\Cn$ if it is $\infty$-isolated.
\end{definition} 

In Section \ref{newfamily}, we will introduce a method for constructing infinitely many new critical configurations. This newly constructed family of configurations raises the concern that critical configurations may not be $\infty$-isolated in general. Consequently, we may need to explore deformations of critical eigensections as well.
\begin{definition}
Let $\vp$ be a critical configuration with $f$ a critical eigensection with eigenvalue $\lam$. A critical deformation of this triple $(\vp,\lambda,f)$ consists of the following data: 
    \begin{enumerate}[label=(\roman*)]
        \item A smooth path $\vp(t)$ on $\Cn$, such that $\vp(0)=\vp$.
        \item A function of eigenvalues $\lambda(t)$, such that for each $t$, $\lambda(t)$ is a critical eigenvalue of $\mathcal{I}_{\vp(t)}$, and $\lambda(0)=\lambda$.
        \item A family of critical eigensections $f_t$, such that each $f_t$ satisfies $-\Delta f_t=\lam(t)f_t$. Additionally, for any smooth family of diffeomorphism $\Phi_t$ of $\sph$, such that $\Phi_t(\vp)=\vp(t)$, $t\mapsto \Phi_t^\ast f_t$ is a smooth path of sections in $\sob$.
    \end{enumerate}
A critical deformation is called trivial if the path $\vp(t)$ can be realized as $A_t(\vp)$, where $A_t$ is a smooth path in $\SO$. It could happen that $A_t^\ast f_t\ne f$.

A critial triple $(\vp,\lambda,f)$ is called \textbf{deformation rigid} if there exists no nontrivial critical deformations for it. 
\end{definition}

 It follows from the definition that for a critical triple $(\vp,\lambda,f)$, if $\vp$ is $M$-isolated for $M>\lambda$, then $(\vp,\lambda,f)$ is deformation rigid. As shown by Taubes-Wu, critical eigensections are critical values for a functional. It would be interesting to know whether the critical points are isolated or rigid. In particular, we would like to understand the following question:

\begin{question}
    Are the critical eigensections found in Taubes-Wu \cite{taubeswu2020examples} deformation rigid? 
\end{question}

We shall discuss this problem in the tetrahedral case in Section \ref{tetrahedralcase}.

\subsection{Continuity of eigenvalues}
Consider two distinct configurations $\vp,\vq\in\Cn$, for line bundles $\mathcal{I}_{\vp}$ and $\mathcal{I}_{\vq}$, we want to show that the difference between $k$-th eigenvalues $\lambda_k(\vp)$ and $\lambda_k(\vq)$ is dominated by $\dis(\vp,\vq)$. Consider the surfaces $\pi_{\vp}:\Sigma_\vp\to\sph$ and $\pi_{\vq}:\Sigma_{\vq}\to\sph$. Assume $\dis(\vp,\vq)=:\delta\ll\delta_0=\delta_0(\vp)$ and order $\vq$ in the way that $\mathrm{dist}(p_i,q_i)=\min_{j}\mathrm{dist}(p_i,q_j)$. Consider a cut-off function $\chi_i$ near a configuration point $p_i\in\vp$, supported on $B_{4\delta_1}(p_i)$ for $\delta<\delta_1\ll\delta_0$, and $\chi_i|_{B_{2\delta_1}(p_i)}\equiv 1, \chi_i|_{\sph\setminus B_{4\delta_1}(p_i)}\equiv 0$ and $|d\chi_i|<1/\delta_1<1/\delta$. We can fix a choice of $\chi_i$ for sufficiently small $\delta$ and assume any two of them are identical up to a rotation. Let's choose a diffeomorphism $\Phi:\sph\to\sph$ satisfying:(a) $\Phi(p_i)=q_i$; (b) in a complex coordinate $z$ near each $p_i$, $\Phi$ can be written as $z\mapsto z+\chi_iz(q_i)$. Note that the assumption $|d\chi_i|<1/\delta_1$ ensures that $z\mapsto z+\chi_iz(q_i)$ is injective.

Given such a diffeomorphism, we can lift it to be a diffeomorphism $\tilde{\Phi}:\Sigma_\vp\to\Sigma_\vq$ so that $\pi_{\vq}\circ\tilde{\Phi}=\Phi\circ\pi_\vp$. The pull-back metric $(\pi_{\vq}\circ\tilde{\Phi})^\ast\mathrm{ds}^2$ is defined on $\Sigma_\vp$ and possesses conical singularities $\vp$. Denote this metric by $g'$ and $\pi_\vp^\ast\mathrm{ds}^2$ by $g$.

\begin{lemma}
There are functions $C=C(\delta_0,\delta)=1+\mathcal{O}(\delta)>0$ and $C'=C'(\delta_0,\delta)=1+\mathcal{O}(\delta)>0$, depends only on $\delta_0$ and $\delta$, such that for any function $f\in H^1(\Sigma_\vp)$, we have
\begin{enumerate}[label=(\roman*)]
    \item $C^{-1}||f||^2_{L^2(g')}\le ||f||^2_{L^2(g)}\le C||f||^2_{L^2(g')}$,
    \item $C'^{-1}||df||^2_{L^2(g')}\le ||df||^2_{L^2(g)}\le C'||df||^2_{L^2(g')}$.
\end{enumerate}
\end{lemma}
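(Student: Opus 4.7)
The plan is to reduce both inequalities to a pointwise comparison between the two metrics $g = \pi_\vp^\ast\mathrm{ds}^2$ and $g' = (\pi_\vq\circ\tilde\Phi)^\ast\mathrm{ds}^2$ on $\Sigma_\vp$. Since $\tilde\Phi$ is a branched double cover of $\Phi$, and $\pi_\vq\circ\tilde\Phi = \Phi\circ\pi_\vp$, we have $g' = \pi_\vp^\ast(\Phi^\ast\mathrm{ds}^2)$, so the comparison upstairs is determined by comparing $\mathrm{ds}^2$ and $\Phi^\ast\mathrm{ds}^2$ on $\sph$. Outside $\bigcup_i B_{4\delta_1}(p_i)$ the diffeomorphism $\Phi$ is the identity and the two metrics agree, so all the work is localized to the balls $B_{4\delta_1}(p_i)$.

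First I would estimate $d\Phi-\mathrm{Id}$ in each coordinate chart. Writing $\Phi(z)=z+\chi_i(z)\,z(q_i)$, one has $d\Phi = \mathrm{Id} + z(q_i)\,d\chi_i$. Using $|z(q_i)|\le C_0\,\delta$ for a constant $C_0=C_0(\delta_0)$ and $|d\chi_i|\le 1/\delta_1$, together with $\delta_1$ chosen comparable to $\delta_0$, one gets $\|d\Phi-\mathrm{Id}\|_{C^0}\le C_1(\delta_0)\,\delta$. Taking determinants and transposes this immediately gives
\[
\bigl|\det d\Phi - 1\bigr| \le C_2(\delta_0)\,\delta,\qquad
\bigl\|d\Phi^{\,T}d\Phi - \mathrm{Id}\bigr\| \le C_2(\delta_0)\,\delta.
\]
Pulling back by $\pi_\vp$, this translates into the pointwise bounds
\[
(1-C_3\delta)\,g \le g' \le (1+C_3\delta)\,g, \qquad
(1-C_3\delta)\,\vol_g \le \vol_{g'} \le (1+C_3\delta)\,\vol_g,
\]
valid away from the conical points (and hence almost everywhere).

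The $L^2$ inequality (i) follows directly by integrating $|f|^2$ against the two volume forms. For (ii), in local coordinates $|df|_{g'}^2 = (g')^{ij}\pa_i f\pa_j f$, and the inverse-metric bound $|(g')^{ij} - g^{ij}|\le C_3\delta\,g^{ij}$ combined with the volume comparison gives the Dirichlet-energy equivalence with the same type of constant. Since the conical points form a set of measure zero and $f\in H^1(\Sigma_\vp)$ is controlled by its gradient on the complement, there is no issue extending these pointwise bounds to integrated Sobolev estimates.

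The only real subtlety is the behavior near the conical singularities: one has to check that $\tilde\Phi$ is a bi-Lipschitz map between the two singular surfaces (so that the coordinate calculations above genuinely apply near $p_i$) and that the $H^1$ norms on $(\Sigma_\vp,g)$ and $(\Sigma_\vp,g')$ can be compared despite the cone angles $4\pi$ at $\vp$. This is handled by noting that $\Phi$ is a smooth diffeomorphism of $\sph$ (the perturbation $\chi_i z(q_i)$ is smooth), so $\tilde\Phi$ is smooth off the branch loci and conically bi-Lipschitz across them, and the pointwise metric equivalence then passes to the integrated Sobolev norms without modification. Setting $C = C' = 1 + C_3(\delta_0)\,\delta$ completes the proof.
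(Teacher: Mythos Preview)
Your proposal is correct and follows essentially the same strategy as the paper: reduce to a pointwise comparison of $\mathrm{ds}^2$ and $\Phi^\ast\mathrm{ds}^2$ in the stereographic charts near each $p_i$, then integrate. The paper carries this out more explicitly --- writing out the full coordinate expression of $\Phi^\ast\mathrm{ds}^2$, bounding $\det g/\det g'$ by rational functions in $(u_q,v_q,\chi_i,d\chi_i)$, and for part~(ii) comparing the eigenvalues of the two metrics via their trace and determinant --- whereas you package the same information as the single perturbative estimate $\|d\Phi-\mathrm{Id}\|\le C(\delta_0)\,\delta$. One small point: your bound for $g'$ versus $g$ should also absorb the change in the conformal factor $\rho(z)=2/(1+|z|^2)$ between $z$ and $\Phi(z)$, which is again $O(\delta)$ since $|\Phi(z)-z|\le|z(q_i)|\le C_0\delta$; this is implicit in the paper's explicit coordinate formula but not in your $d\Phi^T d\Phi$ step alone.
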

\begin{proof}
      For (1), Since $||f||^2_{L^2(g)}=\int_{\Sigma_\vp}f^2\sqrt{g}dx$, it is sufficient to compare $\sqrt{g}$ and $\sqrt{g'}$. Fix $p_i\in\vp$, $q_i\in B_{2\delta_1}(p_i)\subset\sph$ and complex coordinate $z=u+iv$ near $p_i$. Let $z(q_i)=u_q+iv_q$ in this coordinate. We have $\mathrm{ds}^2=(\frac{2}{u^2+v^2+1})^2(du^2+dv^2)$ and 
    \begin{align*}
        (\Phi^\ast\mathrm{ds}^2)(u+iv)=&\left(\frac{2}{(u+\chi_iu_q)^2+(v+\chi_iv_q)^2+1}\right)^2\Big(((1+\partial_u\chi_i u_q)^2+(\partial_u\chi_i v_q)^2)du^2\\
        &+((\partial_v\chi_i u_q)^2+(1+\partial_v\chi_i v_q)^2)dv^2\\
        &+((1+\partial_u\chi_i u_q)\partial_v\chi_i u_q+(1+\partial_v\chi_i v_q)\partial_u\chi_i v_q)dudv\Big).
    \end{align*}
    Since $g=\pi_\vp^\ast\mathrm{ds}^2$ and $g'=\pi_\vp^\ast\Phi^\ast\mathrm{ds}^2$, we only need to compare $\mathrm{ds}^2$ with $\Phi^\ast\mathrm{ds}^2$. One can easily see that $Q_1\det(\Phi^\ast\mathrm{ds}^2)\le\det(\mathrm{ds}^2)\le P_1\det(\Phi^\ast\mathrm{ds}^2)$, where $P_1(u_q,v_q,\chi_i,d\chi_i)$ and $Q_1(u_q,v_q,\chi_i,d\chi_i)$ are (universal) rational polynomials, such that $$P_1(0,0,\chi_i,d\chi_i)=Q_1(0,0,\chi_i,d\chi_i)=P_1(u_q,v_q,0,0)=Q_1(u_q,v_q,0,0)=1.$$

    For (2), recall that $||df||^2_{L^2(g)}=\int_{\Sigma_\vp}g^{ij}\partial_if\partial_jf\sqrt{g}dx$. As $\mathrm{tr} g'/\mathrm{tr} g$ is a rational function in $u_q,v_q,\chi_i$ and $d\chi_i$, one can choose a constant $C_1=1+\mathcal{O}(\delta)$ so that $C_1^{-1}\mathrm{tr} g'\le\mathrm{tr} g\le C_1\mathrm{tr} g'$. Let $\lambda_1=\lambda_2$ be the eigenvalues of $g$ and $\lambda_1'\ge \lambda_2'$ be eigenvalues of $g'$. Equivalently $C_1^{-1}(\lambda_1'+\lambda_2')\le(\lambda_1+\lambda_2)\le C_1(\lambda_1'+\lambda_2')$. Additionally, we have shown $C^{-1}\lambda_1'\lambda_2'\le\lambda_1\lambda_2\le C\lambda'_1\lambda_2'$. Thus $\lambda_1'/\lambda_2\ge 2C_1^{-1}-\lambda_2'/\lambda_2\ge 2C_1^{-1}-\sqrt{g'}/\sqrt{g}\ge 2C_1^{-1}-C$ and $\lambda_2'/\lambda_1\le 2C_1-C^{-1}$.

    Therefore, we can choose some $C'=1+\mathcal{O}(\delta)$ so that 
    \[g^{ij}\partial_if\partial_jf=\frac{1}{\lambda_2}\sum_{i=1,2}|\partial_if|^2\ge \frac{\lambda_1'}{\lambda_2}(g')^{ij}\partial_if\partial_j f\ge C'^{-1}(g')^{ij}\partial_if\partial_jf\]  Similarly for the remained inequality. 
\end{proof}

 Combine the previous lemma with min-max characterization of $\ZT$ eigenvalues as in \eqref{eq_minmaxZ2eigenvalues}, we conclude:
\begin{proposition}
 \label{contiofeig}
 There is a function $C=C(\delta_0,\delta)=1+\mathcal{O}(\delta)>0$, which depends only on $\delta_0$ and $\delta$, such that $C^{-1}\lambda_k(\vq)\le\lambda_k(\vp)\le C\lambda_k(\vq)$.
\end{proposition}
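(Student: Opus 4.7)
The plan is to reduce the statement to the min-max characterization \eqref{eq_minmaxZ2eigenvalues} combined with the bi-Lipschitz comparison of norms established in the preceding lemma. Recall that $\lam_k(\vp)$ is identified with the $k$-th eigenvalue of the singular Laplacian on $(\Sigma_\vp, g)$ restricted to the odd sector $H^1_{-}(\Sigma_\vp)$, and similarly $\lam_k(\vq)$ with the $k$-th eigenvalue on $(\Sigma_\vq, \pi_\vq^\ast \mathrm{ds}^2)$ restricted to its odd sector.

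First I would verify that the lift $\tilde{\Phi}:\Sigma_\vp\to\Sigma_\vq$ intertwines the two canonical involutions $\ei_\vp$ and $\ei_\vq$. This is automatic because $\Phi$ sends the branch divisor $\vp$ to $\vq$, so the pullback of the double cover $\Sigma_\vq\to\sph$ agrees with $\Sigma_\vp\to\sph$, and the involutions are determined by the covering structure. Consequently, pullback by $\tilde\Phi$ gives a linear isomorphism $\tilde\Phi^\ast: H^1_{-}(\Sigma_\vq)\to H^1_{-}(\Sigma_\vp)$, where on the target side the metric is $g'=\pi_\vp^\ast \Phi^\ast \mathrm{ds}^2$ rather than $g=\pi_\vp^\ast \mathrm{ds}^2$. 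Under this identification, the Rayleigh quotient $\|d h\|^2_{L^2(g')}/\|h\|^2_{L^2(g')}$ computed on $\Sigma_\vp$ agrees with the Rayleigh quotient for the corresponding eigenvalue problem on $\Sigma_\vq$.

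Next I would apply the previous lemma, which supplies constants $C(\delta_0,\delta)$ and $C'(\delta_0,\delta)$ of the form $1+\mathcal{O}(\delta)$ controlling $\|\cdot\|_{L^2(g)}$ versus $\|\cdot\|_{L^2(g')}$ and $\|d\cdot\|_{L^2(g)}$ versus $\|d\cdot\|_{L^2(g')}$ for every function in $H^1(\Sigma_\vp)$. Combining these two inequalities pointwise on Rayleigh quotients yields, for every nonzero $h\in H^1_{-}(\Sigma_\vp)$,
\begin{equation*}
(CC')^{-1}\,\frac{\|dh\|^2_{L^2(g')}}{\|h\|^2_{L^2(g')}}\;\le\;\frac{\|dh\|^2_{L^2(g)}}{\|h\|^2_{L^2(g)}}\;\le\; CC'\,\frac{\|dh\|^2_{L^2(g')}}{\|h\|^2_{L^2(g')}}.
\end{equation*}

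Finally, I would feed this inequality into the min-max formula \eqref{eq_minmaxZ2eigenvalues}: taking $\inf$ over $k$-dimensional subspaces of $H^1_{-}(\Sigma_\vp)$ after taking $\sup$ over the subspace yields $(CC')^{-1}\lam_k(\vq)\le \lam_k(\vp)\le CC'\,\lam_k(\vq)$. Redefining the constant $C$ to mean the product $CC'$, which is still $1+\mathcal{O}(\delta)$, gives the claim. There is no real obstacle beyond being careful about the involution-equivariance of $\tilde\Phi$ (so that pullback genuinely restricts to the odd sectors) and checking that the test subspaces in the min-max are exchanged under $\tilde\Phi^\ast$; the bi-Lipschitz comparison of norms has already done the analytic work.
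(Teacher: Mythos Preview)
Your proposal is correct and follows the same approach as the paper: the paper's proof is a single sentence combining the preceding bi-Lipschitz norm comparison lemma with the min-max characterization \eqref{eq_minmaxZ2eigenvalues}, and you have spelled out precisely those steps (including the involution-equivariance of $\tilde\Phi$, which the paper leaves implicit). Your redefinition of $C$ as the product $CC'$ is the intended interpretation.
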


Therefore, we obtain the following well-known fact:
\begin{proposition}\label{continuousordinary}
    The $k$-th eigenvalues $\lambda_k(\vp)$ depends continuously on $\vp$ in $\Cn$.
\end{proposition}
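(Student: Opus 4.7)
The plan is to derive Proposition~\ref{continuousordinary} as an almost immediate corollary of Proposition~\ref{contiofeig}, which provides the quantitative two-sided bound $C^{-1}\lambda_k(\vq)\le \lambda_k(\vp)\le C\lambda_k(\vq)$ with $C=1+\mathcal{O}(\delta)$. So continuity really reduces to chasing the constant $C$ to $1$ as $\delta\to 0$.

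Concretely, fix $\vp_0\in\Cn$ and let $\delta_0=\delta_0(\vp_0)>0$ be the minimal pairwise distance of the configuration points of $\vp_0$. The first step is to take an arbitrary sequence $\vp_n\to\vp_0$ in $\Cn$ and set $\delta_n:=\dist(\vp_n,\vp_0)$, which tends to $0$ by the definition of the topology on $\Cn$. For $n$ sufficiently large we have $\delta_n\ll\delta_0$, so the construction of the diffeomorphism $\Phi$ and the lifted metric comparison used in the proof of Proposition~\ref{contiofeig} apply with $\vp=\vp_0$ and $\vq=\vp_n$. This yields constants $C_n=1+\mathcal{O}(\delta_n)$ with
\begin{equation*}
C_n^{-1}\lambda_k(\vp_n)\le \lambda_k(\vp_0)\le C_n\,\lambda_k(\vp_n).
\end{equation*}
Since $C_n\to 1$, this sandwiches $\lambda_k(\vp_n)\to\lambda_k(\vp_0)$ for every fixed $k\ge 1$, which is exactly continuity at the arbitrary point $\vp_0$.

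There is essentially no obstacle here beyond bookkeeping: one must verify that the constant $C$ in Proposition~\ref{contiofeig} can genuinely be chosen uniformly once we restrict to a neighborhood of $\vp_0$ in which $\delta_0(\cdot)$ is bounded below (which is automatic, since $\delta_0$ is itself a continuous function on $\Cn$ and $\delta_0(\vp_0)>0$). After that, the min-max characterization \eqref{eq_minmaxZ2eigenvalues} was already used inside Proposition~\ref{contiofeig}, and no additional variational argument is needed. In short, the proof is one sentence: apply Proposition~\ref{contiofeig} along the sequence and let $\delta_n\to 0$.
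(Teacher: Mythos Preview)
Your proposal is correct and matches the paper's approach exactly: the paper presents Proposition~\ref{continuousordinary} as an immediate consequence of Proposition~\ref{contiofeig} (introducing it with ``Therefore, we obtain the following well-known fact'') without writing out any further argument. Your sequence-and-sandwich write-up is precisely the intended one-line deduction from $C=1+\mathcal{O}(\delta)$.
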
  

\subsection{Variation and generic properties of $\ZT$-eigenvalues}
Given a compact manifold, for a generic Riemannian metric on it, eigenvalues of the Laplacian are all simple (cf. \cite{Uhlenbeck} or \cite{Bando}). This generic property holds in our case as well, implying that a generic configuration is non-critical, since a critical eigenvalue has multiplicity greater than two \cite[Lemma 2.7]{taubeswu2021topological}. 

The following lemma deals with the derivative of eigenvalues.

\begin{lemma}[\cite{taubeswu2021topological}, Proposition 2.6]\label{TWformula}
Fix a $\vp\in\Cn$, let $\lambda(\vp)$ be an eigenvalue of $\lb$. Choose a complex coordinate $z_p$ near each $p\in\vp$, a $\ZT$ eigensection $f$ has an asymptotic expansion near $p$. And the leading coefficients $a_p(f)$ are determined up to a sign under these choices. 

If $\mul\,\lambda(\vp) =1$, then there is a neighborhood $U$ of $\vp$ in $\Cn$, such that there is a smooth function $\lambda(-)$ defined on it and satisfies:
\begin{enumerate}[label=(\roman*)]
    \item The value of $\lambda{(-)}$ at $\vp$ is $\lambda(\vp)$;
    \item If $\vq$ is contained in $U$, then $\lambda{(\vq)}$ is an eigenvalue of $\mathcal{I}_{\vq}$ with $\mul\,\lambda{(\vq)}=1$;
    \item Let $\vec{v}$ be a tangent vector of $\Cn$ at $\vp$, the directional derivative of $\lambda{(-)}$ along $\vec{v}$ is $\frac{\pi}{2}\sum_{p\in \vp_f}\re\,(v(p)a_{p}(f)^2)$, where $f$ denoting an eigensection with eigenvalue $\lambda(\vp)$ and whose square has $\sph$ integral equal to one.
\end{enumerate}

If $N:=\mul\,\lambda(\vp)>1$, then there exist a neighborhood $U$ of $\vp$ such that there is a set of $N$ continuous functions $\mu_{i}(-)(i=1,\cdots,N)$ defined on $U$, with the following properties:
\begin{enumerate}[label=(\roman*)]
    \item $\mu_{i}(\vp)=\lambda(\vp)$ for $i=1,\cdots,N$;
    \item If $\vq\in U$, then $\mu_{i}(\vq)$'s are $\ZT$ eigenvalues of $\mathcal{I}_\vq$;
    \item Consider the eigenspace $V_{\lambda(\vp)}$. A tangent vector $\vec{v}$ at $\vp$ determines a symmetric bilinear form on $V_{\lambda(\vp)}$ by 
    \begin{align}
    B_{\vec{v}}(f,f'):=\frac{\pi}{2}\sum_{p\in\vp_f\cap\vp_{f'}}\re\,\big(v(p)a_p(f)a_p(f')\big),\label{TWblf}\end{align}
    where $f,f'\in V_{\lambda(\vp)}$. Denote the eigenvalues of this bilinear form as $\eta_1(\vec{v}),\cdots,\eta_N(\vec{v})$. Let $\vp(t)$ be a path of configurations such that $\vp(0)=\vp$ and $\vp'(0)=\vec{v}$. Then up to an arrangement the functions $\mu_i(-)$ satisfy $\frac{d}{dt}\mu_{i}(\vp(0))=\eta_i(\vec{v})$ for $i=1,\cdots,N$.
\end{enumerate}
\end{lemma}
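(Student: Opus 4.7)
The plan is to adapt standard analytic perturbation theory for self-adjoint operators to the singular conical setting by trivializing the moving singular locus via diffeomorphisms. I would first choose a family of diffeomorphisms $\Phi_t : \sph \to \sph$ supported in disjoint neighborhoods of $\vp$, acting on small balls around each $p\in\vp$ as the translation $z \mapsto z + t\,v(p)$ in the fixed complex coordinate, exactly as in the construction preceding Proposition~\ref{contiofeig}. Pulling back the round metric gives a family $g_t = \Phi_t^\ast \mathrm{ds}^2$ of conical metrics on the fixed surface $\Sigma_\vp$ with fixed singular locus $\vp$, so that the corresponding Laplacians $\Delta_{g_t}$ form an analytic family of unbounded self-adjoint operators acting on a common Hilbert space. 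The Kato--Rellich theory then produces an analytic branch $(\lambda(t),f_t)$ in the simple case; in the degenerate case one obtains $N$ continuous branches $\mu_i(t)$ whose continuity as functions on the configuration space follows from Proposition~\ref{continuousordinary}.

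The core of the argument is computing $\dot\lambda(0)$ in terms of the leading coefficients $a_p(f)$. After normalizing $\|f_t\|_{g_t}=1$, differentiating the Rayleigh quotient at $t=0$ cancels the variations that fall on $f_t$ by the eigenvalue equation, leaving a single integral involving only $\dot g$ and $f$. I would then perform integration by parts on the complement $\sph\setminus\bigcup_p B_\epsilon(p)$, pushing all bulk contributions to the circles $\partial B_\epsilon(p)$, and substitute the local expansion $f = \re\bigl(a_p(1+u_0(r))\,z^{1/2}\bigr) + \MO(|z|^{3/2})$. As $\epsilon\to 0$ only the terms quadratic in the leading $z^{1/2}$ piece survive, and a direct calculation produces $\frac{\pi}{2}\re\bigl(v(p)a_p(f)^2\bigr)$, with the factor $\pi$ coming from $\int_0^{2\pi}\cos^2(\theta/2)\,d\theta$ and the factor $1/2$ from differentiating $z^{1/2}$. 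Summing over $p\in\vp_f$ gives the claimed formula; the same argument applied to a pair $f,f'\subset V_{\lambda(\vp)}$ yields the bilinear form $B_{\vec v}(f,f')$, with the range $\vp_f\cap\vp_{f'}$ appearing naturally since the boundary integrand at $p$ is nonzero at leading order only when both leading coefficients are. Standard first-order degenerate perturbation theory then identifies the derivatives of the $\mu_i(t)$ at $0$ with the eigenvalues of $B_{\vec v}$ on $V_{\lambda(\vp)}$.

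The main technical obstacle is justifying the regularity of the family $(\lambda(t),f_t)$ in the singular setting and the exchange of limits needed for the boundary computation. One must verify that the leading coefficients $a_p(f_t)$ depend smoothly on $t$ and that the $\MO(|z|^{3/2})$ remainders in the Taubes--Wu expansion contribute negligibly as $\epsilon\to 0$; this amounts to showing that $\{\Delta_{g_t}\}$ is a holomorphic family of type (A) in Kato's sense, which is most cleanly obtained via the conformal or stretched-coordinate framework underlying Section~2.2. Granting this regularity, the extraction of the residue-type formula is essentially a complex-analytic bookkeeping exercise at each singular point.
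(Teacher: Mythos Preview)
The paper does not supply its own proof of this lemma: it is quoted verbatim from Taubes--Wu \cite{taubeswu2021topological}, Proposition~2.6, and no argument is given here. So there is nothing to compare your proposal against in this paper.

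That said, your sketch is the standard route and matches the strategy of the original Taubes--Wu argument: trivialize the moving singular set by the diffeomorphisms $\Phi_t$ already used before Proposition~\ref{contiofeig}, apply analytic perturbation theory to the resulting family of conical Laplacians on the fixed surface $\Sigma_\vp$, and compute the first variation by an integration-by-parts that localizes to the boundary circles $\partial B_\epsilon(p)$. The residue-type extraction you describe is exactly the mechanism behind the formula, and is the same integration-by-parts bookkeeping that this paper carries out explicitly (for a slightly different purpose) in Lemma~\ref{lemmalocal}. Your identification of the main technical point---that $\{\Delta_{g_t}\}$ must be shown to be a holomorphic family in Kato's sense so that the branches $(\lambda(t),f_t)$ and the remainder estimates behave well---is correct, and is precisely what requires the conical analysis in the cited reference.
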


Recall that $\vp_f\subset \vp$ consists of points in the configuration that $f$ has non-zero order $\frac12$ coefficients in local expansions. It follows that the variation of the eigenvalues is determined by the coefficients at these points of order $\frac12$. Given a tangent vector $\vec{v}\in T_\vp\Cn$, the $B_{\vec{v}}$ given in \eqref{TWblf} is called the \textbf{Taubes-Wu bilinear form}. Although signs of leading coefficients $a_p(f)$ are not determined, this bilinear form is well-defined since $f\times f'$ is a single-valued function defined on $\sph$, and hence $a_p(f)a_p(f')$ are well-defined.

\begin{lemma}
\label{lemma_different_eigenvalue}
Let $\vp\in \Cn$ and let $\lam$ be a eigenvalue of $\lb$ with  $\mul\,\lambda>1$, then there exists a direction $\vec{v}\in T_{\vp}\Cn$ such that at least two of the eigenvalues of the bilinear form $B_{\vec{v}}$ given in Lemma \ref{TWformula} are different.
\end{lemma}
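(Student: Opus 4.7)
The plan is to argue by contradiction: suppose that for every $\vec v \in T_\vp\Cn$ the bilinear form $B_{\vec v}$ is a scalar multiple of the identity on $V_\lambda$. The main step is a rank-one obstruction that forces every eigensection in $V_\lambda$ to have vanishing leading coefficient at every configuration point.

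Fix an $L^2$-orthonormal basis $\{f_1,\ldots,f_N\}$ of $V_\lambda$ with $N := \mul\,\lambda \ge 2$, and complex coordinates $z_p$ at each $p \in \vp$. For each $p$, assemble the leading coefficients into $\vec a_p := (a_p(f_1),\ldots,a_p(f_N)) \in \mbC^N$ and form the symmetric complex matrix $M_p := \vec a_p\,\vec a_p^T$; by construction $\rank M_p \le 1$. Now probe the standing hypothesis with tangent vectors concentrated at a single point: for $\vec v_p \in T_\vp\Cn$ with $v(p) = 1$ and $v(q) = 0$ for $q \ne p$, formula \eqref{TWblf} shows that the matrices of $B_{\vec v_p}$ and $B_{i\vec v_p}$ in the basis $\{f_i\}$ are $\tfrac{\pi}{2}\re(M_p)$ and $-\tfrac{\pi}{2}\im(M_p)$ respectively. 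The hypothesis forces both to be scalar multiples of $I$, so $M_p = \alpha_p I$ for some $\alpha_p \in \mbC$; combined with $\rank M_p \le 1 < N$ this yields $\alpha_p = 0$, hence $\vec a_p = 0$ at every $p \in \vp$. In particular, every $f_i$ must be a critical eigensection.

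As long as $V_\lambda$ contains at least one non-critical eigensection, the previous paragraph is already a contradiction: the single-point probe at a $p \in \vp_{f}$ with $f \in V_\lambda$ non-critical produces a non-zero rank-one $M_p$, which cannot be a scalar multiple of $I$, so the desired splitting direction $\vec v$ exists.

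The principal obstacle -- and the step I expect to be the hardest -- is the residual scenario in which the whole eigenspace $V_\lambda$ is critical, so that $B_{\vec v}\equiv 0$ and the standing hypothesis is satisfied trivially. Such eigenspaces arise precisely in the highly symmetric configurations studied in \cite{taubeswu2020examples}. To rule this out I would push the derivation of \cite[Proposition 2.6]{taubeswu2021topological} one order higher: under $\vn_p(f_i)\ge 1$, the subleading coefficients $\vec b_p := (b_p(f_i))_i$ control the second-order variation of $\lambda(\vp(t))$ along a smooth path, and the corresponding matrix at each $p \in \vp$ again factors through the rank-at-most-one tensor $\vec b_p \vec b_p^T$. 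A second application of the single-point probe then yields a direction $\vec v$ along which two of the continuous branches $\mu_i(\vp(\cdot))$ of Lemma \ref{TWformula} detach at order $t^2$; rescaling this detachment against a generic non-stationary direction gives a splitting at the level of $B_{\vec v}$ itself, completing the proof.
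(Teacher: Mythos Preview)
Your rank-one obstruction via single-point probes is correct and in spirit matches the paper's argument: both reduce to the observation that, under the standing hypothesis, the matrix $M_p = \vec a_p\,\vec a_p^{\,T}$ must be a scalar multiple of the identity, which is incompatible with $\rank M_p\le 1<N$ unless $\vec a_p=0$.

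The genuine gap is in your treatment of the residual scenario. You propose to ``push the derivation one order higher'' and extract a second-order detachment of the branches $\mu_i$, then ``rescale'' this into a splitting of $B_{\vec v}$. This cannot work as stated: if every $f\in V_\lambda$ is critical, then by definition $a_p(f)=0$ for all $p$ and all $f$, so $B_{\vec v}\equiv 0$ for \emph{every} $\vec v$. All eigenvalues of $B_{\vec v}$ are then equal to zero, and no amount of higher-order information about the $\mu_i$ changes this first-order quantity. In other words, if the residual scenario were possible the lemma would simply be false, not salvageable by a finer expansion.

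What you are missing is that the residual scenario is impossible. The paper invokes \cite[Lemma~2.7]{taubeswu2021topological} (see also Proposition~\ref{orderminusone}): the rotation vector fields $J_1^p,J_2^p$ commute with $\Delta$ and strictly lower the vanishing order at $p$, so if $f\in V_\lambda$ is critical then $J_1^p f\in V_\lambda$ has $\vn_p(J_1^p f)=\vn_p(f)-1\ge 0$ with leading coefficient nonzero at $p$. Hence every eigenspace contains a non-critical eigensection, and your first paragraph already closes the argument. Replace your last paragraph with this one-line citation and the proof is complete.
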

\begin{proof}
    Let $N=\mul\,\lambda>1$. Choose a complex coordinate $z_p$ for each $p\in\vp$ and fix a basis for the eigenspace $V_{\lambda}$, say $\{f_i\}_{1\le i\le N}$. If for any direction $\vec{v}$, the $\eta_1(\vec{v}),\cdots,\eta_N(\vec{v})$ defined in Lemma \ref{TWformula} are all the same, then the Taubes-Wu bilinear form satisfies $B_{\vec{v}}(f_i,f_j)=0$ for $i\ne j$ while $B_{\vec{v}}(f_i,f_i)=\eta(\vec{v})\lan f_i,f_i\ran_{L^2}$ for any $i$ and $\vec{v}$, where $\eta(\vec{v})=\eta_i(\vec{v})$ for $1\le i\le N$.
    
    Consider a point $p\in\vp$, we want to prove that there is at most one base vector of $V_\lam$, say $f_1$, can have nonzero coefficient in $z_p^{1/2}$ term. Suppose not, then there are two distinct base vectors $f_1,f_2$ with nonzero leading coefficients $a_p(f_1)$ and $a_p(f_2)$. Let $v(p)=\bar{a}_p(f_1)\bar{a}_p(f_2)$, then $\Re\,(v(p)a_p(f_1)a_p(f_2))\neq 0$, which gives a contradiction. 

    By \cite[Lemma 2.7]{taubeswu2021topological} (see also Proposition \ref{orderminusone}), each eigenspace must contain non-critical eigensections. Therefore, we could assume that for some $p\in \vp$, $a_p(f_1)\neq 0$ while $a_p(f_2)=\cdots=a_p(f_N)=0$.

    Take $v(p)=\bar{a}_p^2(f_1)$, then $0=\Re\,(v(p)a_p(f_2)^2)=\eta(\vec{v})\lan f_2,f_2\ran_{L^2}$, which implies $\eta(\vec{v})=0$. However, $\Re\,(v(p)a_p^2(f_1))=|a_p(f_1)|^4\neq 0$, while $\Re\,(v(p)a_p^2(f_1))=\eta(\vec{v})\lan f_1, f_1\ran_{L^2}=0$, leading to a contradiction.
\end{proof}

For each vector $\vp\in \Cn$, according to Proposition \ref{prop_descrte_spectrum}, the spectrum of $\Delta:\sob\to\sob$ are a discrete non-decreasing sequence
$0<\lam_1(\vp)\leq \lam_2(\vp)\leq \cdots$. Our next step is to explore the generic property of the multiplicities of these eigenvalue functions on the configuration space $\Cn$. For this purpose, we introduce the subset $\mathcal{F}_k \subset \Cn$, which consists of configurations for which the first $k+1$ eigenvalues are distinct, implying that the first $k$ are simple. Additionally, we define $\mathcal{F}_0:=\Cn$ and we have a filtration $\cdots \subset \mathcal{F}_{k+1}\subset \mathcal{F}_k\subset\cdots \subset \mathcal{F}_0=\Cn$.

\begin{proposition}
For each $k\ge 1$, $\mathcal{F}_k$ is open and dense in $\Cn$.    
\end{proposition}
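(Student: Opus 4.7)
The plan is to prove openness by appealing to continuity of the eigenvalue functions, and density by a two-level induction---an outer induction on $k$, together with an inner induction on the multiplicity of the $k$-th eigenvalue at a given configuration. The Taubes-Wu variation formula (Lemma~\ref{TWformula}) and the splitting statement (Lemma~\ref{lemma_different_eigenvalue}) will supply the analytic input.

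First, openness is essentially immediate: writing $\mathcal{F}_k = \bigcap_{j=1}^{k}\{\vp \in \Cn : \lambda_j(\vp) < \lambda_{j+1}(\vp)\}$ and invoking continuity of each $\lambda_j$ from Proposition~\ref{continuousordinary}, $\mathcal{F}_k$ is a finite intersection of open sets.

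For density I will argue by induction on $k$. The base case $\mathcal{F}_0 = \Cn$ is trivial. Assuming $\mathcal{F}_{k-1}$ is open and dense, it will suffice to show that $\mathcal{F}_k$ is dense in $\mathcal{F}_{k-1}$. Fix $\vp \in \mathcal{F}_{k-1}$ and $\epsilon > 0$, and let $N \geq 1$ be the multiplicity of $\lambda_k(\vp)$; I will produce $\vp' \in \mathcal{F}_k$ within distance $\epsilon$ of $\vp$ by a further induction on $N$. When $N=1$ there is nothing to do, since $\vp$ already lies in $\mathcal{F}_k$. When $N \geq 2$, Lemma~\ref{lemma_different_eigenvalue} furnishes a tangent direction $\vec{v}$ at $\vp$ such that the Taubes-Wu bilinear form $B_{\vec{v}}$ on the eigenspace $V_{\lambda_k(\vp)}$ has at least two distinct eigenvalues $\eta_i(\vec{v})$. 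Choosing a smooth path $\vp(t)$ with $\vp(0) = \vp$ and $\vp'(0) = \vec{v}$, Lemma~\ref{TWformula} describes the $N$ branches $\mu_i(t)$ of eigenvalues near $\lambda_k(\vp)$ by $\mu_i'(0) = \eta_i(\vec{v})$. Since at least two of the $\eta_i$ differ, the multiplicity $m$ of $\min_i \eta_i(\vec{v})$ satisfies $m \leq N-1$; consequently, for small $t > 0$, the multiplicity of $\lambda_k(\vp(t))$ is at most $m \leq N-1$, and by openness one has $\vp(t) \in \mathcal{F}_{k-1}$. Applying the inner induction hypothesis at $\vp(t)$ with tolerance $\epsilon/2$ then yields a $\vp' \in \mathcal{F}_k$ within $\epsilon$ of $\vp$.

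The main obstacle I anticipate is that the smallest eigenvalue of $B_{\vec{v}}$ need not itself be simple on $V_{\lambda_k(\vp)}$, so a single first-order perturbation will not, in general, make $\lambda_k$ simple; it only strictly decreases its multiplicity. This is precisely why the nested induction on $N$ is needed, together with the fact that each iteration preserves membership in $\mathcal{F}_{k-1}$ and strictly reduces the multiplicity, so the process terminates in finitely many steps. The only role of Lemma~\ref{lemma_different_eigenvalue} in this plan is to rule out the degenerate scenario in which every direction $\vec{v}$ produces a multiple of the identity on $V_{\lambda_k(\vp)}$; without that lemma the inductive mechanism would stall.
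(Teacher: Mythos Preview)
Your proposal is correct and follows essentially the same route as the paper: openness from continuity of the $\lambda_j$, and density in $\mathcal{F}_{k-1}$ by repeatedly using Lemma~\ref{lemma_different_eigenvalue} together with the variation formula (Lemma~\ref{TWformula}) to strictly drop the multiplicity of $\lambda_k$ while staying inside $\mathcal{F}_{k-1}$. Your explicit inner induction on $N$ is just a cleaner formalization of the paper's phrase ``repeating such a perturbation a finite number of times.''
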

\begin{proof}
    It suffices to show that $\mathcal{F}_{k}$ is open and dense in $\mathcal{F}_{k-1}$ for $k\ge 1$. For openness, let $\vp\in\mathcal{F}_{k}$, and consider the first $k+1$ eigenvalues, we have $\lambda_1(\vp)<\cdots<\lambda_k(\vp)<\lambda_{k+1}(\vp)$. We define $c_0:=\min_{i=1}^k|\lam_{i+1}(\vp)-\lam_{i}(\vp)|.$ By Proposition \ref{contiofeig}, we can find a $\delta'>0$ such that for any $\vq\in \Cn$ with $\dis(\vp,\vq)<\delta'$, we have $|\lam_i(\vq)-\lam_i(\vp)|<\frac{c_0}{4}$ for $i=1,\cdots,k+1$. This implies $|\lam_i(\vq)-\lam_{i+1}(\vq)|>\frac{c_0}{2}>0,$ thereby $\mathcal{F}_{k}$ is open.


For denseness, let $\vp\in\mathcal{F}_{k-1}\setminus\mathcal{F}_{k}$. According to Lemma \ref{lemma_different_eigenvalue}, there exists a tangent vector $\vec{v}\in T_\vp\Cn$ so that at least two of the eigenvalues of $B_{\vec{v}}$ on $\lambda_k(\vp)$ are different. Choose a smooth path $\vp(t)$ so that $\vp'(0)=\vec{v}$. From Lemma \ref{lemmalocal} one conclude that $\mu_{i}(\vp(t))\ne\mu_{j}(\vp(t))$ for some $i\ne j$ and $t\ne 0$ sufficiently small. Therefore $\mul\,\lambda_{k}(\vp(t))<\mul\,\lambda_k(\vp)$. Repeating such a perturbation a finite number of times while keeping the first $k-1$  eigenvalues simple, we find a configuration in $\mathcal{F}_{k}$ that as close to $\vp$ as we like. Therefore, $\mathcal{F}_{k}$ is dense in $\mathcal{F}_{k-1}$.
\end{proof}
\begin{theorem}\label{residualsimple}
     The set $\mathfrak{S}:=\{\vp\in\Cn:\text{all eigenvalues of }\lb\text{ are simple}\}$ is residual.
\end{theorem}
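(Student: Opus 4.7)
The plan is to identify $\mathfrak{S}$ with the countable intersection $\bigcap_{k \ge 1} \mathcal{F}_k$ and then invoke Baire's theorem on the locally compact, hence Baire, space $\Cn$. Indeed, a configuration $\vp$ lies in $\mathfrak{S}$ exactly when every eigenvalue $\lam_k(\vp)$ is simple, which by the definition of $\mathcal{F}_k$ is equivalent to $\vp \in \mathcal{F}_k$ for every $k \ge 1$. The preceding proposition already establishes that each $\mathcal{F}_k$ is open and dense in $\Cn$, so the conclusion is immediate once this identification is made.

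First I would verify the set-theoretic equality $\mathfrak{S} = \bigcap_{k \ge 1} \mathcal{F}_k$: the inclusion $\mathfrak{S} \subset \mathcal{F}_k$ is clear since simplicity of all eigenvalues implies $\lam_1(\vp) < \lam_2(\vp) < \cdots < \lam_{k+1}(\vp)$ for every $k$; conversely, if $\vp \in \mathcal{F}_k$ for all $k$, then $\lam_k(\vp) < \lam_{k+1}(\vp)$ for every $k$, so by Proposition \ref{prop_descrte_spectrum} the spectrum is a strictly increasing sequence and every eigenvalue has multiplicity one. Then I would observe that $\Cn$, being an open subset of the complex manifold $(\sph)^{2n}/S_{2n}$ (minus the diagonal), is locally compact Hausdorff and second countable, hence a Baire space. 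A countable intersection of open dense subsets of a Baire space is dense, and by definition such an intersection is residual, which completes the argument.

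There is essentially no hard step left: the real work has already been done in establishing openness and denseness of each $\mathcal{F}_k$, which in turn rested on the continuity result Proposition \ref{contiofeig} and the Taubes--Wu variation formula via Lemma \ref{lemma_different_eigenvalue}. The only thing to be slightly careful about is confirming that $\Cn$ is a Baire space, which I would dispatch in one line by noting it is an open subset of a smooth manifold.
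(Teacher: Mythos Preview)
Your proposal is correct and follows essentially the same route as the paper: identify $\mathfrak{S}$ with (or as containing) $\bigcap_{k\ge 1}\mathcal{F}_k$ and invoke the preceding proposition that each $\mathcal{F}_k$ is open and dense. The paper's proof is slightly terser, noting only the inclusion $\mathfrak{S}\supset\bigcap_k\mathcal{F}_k$, which already suffices since ``residual'' means containing a countable intersection of open dense sets; your verification of equality and the remark that $\Cn$ is a Baire space are correct but not strictly needed for the statement as phrased.
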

\begin{proof}
    Note that $\mathfrak{S}\supset\cap_k\mathcal{F}_k$. Since $\mathcal{F}_k$ are open and dense in $\Cn$ for any $k$, $\mathfrak{S}$ is residual.
\end{proof}
 Together with \cite[Lemma 2.7]{taubeswu2021topological} (or Lemma \ref{mulle4} of this article), we arrive at the following:
\begin{theorem}
    Configurations in $\mathfrak{S}$ are non-critical. As a result, generic configurations on $\Cn$ are non-critical.
\end{theorem}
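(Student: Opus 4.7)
The plan is to combine two ingredients that are already available in the excerpt. First, Theorem~\ref{residualsimple} supplies the residual set $\mathfrak{S}\subset\Cn$ on which every $\ZT$ eigenvalue of $\lb$ is simple. Second, the cited \cite[Lemma~2.7]{taubeswu2021topological}, recorded in this article as Lemma~\ref{mulle4}, asserts that any critical $\ZT$ eigenvalue of $\lb$ must have multiplicity strictly greater than one.

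Given these two inputs, I would argue as follows. Suppose for contradiction that some $\vp\in\mathfrak{S}$ admits a critical eigensection $f$ with eigenvalue $\lambda$. The multiplicity lemma immediately forces $\mul\,\lambda\ge 2$, which directly contradicts the defining property of $\mathfrak{S}$. Therefore no such $f$ can exist and every $\vp\in\mathfrak{S}$ is non-critical, establishing the first assertion. The second assertion is then formal: the set of non-critical configurations contains the residual set $\mathfrak{S}$, so it is itself residual in $\Cn$, which is exactly what is meant by ``generic configurations are non-critical''.

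The only genuine content is the multiplicity lower bound used above, and I would treat it as a black box here since it has been proved in the paper already. For orientation, the mechanism is that criticality of $f$ forces every order-$1/2$ coefficient $a_p(f)$ to vanish at each $p\in\vp$, so that the Lie derivative $Jf$ along any Killing vector field $J$ on $\sph$ remains in $\sob$ and still lies in the $\lambda$-eigenspace (since $J$ commutes with $\Delta$); for a suitable $J$ that does not stabilize $\vp$, the resulting $Jf$ is linearly independent from $f$, producing a second eigensection and hence $\mul\,\lambda\ge 2$. No additional step is needed once this lemma is invoked, so there is no serious obstacle to the proof beyond the already-established multiplicity bound and the continuity/perturbation analysis packaged into Theorem~\ref{residualsimple}.
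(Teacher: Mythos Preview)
Your proposal is correct and matches the paper's own argument exactly: the theorem is stated as an immediate consequence of Theorem~\ref{residualsimple} together with the multiplicity lower bound of Lemma~\ref{mulle4} (equivalently \cite[Lemma~2.7]{taubeswu2021topological}), with no additional steps. The only minor point worth noting is that Lemma~\ref{mulle4} is stated for configurations of four or more points, so strictly speaking the case $n=1$ requires a separate remark; the paper handles this elsewhere via Proposition~\ref{nocriticaltwopoints}.
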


\subsection{$\ZT$-degeneration and compactification}

Following Taubes-Wu we shall consider a compactification of $\Cn$, resulting in a topologically interesting space. A topology on the space $\ccn:=\mathcal{C}_0\cup\mathcal{C}_{2}\cup\cdots\cup\Cn$ was defined in \cite{taubeswu2021topological}, so that $\mathcal{C}_{2i}(0\le i\le n)$ forms a stratification on it. With this topology, we can define a process which we name as $\ZT$\textbf{-degeneration}. We prove that eigenvalues are continuous in this process in Theorem \ref{continuousztdegenerate}, using an argument similar to \cite[Section 3.3]{ABoper}. The continuity of the first eigenvalue has been proved in \cite{taubeswu2021topological}. One could also view from spectral convergence of conical metric point of view, which we refer to \cite{mazzeo2006resolution,mazzeo2020conical} for more details.

Consider a sequence of configurations $\{\vp_l\in\Cn\}_{l\in\mathbb{N}}$. Assume $\vp_l=\{p_1^l,\cdots,p_{2n}^l\}$, where $p_i^l(1\le i\le 2n)$ are distinct points on $\sph$ for each $l$. Let $q_1,\cdots,q_{m}$ be distinct points on $\sph$ for $m\le 2n$, that exhaust the limits of $\{p_i^l\}_l$ as $l\to \infty$ for all $i$. We call the set of sequences $\{\{p_i^l\}_l: \lim_{l}p^l_i=q_s,\, 1\le i\le 2n\}$ as the $q_s$-group $(1\le s\le m)$. 

Consider a disk centered at $q_s$. If the number of sequences in the $q_s$-group is odd, and suppose additionally $l$ is sufficiently large, then the boundary of this disk, when seen as an element of $\pi_1(\sph\setminus\vp_l)$, would have $-1$ as its image in $\ZT$ under the monodromy representation $\rho:\pi_1(\sph\setminus\vp_l)\to\ZT$. In contrary, if the cardinality of the $q_s$-group is even, the image of this circle would be $1\in\ZT$. With the topology introduced by Taubes and Wu, we have $\lim_l\vp_l=\vq$ for $\vq=\{q_s:|q_s\text{-group}|\text{ is odd}\}$. If $|\vq|<2n$, we say that the sequence $\{\vp_l\}_l$ $\ZT$\textbf{-degenerates} to $\vq$, and name $\vq$ as the $\ZT$\textbf{-limit} of $\{\vp_l\}_l$. We shall prove that eigenvalues are continuous in the process of $\ZT$\textbf{-degeneration}.

Consider a neighborhood of the set of limit points $\vq':=\{q_1,\cdots,q_{m}\}$, namely $B_{\epsilon}(\vq')=\cup_{q\in\vq'}B_{\epsilon}(q)\subset \sph$. Let $\Omega_{\epsilon}=\sph\setminus B_{\epsilon}(\vq')$, then for $l$ sufficiently large, $\mathcal{I}_{\vp_l}|_{\Omega_\epsilon}=\mathcal{I}_{\vq}|_{\Omega_\epsilon}$. 

\begin{lemma}\label{limsup}
    For any $k\ge 1$, $\limsup_l\lam_k(\vp_l)\le\lam_k(\vq)$.
\end{lemma}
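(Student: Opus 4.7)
The plan is to bound $\lambda_k(\vp_l)$ from above via the min-max characterization \eqref{eq_minmaxZ2eigenvalues} by transplanting the first $k$ eigensections of $\mathcal{I}_\vq$ into $\mathcal{L}_1^2(\mathcal{I}_{\vp_l})$. Let $f_1,\dots,f_k$ be an $L^2$-orthonormal family of $\ZT$ eigensections of $\mathcal{I}_\vq$ with eigenvalues $\lambda_1(\vq)\le\dots\le\lambda_k(\vq)$, so that their span realizes the $k$-th Rayleigh maximum $\lambda_k(\vq)$ in $\mathcal{L}_1^2(\mathcal{I}_\vq)$.

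For $0<\epsilon\ll 1$, take a standard two-dimensional logarithmic cutoff $\chi_\epsilon\colon\sph\to[0,1]$ vanishing on an $\epsilon$-neighborhood of $\vq'$ and equal to $1$ outside a $\sqrt{\epsilon}$-neighborhood, satisfying $\int_\sph|d\chi_\epsilon|^2=O(1/\log\epsilon^{-1})\to 0$. A loop in $\Omega_\epsilon$ encircling $q_s$ has monodromy $(-1)^{|q_s\text{-group}|}$ in $\mathcal{I}_{\vp_l}$ for all $l$ sufficiently large, which matches the monodromy of $\mathcal{I}_\vq$ around $q_s$ ($-1$ precisely when $q_s\in\vq$, $+1$ otherwise). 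Consequently $\mathcal{I}_{\vp_l}\big|_{\Omega_\epsilon}\cong\mathcal{I}_\vq\big|_{\Omega_\epsilon}$ as flat real line bundles for $l\ge l(\epsilon)$; fix such an identification and set $\tilde f_i:=\chi_\epsilon f_i$, extended by zero across $B_\epsilon(\vq')$, so that $\tilde f_i\in\mathcal{L}_1^2(\mathcal{I}_{\vp_l})$.

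By Proposition 2.1 of \cite{taubeswu2020examples} each $|f_i|$ extends H\"older continuously to $\sph$ and is therefore bounded, hence
\[
\int_\sph|d\chi_\epsilon|^2|f_i|^2\le \|f_i\|_{L^\infty}^2\int_\sph|d\chi_\epsilon|^2\xrightarrow{\epsilon\to 0}0,
\]
and dominated convergence handles the other cross terms, yielding $\langle\tilde f_i,\tilde f_j\rangle_{L^2}\to\delta_{ij}$ and $\langle d\tilde f_i,d\tilde f_j\rangle_{L^2}\to\lambda_i(\vq)\delta_{ij}$ as $\epsilon\to 0$, uniformly in $l$. For $\epsilon$ small the $\tilde f_i$ are linearly independent and span a $k$-dimensional subspace $H_{l,\epsilon}\subset\mathcal{L}_1^2(\mathcal{I}_{\vp_l})$ with Rayleigh maximum at most $\lambda_k(\vq)+o_\epsilon(1)$; inserting $H_{l,\epsilon}$ into \eqref{eq_minmaxZ2eigenvalues} gives $\lambda_k(\vp_l)\le\lambda_k(\vq)+o_\epsilon(1)$ for $l\ge l(\epsilon)$, and sending $l\to\infty$ before $\epsilon\to 0$ finishes the argument. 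The only genuine subtlety is the bundle identification on $\Omega_\epsilon$, which is precisely where the parity condition in the definition of $\ZT$-degeneration enters essentially; the energy estimate itself is the standard two-dimensional log-cutoff computation, enabled by the boundedness of $|f_i|$ coming from \cite{taubeswu2020examples}.
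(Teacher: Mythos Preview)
Your argument is correct and follows the same min--max transplantation strategy as the paper: identify $\mathcal{I}_{\vp_l}$ with $\mathcal{I}_\vq$ over $\Omega_\epsilon$ and insert a $k$-dimensional test space coming from $\mathcal{I}_\vq$. The only difference is in how the test sections are produced. The paper appeals directly to the fact that $\mathcal{L}_1^2(\mathcal{I}_\vq)$ is the completion of $C_c^\infty(\mathcal{I}_\vq)$, so for any $\varepsilon'>0$ one can pick $k$ smooth sections already compactly supported in some $\Omega_\epsilon$ whose Rayleigh maximum is at most $\lambda_k(\vq)+\varepsilon'$; no cutoff computation or $L^\infty$ bound is needed. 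Your version instead takes the genuine eigensections, applies a logarithmic cutoff, and controls the error via the $L^\infty$ bound from \cite{taubeswu2020examples}. Both work; the paper's route is a touch more economical since it bypasses the $L^\infty$ estimate, while yours makes the energy bookkeeping fully explicit.
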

\begin{proof}
    By the min-max characterization of the $k$-th $\ZT$ eigenvalue, for any $\varepsilon'>0$, there exists $\epsilon$ and $k$ smooth sections $f_1,\cdots,f_k$ of $\mathcal{I}_{\vq}$ which are compactly supported in $\Omega_\epsilon$ for some $\epsilon>0$, such that the vector space $H=\mathrm{span}\{f_1,\cdots,f_k\}$ satisfies \[\sup_{f\in H\setminus\{0\}}||df||^2_{\mathcal{L}_1^2}/||f||^2_{L^2}\le\lam_k(\vq)+\varepsilon'.\]  
    Consider $l$ sufficiently large such that $\vp_l\subset B_\epsilon(\vq')$ and hence $\mathcal{I}_{\vp_l}|_{\Omega_\epsilon}=\mathcal{I}_{\vq}|_{\Omega_\epsilon}$. Then $f_1,\cdots,f_k$ can be identified as sections of $\mathcal{I}_{\vp_l}$, which implies $\lam_k(\vp_l)\le\sup_{f\in H\setminus\{0\}}||df||^2_{\mathcal{L}_1^2}/||f||^2_{L^2}\leq \lam_k(\vq)+\epsilon'$. Therefore, $\limsup_l\lam_k(\vq_l)\le\lam_k(\vq)$.
\end{proof}

Following Taubes-Wu \cite[(2.2)]{taubeswu2021topological}, for $\epsilon>0$ and $\vp\in\Cn$, we define a cut-off function \begin{align}\label{cutoff}
    \chi_{\epsilon,\vp}(-)=\chi\big(2\frac{\ln{\dist(-,\vp)}}{\ln(\epsilon)}-1\big),
\end{align}
so that $\chi_{\epsilon,\vp}(x)$ is equal to $1$ when $\dist(x,\vp)>\sqrt{\epsilon}$ and equal to $0$ when $\dist(x,\vp)<\epsilon$. Moreover, with a suitable choice of $\chi$, we may assume $|d\chi_{\epsilon,\vp}(x)|<c_0 /|\dist(x,\vp)\ln\epsilon|$ for a constant $c_0$ that is independent of $\vp$. As a result, $\int_{\sph}|d\chi_{\epsilon,\vp}|^2\le c_1/|\ln\epsilon|$, with $c_1$ a constant being independent of $\vp$.

\begin{lemma}[\cite{taubeswu2021topological}, Lemma 2.1]\label{bound}
    Suppose $f$ is a $\ZT$ eigensection of $\lb$, with eigenvalue $\lam$. Then for any point $x\notin \vp$, we have $|f|(x)\le c(\lam+1)||f||_{L^2}$, where $c$ is a constant independent of $\vp$ and $q$. 
\end{lemma}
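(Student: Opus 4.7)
The plan is to reduce the claim to a standard $L^\infty$-estimate for a non-negative subsolution of a second-order elliptic PDE on the fixed round sphere, by working with the single-valued function $u := f^2$ rather than with $f$ itself.

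First, by \cite[Proposition 2.1]{taubeswu2020examples} recalled in the introduction, $|f|$ extends to a H\"older continuous function on $\sph$ vanishing on $\vp$, so $u = f^2$ is a bounded, non-negative, continuous function on $\sph$. The Taubes--Wu local expansion $f(z,\bar z) = \sum_n \re\,(a_n(1+u_n(r))z^{n+\frac12})$ gives $u = O(r)$ near each $p \in \vp$, whence $u \in H^1(\sph)$. On the smooth locus $\sph\setminus\vp$ the eigenvalue equation produces the pointwise identity
\begin{equation*}
\Delta u \;=\; 2|df|^2 - 2\lam u \;\geq\; -2\lam u,
\end{equation*}
so $u$ is a classical non-negative subsolution of $-\Delta u = 2\lam u$ on $\sph\setminus\vp$.

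Second, I would promote this to a weak inequality on all of $\sph$. Testing against a non-negative $\varphi\in C^\infty(\sph)$ after multiplication by the cutoff $\chi_{\epsilon,\vp}$ of \eqref{cutoff} and sending $\epsilon\to 0$, the error terms arising from $d\chi_{\epsilon,\vp}$ are controlled by $\int_\sph|d\chi_{\epsilon,\vp}|^2 \leq c_1/|\ln\epsilon|$ multiplied by bounded quantities (using $u \in H^1(\sph)$ and $u|_\vp = 0$), and therefore vanish in the limit.

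Third, I would apply Moser iteration to the non-negative weak subsolution $u$ on the closed smooth Riemannian manifold $(\sph,\mathrm{ds}^2)$. Since the zeroth-order coefficient has $L^\infty$ norm at most $2\lam$ and the Sobolev constants of $(\sph,\mathrm{ds}^2)$ are intrinsic, the iteration produces a polynomial estimate
\begin{equation*}
\sup_\sph u \;\leq\; C(1+\lam)^\alpha\,\|u\|_{L^1(\sph)} \;=\; C(1+\lam)^\alpha\,\|f\|_{L^2}^2
\end{equation*}
with $C$ and $\alpha$ absolute and $\alpha \leq 2$ achievable in two dimensions with elementary bookkeeping. Taking square roots delivers $|f|(x) \leq c(1+\lam)\|f\|_{L^2}$ with $c$ depending only on $(\sph,\mathrm{ds}^2)$, hence uniform in $\vp$.

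The main obstacle I anticipate is step two, where the uniformity in the configuration is won or lost: one must extend the differential inequality across the singular set while tracking constants that do not depend on the positions of the points of $\vp$. The key observations are that $\vp$ is a finite set on a surface (hence of vanishing $H^1$-capacity), that $u$ vanishes continuously on $\vp$ with the explicit $O(r)$ decay supplied by the Taubes--Wu expansion, and that the logarithmic cutoff bound $\int_\sph|d\chi_{\epsilon,\vp}|^2 \leq c_1/|\ln\epsilon|$ is itself independent of $\vp$. Once the weak subsolution inequality on all of $\sph$ is in hand, step three is a routine application of standard elliptic regularity and the estimate follows.
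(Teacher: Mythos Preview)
The paper does not give its own proof of this lemma: it is stated with a citation to \cite[Lemma~2.1]{taubeswu2021topological} and no proof environment follows. So there is nothing in the present paper to compare your argument against line by line.

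That said, your approach is sound and is essentially one of the standard ways to obtain such a bound. Passing to the single-valued function $u=f^2$ removes the multivaluedness, the identity $\Delta u = 2|df|^2 - 2\lambda u$ makes $u$ a nonnegative subsolution of $(-\Delta - 2\lambda)u \le 0$ on $\sph\setminus\vp$, and the logarithmic cutoff $\chi_{\epsilon,\vp}$ with the $\vp$-independent bound $\int|d\chi_{\epsilon,\vp}|^2 \le c_1/|\ln\epsilon|$ lets you push the weak inequality across the finite set $\vp$; here you use only $du\in L^2(\sph)$, which holds because $|f|$ is continuous on the compact $\sph$ and $df\in L^2$. Moser iteration on the fixed round sphere then gives $\|u\|_{L^\infty}\le C(1+\lambda)\|u\|_{L^2}$, and the interpolation $\|u\|_{L^2}^2\le\|u\|_{L^\infty}\|u\|_{L^1}$ upgrades this to $\|u\|_{L^\infty}\le C^2(1+\lambda)^2\|u\|_{L^1}=C^2(1+\lambda)^2\|f\|_{L^2}^2$, which is exactly the asserted bound after a square root. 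All constants entering are those of the round sphere and the universal cutoff constant $c_1$, hence uniform in $\vp$ as required.
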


\begin{lemma}\label{limitsectionh}
    Suppose the sequence $\{\vp_l\}_l$ has $\ZT$-limit $\vq$ and the limit $\lim_l\lam_k(\vp_l)=:\lam$ exists. Consider a sequence of $\ZT$ eigensections $\{f_l\}_l$, such that $\Delta f_l+\lam_{k}(\vp_l)f_l=0$ and $||f_l||_{L^2}=1$. Then there exists a $\ZT$ eigensection $h$ of $\mathcal{I}_{\vq}$, with eigenvalue $\lam$, such that for any $\epsilon>0$, by passing to a subsequence, we have $f_{l}$ converge to $h$, strongly in $L^2(\mathcal{I}_{\vq}|_{\Omega_{\epsilon}})$, and weakly in $\mathcal{L}_1^2(\mathcal{I}_{\vq}|_{\Omega_{\epsilon}})$. Moreover, $\lim_l||f_l||_{L^2}=||h||_{L^2}$.
\end{lemma}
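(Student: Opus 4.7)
The plan is to extract $h$ as a subsequential weak limit of $\{f_l\}$ on an exhaustion of $\sph\setminus\vq'$ by the sets $\Omega_{1/j}$, and then to promote $h$ to an $\mathcal{L}_1^2$ section of $\mathcal{I}_\vq$ on all of $\sph\setminus\vq$ by removable-singularity arguments. Since $\|f_l\|_{L^2}=1$ and $\int|df_l|^2=\lam_k(\vp_l)\to\lam$, the family $\{f_l\}$ is uniformly bounded in the $\mathcal{L}_1^2$ norm, and Lemma \ref{bound} provides a uniform pointwise bound $|f_l|\le c(\lam+1)$ on $\sph\setminus\vp_l$. For each $\epsilon>0$ and $l$ large enough that $\vp_l\subset B_\epsilon(\vq')$, the restriction $f_l|_{\Omega_\epsilon}$ is a uniformly $\mathcal{L}_1^2$-bounded sequence of sections of the fixed bundle $\mathcal{I}_\vq|_{\Omega_\epsilon}$. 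Applying Rellich compactness on each $\Omega_{1/j}$ and taking a diagonal subsequence yields a limit section $h$ on $\sph\setminus\vq'$ with $f_l\to h$ strongly in $L^2(\mathcal{I}_\vq|_{\Omega_\epsilon})$ and weakly in $\mathcal{L}_1^2(\mathcal{I}_\vq|_{\Omega_\epsilon})$ for every $\epsilon>0$.

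Next I would pass to the limit in the weak form of $\Delta f_l+\lam_k(\vp_l)f_l=0$ tested against sections compactly supported in $\sph\setminus\vq'$; strong $L^2$ convergence on the support handles the zeroth-order term and weak $\mathcal{L}_1^2$ convergence handles the gradient term, so $h$ weakly, and by elliptic regularity classically, satisfies $\Delta h+\lam h=0$ on $\sph\setminus\vq'$. For a point $q\in\vq'\setminus\vq$, i.e.\ one around which $\vp_l$ has an even number of points collapsing, the bundle $\mathcal{I}_\vq$ is locally trivial and $h$ inherits the pointwise bound $|h|\le c(\lam+1)$ from $f_l$; since a single point in dimension two has vanishing $H^1$-capacity, the standard removable-singularity theorem for bounded weak solutions of $(\Delta+\lam)u=0$ extends $h$ smoothly across $q$ as a section of $\mathcal{I}_\vq$. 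Weak lower semicontinuity gives $\int_{\Omega_\epsilon}|dh|^2\le\liminf_l\int_{\Omega_\epsilon}|df_l|^2\le\lam$ for every $\epsilon>0$, and sending $\epsilon\to 0$ yields $\int_\sph|dh|^2\le\lam<\infty$. Hence $h\in\mathcal{L}_1^2(\mathcal{I}_\vq)$ is a genuine $\ZT$ eigensection with eigenvalue $\lam$.

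For the convergence of $L^2$ norms, the uniform pointwise bound combined with the volume estimate $\mathrm{Vol}(B_\epsilon(\vq'))=O(\epsilon^2)$ gives $\|f_l\|_{L^2(B_\epsilon(\vq'))}^2\le C\epsilon^2$ uniformly in $l$, and the same estimate holds for the extended $h$. Combined with the strong $L^2(\Omega_\epsilon)$ convergence already established, this forces $\|f_l\|_{L^2}\to\|h\|_{L^2}$; in particular $\|h\|_{L^2}=1$ so $h\not\equiv 0$. The main obstacle I anticipate is the removable-singularity step at points of $\vq'\setminus\vq$: one must verify that the $L^\infty$ bound really does descend cleanly to $h$ and, more subtly, that no portion of the $L^2$ mass of $f_l$ escapes into pointlike concentrations at these collapsed configurations as $l\to\infty$. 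Both follow from Lemma \ref{bound} together with the cutoff function defined in \eqref{cutoff}, mirroring the spectral-convergence strategy used in \cite{ABoper}.
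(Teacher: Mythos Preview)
Your proposal is correct and follows the same overall strategy as the paper: extract a weak limit, verify the eigenvalue equation in the limit, and show $L^2$ norm convergence.

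The technical implementation differs in two places worth noting. First, for the removable singularity at points of $\vq'\setminus\vq$ and the verification that $h$ is a genuine eigensection of $\mathcal{I}_\vq$, the paper does not invoke a removable-singularity theorem directly; instead it multiplies $f_l$ by the log-cutoff $\chi_l=\chi_{\epsilon_l,\vq'}$ of \eqref{cutoff}, so that $\chi_l f_l$ is already a section of $\mathcal{I}_\vq$ on all of $\sph$, and then tests against $\eta\in C_c^\infty(\mathcal{I}_\vq)$, using $\int|d\chi_l|^2=O(1/|\ln\epsilon_l|)\to 0$ to kill the commutator terms. This handles the extension across $\vq'\setminus\vq$ and the weak eigenvalue equation simultaneously. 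Your explicit removable-singularity step is equally valid and perhaps conceptually cleaner. Second, for $\|f_l\|_{L^2}\to\|h\|_{L^2}$, the paper argues via the Kato inequality that $|f_l|$ is uniformly bounded in $H^1(\sph)$ as a single-valued function, hence precompact in $L^2(\sph)$; your route via the uniform $L^\infty$ bound from Lemma~\ref{bound} together with $\mathrm{Vol}(B_\epsilon(\vq'))=O(\epsilon^2)$ is more direct and avoids Kato entirely.
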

\begin{proof}
Consider a sequence of positive reals $\{\epsilon_l\}_l$, such that $\lim_l\epsilon_l=0$, and for any $l'>l$, $\vp_{l'}\subset B_{\epsilon_{l}}(\vq')$. Let $\chi_{l}:=\chi_{\epsilon_l,\vq'}$, which are cut-off functions defined in \eqref{cutoff}. Then $\chi_l f_l$ can be regarded as a section of $\mathcal{I}_{\vq}$, with $||\chi_l f_l||_{\mathcal{L}_1^2(\mathcal{I}_\vq)}\le 2||f_l||_{\sob}\le 2\sqrt{\lam_k(\vp_l)+1}\le 3\sqrt{\lam+1}$ for sufficiently large $l$. Here, the first inequality holds due to $\int_{\sph}|d\chi_{l}|^2=\mathcal{O}(1/|\ln\epsilon_l|)$. 

Therefore, $\{\chi_lf_l\}_l$ is a bounded sequence in $\mathcal{L}_1^2(\mathcal{I}_\vq)$. As $\mathcal{I}_{\vq}|_{\Omega_{\epsilon}}$ is well-defined, by standard local elliptic regularity, $\|\chi_lf_l\|_{L^2_k}\leq C_k$ for some constant $k$ independent of $l$. Hence by passing to a subsequence, there exists a $h\in \mathcal{L}_1^2(\mathcal{I}_{\vq})$, such that $||\chi_l f_l-h||_{L_k^2(\mathcal{I}_{\vq}|_{\Omega_{\epsilon}})}\to 0$. 

In addition, as $f_l$ is an eigensection with $\|f_l\|_{L^2}=1$, we have $\int_{S^2}|df_l|^2=\lam_l$. Note that $|f_l|$ is a single-valued function defined over $S^2$ and by Kato inequality, $|df_l|\geq d|f_l|$, we have $\int_{S^2}|f_l|^2+|d|f_l
||^2\leq C$, which implies $\lim_{l\to\infty}\|f_l\|_{L^2}=\|h\|_{L^2}.$


   It remains to show that $h$ is an eigensection. Consider a section $\eta\in C_c^\infty(\mathcal{I}_{\vq})$, we have $\int_{\sph}\lan d\eta, dh\ran = \lim_l\int_{\sph}\lan d\eta,d(\chi_l f_l)\ran$. And 
   \begin{align*}
   \int_{\sph}\lan d\eta,d(\chi_l f_l)\ran=&\int_{\sph}\lan d(\chi_l\eta),d f_l\ran-\int_{\sph}\eta\lan d\chi_l,df_l\ran+\int_{\sph}f_l\lan d\eta,d\chi_l\ran\\
   =&\lam_k(\vp_l)\int_{\sph}\eta\chi_lf_l-\int_{\sph}\eta\lan d\chi_l,df_l\ran+\int_{\sph}f_l\lan d\eta,d\chi_l\ran
   \end{align*} 
Note that \[\int_{\sph}|\eta\lan d\chi_l,df_l\ran|+\int_{\sph}|f_l\lan d\eta,d\chi_l\ran|\le c\max_{x\in\sph}(|\eta|+|d\eta|)(x)||f_l||_{\mathcal{L}_1^2}\big(\int_{\sph} |d\chi_l|^2\big)^{\frac12}=\mathcal{O}(1/\sqrt{|\ln\epsilon_l|}).\]
Thus $\int_{\sph}\lan d\eta,dh\ran=\lim_l\lam_k(\vp_l)\int_{\sph}\eta\chi_lf_l=\lam\int_{\sph}\eta h$.
\end{proof}

\begin{theorem}\label{continuousztdegenerate}
$\lim_l\lam_k(\vp_l)=\lam_k(\vq)$.
\end{theorem}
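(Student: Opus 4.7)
The plan is to combine Lemma \ref{limsup} with a reverse inequality obtained by extracting convergent limits of eigensections, so it suffices to prove
\[
\liminf_l \lam_k(\vp_l) \ge \lam_k(\vq).
\]
Suppose for contradiction this fails. After passing to a subsequence, I may assume $\lam_j(\vp_l) \to \mu_j$ exists for each $1 \le j \le k$ (boundedness of the sequences follows from Lemma \ref{limsup}), with $\mu_1 \le \mu_2 \le \cdots \le \mu_k$ and $\mu_j \le \lam_j(\vq)$; my aim is to construct $k$ orthonormal $\ZT$ eigensections of $\mathcal{I}_{\vq}$ with eigenvalues $\mu_1, \ldots, \mu_k$, so that by the min-max characterisation \eqref{eq_minmaxZ2eigenvalues} we get $\lam_k(\vq) \le \mu_k$, forcing equality.

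For each $j$, choose $\ZT$ eigensections $f_l^{(j)}$ of $\mathcal{I}_{\vp_l}$ with eigenvalues $\lam_j(\vp_l)$, normalised so $\|f_l^{(j)}\|_{L^2} = 1$ and $\langle f_l^{(j)}, f_l^{(j')}\rangle_{L^2} = 0$ for $j \ne j'$. Applying Lemma \ref{limitsectionh} to each index $j$ in turn (diagonalising over $j = 1, \ldots, k$), I obtain $\ZT$ eigensections $h^{(j)}$ of $\mathcal{I}_{\vq}$ with eigenvalues $\mu_j$, such that for every $\epsilon > 0$, after passing to a further subsequence, $f_l^{(j)} \to h^{(j)}$ strongly in $L^2(\mathcal{I}_{\vq}|_{\Omega_\epsilon})$, and $\|h^{(j)}\|_{L^2(\sph)} = 1$.

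The key step is to show the $h^{(j)}$ are $L^2$-orthogonal. Split
\[
0 = \langle f_l^{(j)}, f_l^{(j')}\rangle_{L^2(\sph)} = \langle f_l^{(j)}, f_l^{(j')}\rangle_{L^2(\Omega_\epsilon)} + \langle f_l^{(j)}, f_l^{(j')}\rangle_{L^2(B_\epsilon(\vq'))}.
\]
The first term tends to $\langle h^{(j)}, h^{(j')}\rangle_{L^2(\Omega_\epsilon)}$ by strong convergence on $\Omega_\epsilon$. For the second, since $\lim_l \|f_l^{(j)}\|_{L^2(\sph)}^2 = \|h^{(j)}\|_{L^2(\sph)}^2$ together with strong convergence on $\Omega_\epsilon$ implies $\lim_l \|f_l^{(j)}\|_{L^2(B_\epsilon(\vq'))}^2 = \|h^{(j)}\|_{L^2(B_\epsilon(\vq'))}^2$, Cauchy--Schwarz bounds the second term by a quantity that tends to $\|h^{(j)}\|_{L^2(B_\epsilon(\vq'))} \|h^{(j')}\|_{L^2(B_\epsilon(\vq'))}$. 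Letting $\epsilon \to 0$ the latter vanishes (since $h^{(j)}, h^{(j')} \in L^2(\sph)$), and we obtain $\langle h^{(j)}, h^{(j')}\rangle_{L^2(\sph)} = 0$.

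Thus $\{h^{(j)}\}_{j=1}^{k}$ spans a $k$-dimensional subspace of $\sobc|_{\vp=\vq}$ consisting of eigensections with eigenvalues at most $\mu_k$, so \eqref{eq_minmaxZ2eigenvalues} gives $\lam_k(\vq) \le \mu_k$. Combined with $\mu_k \le \lam_k(\vq)$, this yields $\mu_k = \lam_k(\vq)$ for every convergent subsequence, whence the full sequence $\lam_k(\vp_l)$ converges to $\lam_k(\vq)$. The main obstacle is controlling the behaviour of the eigensections near the degeneration locus $\vq'$, which is handled by the $L^2$-mass preservation in Lemma \ref{limitsectionh}; this is precisely what rules out the concentration phenomenon that would otherwise prevent the orthonormal frame from passing to the limit.
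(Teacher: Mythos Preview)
Your argument is correct and follows the same overall strategy as the paper: Lemma~\ref{limsup} for $\limsup$, then extract limiting eigensections via Lemma~\ref{limitsectionh} and verify their orthogonality to get $\liminf\ge\lam_k(\vq)$ from min--max. The paper organises this as an induction on $k$, while you treat all indices $j\le k$ simultaneously by a diagonal argument; this is cosmetic.

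The one substantive difference is in the orthogonality step. The paper controls $\langle f_l^{(j)},f_l^{(j')}\rangle_{L^2(B_\epsilon(\vq'))}$ directly using the uniform $L^\infty$ bound of Lemma~\ref{bound}, obtaining a decay of order $\epsilon^2$ from the area of $B_\epsilon(\vq')$. You instead use only the $L^2$ mass preservation $\lim_l\|f_l^{(j)}\|_{L^2}=\|h^{(j)}\|_{L^2}$ from Lemma~\ref{limitsectionh}: subtracting the mass on $\Omega_\epsilon$ gives $\lim_l\|f_l^{(j)}\|_{L^2(B_\epsilon)}=\|h^{(j)}\|_{L^2(B_\epsilon)}$, and Cauchy--Schwarz finishes. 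Your route is mildly more economical in that it does not invoke Lemma~\ref{bound} at all, while the paper's route gives a sharper quantitative estimate; either is perfectly adequate here.
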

\begin{proof}
    We prove this by induction. Let $\lam_1=\liminf_l\lam_1(\vp_l)$. We can choose a subsequence $\{\vp_{l_i}\}_{l_i}$, such that $\lim_{l_i}\lam_1(\vp_{l_i})=\lam_1$, and define an eigensection $h_1$ of eigenvalue $\lam_1$ as a limit of $f_{l_i}$, as in Lemma \ref{limitsectionh}. Since $\lam_1$ is an $\ZT$ eigenvalue of $\mathcal{I}_{\vq}$, there must be $\lam_1\ge \lam_1(\vq)$. Then by Lemma \ref{limsup}, $\lim_l\lam_1(\vp_l)=\lam_1(\vq)$.

    Let $\lam_s=\liminf_l\lam_s(\vp_l)$ for $1\le s\le k$. By a similar argument, for each $2\le s\le k$, we can define a nonzero $\ZT$ eigensection $h_s$ of $\mathcal{I}_{\vq}$ with eigenvalue $\lam_s$. Assume $h_s$ is the limit of a sequence of eigensections $\{f^s_{l}\}_{l}$, where $f^s_{l}$ are the $s$-th eigensection, with eigenvalue $\lam_s(\vp_l)$, such that $||f^s_{l}||_{L^2}=1$. Here, for the sake of simplicity, we do not emphasize the procedure of passing to subsequence, and use subscript $l$ only.

    Suppose we have proved that for any $1\le s\le k-1$, $\lim_l\lam_s(\vp_l)=\lam_s(\vq)$. By definition of eigensections, for any $l$ and $s<k$, $\lan f^k_{l},f^s_{l}\ran_{L^2}=\int_{\sph}f^k_{l}f^s_{l}=0$. Moreover, according to Lemma \ref{bound}, we have \[|\lan f^k_{l},f^s_{l}\ran_{L^2(B_{\epsilon}(\vq'))}|\le \max_{x\in\sph}|f^k_{l}f^s_{l}|(x)|B_{\epsilon}(\vq')|=\mathcal{O}(\epsilon^2),\]hence $|\lan h_k,h_s\ran_{L^2}|=\lim_{l}|\lan \chi_l f^k_l,\chi_l f^s_l\ran_{L^2}|\le \lim_l |\lan f^k_l,f^s_l\ran_{L^2(B_l)}|=0$ for any $1\le s\le k-1$, where $B_l=B_{\sqrt{\epsilon_l}}(\vq')$. It follows that $\lam_k\ge \lam_k(\vq)$. Consequently, by Lemma \ref{limsup}, $\lim_l\lam_k(\vp_l)=\lam_k(\vq)$. 
\end{proof}

\begin{corollary}
    The $k$-th $\ZT$ eigenvalue $\lambda_k(\vp)$, when considered as a continuous function on $\Cn$, can be extended to the compactification $\ccn$ continuously.
\end{corollary}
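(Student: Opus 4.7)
The plan is to define $\lambda_k$ on the compactification in the natural way and then verify continuity by reducing to the two results already established in this section. For each $0\le m\le n$ and each $\vq\in \mathcal{C}_{2m}$, one sets $\lambda_k(\vq)$ to be the $k$-th eigenvalue of the Friedrichs extension of $\Delta$ on $\mathcal{I}_\vq$, which is well-defined by Proposition \ref{prop_descrte_spectrum} (with the convention that on $\mathcal{C}_0$ the line bundle is trivial and $\lambda_k$ is simply the $k$-th eigenvalue of the round Laplacian on $\sph$). Proposition \ref{continuousordinary} guarantees continuity within each open stratum $\mathcal{C}_{2m}$, so what remains is continuity at points of the lower strata.

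First, I would recall that the topology on $\ccn$ introduced by Taubes-Wu is exactly the one in which $\vp_l\to \vq$ is equivalent to $\ZT$-degeneration of $\vp_l$ to $\vq$ as defined before Theorem \ref{continuousztdegenerate}. Thus, given any sequence $\vp_l$ in $\ccn$ converging to $\vq\in \mathcal{C}_{2m}$, after passing to a subsequence the points of the $\vp_l$ accumulate at a finite set $\vq'\supset \vq$, with $\vq$ consisting of exactly those limit points whose accumulating group has odd cardinality.

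Next, I would observe that Theorem \ref{continuousztdegenerate} was formulated for sequences inside $\Cn$, but its proof uses only two structural facts: (a) on the complement $\Omega_\epsilon$ of any $\epsilon$-neighborhood of $\vq'$ the bundles $\mathcal{I}_{\vp_l}$ and $\mathcal{I}_\vq$ agree for $l$ large, and (b) the uniform $L^\infty$ bound on eigensections supplied by Lemma \ref{bound}. Both facts are insensitive to whether $\vp_l$ sits in $\Cn$ or already in a lower stratum $\mathcal{C}_{2m'}$ with $m'<n$, so the arguments of Lemma \ref{limsup}, Lemma \ref{limitsectionh} and Theorem \ref{continuousztdegenerate} apply verbatim. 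Combining the analogue of Lemma \ref{limsup}, which gives $\limsup_l \lambda_k(\vp_l)\le \lambda_k(\vq)$, with the inductive construction of orthogonal limiting eigensections from the analogue of Theorem \ref{continuousztdegenerate}, which yields $\liminf_l \lambda_k(\vp_l)\ge \lambda_k(\vq)$, produces the desired $\lim_l \lambda_k(\vp_l)=\lambda_k(\vq)$.

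The only real point requiring care -- and the closest thing to an obstacle -- is confirming that the technical machinery of Theorem \ref{continuousztdegenerate} does carry over when the approximating configurations already lie in a stratum below $\Cn$, so that one may iterate the degeneration through multiple strata. This is transparent once one tracks the two structural ingredients (a) and (b) above: neither the cut-off construction with $\chi_{\epsilon,\vq'}$ nor the Kato-type $L^2$ bound used in Lemma \ref{limitsectionh} is sensitive to the ambient stratum, so no new analysis is required beyond a notational rewrite.
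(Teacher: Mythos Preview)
Your proposal is correct and follows exactly the approach the paper intends: the corollary is stated without proof, as an immediate consequence of Theorem \ref{continuousztdegenerate} together with Proposition \ref{continuousordinary}. You have simply spelled out the details the paper leaves implicit, including the observation that the degeneration argument applies equally well to sequences already lying in lower strata.
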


\section{Algebraic relations between leading coefficients}
In this section, we will present a series of algebraic identities derived using integration by parts in \cite{taubeswu2020examples}, imposing constraints on the leading coefficients of $\ZT$ eigensections. By combining these elements, we establish a uniform upper bound for orders of $\ZT$-eigensections and multiplicity of $\ZT$-eigenvalues. 

\subsection{Complexified $\ZT$ eigensections and vector fields action}\label{4.1}
In this subsection, we will introduce a complexification of the $\ZT$ eigensections and discuss differential operators generated by rotations over $\sph$.
\subsubsection{Complexified $\ZT$ eigensections}
We fix one complex local coordinate for any $p\in\vp$. For the sake of convenience in later applications of representation theory, we consider the complexification of $\lb$, denoted as $\lbc$, throughout this section.
The definition of the Laplacian remains consistent with that of the real line bundle, as do the $\ZT$ eigenvalues. Since the eigenvalue equations are real equations, most of the analytic results and definitions of $\lb$ automatically generalize to sections of $\lbc$. Although the $\ZT$ eigenvalues of $\lbc$ coincide with those of $\lb$, the multiplicity is defined as the complex dimension of the eigensubspace of $\sobc$. 

Let $f$ be an eigensection of $\lbc$, and let $z$ be a local complex coordinate of $p\in \vp$. Then, we can express $f(z,\bar{z})$ as follows:
\begin{align}\label{fourier}
f(z,\bar{z})=\sum_{n=0}^{+\infty} a_n(1+u_n(r))z^{n+\frac{1}{2}}+\sum_{n=0}^{+\infty}b_n(1+u'_n(r))(\bar{z})^{n+\frac{1}{2}},
\end{align}
where $r=|z|$, $u_n$ and $u'_n$ are real analytic functions of $r$, and $a_n$ and $b_n$ are complex constants.

\begin{definition}\label{leadterm}
We define $\vn_p(f)$ to be the smallest $n$ in the expansion \eqref{fourier} such that $|a_n|+|b_n|\neq 0$. Additionally, we denote $(a_n,b_n)$ as the pair of leading coefficients of $f$ at $p$ for $n=\vn_p(f)$. Furthermore, we define the order of $f$ as $\ord(f):=\min_{p\in \vp}\vn_p(f)$. We consider $f$ to be critical if $\vp_f=\varnothing$. Moreover, $f$ is called non-degenerate if $\vn_p(f)=1$ for any $p\in \vp$.

Let $\lambda$ be an eigenvalue with $V_{\lambda}\subset \sobc$ representing the corresponding eigenspace. We define the multiplicity $\mul\,\lambda:=\dim_{\mathbb{C}}V_{\lambda}$.
\end{definition}

Note that $\lb$ is a subbundle of $\lbc$, and hence a real section $f$ can be seen as a section of $\lbc$. For such a section, in the expansion \eqref{fourier}, the leading coefficients satisfying $a_n=\bar{b}_n$. Additionally, the existence of critical eigensections in $\lb$ is equivalent with that in $\lbc$. 

In the expansion \eqref{fourier}, the terms $z^{n+1/2}$ and $\bar{z}^{n+1/2}$ are ambiguous due to their multi-valued nature. Consequently, the pair of leading coefficients $(a_n,b_n)$ is determined only up to a sign. It is essential to handle this ambiguity with care. Typically, to fix the sign, one need to select a complex coordinate $w$ at $p\in\vp\subset \Sigma_\vp$, such that $w^2=z$; the sign depends on the choice of such coordinates. Once this complex coordinate is choosen, for the corresponding odd function $\tilde{f}$ defined on $\Sigma_\vp$, we have the asymptotic expansion \begin{align}\label{mentionsign}\tilde{f}(w,\bar{w})=a_nw^{2n+1}+b_n\bar{w}^{2n+1}+\mathcal{O}(|w|^{2n+3}),\end{align}where $n=\vn_p(f)$. When stating that $f$ has leading coefficients $(a_n,b_n)$, it is understood that such a complex coordinate and asymptotic expansion of $\tilde{f}$ have been fixed implicitly.

\subsubsection{Vector fields from rotations}
We now consider three vector fields on $\sph$ that can be interpreted as infinitesimal generators of the group of rotations $\SO$. Let $(x_1,x_2,x_3)$ be the Euclidean coordinate, and we define the vector fields as follows:
\begin{align*}
L_1=x_2\frac{\partial}{\partial x_3}-x_3\frac{\partial}{\partial x_2},\;
L_2=x_3\frac{\partial}{\partial x_1}-x_1\frac{\partial}{\partial x_3},\;
L_3=x_1\frac{\partial}{\partial x_2}-x_2\frac{\partial}{\partial x_1},
\end{align*}
which are the tangent vector fields generated by $\SO$ action on $(0,0,-1)$. We also define the operators $J_j = -iL_j$ for $j = 1, 2, 3$. For the $z$-coordinates near $(0, 0, -1) \in \sph$ given by stereographic projection from the north pole, namely $z=(x_1+ix_2)/(1-x_3)$, straightforward calculations yield: 
\begin{equation}
\label{Jcplxcoor} 
    \begin{split}
        J_1z&=\frac{1}{2}-\frac{1}{2}z^2,
\;J_2z=\frac{i}{2}+\frac{i}{2}z^2,\;J_3z=z,\\
J_1\bar{z}&=-\frac{1}{2}+\frac{1}{2}\bar{z}^2,\;J_2\bar{z}=\frac{i}{2}+\frac{i}{2}\bar{z}^2,\;J_3\bar{z}=-\bar{z}.
    \end{split}
\end{equation}

Since the vector fields $J_j(j=1,2,3)$ commute with the Laplacian $\Delta$, given an eigensection $f$ of eigenvalue $\lambda$ on any $\lbc$, $J_jf$ (resp. $L_jf$) still satisfy the equation $\Delta J_jf+\lambda J_jf=0$ $(\mathrm{resp.} \Delta L_jf+\lambda L_jf=0)$. Remark after Definition \ref{def_leading_coefficient_pf} is applied for these sections. Throughout this article, when referring to the complex coordinate $z$ near the south pole, we consistently mean the one defined by the stereographic projection from the north pole, as presented above. The choice of coordinates for other points on $\sph$ will be fixed later.

The most important point of the above formula is that even $J_1$, $J_2$ will lower the vanishing order of sections at $(0,0,-1)$ by $1$, $J_3$ will preserve this order.


Consider a point $p\in\vp$ that is not the south pole. We can express the spherical coordinates of $\mathbb{R}^3\setminus{0}$ as $(x_1,x_2,x_3)=R(\sin\theta\cos\varphi,\sin\theta\sin\varphi,\cos\theta)$. Let $(\varphi,\theta)$ be the spherical coordinates of $p$ on the unit sphere. We define $A_p\in \SO$ that maps $p$ to $(0,0,-1)$ as follows:
\[\left(\begin{array}{ccc}
-\cos\theta & 0 & \sin\theta\\
0 & 1 & 0\\
-\sin\theta & 0 & -\cos\theta
\end{array}\right)\left(\begin{array}{ccc}
\cos\varphi & \sin\varphi & 0\\
-\sin\varphi & \cos\varphi & 0\\
0 & 0 & 1
\end{array}\right)\]
We denote the complex coordinate $A_p^\ast z$ by $z_p$, where $z$ represents the complex coordinate near $(0,0,-1)$ defined previously. We define $L_j^p:=(A_p^{-1})_\ast L_j$ and $J_j^p:=-iL_j^{p}$, and these vector fields are also infinitesimal rotations. Additionally, \eqref{Jcplxcoor} holds for $J_j^pz_p$ and $J_j^p\bar{z}_p$ for $j=1,2,3$. As a result, $(J^p_jz_p)|_{z_p=0}=(J_jz)|_{z=0}$ and $(J^p_j\bar{z}_p)|_{\bar{z}_p=0}=(J_j\bar{z})|_{\bar{z}=0}$ $(j=1,2,3)$. We refer to $J_3^p$ as infinitesimal rotations based at $p$. 

Straightforward calculations yield the following:

\begin{lemma}\label{Liealgetrans}
    \begin{align*}
\left(\begin{array}{ccc}
L_1\\L_2\\L_3 
\end{array}\right)=\left(\begin{array}{ccc}
-\cos\theta\cos\varphi      &        -\sin\varphi       &       
-\sin\theta\cos\varphi      \\
-\cos\theta\sin\varphi      &       \cos\varphi      &       -\sin\theta\sin\varphi\\
\sin\theta     &  0       &       -\cos\theta
\end{array}\right)\left(\begin{array}{ccc}
L^{p}_1\\L^{p}_2\\L^{p}_3 
\end{array}\right),\end{align*}
and
   \begin{align*}
\left(\begin{array}{ccc}
L^{p}_1\\L^{p}_2\\L^p_3 
\end{array}\right)=\left(\begin{array}{ccc}
-\cos\theta\cos\varphi      &        -\cos\theta\sin\varphi       &       
\sin\theta      \\
-\sin\varphi      &       \cos\varphi     &      0\\
-\sin\theta\cos\varphi     & -\sin\theta\sin\varphi       &       -\cos\theta
\end{array}\right)\left(\begin{array}{ccc}
L_1\\L_2\\L_3 
\end{array}\right).\end{align*}
\end{lemma}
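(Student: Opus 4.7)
The proof reduces to an elementary calculation together with the standard formula for how conjugation in $\SO$ transforms the rotation generators. My plan is first to compute the matrix $A_p$ explicitly, and then to use the identity that if $A\in\SO$ and $L_v$ denotes the infinitesimal rotation of $\sph$ about an axis $v\in\Rr^3$, then $(A)_\ast L_v = L_{Av}$.

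Concretely, I would carry out the two steps as follows. First, multiply the two matrices in the definition of $A_p$; this is a one-line computation yielding
\begin{equation*}
A_p=\begin{pmatrix}
-\cos\theta\cos\varphi & -\cos\theta\sin\varphi & \sin\theta\\
-\sin\varphi & \cos\varphi & 0\\
-\sin\theta\cos\varphi & -\sin\theta\sin\varphi & -\cos\theta
\end{pmatrix}.
\end{equation*}
Second, I verify the pushforward formula. On $\sph\subset\Rr^3$, $L_j$ is the restriction of the vector field $q\mapsto e_j\times q$. For any $A\in\SO$ and any $q\in\sph$,
\begin{equation*}
(A_\ast L_j)(q)=A\cdot L_j(A^{-1}q)=A\bigl(e_j\times A^{-1}q\bigr)=(Ae_j)\times q,
\end{equation*}
so that $A_\ast L_j$ is the infinitesimal rotation about the axis $Ae_j=\sum_k A_{kj}e_k$, and by linearity of the axis-to-generator correspondence this is $\sum_k A_{kj}L_k$.

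Applying this formula to $A=A_p^{-1}$ gives $L_j^{p}=(A_p^{-1})_\ast L_j=\sum_k (A_p^{-1})_{kj}L_k=\sum_k (A_p)_{jk}L_k$, since $A_p\in\SO$ is orthogonal. Written as a matrix identity this is $L^p=A_p\,L$, which is exactly the second displayed equation of the lemma. Inverting gives $L=A_p^{-1}L^p=A_p^T L^p$, which is exactly the first displayed equation, since the transpose of the explicit $A_p$ above matches the stated matrix.

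The only real place for care is bookkeeping: the axis $v$ of a rotation generator transforms covariantly under $A$ (by $v\mapsto Av$), while the row/column placement of $A_p$ in the final matrix identity depends on whether one writes the generators as a column vector indexed by $j$ or as a family of vector fields indexed by $j$. I would briefly note that, because $A_p\in\SO$ satisfies $A_p^{-1}=A_p^T$, the two displayed matrices are transposes of one another, serving as an internal consistency check.
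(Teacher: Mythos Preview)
Your argument is correct and is precisely the ``straightforward calculation'' the paper alludes to (the paper offers no further detail). Your explicit computation of $A_p$ is accurate, the identity $(A_\ast L_j)=L_{Ae_j}$ for $A\in\SO$ is the right conceptual core, and the resulting matrix identities $L^p=A_p\,L$ and $L=A_p^{T}L^p$ match the two displayed formulas exactly.
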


For the north pole $p=(0,0,1)$, $z_p$ is not defined, so as $J^p_j(j=1,2,3)$. Indeed, $z_p$ is determined only up to a unit complex $e^{i\varphi}$. If $\varphi$ is fixed, then these notations carry on, and the lemma remains true.

\begin{proposition}\label{orderminusone}
Let $f$ be an eigensection of $\lb$. For $p\in\vp$, let $n:=\vn_p(f)$. Then, we have $\vn_p(J_1^p(f))=\vn_p(J_2^p(f))=n-1$ and $\vn_p(J_3^p(f))=n$.

For $q\neq \pm p$, where $q$ is not one of the poles, let $J_1^q$, $J_2^q$, $J_3^q$ denote the infinitesimal rotations based at $q$. Then 
$J_j^qf\ne 0$ and $\vn_p(J_j^qf)=n-1$ for $j=1,2,3$.  
\end{proposition}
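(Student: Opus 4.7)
The proof is by direct computation using the local Fourier expansion of $f$ near $p$ together with the explicit action of the infinitesimal rotations on the coordinate $z_p$, given in \eqref{Jcplxcoor}. I would begin by expanding
\[
f = \sum_{m \ge n} a_m (1+u_m(r)) z_p^{m+1/2} + \sum_{m \ge n} b_m (1+u_m'(r)) \bar z_p^{m+1/2},
\]
where $n = \vn_p(f)$ and $(a_n, b_n) \neq (0, 0)$. For $J_3^p$, the relations $J_3^p z_p = z_p$ and $J_3^p \bar z_p = -\bar z_p$ give $J_3^p = z_p \partial_{z_p} - \bar z_p \partial_{\bar z_p}$, which satisfies $J_3^p(z_p \bar z_p) = 0$ and hence annihilates every function of $r$. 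A term-by-term calculation then produces leading coefficients $((n+\tfrac12) a_n, -(n+\tfrac12) b_n) \neq (0,0)$, so $\vn_p(J_3^p f) = n$.

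For $J_1^p$ and $J_2^p$, I would use $J_1^p = \tfrac12(1-z_p^2)\partial_{z_p} + \tfrac12(\bar z_p^2 - 1)\partial_{\bar z_p}$ and the analogous formula for $J_2^p$. The constant parts $\tfrac12$ and $\tfrac{i}{2}$, acting holomorphically on $z_p^{m+1/2}$ and $\bar z_p^{m+1/2}$, produce leading terms of order $n - 1/2$:
\[
J_1^p f = \tfrac{n+1/2}{2}\bigl(a_n z_p^{n-1/2} - b_n \bar z_p^{n-1/2}\bigr) + O(|z_p|^{n+1/2}), \qquad J_2^p f = \tfrac{i(n+1/2)}{2}\bigl(a_n z_p^{n-1/2} + b_n \bar z_p^{n-1/2}\bigr) + O(|z_p|^{n+1/2}).
\]
The contribution from $J_j^p$ differentiating the radial factor $(1+u_m(r))$ is of strictly higher order: using $r^2 = z_p \bar z_p$, one computes $J_j^p r = O(1)$ near $z_p = 0$, and with $u_m = O(r^\ell)$ for $\ell \ge 1$, each such term contributes order $\ge n + 1/2$. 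Since $(a_n, b_n) \neq (0,0)$, the leading coefficients above are non-zero, so $\vn_p(J_j^p f) = n - 1$ for $j = 1, 2$.

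For the second part, Lemma \ref{Liealgetrans} (applied twice, converting between the bases $(L_j^p)$ and $(L_j^q)$ via the standard basis $(L_j)$) expresses each $J_j^q$ as a real linear combination $\sum_{i=1}^3 c_{ij}(p,q) J_i^p$. Equivalently, since $\SO$ acts on $\sph \simeq \CP^1$ by biholomorphisms, in the coordinate $z_p$ each $J_j^q$ takes the form $X_j^q(z_p)\partial_{z_p} + \overline{X_j^q(z_p)}\partial_{\bar z_p}$ for a polynomial $X_j^q$ of degree at most two, with $X_j^q(0) = \tfrac12(c_{1j}+i c_{2j})$. Repeating the same leading-order bookkeeping yields
\[
J_j^q f = (n+\tfrac12)\bigl[X_j^q(0)\, a_n z_p^{n-1/2} + \overline{X_j^q(0)}\, b_n \bar z_p^{n-1/2}\bigr] + \text{higher order},
\]
which is non-zero provided $X_j^q(0) \neq 0$, i.e.\ provided the rotation $J_j^q$ does not fix $p$. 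For $j = 3$ the axis of $J_3^q$ passes through $q$ and $-q$, so $q \neq \pm p$ guarantees $X_3^q(0) \neq 0$; for $j = 1, 2$ the axes of $J_1^q$ and $J_2^q$ lie in the plane orthogonal to the $q$-axis, and the explicit form of $A_q$ in Lemma \ref{Liealgetrans} confirms $X_j^q(0) \neq 0$ under the hypothesis $q \neq \pm p$. Thus $\vn_p(J_j^q f) = n - 1$ and in particular $J_j^q f \not\equiv 0$.

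The step requiring most care is the bookkeeping: verifying that the radial corrections $u_m(r)$, when differentiated by the Killing fields $J_j^p$ or $J_j^q$, contribute only strictly higher-order terms that cannot cancel the explicit leading coefficients. The second subtlety is checking, from the specific matrix $A_q$, that $X_j^q(0) \neq 0$ for \emph{all three} indices $j = 1, 2, 3$ whenever $q \neq \pm p$; the case $j = 3$ is immediate, while $j = 1, 2$ requires tracing through the explicit form of $A_q^{-1}$.
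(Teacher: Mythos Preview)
Your approach is essentially the same as the paper's: reduce to $p=(0,0,-1)$, read off the first part directly from \eqref{Jcplxcoor}, and for the second part express $J_j^q$ in the $J_i^p$ basis via Lemma \ref{Liealgetrans} and evaluate $(J_j^q z_p)|_{z_p=0}$. Your explicit bookkeeping on the radial factors $u_m(r)$ is more careful than the paper, which simply takes this for granted.

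There is, however, a concrete error in your final claim. You assert that for $j=1,2$ the hypothesis $q\neq\pm p$ forces $X_j^q(0)\neq 0$, but this fails for $j=1$. From Lemma \ref{Liealgetrans} one has $J_1^q=-\cos\theta\cos\varphi\,J_1^p-\cos\theta\sin\varphi\,J_2^p+\sin\theta\,J_3^p$, hence $(J_1^q z_p)\big|_{0}=-\tfrac12\cos\theta\,e^{i\varphi}$, which vanishes exactly when $q$ lies on the great circle orthogonal to $p$ (i.e.\ $\theta=\pi/2$); in that case $J_1^q=J_3^p$ and $\vn_p(J_1^q f)=n$, not $n-1$. (The $j=2$ case is genuinely fine: $(J_2^q z_p)\big|_0=\tfrac{i}{2}e^{i\varphi}\neq 0$ always.) The paper's own proof treats only $j=3$ in detail and then writes ``similar computations hold for $J_1^q f$ and $J_2^q f$'', so this is really a flaw in the statement itself rather than something you introduced; since the proposition is only ever invoked with $J_3^q$ later in the paper, the defect is harmless for the applications.
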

\begin{proof}
   We start by assuming, without loss of generality, that $p=(0,0,-1)$. The claims for $J_1^p$, $J_2^p$, and $J_3^p$ directly follow from \eqref{Jcplxcoor}.

   Next, let $(\varphi,\theta)$ be the spherical coordinates of $q$, and let $z$ be the complex coordinate near $p$, defined by stereographic projection from $-p$. Then, we have $$J_3^q=-\sin\theta\cos\varphi J_1-\sin\theta\sin\varphi J_2-\cos\theta J_3.$$ Consequently, $(J_3^qz)|_{r=0}=-\frac{1}{2}\sin\theta e^{i\varphi}$ and $(J_3^q\bar{z})|_{r=0}=\frac{1}{2}\sin\theta e^{-i\varphi}$. It follows that 
\[
\begin{aligned}
J_3^qf(z,\bar{z}) &= -\frac{1}{2}(n+\frac{1}{2})\sin\theta e^{i\varphi}a_nz^{n-1/2}+\frac{1}{2}(n+\frac{1}{2})\sin\theta e^{-i\varphi}b_n\bar{z}^{n-1/2}+\mathcal{O}(r^{n+1/2}),
\end{aligned}
\]
and hence $J_3^qf\ne 0$ and $\vn_p(J_3^qf)=n-1$.
Similar computations hold for $J_1^qf$ and $J_2^qf$, and are omitted here to avoid redundancy.
\end{proof}

Now, we will consider the multiplicity of the eigenspaces.
\begin{lemma}{\cite[Lemma 2.7]{taubeswu2021topological}}
\label{mulle4}
Let $\vq$ be a configuration consisting of four or more distinct points on $\sph$. If $\lambda$ is an critical eigenvalue of $\mathcal{I}_{\vq}$, then $\mul\,\lambda\ge 2$. Here, $\mul\,\lambda=\dim_{\RR}V_\lambda$, where $V_\lambda$ is the eigensubspace of $\sob$.
\end{lemma}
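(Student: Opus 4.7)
The plan is to produce, starting from a critical eigensection $f$ of eigenvalue $\lambda$, a second eigensection $Jf$ of the same eigenvalue that is $\RR$-linearly independent from $f$, where $J$ is a suitably chosen infinitesimal rotation of $\sph$. The heuristic is that each of $J_1,J_2,J_3$ commutes with $\Delta$, so it sends any pointwise solution of $\Delta u+\lambda u=0$ on $\sph\setminus\vq$ to another pointwise solution; the challenge is to ensure that the output stays in the Sobolev space $\mathcal{L}_1^2(\mathcal{I}_\vq)$, which is exactly where criticality ($\vn_q(f)\geq 1$ at every $q\in\vq$) enters.

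First I would select a rotation generator $J\in\{J_1,J_2,J_3\}$ and a point $q_0\in\vq$ lying off the axis of $J$. Since any rotation axis of $\SO$ meets $\sph$ in exactly two antipodal points and $|\vq|\geq 4$, at least two points of $\vq$ lie off the axis of any given $J_j$, so such a choice is always available. By Proposition~\ref{orderminusone}, $Jf$ has local order equal to $\vn_q(f)$ at each $q\in\vq$ on the axis of $J$ and local order at least $\vn_q(f)-1$ at each off-axis $q$. Since $f$ is critical, $\vn_q(f)\geq 1$ for all $q$, so $\vn_q(Jf)\geq 0$ throughout $\vq$. This is enough to guarantee $Jf\in \mathcal{L}_1^2(\mathcal{I}_\vq)$, because a leading local behaviour of the form $z_q^{1/2}$ already has locally $L^2$ gradient on a punctured disk. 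Combined with the pointwise equation $\Delta(Jf)+\lambda Jf=0$, it follows that $Jf$ is a genuine $\ZT$ eigensection of $\mathcal{I}_\vq$ with eigenvalue $\lambda$.

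Next I would argue that $Jf$ is not a scalar multiple of $f$. At $q_0$ the expansion of $f$ begins at order $\vn_{q_0}(f)+1/2$ in $|z_{q_0}|$ with non-vanishing leading coefficient pair $(a,b):=(a_{\vn_{q_0}(f)},b_{\vn_{q_0}(f)})$. Applying $J$ term-by-term via the formulas \eqref{Jcplxcoor} shows that $Jf$ at $q_0$ has a leading term of strictly smaller order $\vn_{q_0}(f)-1/2$, whose coefficient pair is a non-zero constant multiple of $(a,b)$ (up to a sign flip between the two entries coming from the sign discrepancy between $J_j z$ and $J_j \bar z$), and hence is non-zero. An identity $Jf=cf$ would thus equate expansions of different leading orders at $q_0$: for $c\neq 0$, the two leading orders would have to coincide but differ by $1$; for $c=0$, the computed non-zero leading term of $Jf$ at $q_0$ would be forced to vanish. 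Either case is a contradiction, so $\{f,Jf\}$ spans a $2$-dimensional real subspace of $V_\lambda$, yielding $\mul\,\lambda\geq 2$.

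The main technical point is the regularity step, that is, checking $Jf\in\mathcal{L}_1^2(\mathcal{I}_\vq)$ at every configuration point simultaneously and not only at $q_0$. Here the full force of criticality ($\vn_q(f)\geq 1$ uniformly in $q$) is essential, because $J$ typically drops the order at every off-axis configuration point, not only at the distinguished $q_0$. The hypothesis $|\vq|\geq 4$ is used only mildly, to guarantee the existence of a rotation axis failing to contain all of $\vq$, which in turn ensures that $Jf$ genuinely differs from every scalar multiple of $f$.
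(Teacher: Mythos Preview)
Your proposal is correct and follows the same idea the paper uses (and that Taubes--Wu use in the cited source): apply an infinitesimal rotation to a critical eigensection and exploit the order drop at an off-axis configuration point to get linear independence. The paper does not reprove this lemma but immediately afterward proves the sharper Proposition~\ref{mulge4} ($\mul\,\lambda\ge 4$) by the same mechanism, showing that all three of $J_1f,J_2f,J_3f$ are linearly independent from $f$.

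One minor point: since the lemma is stated for the real bundle $\mathcal{I}_\vq$ with $\mul\,\lambda=\dim_\RR V_\lambda$, you should either use the real vector fields $L_j$ (so that $L_jf$ remains a real section) rather than $J_j=-iL_j$, or else pass to the complexification and invoke $\dim_\RR V_\lambda=\dim_\CC V_\lambda^\CC$. The order computations and the linear-independence argument are unaffected by this, since $\vn_q(L_jf)=\vn_q(J_jf)$.
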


As mentioned in \cite{taubeswu2021topological}, with a little more work, which has been done above, we could extend the result to obtain a better multiplicity estimate.

\begin{proposition}
    \label{mulge4}
    Assume that $\vp$ is not the configuration of a pair of antipodal points or two pairs of antipodal points. Suppose that $\lambda$ is a critical eigenvalue of $\lbc$, then $\mul\,\lambda\ge 4$. Here, $\mul\,\lambda=\dim_{\CC}V_\lambda$, where $V_\lambda$ is the eigensubspace of $\sobc$.
\end{proposition}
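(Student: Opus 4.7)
The plan is to produce four $\CC$-linearly independent sections inside $V_\lambda$ by applying the three infinitesimal rotation vector fields $J_1, J_2, J_3$ to a critical eigensection. Since $\Delta$ has real coefficients, by taking the real or imaginary part of any complex critical eigensection I may fix a nonzero real critical eigensection $f \in V_\lambda \cap \sob$. For such a real section, the leading coefficients in the expansion \eqref{fourier} at every $p\in\vp$ satisfy the reality constraint $b_{\vn_p(f)}=\overline{a_{\vn_p(f)}}$, and the criticality bound $\vn_p(f)\geq 1$ then forces both $a_n$ and $b_n$ to be nonzero at every $p$, where $n=\vn_p(f)$.

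Next I would show that $J_k f\in V_\lambda$ for $k=1,2,3$. Since each $J_k$ generates a one-parameter subgroup of isometries of the round $\sph$, it commutes with $\Delta$, so $J_kf$ satisfies $\Delta(J_kf)+\lambda J_kf=0$ pointwise on $\sph\setminus\vp$. For the regularity near $\vp$, Proposition \ref{orderminusone} applied at each $p$ gives $\vn_p(J_kf)\geq \vn_p(f)-1\geq 0$; locally this yields $|J_kf|\lesssim r^{1/2}$ and $|d(J_kf)|\lesssim r^{-1/2}$, both square-integrable against $r\,dr\,d\theta$. Hence $J_kf\in\sobc$, and therefore in $V_\lambda$.

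The core step is the linear independence of $\{f, J_1f, J_2f, J_3f\}$ over $\CC$. Assume a relation $c_0 f+\sum_{k=1}^3 c_k J_kf=0$ with $c_k\in\CC$, pick any $p\in\vp$, and use Lemma \ref{Liealgetrans} to rewrite the global $J_k$ in the basis $\{J_1^p, J_2^p, J_3^p\}$ via an orthogonal matrix (absorbed into a relabeling $(c_1,c_2,c_3)\mapsto(\tilde c_1,\tilde c_2,\tilde c_3)$). Using the local formulas \eqref{Jcplxcoor} with $n=\vn_p(f)$, the coefficients of $z_p^{n-1/2}$, $\bar z_p^{n-1/2}$, $z_p^{n+1/2}$, $\bar z_p^{n+1/2}$ on the left-hand side reduce to the four equations $(\tilde c_1+i\tilde c_2)a_n=0$, $(-\tilde c_1+i\tilde c_2)b_n=0$, $(c_0+(n+\tfrac12)\tilde c_3)a_n=0$, $(c_0-(n+\tfrac12)\tilde c_3)b_n=0$. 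Since $a_n,b_n$ are both nonzero and $n+\tfrac12>0$, these force $c_0=\tilde c_1=\tilde c_2=\tilde c_3=0$, and inverting the orthogonal change gives $c_0=c_1=c_2=c_3=0$. Thus $\dim_\CC V_\lambda\geq 4$.

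The main obstacle is not technical but one of scope: the argument implicitly requires that the sections $J_kf$ be genuinely new, which can fail precisely when $\vp$ has an enlarged $\SO$-stabilizer. For a single antipodal pair the stabilizer contains $SO(2)$, making $f$ an $L_j$-eigenvector and forcing $J_jf=\alpha f$ for some $j$; the two pairs of antipodes case carries enough additional symmetry that a similar collapse must be ruled out by hand and is hence excluded from the hypothesis. In every remaining configuration with $|\vp|\geq 4$ the four sections produced above are genuinely $\CC$-linearly independent, and the proposition follows.
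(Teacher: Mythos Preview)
Your argument is correct, and it differs from the paper's in a way worth noting. The paper observes that any nonzero \emph{real} combination $\sum_j\mu_jJ_j$ can be written as $\eta J_3^q$ for a suitable $q\in\sph$, and then invokes the hypothesis on $\vp$ to find some $p\in\vp$ with $q\ne\pm p$; Proposition~\ref{orderminusone} at that $p$ shows the combination drops the order strictly below $\vn_p(f)$, so it cannot cancel against $c_0f$. You instead fix a \emph{real} critical eigensection (so that both leading coefficients $a_n,b_n=\bar a_n$ are nonzero) and read off four scalar equations from the local expansion at a single $p\in\vp$, handling complex coefficients $(c_0,\tilde c_1,\tilde c_2,\tilde c_3)$ directly.

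Your local computation in fact never uses the hypothesis: once $f$ is real and critical, the four equations at any $p\in\vp$ already force $c_0=\tilde c_1=\tilde c_2=\tilde c_3=0$. So your closing paragraph is misdirected---the exclusion of one or two antipodal pairs in the statement is an artifact of the paper's proof strategy (needing $p$ off the axis $q$), not of an actual failure of linear independence. For a single antipodal pair one can check from Proposition~\ref{antipodalspectrum} that critical eigenvalues occur only for $l\ge 2$, where the multiplicity is $2l\ge 4$; and the ``collapse'' $J_3f=\alpha f$ you worry about happens only for complex $L_3$-eigenvectors, which you have deliberately avoided by taking $f$ real.

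One small point of exposition: your third and fourth equations (coefficients of $z_p^{n+1/2}$, $\bar z_p^{n+1/2}$) are literally correct only \emph{after} $\tilde c_1=\tilde c_2=0$ has been obtained from the first two, since $J_1^p,J_2^p$ applied to the order-$(n+\tfrac32)$ terms of $f$ (and to the $u_n(r)$ factors) also contribute at order $n+\tfrac12$. Read sequentially, the argument is fine.
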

\begin{proof}
    Let $f$ be a critical eigensection with eigenvalue $\lambda$. We shall show that $J_1f, J_2f$ and $J_3f$ are linearly independent eigensections.
    
    Note that $\sum_j\mu_jJ_j=\eta J_3^q$ for some $q\in\sph$ and $\eta\in\RR\setminus\{0\}$. By assumption, there is some $p\in\vp$ such that $q\ne \pm p$. We deduce from Proposition \ref{orderminusone} that $\sum_j\mu_jJ_j f\ne 0$, and hence $f,J_1f,J_2f$ and $J_3f$ are linearly independent.
\end{proof}

When $\vp$ is the configuration of a pair of antipodal points, then one could explicitly compute the eigensections and the above lemma is incorrect. The hardest situation will be $\vp\in \mathcal{C}_4$ which are the configuration of two paris of antipodal points, which will be studied in Section \ref{5.4}.

\subsection{Algebraic Identities of the leading coefficients}
In this subsection, we will introduce several algebraic identities on the leading coefficients of the asymptotic expansion of a critical eigensection $f$ near points in $\vp$, which have been studied in \cite[Appendix]{taubeswu2020examples}. 

From now on, for each $p\in\vp$, we fix the complex coordinate as $z_p=A_p^\ast z$. Asssume that $f$ is a critical eigensection of $\lbc$ with local expansion $f(z,\bar{z})=a_pz_p^{3/2}+b_p\bar{z}_p^{3/2}+\mathcal{O}(r^{5/2})$ for each $p\in\vp$. Take three arbitrary smooth (complex) vector fields $V_1,V_2$ and $V_3$ on $\sph$. We have
\begin{align*}
V_1f(z_p,\bar{z}_p)&=\frac{3}{2}a_pz_p^{1/2}(V_1z_p)|_{r=0}+\frac{3}{2}b_p\bar{z}_p^{1/2}(V_1\bar{z}_p)|_{r=0}+\mathcal{O}(r^{3/2}),\\
V_2V_3f(z_p,\bar{z}_p)&=\frac{3}{4}a_pz_p^{-1/2}(V_2z_pV_3z_p)|_{r=0}+\frac34b_p(\bar{z}_p)^{-1/2}(V_2\bar{z}_pV_3\bar{z}_p)|_{r=0}+\mathcal{O}(r^{1/2}).
\end{align*}
Let $\rho>0$ be much smaller than $\delta_0(\vp)$, i.e. much smaller than the smallest distance between any two points in $\vp$. Consider a smooth nonnegative test function $\chi_\rho$ on $\sph$, such that $\chi_\rho$ vanishes on the $\rho/2$-neighborhood of $\vp$ on $\sph$ and is identically equal to $1$ outside the $\rho$-neighborhood. Additionally, suppose that within the $\rho$-neighborhood of $p\in\vp$, the cut-off function $\chi_{\rho}$ depends only on the radius $r=|z_p|$.

\begin{lemma}\label{lemmalocal}
Consider an eigensection $\phi_1$ of eigenvalue $\lambda$. And $\phi_2:\sph\setminus\vp\to \lbc$ is a smooth section satisfying $\Delta \phi_2+\lambda\phi_2=0$. And consider the corresponding odd functions $\tilde{\phi}_1$ and $\tilde{\phi}_2$ on $\Sigma_{\vp}$, such that, with complex coordinates $w_p$ and $z_p=w_p^2$ being fixed near each $p\in\vp$ on $\Sigma_\vp$ and $\sph$ respectively, the odd functions have the following asymptotic expansions
\begin{align*}
\tilde{\phi}_1(w_p,\bar{w}_p)=\gamma_pw_p+\delta_p\bar{w}_p+\mathcal{O}(r^{3}),\;\tilde{\phi}_2(w_p,\bar{w}_p)=\gamma'_pw_p^{-1}+\delta'_p\bar{w}^{-1}_p+\mathcal{O}(r).
\end{align*}
Then $\sum_{p\in\vp}(\gamma'_p\gamma_p+\delta'_p\delta_p)=0$.
\label{localize}
\end{lemma}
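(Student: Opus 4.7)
The approach is a standard Green's-identity (residue) calculation on the punctured sphere. For $\rho>0$ much smaller than $\delta_0(\vp)$, set $\Omega_\rho:=\sph\setminus\bigcup_{p\in\vp}B_\rho(p)$. On $\Omega_\rho$ both $\phi_1$ and $\phi_2$ satisfy $\Delta\phi+\lambda\phi=0$ pointwise, so $\phi_2\Delta\phi_1-\phi_1\Delta\phi_2\equiv 0$ there. Although $\phi_1,\phi_2$ are sections of the flat line bundle $\lbc$ and hence multi-valued as scalar functions, the two $\pm 1$-monodromies cancel, so $\phi_2\,\partial_r\phi_1-\phi_1\,\partial_r\phi_2$ is a genuine single-valued function on $\sph\setminus\vp$. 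Green's identity thus gives
\begin{equation*}
 0 \;=\; \int_{\Omega_\rho}\bigl(\phi_2\Delta\phi_1-\phi_1\Delta\phi_2\bigr)\,dA
 \;=\; -\sum_{p\in\vp}\int_{\partial B_\rho(p)}\bigl(\phi_2\,\partial_\nu\phi_1-\phi_1\,\partial_\nu\phi_2\bigr)\,ds,
\end{equation*}
where $\nu=-\partial_r$ is the outward unit normal to $\Omega_\rho$ on each small circle.

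In the local coordinate $z_p=re^{i\theta}$, the round metric is conformally Euclidean, $ds^2=e^{2u_p}|dz_p|^2$ with $u_p$ smooth. The conformal factor from the unit normal exactly cancels that of the arclength element, so each boundary integral reduces to its Euclidean form
\[
 \int_{\partial B_\rho(p)}(\phi_2\,\partial_\nu\phi_1-\phi_1\,\partial_\nu\phi_2)\,ds \;=\; -\rho\int_0^{2\pi}(\phi_2\,\partial_r\phi_1-\phi_1\,\partial_r\phi_2)\,d\theta.
\]
For the homogeneous leading terms each of $z_p^{\pm 1/2}$ and $\bar z_p^{\pm 1/2}$ satisfies $\partial_r f = \pm\tfrac{1}{2r}f$, whence $\phi_2\,\partial_r\phi_1-\phi_1\,\partial_r\phi_2 = \phi_1\phi_2/r+\mathcal{O}(1)$ near $p$. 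With the identifications $w_p=z_p^{1/2}$, $\bar w_p=\bar z_p^{1/2}$, the product expands in $\theta$-Fourier modes as
\[
 \phi_1\phi_2 \;=\; (\gamma_p\gamma'_p+\delta_p\delta'_p) + \gamma_p\delta'_p\,e^{i\theta}+\delta_p\gamma'_p\,e^{-i\theta} + \mathcal{O}(r),
\]
so integrating in $\theta$ kills the $e^{\pm i\theta}$ modes and the $p$-th boundary integral equals $-2\pi(\gamma'_p\gamma_p+\delta'_p\delta_p)+\mathcal{O}(\rho)$.

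Summing over $p\in\vp$ and dividing by $-2\pi$ yields $\sum_{p\in\vp}(\gamma'_p\gamma_p+\delta'_p\delta_p)=\mathcal{O}(\rho)$; letting $\rho\to 0$ finishes the proof. The main care point is checking that the subleading pieces---$\mathcal{O}(r^{3/2})$ in $\phi_1$ and $\mathcal{O}(r^{1/2})$ in $\phi_2$---feed into integrands of size $\mathcal{O}(1)$ and hence contribute only $\mathcal{O}(\rho)$ after integration on $\partial B_\rho(p)$. This is immediate from the stated asymptotic orders (for example $\phi_2\cdot\partial_r\phi_1$ has leading-times-subleading terms of size $r^{-1/2}\cdot r^{1/2}=1$). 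Beyond that, the only real subtlety is bookkeeping of branches: once one fixes a consistent choice of cuts for $z_p^{\pm 1/2},\bar z_p^{\pm 1/2}$ compatible with $w_p^2=z_p$, the pairings $z^{1/2}z^{-1/2}=\bar z^{1/2}\bar z^{-1/2}=1$ (diagonal, $\theta$-independent) and $z^{1/2}\bar z^{-1/2}=e^{i\theta}$, $\bar z^{1/2}z^{-1/2}=e^{-i\theta}$ (off-diagonal, non-constant) are precisely what singles out the combination $\gamma'\gamma+\delta'\delta$ rather than $\gamma'\delta+\delta'\gamma$.
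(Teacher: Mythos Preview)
Your proof is correct and takes essentially the same approach as the paper: both extract the local ``residue'' $\gamma_p\gamma'_p+\delta_p\delta'_p$ at each configuration point from a Green-type identity, using that $\phi_2\Delta\phi_1-\phi_1\Delta\phi_2=0$ on $\sph\setminus\vp$. The only cosmetic difference is that the paper packages the integration by parts via a radial cut-off function $\chi_\rho$ and the Hodge-star formalism (computing $\int d\chi_\rho\wedge\ast(\phi_2\,d\phi_1-\phi_1\,d\phi_2)$), whereas you use the domain-with-boundary version of Green's identity on $\Omega_\rho$ and the normal derivative directly; the resulting local calculations are identical.
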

\begin{proof}
We compute
\begin{align*}
d\ast(\phi_2d\phi_1-\phi_1d\phi_2)&=d\phi_2\wedge\ast d\phi_1-d\phi_1\wedge\ast d\phi_2+\phi_2\Delta\phi_1-\phi_1\Delta\phi_2=0,
\end{align*}
where the $\ast$ represents the Hodge star operator. Therefore, $\lim_{\rho\to 0}\int_{\sph}\chi_\rho d\ast(\phi_2d\phi_1-\phi_1d\phi_2)=0.$ 

For $\phi_2\ast d\phi_1$, near $p$, we have
\begin{align*}
\phi_2\ast d\phi_1=&\frac{1}{2}(\gamma'_pz_p^{-1/2}+\delta'_p(\bar{z}_p)^{-1/2})(\gamma_pz_p^{-1/2}\ast dz_p+\delta_p(\bar{z}_p)^{-1/2}\ast d\bar{z}_p)+\mathcal{O}(r),\\
-\phi_1\ast d\phi_2=&\frac{1}{2}(\gamma_pz_p^{1/2}+\delta_p\bar{z}_p^{1/2})(\gamma'_pz_p^{-3/2}\ast dz_p+\delta'_p(\bar{z}_p)^{-3/2}\ast d\bar{z}_p)+\mathcal{O}(r).
\end{align*}
With $\ast dz_p=-idz_p, \ast d\bar{z}_p=id\bar{z}_p$, we then have
\begin{align*}
\phi_2\ast d\phi_1=&\frac{i}{2}(-\gamma'_p\gamma_pz_p^{-1}dz_p+\delta'_p\delta_p(\bar{z}_p)^{-1}d\bar{z}_p+\gamma'_p\delta_pr^{-1}d\bar{z}_p-\delta'_p\gamma_pr^{-1}dz_p)+\mathcal{O}(r),\\
-\phi_1\ast d\phi_2=&\frac{i}{2}(-\gamma_p\gamma'_p z_p^{-1}dz_p+\delta_p\delta'_p(\bar{z}_p)^{-1}d\bar{z}_p+\gamma_p\delta'_p\frac{z_p^{2}}{r^3} d\bar{z}_p-\delta_p\gamma'_p\frac{\bar{z}_p^{2}}{r^3}dz_p)+\mathcal{O}(r).
\end{align*}
Since $\int_{|z_p|=r}\frac{z_p^2}{r^3}\mathrm{d}\bar{z}_p=0$, we conclude
\begin{align*}
0&=\int_{\sph}\chi_\rho\wedge d\ast(\phi_2d\phi_1-\phi_1d\phi_2)=\lim_{\rho\to 0}\int_{\sph}d\chi_\rho\wedge\ast(\phi_2d\phi_1-\phi_1d\phi_2)\\
&=\lim_{\rho\to 0}-i\int_{\rho/2}^{\rho}d\chi_{\rho}dr\int_{|z_p|=r}(\gamma_p\gamma'_p+\delta_p\delta'_p)\frac{1}{z_p}dz_p
\\
&=2\pi(\gamma_p\gamma'_p+\delta_p\delta'_p)\lim_{\rho\to 0}\int_{\rho/2}^{\rho}d\chi_{\rho}dr=2\pi (\gamma_p\gamma'_p+\delta_p\delta'_p).
\end{align*}
\end{proof}

We fixed the complex coordinate $w_p$ near $p\in\Sigma_\vp$ while using $z_p^{1/2}$ and $\bar{z}_p^{1/2}$ only in the proof. However, this is necessary to eliminate the ambiguity of sign, which was done in \cite{taubeswu2020examples} implicitly. Nevertheless, for the calculation presented later in this article, it is important to treat this explicitly. 

Although we make a slight modification, for in our case the complexification $\lbc$ was concerned, the proof provided here is essentially the same with that in the Appendix of \cite{taubeswu2020examples}. Using \cite[(3.8)]{Victor}, one can obtain another proof. The following Lemma is a direct application of Lemma \ref{lemmalocal}, which will be convenient when $\vp\in\mathcal{C}_4$.

\begin{lemma}\label{4.10}
Let $f$ be a critical eigensection with expansions $f(z,\bar{z})=a_pz_p^{\frac32}+b_p\bar{z}_p^{\frac32}+\mathcal{O}(r^{\frac52})$ near each $p$, we denote $\alpha_p=a^2_p$ and $\beta_p=b_p^2$, assume that vector fields $V_1,V_2$ and $V_3$ commute with the Laplacian, then we have
\begin{align}
\sum_{p\in \vp}\left(\alpha_p\prod_{l=1}^3(V_lz_p)+\beta_p\prod_{l=1}^3(V_l\bar{z}_p)\right)(p)=0. \label{flocal}
\end{align}
\end{lemma}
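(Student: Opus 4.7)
The strategy is to reduce Lemma \ref{4.10} to the integration-by-parts identity of Lemma \ref{lemmalocal} by applying it to the pair
$$\phi_1 := V_1 f, \qquad \phi_2 := V_2 V_3 f.$$
Since $V_1, V_2, V_3$ all commute with $\Delta$, both $\phi_1$ and $\phi_2$ satisfy $\Delta u + \lambda u = 0$ pointwise on $\sph \setminus \vp$. Because $f$ is critical (leading order $z_p^{3/2}$), the expansion of $V_1 f$ begins at order $z_p^{1/2}$, so $V_1 f \in \sob$ and qualifies as an eigensection in the sense of Lemma \ref{lemmalocal}, while $V_2 V_3 f$ is only smooth away from $\vp$ and plays the role of the more singular section.

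The bulk of the work is the local expansion near each $p \in \vp$. Starting from $f = a_p z_p^{3/2} + b_p \bar z_p^{3/2} + \mathcal{O}(r^{5/2})$ and using that each $V_l z_p$, $V_l \bar z_p$ extends smoothly through $p$ with value $(V_l z_p)(p)$, $(V_l \bar z_p)(p)$, a direct differentiation yields
$$V_1 f = \tfrac{3}{2} a_p (V_1 z_p)(p)\, z_p^{1/2} + \tfrac{3}{2} b_p (V_1 \bar z_p)(p)\, \bar z_p^{1/2} + \mathcal{O}(r^{3/2}),$$
$$V_2 V_3 f = \tfrac{3}{4} a_p (V_2 z_p)(p)(V_3 z_p)(p)\, z_p^{-1/2} + \tfrac{3}{4} b_p (V_2 \bar z_p)(p)(V_3 \bar z_p)(p)\, \bar z_p^{-1/2} + \mathcal{O}(r^{1/2}).$$
After lifting via $w_p^2 = z_p$, these are exactly of the forms $\gamma_p w_p + \delta_p \bar w_p + \mathcal{O}(r^3)$ and $\gamma'_p w_p^{-1} + \delta'_p \bar w_p^{-1} + \mathcal{O}(r)$ required by Lemma \ref{lemmalocal}, with
$\gamma_p = \tfrac{3}{2} a_p (V_1 z_p)(p)$, $\delta_p = \tfrac{3}{2} b_p (V_1 \bar z_p)(p)$, $\gamma'_p = \tfrac{3}{4} a_p (V_2 z_p)(p)(V_3 z_p)(p)$, $\delta'_p = \tfrac{3}{4} b_p (V_2 \bar z_p)(p)(V_3 \bar z_p)(p)$.

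The conclusion $\sum_p (\gamma_p \gamma'_p + \delta_p \delta'_p) = 0$ then reads
$$\tfrac{9}{8}\sum_{p\in\vp}\Big( a_p^2\, (V_1 z_p)(V_2 z_p)(V_3 z_p)(p) + b_p^2\, (V_1 \bar z_p)(V_2 \bar z_p)(V_3 \bar z_p)(p)\Big) = 0,$$
which, after cancelling $9/8$ and recalling $\alpha_p = a_p^2, \beta_p = b_p^2$, is precisely the claimed identity.

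The main technical subtlety is the sign ambiguity of $a_p, b_p$, which is resolved by fixing a single branch $w_p$ with $w_p^2 = z_p$ near each $p$. This choice propagates simultaneously into the expansions of $V_1 f$ and $V_2 V_3 f$, and since $\gamma_p \gamma'_p$ carries exactly two factors of $a_p$ (and similarly $\delta_p \delta'_p$ two factors of $b_p$), the final expression involves only $a_p^2$ and $b_p^2$ and is therefore sign-independent, as it must be. One should note that the naive attempt with $\phi_1 = f$ and $\phi_2 = V_1 V_2 V_3 f$ does not work: $V_1 V_2 V_3 f$ blows up like $z_p^{-3/2}$, which is too singular for Lemma \ref{lemmalocal} to apply. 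Splitting one derivative onto $\phi_1$ and two onto $\phi_2$ is the unique balance that keeps $\phi_1 \in \sob$ while matching the required asymptotic template for $\phi_2$.
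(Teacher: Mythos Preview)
Your proof is correct and takes exactly the same approach as the paper: set $\phi_1 = V_1 f$, $\phi_2 = V_2 V_3 f$, and feed them into Lemma~\ref{lemmalocal}. The paper's proof is a one-liner (``follows from Lemma~\ref{lemmalocal} directly''), while you have spelled out the intermediate expansions and the sign-ambiguity remark, but the substance is identical.
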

\begin{proof}
    Let $\phi_1=V_1f$ and $\phi_2=V_2V_3f$. Then \eqref{flocal} follows from Lemma \ref{lemmalocal} directly.
\end{proof}

We should note that the ambiguity of sign of leading coefficients is eliminated and the final algebraic relation is presented as linear combination of $\alpha_p=a_p^2$ and $\beta_p=b_p^2$ $(p\in\vp)$.

\subsection{Applications of the algebraic relationships}
In this subsection, we will introduce several applications of the algebraic relationships of the leading coefficients, which impose some constraints on the multiplicity of eigenvalues and the existence of critical eigensections, based on the positions of points in the configuration.
\subsubsection{Upper bound of order and multiplicity}
Let $\lambda$ be an eigenvalue and $V_\lambda$ be the eigenspace, let $V^c_\lambda\subset V_{\lambda}$ be the subspace that consists of critical eigensections of $V_\lambda$, which is $\{0\}$ if $\lambda$ is not critical.

\begin{lemma}\label{morethanonepointleastorder}
    Given an eigensection $f$ of $\lbc$, there exists at least two points $p_1,p_2\in\vp$ such that $\vn_{p_1}(f)=\vn_{p_2}(f)=\ord(f)$.
\end{lemma}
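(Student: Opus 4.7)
The plan is to argue by contradiction. Suppose $N := \ord(f)$ is achieved at a unique point $p_0 \in \vp$: locally $f \sim A z_{p_0}^{N+1/2} + B \bar z_{p_0}^{N+1/2} + O(r^{N+3/2})$ with $(A, B) \ne (0,0)$, while $\vn_p(f) \ge N+1$ for every other $p \in \vp$. I will produce two algebraic relations that together force $A = B = 0$.

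The engine is a direct generalization of Lemma \ref{lemmalocal}: its proof is a residue computation on circles shrinking to $\vp$, and extends verbatim for any pair $\phi_1, \phi_2$ of solutions of $(\Delta + \lam)\phi_i = 0$ on $\sph \setminus \vp$ with local expansions $\phi_1 \sim \sum_n (A_n^p z_p^{n+1/2} + B_n^p \bar z_p^{n+1/2})$ and $\phi_2 \sim \sum_m (C_m^p z_p^{m+1/2} + D_m^p \bar z_p^{m+1/2})$, producing
\begin{equation*}
\sum_{p \in \vp} \, \sum_{\substack{n + m = -1 \\ n \ge \vn_p(\phi_1),\, m \ge \vn_p(\phi_2)}} (n - m)\bigl(A_n^p C_m^p + B_n^p D_m^p\bigr) = 0.
\end{equation*}
A given $p$ contributes only when $\vn_p(\phi_1) + \vn_p(\phi_2) \le -1$.

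I would take $\phi_1 = f$ and $\phi_2^{(j)} := (J_j^{p_0})^{2N+1} f$ for $j = 1, 2$, each of which solves the equation on $\sph \setminus \vp$ because $J_j^{p_0}$ commutes with $\Delta$. By iterated application of Proposition \ref{orderminusone}, $\vn_{p_0}(\phi_2^{(j)}) = -N-1$; moreover at any other $p$ the order is bounded below by $\vn_p(f) - (2N+1) \ge -N$, since each application of the first-order operator $J_j^{p_0}$ can drop the vanishing order by at most $1$. Consequently $\vn_p(f) + \vn_p(\phi_2^{(j)}) \ge 1$ at every $p \ne p_0$, so only $p_0$ contributes, and the identity reduces to
\begin{equation*}
A\, C^{(j)}_{-N-1} + B\, D^{(j)}_{-N-1} = 0, \qquad j = 1, 2.
\end{equation*}

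The leading coefficients $(C^{(j)}_{-N-1}, D^{(j)}_{-N-1})$ at $p_0$ are obtained by induction from \eqref{Jcplxcoor}, which gives $J_1^{p_0} z_{p_0} \approx 1/2$, $J_1^{p_0} \bar z_{p_0} \approx -1/2$, and $J_2^{p_0} z_{p_0} \approx J_2^{p_0} \bar z_{p_0} \approx i/2$ at $p_0$. One finds $\phi_2^{(1)} \sim K_N\bigl(A z_{p_0}^{-N-1/2} - B \bar z_{p_0}^{-N-1/2}\bigr)$ and $\phi_2^{(2)} \sim i^{2N+1} K_N \bigl(A z_{p_0}^{-N-1/2} + B \bar z_{p_0}^{-N-1/2}\bigr)$ with $K_N = (-1)^N (2N+1)!!(2N-1)!!/4^{2N+1} \ne 0$, so substituting yields $A^2 - B^2 = 0$ and $A^2 + B^2 = 0$, hence $A = B = 0$, contradicting $\vn_{p_0}(f) = N$. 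The main obstacle is combinatorial rather than conceptual: tracking the leading coefficients through $2N+1$ iterations and verifying that no intermediate factor of the form $n + 1/2 - k$ (with integer $n, k$) vanishes, which is where the nonvanishing of $K_N$ ultimately comes from.
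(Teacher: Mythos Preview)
Your argument is correct and close in spirit to the paper's, but the mechanics differ in two places.  The paper also argues by contradiction and also feeds iterates of a Killing field into the residue pairing, but it chooses $J_3^q$ for an auxiliary point $q\notin\pm\vp$ rather than $J_1^{p_0},J_2^{p_0}$, and it applies the operator only $n_0$ and $n_0+1$ times to produce $\phi_1=(J_3^q)^{n_0}f$ and $\phi_2=(J_3^q)^{n_0+1}f$.  This lands exactly in the hypotheses of Lemma~\ref{lemmalocal} as stated (orders $0$ and $-1$ at $p_0$, orders $\ge 1$ and $\ge 0$ elsewhere), so no generalized residue identity is needed; the single relation it yields, $\sin^{2n_0+1}\theta\bigl(e^{i(2n_0+1)\varphi}a^2-e^{-i(2n_0+1)\varphi}b^2\bigr)=0$, then gives two independent constraints by varying the free parameter $\varphi$.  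Your route trades that free parameter for the pair $J_1^{p_0},J_2^{p_0}$ and pays for it by needing the extended pairing (orders $N$ and $-N-1$), which, as you note, is a routine extension of the residue computation.  The paper's choice also sidesteps any case analysis at a possible antipode $-p_0\in\vp$, since $q\notin\pm\vp$ guarantees that $J_3^q$ drops the order by exactly one at every configuration point; your bound ``at most one'' handles this uniformly but less sharply.
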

\begin{proof}
    Suppose there is only one point $p\in\vp$ such that $\vn_{p}(f)=\ord(f)$, and any other $p'\neq p\in\vp$ satisfies $\vn_{p'}(f)>\ord(f)\ge 0$. Expand $f$ near $p$ as $f(z_p,\bar{z}_p)=az_p^{n_0+1/2}+b\bar{z}_p^{n_0+1/2}+\mathcal{O}(r^{n_0+3/2})$, where $n_0=\ord(f)$ and $(a,b)\ne (0,0)$.

    Consider a point $q\notin \pm\vp=\{\pm p:p\in\vp\}$, with spherical coordinate $(\varphi,\theta)$ on $\sph$. Set $\phi_1=(J_3^q)^{n_0}f$ and $\phi_2=(J_3^q)^{n_0+1}f$. By Proposition \ref{orderminusone}, $\vn_{p}(\phi_1)=\vn_{p}(\phi_2)+1=0$ while $\vn_{p'}(\phi_1)=\vn_{p'}(\phi_2)+1>0$ for other $p'\in\vp$. From \eqref{Jcplxcoor}, Lemma \ref{Liealgetrans} and \ref{lemmalocal}, one can conclude $0=\sin^{2n_0+1}\theta(e^{i(2n_0+1)\varphi}a^2-e^{-i(2n_0+1)\varphi}b^2)$. Varying $\varphi$ in the spherical coordinate of $q$ yields $(a,b)=(0,0)$ and gives a contradiction. 
\end{proof}

\begin{corollary}\label{non-criticalmulbound}
$\dim_{\mathbb{C}}\,V_\lambda/V^c_\lambda\le 4n-2$, where $2n=|\vp|$.
\end{corollary}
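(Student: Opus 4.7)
The plan is to exhibit a $\mbC$-linear ``leading coefficient'' map $L\colon V_\lambda \to \bigoplus_{p\in\vp}\mbC^2$ whose kernel is exactly $V^c_\lambda$, and then to use Lemma \ref{morethanonepointleastorder} to argue that one may forget the block indexed by \emph{any} one chosen point $p_0\in\vp$ while still keeping the induced map $V_\lambda/V^c_\lambda \to \bigoplus_{p\neq p_0}\mbC^2$ injective. Dimension counting then yields $\dim_\mbC V_\lambda/V^c_\lambda \leq 2(2n-1)=4n-2$.

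To set up $L$, I would fix, for each $p\in\vp$, a local coordinate $w_p$ on $\Sigma_\vp$ near the branch point over $p$ with $w_p^2=z_p$; this is needed to eliminate the sign ambiguity in the leading coefficients as in the discussion surrounding \eqref{mentionsign}. With these choices made once and for all, the lifted odd function $\tilde{f}$ of any $f\in V_\lambda$ has an asymptotic expansion $\tilde{f}(w_p,\bar{w}_p) = \gamma_p(f)w_p + \delta_p(f)\bar{w}_p + \mathcal{O}(|w_p|^3)$ at each branch point, and the assignment $L(f):=(\gamma_p(f),\delta_p(f))_{p\in\vp}$ is genuinely $\mbC$-linear. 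By the definition of $\vn_p$ and criticality, $L(f)=0$ iff $\vn_p(f)\geq 1$ for every $p\in\vp$, iff $f\in V^c_\lambda$.

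Now fix any $p_0\in\vp$ and let $\pi_{p_0}\colon \bigoplus_{p\in\vp}\mbC^2 \to \bigoplus_{p\neq p_0}\mbC^2$ be the projection that forgets the $p_0$-block. I claim $\ker(\pi_{p_0}\circ L) = V^c_\lambda$. Indeed, suppose $f$ lies in this kernel, so $\vn_p(f)\geq 1$ for all $p\neq p_0$. If $f\notin V^c_\lambda$, then $\ord(f)=0$, and by Lemma \ref{morethanonepointleastorder} at least two distinct points of $\vp$ attain this minimal vanishing order---which is impossible since only $p_0$ is a candidate. Therefore $f\in V^c_\lambda$, and $\pi_{p_0}\circ L$ descends to an injection $V_\lambda/V^c_\lambda \hookrightarrow \bigoplus_{p\neq p_0}\mbC^2$, yielding the claimed estimate.

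There is no real obstacle here: once Lemma \ref{morethanonepointleastorder} is available, the bound is a bookkeeping argument on leading coefficients. The only subtlety worth flagging is the sign ambiguity discussed around \eqref{mentionsign}, which is dispatched by passing to the branched cover $\Sigma_\vp$ and fixing coordinates $w_p$ up front so that $L$ is genuinely linear rather than defined only up to signs at each point.
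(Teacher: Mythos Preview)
Your proof is correct and rests on the same two ingredients as the paper's: the order-$\tfrac12$ leading-coefficient map and Lemma~\ref{morethanonepointleastorder}. The paper argues by contradiction, choosing $4n-1$ sections independent modulo $V^c_\lambda$ and performing an iterative Gaussian-elimination reduction (killing two coefficients at each of $2n-1$ points) until a single nonzero section remains whose minimal order is attained only at $p_{2n}$, contradicting Lemma~\ref{morethanonepointleastorder}; you package the same linear algebra as a direct injection $V_\lambda/V^c_\lambda \hookrightarrow \bigoplus_{p\neq p_0}\mathbb{C}^2$, which is a cleaner formulation of the identical content.
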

\begin{proof}
Let $\vp=\{p_1,\cdots,p_{2n}=(0,0,-1)\}$. Suppose $\dim_{\mathbb{C}}V_\lambda/V^c_\lambda\ge 4n-1$. Then, we can choose $4n-1$ linearly independent eigensections, denoted as $f_1,\cdots,f_{4n-1}$, none of them are critical. Given that each base vector must vanish to order $\frac{1}{2}$ at one of the $2n$ points, and considering that the number of base vectors is $4n-1$, we can assume $\vn_{p_1}(f_1)=\vn_{p_1}(f_2)=0$ for $p_1\in \vp$ after a rotation if necessary. Using the leading coefficients of $f_1,f_2$, by linear combinations, 
there exists $4n-3$ linearly independent eigensections, which we denote as $\phi_1,\cdots,\phi_{4n-3}$, such that $\vn_{p_1}(\phi_i)\ge 1$. 

Repeating this reduction process, we eventually obtain at least one nonzero eigensection $g$, such that $\vn_{p_i}(g)\ge 1$ for $i=1,\cdots,2n-1$. The assertion $\vn_{p_{2n}}(g)=0$ contradicts Lemma \ref{morethanonepointleastorder}, while $\vn_{p_{2n}}(g)\ge 1$ contradicts  the non-critical nature of $f_1,\cdots,f_{4n-1}$. Thus $\dim_{\CC}\,V_\lam/V_\lam^c\le 4n-2$.
\end{proof}

\begin{corollary}\label{upperorder}
    Consider an eigensection $f$ of $\lbc$. Assume $\vp$ contains no antipodal points, then $\ord(f)\le n-1$. 
\end{corollary}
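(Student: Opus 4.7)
The plan is to argue by contradiction: suppose $\ord(f)=n_0\ge n$ and let $S_0=\{p\in\vp:\vn_p(f)=n_0\}$. By Lemma~\ref{morethanonepointleastorder} we have $|S_0|\ge 2$, and trivially $|S_0|\le|\vp|=2n\le 2n_0$. Near each $p\in S_0$ write $f=a_pz_p^{n_0+1/2}+b_p\bar z_p^{n_0+1/2}+O(r^{n_0+3/2})$ with $(a_p,b_p)\ne(0,0)$. The goal is to use the integration-by-parts identity of Lemma~\ref{lemmalocal} to force $a_p=b_p=0$ for every $p\in S_0$, producing a contradiction.

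Fix any $q\in\sph\setminus(\pm\vp)$ (which is nonempty since $\vp$ is finite) and set $\phi_1=(J_3^q)^{n_0}f$, $\phi_2=(J_3^q)^{n_0+1}f$. Iterating Proposition~\ref{orderminusone} gives $\vn_p(\phi_i)=\vn_p(f)-n_0-(i-1)$, so on the double cover $\Sigma_\vp$ the sections $\phi_1,\phi_2$ have respective leading behavior $w_p^{\pm 1}$ exactly at $p\in S_0$, with strictly higher order at $p\in\vp\setminus S_0$. Setting $\alpha_p(q):=(J_3^qz_p)(0)$, $\beta_p(q):=(J_3^q\bar z_p)(0)$ and using $J_3^q(z_p^{k+1/2})=(k+1/2)\alpha_p(q)z_p^{k-1/2}+O(r^{k+3/2})$ recursively, a direct chain-rule computation gives the leading coefficients of $\phi_1$ at $p\in S_0$ as $(c_{n_0}a_p\alpha_p^{n_0},c_{n_0}b_p\beta_p^{n_0})$ and of $\phi_2$ as $(\tfrac12 c_{n_0}a_p\alpha_p^{n_0+1},\tfrac12 c_{n_0}b_p\beta_p^{n_0+1})$, where $c_{n_0}=\prod_{k=1}^{n_0}(k+\tfrac12)$. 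Lemma~\ref{lemmalocal} then yields
$$
\sum_{p\in S_0}\bigl[a_p^2\,\alpha_p(q)^{2n_0+1}+b_p^2\,\beta_p(q)^{2n_0+1}\bigr]=0,
$$
valid for $q$ in an open dense subset of $\sph$ and, by analyticity in $q$, everywhere on $\sph$.

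Next, the identity must be interpreted representation-theoretically. As a function of $q\in\sph$, $\alpha_p(q)^{2n_0+1}$ is (up to a nonzero constant) the top-weight harmonic polynomial of degree $2n_0+1$ for the maximal torus $T_p\subset\SO(3)$ stabilizing $p$, i.e.\ a highest-weight vector of the $(4n_0+3)$-dimensional irreducible spin-$(2n_0+1)$ representation; $\beta_p^{2n_0+1}$ is the corresponding lowest-weight vector. The crucial claim is that when $\vp$ has no antipodal points, the $2|S_0|$ vectors $\{\alpha_p^{2n_0+1},\beta_p^{2n_0+1}\}_{p\in S_0}$ are linearly independent in this irrep as long as $2|S_0|\le 4n_0+3$. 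Under $n_0\ge n$ this bound reads $2|S_0|\le 4n\le 4n_0<4n_0+3$, so once independence is secured the displayed identity forces $a_p^2=b_p^2=0$ for every $p\in S_0$, contradicting $(a_p,b_p)\ne(0,0)$. Thus $n_0\le n-1$.

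The hard part is establishing the linear independence claim: dimension counting alone only handles generic configurations, so a uniform statement requires more. The plan is to compute the Gram matrix whose entries are pairings $\langle\alpha_{p_i}^{2n_0+1},\alpha_{p_j}^{2n_0+1}\rangle$ and $\langle\alpha_{p_i}^{2n_0+1},\beta_{p_j}^{2n_0+1}\rangle$; these pairings depend only on the inner products $p_i\cdot p_j$ via explicit expressions like $((1\pm p_i\cdot p_j)/2)^{2n_0+1}$, and one must verify that the resulting matrix is nonsingular whenever no $p_i\cdot p_j=\pm 1$. Any failure here would obstruct the argument, but the non-antipodal hypothesis bars exactly the degenerate cases, making this the right leverage.
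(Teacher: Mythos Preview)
Your identity
\[
\sum_{p\in S_0}\bigl[a_p^{2}\,\alpha_p(q)^{2n_0+1}+b_p^{2}\,\beta_p(q)^{2n_0+1}\bigr]=0
\]
is derived correctly, and you correctly identify $\alpha_p(q)^{2n_0+1}$ and $\beta_p(q)^{2n_0+1}$ as highest- and lowest-weight vectors in the degree-$(2n_0+1)$ spherical harmonics. But the proof stops at the step you yourself call ``the hard part'': the linear independence of these $2|S_0|$ vectors is only asserted, and the Gram-matrix attack is sketched but not executed. That is a genuine gap. (For what it is worth, the claim is true and has a cleaner proof than a Gram-matrix computation: parametrizing the isotropic conic $\{[v]\in\mathbb P^2:v\cdot v=0\}\cong\mathbb P^1$ by $s\mapsto v(s)$ quadratically, the map $[v]\mapsto[(v\cdot q)^{k}]$ into $\mathbb P(H_k)\cong\mathbb P^{2k}$ is a nondegenerate rational curve of degree $2k$, hence the rational normal curve, so any $2k+1$ distinct points on it are independent. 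The no-antipodal hypothesis guarantees that the $2|S_0|$ isotropic directions $\{[v_p],[\bar v_p]\}_{p\in S_0}$ are pairwise distinct on the conic, since $p_1^\perp\cap p_2^\perp=\mathbb C(p_1\times p_2)$ is isotropic only when $p_1=\pm p_2$.)

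The paper avoids this entire linear-algebra problem by a different choice of $\phi_1,\phi_2$. Rather than iterating $J_3^q$ for a single generic $q$, it uses the operators $J_3^{p_i}$ based at the configuration points themselves. The key input from Proposition~\ref{orderminusone} is that $J_3^{p_i}$ \emph{preserves} $\vn_{p_i}$ while lowering $\vn_p$ by one at every $p\ne\pm p_i$. After rotating so that $p_1=(0,0,-1)$ attains $\ord(f)=n$, one sets $\phi_1=J_3^q\bigl(\prod_{i=2}^{n}J_3^{p_i}\bigr)f$ and $\phi_2=J_3^q\bigl(\prod_{i=n+1}^{2n}J_3^{p_i}\bigr)f$. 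Then at every $p\ne p_1$ either $\vn_p(\phi_1)\ge 1$ or $\vn_p(\phi_2)\ge 0$, so in Lemma~\ref{lemmalocal} only the contribution at $p_1$ survives; varying the single free point $q$ immediately forces $a_1=b_1=0$. This isolates one point at a time and needs no independence statement across the configuration. Your approach, once the independence lemma is supplied, gives more: it kills all leading coefficients on $S_0$ simultaneously and would work even if one only assumed $|S_0|\le 2\,\ord(f)+1$ rather than the global bound $\ord(f)\ge n$; but it costs an extra, nontrivial step.
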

\begin{proof}
    We prove this by contradiction. Suppose there is an eigensection $f$ with $\ord(f)\ge n$, i.e. $\vn_p(f)\ge n$ for any $p\in\vp$. After some rotations, we may assume $(0,0,-1)=p_1\in\vp$, and $\vn_{p_1}(f)=\ord(f)=n$. Consider a point $q\in\sph$ with spherical coordinate $(\varphi,\theta)$, let $\phi_1=J^{q}_3(\prod_{i=2}^{n}J_3^{p_i})f$ and $\phi_2=J^{q}_3(\prod_{i=n+1}^{2n}J_3^{p_i})f$. Note that $\vn_p(\phi_1)>\vn_{p_1}(\phi_1)= 0$ and $\vn_{p}(\phi_2)> \vn_{p_1}(\phi_2)= -1$ for any $p\in\vp\setminus\{p_1\}$.

    By Lemma \ref{lemmalocal}, we have $$\sin\theta\big(\prod_{j=2}^{2n}\sin\theta_j\big)(a_1^2\exp(i\sum_{j=2}^{2n}\varphi_j+i\varphi)-b_1^2\exp(-i(\sum_{j=2}^{2n}\varphi_j+i\varphi)))=0,$$ where $(a_1,b_1)$ is the leading coefficients of $f$ at $p_1$ and $(\varphi_i,\theta_i)$ are spherical coordinates of $p_i$. By assumption, $-p_1\notin \vp$, hence  varying $\varphi$ yields $a_1=b_1=0$. This makes a contradiction. 
\end{proof}

\begin{corollary}\label{uppermul}
    Let $\vp$ be a configuration containing no antipodal points and $\lambda$ be an eigenvalue of $\lbc$. Then $\mul\,\lambda\le 2n(2n-1)$.
\end{corollary}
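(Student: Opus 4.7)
The plan is to bound the multiplicity by introducing a filtration on $V_\lambda$ by vanishing order and applying, at each level of the filtration, a reduction argument of exactly the same flavor as in Corollary \ref{non-criticalmulbound}. For each integer $k\ge 0$ I would define
\[
V^{(k)}_\lambda := \{\, f \in V_\lambda : \vn_p(f) \ge k \text{ for every } p \in \vp\,\},
\]
so that $V^{(0)}_\lambda = V_\lambda$, $V^{(1)}_\lambda = V^c_\lambda$, and Corollary \ref{upperorder} (which uses the no-antipodal-points hypothesis) forces $V^{(n)}_\lambda = 0$. Granting the main inequality
\[
\dim_{\CC} V^{(k)}_\lambda / V^{(k+1)}_\lambda \le 4n - 2 \qquad (0 \le k \le n-1),
\]
summing along the filtration immediately gives the desired bound
\[
\mul \lambda \;=\; \sum_{k=0}^{n-1} \dim_{\CC} V^{(k)}_\lambda / V^{(k+1)}_\lambda \;\le\; n(4n - 2) \;=\; 2n(2n - 1).
\]

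To establish the key step, enumerate $\vp = \{p_1, \ldots, p_{2n}\}$ and write $(a_{p_j}^{(k)}(f), b_{p_j}^{(k)}(f)) \in \CC^2$ for the pair of coefficients of $z_{p_j}^{k+1/2}$ and $\bar z_{p_j}^{k+1/2}$ in the local expansion of $f$ at $p_j$. The map $\Phi_j(f) := (a_{p_j}^{(k)}(f), b_{p_j}^{(k)}(f))$ descends to a well-defined linear map $V^{(k)}_\lambda/V^{(k+1)}_\lambda \to \CC^2$ of rank at most $2$, since its kernel on $V^{(k)}_\lambda$ visibly contains $V^{(k+1)}_\lambda$. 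If $\dim_{\CC} V^{(k)}_\lambda/V^{(k+1)}_\lambda \ge 4n - 1$, then the combined map $(\Phi_1, \ldots, \Phi_{2n-1})\colon V^{(k)}_\lambda / V^{(k+1)}_\lambda \to \CC^{4n-2}$ has a nontrivial kernel. Any nonzero kernel class is represented by an eigensection $g$ satisfying $\vn_{p_j}(g) \ge k + 1$ for $j = 1, \ldots, 2n - 1$; since $g$ is nonzero in the quotient, necessarily $\vn_{p_{2n}}(g) = k$, so $\ord(g) = k$ is attained at the single point $p_{2n}$. This directly contradicts Lemma \ref{morethanonepointleastorder}.

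I expect the only mildly delicate point to be verifying that the leading-coefficient map at order $k + \tfrac12$ genuinely factors through $V^{(k)}_\lambda/V^{(k+1)}_\lambda$ uniformly in $k$; this is essentially by construction of the filtration. The argument uses nothing beyond what has already been established: Corollary \ref{upperorder} to terminate the filtration at level $n$, Lemma \ref{morethanonepointleastorder} to rule out an eigensection whose minimal order is attained at a unique point, and the elementary fact that rank-$2$ coefficient maps at $2n-1$ of the $2n$ configuration points cut down dimension by at most $4n - 2$. The no-antipodal-points hypothesis enters only through the invocation of Corollary \ref{upperorder}.
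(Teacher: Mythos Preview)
Your proof is correct and follows the same filtration-by-vanishing-order strategy as the paper, with the same bound $4n-2$ on each graded piece. The organizational difference is in how that bound is obtained at levels $k\ge 1$: after invoking Corollary~\ref{non-criticalmulbound} for the base case $\dim_{\CC} V_\lambda/V_\lambda^1 \le 4n-2$, the paper uses the order-lowering vector field $J_3^q$ (with $q\notin\pm\vp$) from Proposition~\ref{orderminusone} to build an injection of each successive graded piece into the previous one, and then chains down inductively to level $0$. You instead rerun the coefficient-map argument of Corollary~\ref{non-criticalmulbound} directly at each level $k$, appealing to Lemma~\ref{morethanonepointleastorder} to rule out an eigensection whose minimal order is attained at a single point. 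Your route is slightly more self-contained, since it does not invoke the shift operator here at all; the paper's route reuses the earlier corollary as a black box and highlights $J_3^q$ as the mechanism linking adjacent levels of the filtration, which is a recurring theme later in the paper. Both arguments rest on the same underlying obstruction and the no-antipodal-points hypothesis enters in both only through Corollary~\ref{upperorder}.
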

\begin{proof}
    Let $V_\lambda^j=\{f\in V_\lambda:\ord(f)\ge j\}$ for $j\ge 0$. By Corollary \ref{upperorder}, we have a filtration $V_\lambda=V_\lambda^0\supset V_\lambda^1\supset\cdots\supset V_\lambda^{n-1}\supset 0$.

    Let $W_{\lambda}^{j-1}$ be the orthogonal (with respect to the $L^2$ inner product) complement of $V^{j-1}_\lambda$ in $V^j_\lambda$, then $W_{\lam}^{j-1}\oplus V_{\lam}^j=V_{\lam}^{j-1}$. By Corollary \ref{non-criticalmulbound}, $\dim_{\CC}\, W_\lambda^0\le 4n-2$. Suppose $W_\lambda^1$ has dimension $l$, and let $f_1,\cdots,f_l$ be its basis. Introduce the map $\Psi:W_\lambda^1\to V_\lambda: f\mapsto J_3^q f$ for a fixed $q\notin\pm\vp$. Let $\pi_0:V_\lambda\to W_\lambda^0$ be the quotient map. By Proposition \ref{orderminusone}, $\pi_0\circ \Psi$ is injective. Thus $\dim_{\CC}\, W_\lambda^1\le \dim_{\CC}\, W_\lambda^0\le 4n-2$. Similarly, $\dim_{\CC}\, W_\lambda^j\le 4n-2$ for $1\le j\le n-1$, and $\mul \,\lambda=\dim_{\CC}\, V_\lambda\le 2n(2n-1)$.
\end{proof}

The upper bound of $\mul\,\lambda$ provided above is not optimal, and pursuing such optimization does not hold particular interest for our current purposes. In general, when considering the multiplicity of $k$-th Laplacian eigenvalue on a compact manifold, the best we can anticipate is a bound that is a polynomial in $k$. Such a polynomial growth bound can be found in \cite{HHN1999}.

\subsubsection{Restrictions on the configurations in $\mathcal{C}_4$}
Let's now turn our attention to configurations comprising four points. Consider $\vp=\{p_1,p_2,p_3,p_4\}$, where $p_4=(0,0,-1)$, and $p_i=(\varphi_i,\theta_i)$ for $i=1,2,3$. We denote $z_{p_l}$ by $z_l$ for $l=1,2,3,4$. Similar subscripts are also used for other quantities such as spherical coordinates. Let's begin with a warm-up identity.
\begin{lemma}\label{p4algrel}Let $f$ be a critical eigensection of $\lb$, with asymptotic expansion $f(z_l,\bar{z}_l)=a_lz_l^{3/2}+b_l\bar{z}_l^{3/2}+\mathcal{O}(r^{5/2})$ near each point $z_l$ $(l=1,2,3,4)$. (It could happen that $a_l=b_l=0$ for some $l$.) Set $\alpha_l=a_l^2$ and $\beta_l=b_l^2$ as in Lemma \ref{4.10}, we then have the following relation:
\begin{align}
\sin\theta_1\sin\theta_2\sin\theta_3[ e^{i(\varphi_1+\varphi_2+\varphi_3)}\alpha_4-e^{-i(\varphi_1+\varphi_2+\varphi_3)}\beta_4]=0.\label{alrforpole}
\end{align}
\end{lemma}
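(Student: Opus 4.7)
The plan is to apply the algebraic identity \eqref{flocal} of Lemma \ref{4.10} with the three vector fields $V_l=J_3^{p_l}$ for $l=1,2,3$, where $J_3^{p_l}$ is the infinitesimal rotation based at $p_l$ introduced in Section \ref{4.1}. Each $J_3^{p_l}$ is a Killing field of the round metric and so commutes with the Laplacian, which is exactly the hypothesis required by Lemma \ref{4.10}.

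The first key observation is that the contributions of $p_1$, $p_2$, and $p_3$ to the sum \eqref{flocal} vanish. Indeed, from \eqref{Jcplxcoor} we have $J_3^{p_l}z_{p_l}=z_{p_l}$ and $J_3^{p_l}\bar z_{p_l}=-\bar z_{p_l}$, both of which vanish at $p_l$. For $p=p_l$, the product $\prod_{k=1}^{3}(V_k z_p)(p)$ therefore carries the factor $(V_l z_{p_l})(p_l)=0$, killing the $\alpha_{p_l}$-term; the $\beta_{p_l}$-term is killed analogously. Thus \eqref{flocal} collapses to the single contribution at $p_4$:
\[\alpha_4\prod_{l=1}^{3}(J_3^{p_l}z_4)(p_4)+\beta_4\prod_{l=1}^{3}(J_3^{p_l}\bar z_4)(p_4)=0.\]

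The remaining step is to evaluate the six factors above. Using Lemma \ref{Liealgetrans} I would write
\[J_3^{p_l}=-\sin\theta_l\cos\varphi_l\,J_1-\sin\theta_l\sin\varphi_l\,J_2-\cos\theta_l\,J_3,\]
and then substitute the values at $p_4=(0,0,-1)$ read off from \eqref{Jcplxcoor}, namely $J_1 z|_{0}=\tfrac12$, $J_2 z|_{0}=\tfrac{i}{2}$, $J_3 z|_{0}=0$, together with the conjugate formulas. This yields
\[(J_3^{p_l}z_4)(p_4)=-\tfrac12\sin\theta_l\,e^{i\varphi_l},\qquad (J_3^{p_l}\bar z_4)(p_4)=\tfrac12\sin\theta_l\,e^{-i\varphi_l}.\]
Taking the product over $l=1,2,3$ and pulling out the common factor $\pm\tfrac{1}{8}$ produces the identity \eqref{alrforpole}.

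I do not foresee any substantive obstacle; the entire argument is a direct computation once the right triple of Killing fields is chosen. The only mild care needed is sign bookkeeping, which is automatically absorbed into the sign-invariant quantities $\alpha_l=a_l^2$ and $\beta_l=b_l^2$.
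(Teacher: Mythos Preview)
Your proposal is correct and follows essentially the same approach as the paper: choose $V_l=J_3^{p_l}$ in Lemma~\ref{4.10}, use $(J_3^{p_l}z_l)|_{r=0}=0$ to kill the contributions at $p_1,p_2,p_3$, and compute $(J_3^{p_l}z_4)(p_4)=-\tfrac12\sin\theta_l e^{i\varphi_l}$ and its conjugate to obtain \eqref{alrforpole}.
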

\begin{proof}
    Let $V_l=J_3^{p_l}(l=1,2,3)$ in \eqref{flocal}. Recall that $J_3^{p_l}=-\sin\theta_l\cos\varphi_l J_1-\sin\theta_l\sin\varphi_l J_2-\cos\theta_l J_3$. By \eqref{Jcplxcoor} we have $(J_3^{p_l}z_l)|_{r=0}=0$ for $r=|z_l|$, $l=1,2,3$. Furthermore, we have
    \begin{align*}
    (J_3^{p_l}z_4)|_{r=0}&=-\sin\theta_l\cos\varphi_l (J_1z_4)|_{r=0}-\sin\theta_l\sin\varphi_l (J_2z_4)|_{r=0}-\cos\theta_l (J_3z_4)|_{r=0}\\
    &=-\frac{1}{2}\sin\theta_l e^{i\varphi_l},
    \end{align*} and $(J_3^{p_l}\bar{z}_4)|_{r=0}=\frac{1}{2}\sin\theta_le^{-i\varphi_l}$, where $r=|z_4|$.
Substituting these into \eqref{flocal} completes the proof.
\end{proof}

\begin{proposition}\label{partialanti}
    Assuming $\vp\in\mathcal{C}_4$ contains one and only one pair of antipodal points, an eigensection $f$ of $\lbc$ cannot be critical.
\end{proposition}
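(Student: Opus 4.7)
I would argue by contradiction: rotate so the antipodal pair lies at the poles, $p_3=(0,0,1)$ and $p_4=(0,0,-1)$, and suppose $f$ is a nonzero critical eigensection with leading expansions $f\sim a_l z_l^{3/2}+b_l\bar z_l^{3/2}$ near each $p_l$, writing $\alpha_l=a_l^2$, $\beta_l=b_l^2$ as in Lemma \ref{4.10}. The plan is to use Lemma \ref{4.10} with carefully chosen triples of Killing vector fields to extract an overdetermined linear system that forces the $z^{3/2}$-coefficients at the non-antipodal points $p_1,p_2$ to vanish, and then iterate at higher orders to conclude $f\equiv 0$ by unique continuation.

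First apply Lemma \ref{4.10} with $V_1=J_3^{p_1}$, $V_2=J_3^{p_2}$, $V_3\in\{J_1,J_2\}$: since $J_3^{p_l}$ vanishes at $p_l$, only the pole contributions survive, and a direct computation using $(J_3^{p_l} z_3)|_{p_3}=\tfrac12\sin\theta_l\,e^{-i\varphi_l}$ together with its $p_4$-analogue (exactly as in the proof of Lemma \ref{p4algrel}) yields
\begin{equation*}
\alpha_3+\beta_4=0, \qquad \beta_3+\alpha_4=0.
\end{equation*}
Next, sweep over all triples $(V_1,V_2,V_3)\in\{J_+,J_-,J_3\}^3$. The key structural simplification is that at the poles, $\sin\theta_{3,4}=0$ kills all $J_3$-factors, while $J_+$ vanishes at $p_4$ and $J_-$ vanishes at $p_3$ (because $\cos^2(\theta/2)$ or $\sin^2(\theta/2)$ is zero there); consequently only the pure triples $J_+^3$ and $J_-^3$ contribute at the poles, and these contributions are multiples of $\alpha_3+\beta_4$ and $\beta_3+\alpha_4$, which we have just shown vanish. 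After this collapse, the seven distinct Lemma \ref{4.10} identities become linear equations purely in the four unknowns $(\alpha_1,\beta_1,\alpha_2,\beta_2)$. Verify by a Vandermonde-type determinantal check that this overdetermined system has rank $4$; non-degeneracy uses $\sin\theta_l\neq 0$ for $l=1,2$ (so $p_1,p_2$ lie off the $z$-axis) and the hypothesis that $p_1,p_2$ are not antipodal (so the phases $e^{i\varphi_l}$ and latitudes $\theta_l$ are not rigidly coupled). Conclude $\alpha_l=\beta_l=0$ for $l=1,2$, i.e., $\vn_{p_1}(f),\vn_{p_2}(f)\geq 2$.

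To obtain the contradiction, iterate at higher orders using a generalization of Lemma \ref{lemmalocal}: the same integration by parts pairs $\phi_1,\phi_2$ with matching leading orders $z^{k+1/2}$ and $z^{-k-1/2}$, producing $\sum_p(\gamma_p\gamma'_p+\delta_p\delta'_p)=0$ summed only over points where the orders balance. Take $\phi_1=f$ and $\phi_2=V_1\cdots V_{2k-1}f$ with Killing fields $V_i$ non-vanishing at $p_1,p_2$ and chosen so that $\phi_2$'s leading order at $p_3,p_4$ falls below the one needed to match $\phi_1$'s there; only $p_1,p_2$ then contribute. A rank-$4$ argument analogous to the first step forces the order-$(2k+1)/2$ coefficients of $f$ at $p_1,p_2$ to vanish, raising $\vn_{p_1}(f),\vn_{p_2}(f)$ by one. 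Iterating, $f$ vanishes to infinite order at $p_1$ and $p_2$, so $f\equiv 0$ by unique continuation for $\Delta f+\lambda f=0$, contradicting $f\neq 0$.

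The main obstacle is verifying the rank-$4$ property of these linear systems at every order. At $k=1$ it reduces to an explicit (but somewhat intricate) determinantal check whose nonvanishing is precisely equivalent to $p_1\neq -p_2$; at higher $k$ one must track which of the many available $V_i$-multisets produce independent constraints after the pole cancellations, and argue inductively that a rank-$4$ subsystem always survives.
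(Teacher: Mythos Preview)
Your approach has genuine gaps and misses the structural shortcut that makes the paper's proof immediate. The key point: label so the antipodal pair is $\{p_1,p_3=-p_1\}$ and put a \emph{non}-antipodal point at the south pole $p_4=(0,0,-1)$. Then $J_3^{p_1}=-J_3^{p_3}$ vanishes at \emph{both} $p_1$ and $p_3$; it also preserves $\vn_{p_1},\vn_{p_3}$ while lowering $\vn_{p_2},\vn_{p_4}$, so applying it repeatedly (and relabeling $p_2\leftrightarrow p_4$ if necessary) reduces to a still-critical section with $\vn_{p_4}=1$. Now in Lemma~\ref{4.10} with $V_1=J_3^{p_1}$, $V_2=J_3^{p_2}$, $V_3=J_3^q$, the terms at $p_1,p_2,p_3$ all die (at $p_3$ because $V_1$ vanishes there too), leaving only the $p_4$ contribution; varying $q$ forces $a_4=b_4=0$, contradicting $\vn_{p_4}(f)=1$. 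One step, no linear algebra, no iteration.

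By putting the antipodal pair at the poles you forfeit exactly this: you never use that the axis rotation vanishes at both antipodal points simultaneously. Your substitute---a rank-$4$ system in $(\alpha_1,\beta_1,\alpha_2,\beta_2)$ followed by iteration to all orders---is incomplete in two places. First, the rank-$4$ determinantal check is not carried out (you call it ``the main obstacle''), and it is not evident that the $\{J_+,J_-,J_3\}^3$ identities actually span rank $4$ for all admissible $(p_1,p_2)$. Second, the higher-order step rests on an unproved generalization of Lemma~\ref{lemmalocal} to pairings at orders $\pm(k+\tfrac12)$, and your device for suppressing the pole contributions fails as stated: after step one the orders at $p_1,p_2$ are $\ge 2$ while those at $p_3,p_4$ remain $\ge 1$, so applying $2k-1$ Killing fields to $f$ lowers all orders in parallel and cannot push the pole orders strictly below those at $p_1,p_2$.
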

\begin{proof}

Without loss of generality, we can assume $p_1=-p_3$ and $p_2\neq -p_4$. By Proposition \ref{orderminusone}, $J_3^{p_1}=-J_3^{p_3}$ keeps the orders $\vn_{p_1}$ and $\vn_{p_3}$ invariant while reducing $\vn_{p_1}$ and $\vn_{p_4}$ by one. Therefore, suppose an eigensection $f$ is critical, after applying some infinitesimal rotations, we can always assume $\ord(f)=1$ and $\vn_{p_4}(f)=1$.

If we denote $(a_4,b_4)$ as the leading coefficients of the critical eigensection $f$ at $p_4$, then at least one of $a_4$ or $b_4$ is nonzero. Set $V_l=J_3^{p_l}$ for $l=1,2$, and $V_3=J_3^q$ in \eqref{flocal}, where $q\in \sph$ with coordinates $(\varphi,\theta)$ to be determined. Note that $(V_lz_l)|_{r=0}=0$ for $l=1,2$ and $(V_1z_3)|_{r=0}=0$, combining with Lemma \ref{4.10}, we then obtain:
\begin{align}
0=\sin\theta_1\sin\theta_2\sin\theta[e^{i(\varphi_1+\varphi_2+\varphi)}a_4^2-e^{-i(\varphi_1+\varphi_2+\varphi)}b_4^2].\label{noncrt1}
\end{align}
In the equation above, $(\varphi_l,\theta_l)$ are coordinates of $p_l$ for $l=1,2$. Let $\theta\neq 0,\pi$ and vary $\varphi$, we conclude that $a_4=b_4=0$, which contradicts with $\vn_{p_4}(f)=1$.
\end{proof}

\begin{lemma}\label{npf==1}
    Suppose $\vp\in\mathcal{C}_4$ contains no antipodal points and that $f$ is a critical eigensection of $\lbc$, then $\forall p\in\vp, \vn_p(f)=1$. In particular, $f$ is a non-degenerate critical eigensection.
\end{lemma}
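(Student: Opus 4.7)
I plan to argue by contradiction using the identity \eqref{flocal} of Lemma \ref{4.10}. Suppose some $p_{i_0} \in \vp$ has $\vn_{p_{i_0}}(f) \ge 2$; writing $f = a_p z_p^{3/2} + b_p \bar z_p^{3/2} + O(r^{5/2})$ near each $p \in \vp$, this means $(a_{p_{i_0}}, b_{p_{i_0}}) = (0, 0)$, so $\alpha_{p_{i_0}} = \beta_{p_{i_0}} = 0$. Relabel so that $p_{i_0} = p_4$. By Lemma \ref{morethanonepointleastorder}, at least one other point of $\vp$ has order exactly $1$; call it $p_1$, so that $(a_1, b_1) \ne (0, 0)$.

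The key step is to choose the three vector fields in \eqref{flocal} so that the only surviving summand is the one at $p_1$. Taking
\[
V_1 = J_3^{p_2}, \qquad V_2 = J_3^{p_3}, \qquad V_3 = J_3^q \quad (q \in \sph \text{ free}),
\]
the $p_2$- and $p_3$-summands die because $(J_3^{p_l} z_l)(p_l) = 0$ for $l = 2, 3$ by \eqref{Jcplxcoor}, while the $p_4$-summand dies because $\alpha_4 = \beta_4 = 0$. Since each $J_3^{(\cdot)}$ is a real vector field, its action on $\bar z_1$ is the complex conjugate of its action on $z_1$, and \eqref{flocal} collapses to
\[
\alpha_1\, C\, (J_3^q z_1)(p_1) \;+\; \beta_1\, \overline{C\,(J_3^q z_1)(p_1)} \;=\; 0 \qquad \text{for every } q \in \sph,
\]
where $C = (J_3^{p_2} z_1)(p_1)\,(J_3^{p_3} z_1)(p_1)$.

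To finish, two facts suffice. First, $C \ne 0$: the non-antipodal hypothesis gives $p_l \ne \pm p_1$ for $l = 2, 3$, so each factor is nonzero (compare Proposition \ref{orderminusone}). Second, as $q$ ranges over $\sph$, the vector $L_3^q|_{p_1} = q \times p_1$ sweeps out the entire tangent plane at $p_1$, so $(J_3^q z_1)(p_1)$ takes values filling a real $2$-dimensional subspace of $\mathbb{C}$. A real-linear form $z \mapsto \alpha_1 z + \beta_1 \bar z$ vanishing on such a subspace must vanish identically, forcing $\alpha_1 = \beta_1 = 0$. This contradicts $(a_1, b_1) \ne (0, 0)$.

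I do not anticipate a serious obstacle: the scheme directly parallels the one used in Proposition \ref{partialanti}, with the role played there by an antipodal pair (where a single vector field kills two points at once) now played by the higher-order point $p_4$, where $f$ has already vanished. The one non-trivial verification --- that $q \mapsto L_3^q|_{p_1}$ fills the tangent plane at $p_1$ --- is a one-line consequence of $L_3^q|_{p_1} = q \times p_1$.
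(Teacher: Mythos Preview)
Your approach is essentially identical to the paper's: both apply \eqref{flocal} with two of the $V_l$ taken as $J_3^{p_i}, J_3^{p_j}$ (killing those two summands), let the high-order point contribute nothing since its $\alpha=\beta=0$, and vary the third field $J_3^q$ to force the remaining point's coefficients to vanish --- the paper merely relabels so the surviving point sits at the south pole, making the final identity explicit rather than arguing abstractly via $q\times p_1$. One small correction: $J_3^{(\cdot)}=-iL_3^{(\cdot)}$ is not a real vector field (only $L_3^{(\cdot)}$ is), so $(J_3^{(\cdot)}\bar z_1)(p_1)=-\overline{(J_3^{(\cdot)}z_1)(p_1)}$, which changes the sign of the $\beta_1$-term but leaves the conclusion intact; also, to ensure some point has order exactly $1$ you should invoke Corollary~\ref{upperorder} (giving $\ord(f)\le n-1=1$), since Lemma~\ref{morethanonepointleastorder} alone does not exclude all orders being $\ge 2$.
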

\begin{proof}

Suppose for some $p\in\vp$, $\vn_{p}(f)>1$. After rearrangement of points in $\vp$ and applying some infinitesimal rotations to the eigensection if necessary, without loss of generality, we can assume $\vn_{p_4}(f)=1$ and $\vn_{p_3}(f)\geq 2$. Set $V_l=J_3^{p_l}$ for $l=1,2$, and $V_3=J_3^q$ in \eqref{flocal}, with $q=(\varphi,\theta)$ to be determined. Since $(V_lz_l)|_{r=0}=0$ for $l=1,2$, and $\vn_{p_3}(V_3f)\geq 1$, we obtain:
$$\sin\theta_1\sin\theta_2\sin\theta[e^{i(\varphi_1+\varphi_2+\varphi)}a_4^2-e^{-i(\varphi_1+\varphi_2+\varphi)}b_4^2]=0.$$ Varying $q$ implies $a_4=b_4=0$, which leads to a contradiction as $\vn_{p_4}(f)=1.$
\end{proof}

\begin{proposition}
    \label{rk1}
Suppose $\vp\in \mathcal{C}_4$ contains no antipodal points and that $f_1,f_2$ are two critical eigensections with the same eigenvalue, then $f_1=cf_2$ for some complex number $c$. As a consequence, we have $\dim_{\CC}\,V^c_\lambda\le 1$.
\end{proposition}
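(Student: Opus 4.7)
The plan is to derive a polarized (bilinear) version of the Taubes--Wu integration-by-parts identity from Lemma \ref{lemmalocal}, specialize the test vector fields so that only the point $p_4$ contributes, and then combine with Lemma \ref{npf==1} (non-degeneracy) to force the leading coefficients of $f_1$ and $f_2$ at $p_4$ to be proportional. Non-degeneracy then propagates the proportionality to an identity $f_1 = c f_2$.

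First I would note that $V_\lambda^c$ is a linear subspace of $V_\lambda$, since criticality is equivalent to $\vn_p(f) \ge 1$ at every $p \in \vp$. Write the expansions $f_i(z_l,\bar z_l) = a_l^{(i)} z_l^{3/2} + b_l^{(i)} \bar z_l^{3/2} + O(r^{5/2})$ at each $p_l$, using a fixed lift $w_l^2 = z_l$ to pin down signs. Applying Lemma \ref{lemmalocal} to $\phi_1 = V_1 f_1$ and $\phi_2 = V_2 V_3 f_2$ for infinitesimal rotations $V_1,V_2,V_3$ (which commute with $\Delta$), and plugging in the order-$\tfrac{1}{2}$ and order-$(-\tfrac12)$ asymptotics computed in Section 4, yields the polarized identity
\[
\sum_{l=1}^{4}\Bigl[a_l^{(1)} a_l^{(2)} \prod_{j=1}^{3}(V_j z_l)|_{z_l=0} + b_l^{(1)} b_l^{(2)} \prod_{j=1}^{3}(V_j \bar z_l)|_{z_l=0}\Bigr]=0.
\]

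Next I would specialize $V_j = J_3^{p_j}$ for $j=1,2,3$; Proposition \ref{orderminusone} gives $(J_3^{p_l} z_l)|_0 = 0$, so the $l=1,2,3$ summands vanish, and the same trigonometric evaluation used in Lemma \ref{p4algrel} produces
\[
a_4^{(1)} a_4^{(2)} = e^{-2i(\varphi_1+\varphi_2+\varphi_3)} \, b_4^{(1)} b_4^{(2)},
\]
where $(\varphi_l,\theta_l)$ are the spherical coordinates of $p_l$; the assumption that $\vp$ has no antipodal pairs supplies $\sin\theta_l \neq 0$, which is what lets me divide through. Applying Lemma \ref{p4algrel} to each $f_i$ individually gives $(a_4^{(i)})^2 = e^{-2i(\varphi_1+\varphi_2+\varphi_3)} (b_4^{(i)})^2$, and combining with non-degeneracy forces both $a_4^{(i)}$ and $b_4^{(i)}$ to be nonzero. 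Writing $r_i = a_4^{(i)}/b_4^{(i)} = \epsilon_i e^{-i(\varphi_1+\varphi_2+\varphi_3)}$ with $\epsilon_i \in \{\pm 1\}$ and plugging back into the polarized identity yields $\epsilon_1\epsilon_2 = 1$, hence $r_1 = r_2$.

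Finally, setting $c = b_4^{(1)}/b_4^{(2)}$ gives $(a_4^{(1)}, b_4^{(1)}) = c (a_4^{(2)}, b_4^{(2)})$, so $g := f_1 - c f_2 \in V_\lambda^c$ satisfies $\vn_{p_4}(g) \ge 2$; this forces $g = 0$ by Lemma \ref{npf==1}, giving $f_1 = c f_2$, and the bound $\dim_\CC V_\lambda^c \le 1$ is immediate. The main obstacle in the argument is the sign bookkeeping: each leading pair is determined only up to a joint sign by the lift $w_l^2 = z_l$, so I must verify that with one consistent choice of lifts the polarized and unpolarized identities carry exactly the same exponent $e^{-2i(\varphi_1+\varphi_2+\varphi_3)}$, so that one concludes $\epsilon_1\epsilon_2 = +1$ rather than merely $\pm 1$.
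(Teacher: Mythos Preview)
Your proof is correct and uses the same essential ingredients as the paper (non-degeneracy from Lemma~\ref{npf==1} and the algebraic identity at $p_4$ from Lemma~\ref{p4algrel}), but it takes a more circuitous route. You derive a polarized (bilinear) version of the identity, mixing $f_1$ and $f_2$, in order to match \emph{both} leading coefficients at once; this forces you to track sign conventions carefully, as you note.

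The paper's argument is shorter and sidesteps the sign issue entirely: it simply sets $c = a_{4,1}/a_{4,2}$ so that $f_3 = f_1 - cf_2$ has $a_{4,3} = 0$, and then applies the \emph{quadratic} identity \eqref{alrforpole} to $f_3$ itself to conclude $b_{4,3}^2 = e^{2i(\varphi_1+\varphi_2+\varphi_3)} a_{4,3}^2 = 0$. Since only squares of coefficients of a single section appear, no cross-term sign bookkeeping is needed. Both proofs finish the same way, invoking Lemma~\ref{npf==1} to force $f_3 = 0$. Your polarized identity is a valid and natural tool, but here it is unnecessary: applying the quadratic identity to the difference does the job with less overhead.
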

\begin{proof}

Let $p_4=(0,0,-1)$. From Lemma \ref{npf==1}, we deduce that $\vn_{p_4}(f_s)=1$ for $s=1,2$. Fix a complex coordinate $w_4$ at $p_4$ in the branched covering $\Sigma_\vp$, and let $(a_{4,s},b_{4,s})$ for $s=1,2$ be the leading coefficients of $f_s$ at $p_4$. By \eqref{alrforpole}, we could assume $a_{4,2}\neq 0$, then define $c=\frac{a_{4,1}}{a_{4,2}}$. If $f_3:=f_1-cf_2$ is nonvanishing, it is clear that it is another critical eigensection. Moreover, the leading coefficients of $f_3$ at $p_4$ are $(a_{4,3}=0,b_{4,3})$. By Lemma \ref{p4algrel}, we find $b_{4,3}^2=e^{2i(\varphi_{1}+\varphi_{2}+\varphi_{3})}a_{4,3}^2=0$. Consequently, $b_{4,3}=0$ and $\vn_{p_4}(f_3)>1$. It follows from Lemma \ref{npf==1} that $f_3\equiv 0$, and hence $f_1=cf_2$.
\end{proof}

\section{Finite group representations and critical eigensections}

Consider a finite subgroup $G$ of $\mathrm{O}(3)$, that preserves a configuration $\vp\in\Cn$. It can be naturally lifted to a finite group $\hat{G}$ acting isometrically on the conical surface $(\Sigma_{\vp},\pi^\ast\mathrm{ds}^2)$. Consequently, each Laplacian eigenspace of $\Sigma_\vp$ can be interpreted as a finite-dimensional representation of $\widehat{G}$. 

In this section, we begin by delve into a detailed discussion of the spectra of configurations in $\mathcal{C}_2$, serving as an instructive example for the subsequent. Next we present some basic properties of lifting of group actions in Section \ref{grpaction}.  

Finite group representation theory enables us to deduce intriguing results. For instance, in Section \ref{tetrahedralcase}, we establish the rigidity of the Taubes-Wu tetrahedral eigensections. Finally, in Section \ref{5.4}, we demonstrate the non-existence of critical eigenvalues on $\lb$ when $\vp$ forms a cross.

\subsection{The case of $n=1$}
In this subsection, we will provide a description of $\ZT$ eigenvalues and eigensections for the configuration consisting of precisely one pair of antipodal points. Furthermore, we will explore the behavior of $\ZT$ eigenvalues as one of the two points in the configuration varies.

The following result can be derived by standard Fourier series separation of variables, as mentioned in the Appendix of \cite{taubeswu2020examples}. An application of representation theory of $\mathfrak{so}(3)$ provide another interesting proof. This is the classical method used in the calculation of spherical harmonics, as described in \cite[Chapter VI, Complements A]{quantum}. Additionally, this indicates that the assumption of nonexistence of antipodal points in the configuration in the Corollary \ref{uppermul} is necessary.

\begin{proposition}\label{antipodalspectrum}
    Let $\vp=\{(0,0,\pm1)\}\in\mathcal{C}_2$. The spectrum of $\Delta$ on $\lb$ is $\{(l-\frac12)(l+\frac12):l=1,2,\cdots\}$. For $\lambda=(l-\frac12)(l+\frac12)$, the multiplicity $\mul\,\lambda=2l$. Let $(\vp,\theta)$ be the spherical coordinate of $\sph$, then a basis of $V_\lambda$ could be written as 
    $\{\cos ((j-\frac{1}{2})\varphi) G_l^j(\theta),\; \sin((j- \frac{1}{2})\varphi) G_l^j(\theta)\}_{1\le j\le l}$,
    where $G_l^j(\theta)=\sin^{\frac{1}{2}-j}\theta\mathrm{d}^{l-j}(\sin^{2l-1}\theta)/\mathrm{d}(\cos\theta)^{l-j}$.
\end{proposition}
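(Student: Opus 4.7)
The plan is to carry out a separation of variables on $\sph$ in spherical coordinates, using the $\mathbb{Z}_2$ monodromy at the two poles to restrict to half-integer angular Fourier modes. Away from $\vp$, the round Laplacian is $\Delta=\frac{1}{\sin\theta}\partial_\theta(\sin\theta\,\partial_\theta)+\frac{1}{\sin^2\theta}\partial_\varphi^2$, and a section of $\lb$ may be viewed as a function on $\sph\setminus\vp$ that changes sign under $\varphi\mapsto\varphi+2\pi$. Accordingly I would Fourier-expand $f(\varphi,\theta)=\sum_{k\in\mathbb{Z}} e^{i(k+1/2)\varphi}g_k(\theta)$ and set $m=k+\tfrac12$; the eigenvalue equation $-\Delta f=\lambda f$ then decouples into the half-integer associated Legendre equation
\begin{equation*}
-\frac{1}{\sin\theta}(\sin\theta\, g')'+\frac{m^2}{\sin^2\theta}\,g=\lambda g.
\end{equation*}

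The substitution $g(\theta)=\sin^{|m|}\theta\cdot h(\cos\theta)$ together with $x=\cos\theta$ reduces this to a Gegenbauer-type equation $(1-x^2)h''-(2|m|+1)x\,h'+(\lambda-|m|^2-|m|)h=0$. Writing $|m|=j-\tfrac12$ with $j\in\mathbb{Z}_{\geq 1}$ and $\lambda=l^2-\tfrac14=(l-\tfrac12)(l+\tfrac12)$, it becomes $(1-x^2)h''-(2j+1)x\,h'+(l^2-j^2)h=0$, the Gegenbauer equation of order $l-j$ with parameter $j$. Its polynomial solutions $C_{l-j}^{(j)}(x)$ exist precisely when $l\geq j\geq 1$, which produces the claimed spectrum. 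The Rodrigues identity for $C_{l-j}^{(j)}$ then yields $g(\theta)\propto \sin^{1/2-j}\theta\,\frac{d^{l-j}}{d(\cos\theta)^{l-j}}(\sin^{2l-1}\theta)=G_l^j(\theta)$, which I would confirm either by invoking the standard Rodrigues formula for Gegenbauer polynomials or by direct substitution using $\frac{d}{d(\cos\theta)}=-\frac{1}{\sin\theta}\frac{d}{d\theta}$.

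To tally multiplicities, at fixed $\lambda=(l-\tfrac12)(l+\tfrac12)$ the admissible $|m|$ range over $\{\tfrac12,\tfrac32,\ldots,l-\tfrac12\}$, giving $l$ radial eigenfunctions; each $|m|$ contributes a two-dimensional real eigenspace spanned by $\cos((j-\tfrac12)\varphi)G_l^j(\theta)$ and $\sin((j-\tfrac12)\varphi)G_l^j(\theta)$, for the claimed total of $2l$. Completeness across all $l$ follows from Parseval in $\varphi$ combined with completeness of Gegenbauer polynomials in the weighted $L^2$ space on $[-1,1]$, together with Proposition \ref{prop_descrte_spectrum} which guarantees a discrete orthonormal basis of eigensections.

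The main obstacle is the boundary-regularity analysis at $\theta=0,\pi$: for half-integer $m$ the radial ODE admits two local behaviors, $\sin^{j-1/2}\theta$ and $\sin^{1/2-j}\theta$, and one must show that the $\sob$-integrability condition rules out the singular branch at both poles, forcing the Frobenius series to terminate and selecting exactly the quantization $\lambda=(l-\tfrac12)(l+\tfrac12)$. Once this regularity at the conical singularities is settled, the remainder is routine Sturm--Liouville theory and Fourier decomposition.
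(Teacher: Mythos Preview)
Your proposal is correct and follows exactly the separation-of-variables route the paper itself points to (citing the Appendix of Taubes--Wu): half-integer Fourier modes in $\varphi$ forced by the $-1$ monodromy, reduction to the Gegenbauer equation in $\cos\theta$, Rodrigues formula identification with $G_l^j$, and the $\sob$-regularity at the poles selecting the quantization $\lambda=(l-\tfrac12)(l+\tfrac12)$. The paper gives no detailed proof of its own, only noting that an alternative via $\mathfrak{so}(3)$-representation theory (lowering/raising operators as for ordinary spherical harmonics) is also available.
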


\begin{proposition}[\cite{taubeswu2020examples}, Appendix, Proposition A]\label{nocriticaltwopoints}
    For configuration $\vp\in\mathcal{C}_2\setminus\{\{\pm p\}:p\in\sph\}$, there are no critical eigenvalues. Moreover, for an eigensection $f$ of $\lbc$, $\vn_p(f)=\ord(f)=0$ for any $p\in\vp$.
\end{proposition}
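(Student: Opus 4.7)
The plan is to derive this directly from two general tools proved just above: Corollary \ref{upperorder} and Lemma \ref{morethanonepointleastorder}. The hypothesis $\vp\in\mathcal{C}_2\setminus\{\{\pm p\}:p\in\sph\}$ translates to $|\vp|=2$ (so $n=1$) and $\vp$ has no antipodal pair, which is precisely the setup those results require.

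First I would apply Corollary \ref{upperorder} with $n=1$ to any eigensection $f$ of $\lbc$, obtaining $\ord(f)\le n-1=0$. Since the exponents in the local expansion \eqref{fourier} are $n+\frac{1}{2}$ with $n\ge 0$, we have $\vn_p(f)\ge 0$ for all $p\in\vp$, and hence $\ord(f)=0$.

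Next I would invoke Lemma \ref{morethanonepointleastorder}: the minimum order $\ord(f)$ is attained by at least two points of $\vp$. Since $|\vp|=2$, both points of $\vp$ must satisfy $\vn_p(f)=\ord(f)=0$, proving the second claim. In particular $\vp_f=\vp\ne\varnothing$, so $f$ is not critical; as this holds for every eigensection of $\lbc$, no eigenvalue of $\lbc$ is critical.

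There is no serious obstacle, because all the work has been absorbed into Corollary \ref{upperorder}. Its proof specializes in the $n=1$ case to a short application of Lemma \ref{lemmalocal} with $\phi_1=J_3^q f$ and $\phi_2=J_3^q J_3^{p_2}f$: the identity reduces to $\sin\theta\sin\theta_2\bigl(a_1^2 e^{i(\varphi_2+\varphi)}-b_1^2 e^{-i(\varphi_2+\varphi)}\bigr)=0$ with $q=(\varphi,\theta)\notin\pm\vp$, and since $p_2\ne\pm p_1$ guarantees $\sin\theta_2\ne 0$, varying $\varphi$ forces $a_1=b_1=0$, contradicting $\vn_{p_1}(f)=\ord(f)\ge 1$. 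One could inline this computation in place of quoting Corollary \ref{upperorder}, but the two-line citation-based argument is cleaner.
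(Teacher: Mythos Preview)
Your argument is correct. Citing Corollary~\ref{upperorder} with $n=1$ immediately gives $\ord(f)\le 0$, and then Lemma~\ref{morethanonepointleastorder} forces both points of $\vp$ to have $\vn_p(f)=0$; the non-existence of critical eigensections follows.

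The paper's proof reaches the same conclusion by a slightly different route: instead of invoking Corollary~\ref{upperorder} (whose proof rests on the integration-by-parts identity Lemma~\ref{lemmalocal}), it works directly with Proposition~\ref{orderminusone}. Assuming $f$ critical, one applies $J_3^p$ for $p\in\vp$; since $-p\notin\vp$, this operator preserves $\vn_p$ while decreasing $\vn_q$ at the other point $q$, so $J_3^p f$ is an eigensection whose minimal order is attained at a single point, contradicting Lemma~\ref{morethanonepointleastorder}. Your approach has the virtue of being a one-line citation of a general bound already established; the paper's approach is perhaps more self-contained for this special case, since it only needs the elementary order-shifting behavior of $J_3^p$ rather than the algebraic identities behind Corollary~\ref{upperorder}. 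Either is entirely adequate here.
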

\begin{proof}
    Suppose an eigensection $f$ of $\lb$ is critical. Consider $p\in\vp$, then $-p\notin \vp$, and $\vn_p(J_3^pf)>\ord(J_3^pf)$ by Proposition \ref{orderminusone}. This contradicts with Lemma \ref{morethanonepointleastorder}. And we can also deduce from this lemma that $\vn_p(f)=\ord(f)=0$ for any $p\in\vp$. 
\end{proof}

\begin{lemma}
    Assume $p_2\ne -p_1$ and $p_1=(0,0,1)$. Consider an eigensection $f$ of $\mathcal{I}^{\CC}_{\vp(s)}$, with asymptotic expansion $f(z_l,\bar{z}_l)=a_lz_l^{1/2}+b_l\bar{z}_l^{1/2}+\mathcal{O}(r^{3/2})$, where $z_l$ are the complex coordinate $z_{p_l}$ for $l=1,2$. It follows that $a_2^2=b_2^2$. \label{coefficientn1}
\end{lemma}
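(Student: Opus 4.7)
My plan is to invoke the Taubes--Wu integration-by-parts identity (Lemma~\ref{lemmalocal}) with $\phi_1 = f$ and $\phi_2 = V f$ for a carefully chosen vector field $V$ on $\sph$ commuting with $\Delta$. Since $\vp \notin \{\{\pm p\}:p \in \sph\}$, Proposition~\ref{nocriticaltwopoints} tells us that $\vn_{p_l}(f) = 0$ for $l = 1,2$, so lifting to $\Sigma_\vp$ I write $\tilde f(w_l,\bar w_l) = a_l w_l + b_l \bar w_l + \mathcal{O}(|w_l|^3)$, which matches the form required of $\phi_1$ in Lemma~\ref{lemmalocal} with $(\gamma_l,\delta_l) = (a_l,b_l)$. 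Any infinitesimal rotation $V$ reduces the leading order by one, yielding
\[
\widetilde{Vf}(w_l,\bar w_l) = \tfrac{1}{2} a_l (Vz_l)(p_l)\, w_l^{-1} + \tfrac{1}{2} b_l (V\bar z_l)(p_l)\, \bar w_l^{-1} + \mathcal{O}(|w_l|),
\]
so $(\gamma'_l,\delta'_l) = \bigl(\tfrac{1}{2} a_l(Vz_l)(p_l),\,\tfrac{1}{2} b_l(V\bar z_l)(p_l)\bigr)$, and the hypotheses of Lemma~\ref{lemmalocal} are satisfied.

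The decisive choice will be $V = J_3$, the infinitesimal rotation about the $x_3$-axis. Since $p_1 = (0,0,1)$ is fixed by this one-parameter subgroup, $J_3$ coincides with $J_3^{p_1}$, and \eqref{Jcplxcoor} immediately gives $(J_3 z_1)(p_1) = (J_3 \bar z_1)(p_1) = 0$. The $p_1$ contribution to the sum in Lemma~\ref{lemmalocal} therefore vanishes, leaving the single-point constraint
\[
a_2^2\, (J_3 z_2)(p_2) + b_2^2\, (J_3 \bar z_2)(p_2) = 0.
\]
Writing $p_2$ in spherical coordinates $(\varphi_2,\theta_2)$, Lemma~\ref{Liealgetrans} expresses $J_3 = \sin\theta_2\, J_1^{p_2} - \cos\theta_2\, J_3^{p_2}$; combining this with the identities $(J_j^{p_2} z_2)(p_2) = (J_j z)(0)$ and $(J_j^{p_2} \bar z_2)(p_2) = (J_j \bar z)(0)$ together with \eqref{Jcplxcoor}, I obtain $(J_3 z_2)(p_2) = \tfrac{1}{2}\sin\theta_2$ and $(J_3 \bar z_2)(p_2) = -\tfrac{1}{2}\sin\theta_2$. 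Since $p_2 \neq \pm p_1$ forces $\sin\theta_2 \neq 0$, the identity collapses to $a_2^2 = b_2^2$, as required.

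The only piece of insight is the choice of $V$: it must commute with $\Delta$ \emph{and} kill both $(V z_1)(p_1)$ and $(V \bar z_1)(p_1)$, which essentially forces $V$ to be the rotation fixing the $p_1$-axis. After that, everything is routine bookkeeping with \eqref{Jcplxcoor} and Lemma~\ref{Liealgetrans}. The potential nuisance of sign ambiguity in the pair $(a_l,b_l)$ arising from the choice of branch $w_l$ of $z_l^{1/2}$ is automatically neutralised, because the final identity involves only the squares $a_l^2,\, b_l^2$.
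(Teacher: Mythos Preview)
Your proof is correct and follows essentially the same route as the paper: both take $\phi_1 = f$ and $\phi_2 = J_3 f$ in Lemma~\ref{lemmalocal}, observe that the $p_1$ contribution drops out because $p_1$ lies on the rotation axis, and compute the $p_2$ contribution via Lemma~\ref{Liealgetrans} and \eqref{Jcplxcoor} to obtain $\tfrac{1}{2}\sin\theta_2(a_2^2 - b_2^2) = 0$. One tiny inaccuracy: from Lemma~\ref{Liealgetrans} with $\theta=0$ you actually get $J_3 = -J_3^{p_1}$ rather than $J_3 = J_3^{p_1}$, but this is immaterial since either way $(J_3 z_1)(p_1) = (J_3 \bar z_1)(p_1) = 0$.
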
 
\begin{proof}
    Consider $\phi_1=f$ and $\phi_2=J_3f$. It follows that $\vn_{p_1}(\phi_2)=\vn_{p_2}(f)$ and $\vn_{p_2}(\phi_2)=-1$. Recall that $J_3=\sin\theta_2 J_1^{p_2}-\cos\theta_2 J_3^{p_2}$ and hence $(J_3z_2)|_{r=0}=\frac12 \sin\theta_2$. Applying Lemma \ref{lemmalocal} we then conclude $\frac12 \sin\theta_2(a_2^2-b_2^2)=0$. Since $p_2\ne p_1$, $a_2^2=b_2^2$.
\end{proof}

Next we discuss the symmetries of configurations in $\mathcal{C}_2$. Recall that a rotation always preserves Laplacian spectrum and the vanishing order of eigensections. Therefore, for a given configuration $\vp=\{p_1,p_2\}$, we may assume $p_1=(0,0,1)$ and $p_2=(\sin\theta,0,\cos\theta)$ for $\theta\ne 0$. 

We characterize $\sph$ as $\overline{\CC}$ by stereographic projection from the north pole, and identify $p_1$ with $\infty$ and $p_2$ with $t$ for some $t\in [0,+\infty)$ depending on $\theta$. When $\theta=\pi$ or $t=0$, $\vp$ is the antipodal configuration. Denote $\vp=\{+\infty,t\}$ as $\vp(t)$. Let $\pi_t:\Sigma_t\to\sph$ be the corresponding covering branched at $\vp(t)$.

Consider the involution $\iota: \overline{\CC}\to\overline{\CC}:z\mapsto \bar{z}$, which can be lifted to the Riemann surface $\Sigma_s:=\Sigma_{\vp(s)}\to \overline{\CC}$. Such a lift is not unique. We fix a choice as follows. 

Consider a projective curve in $\mathbb{C}P^2=\{[w:z:y]:w,z,y\in\CC\}$ defined by the equation $w^2=y(z-ty)$. This curve is nonsingular and can be identified with $\Sigma_t$. The projection $\pi_{\vp(t)}=:\pi_t$ can be interpreted as $\pi_t:\Sigma_t\to\overline{\CC}:[w:z:y]\mapsto z/y$. In particular, $\pi_t([w:z:1])=z$ and $\pi_t([0:1:0])=\infty$. The canonical involution $\ei:\Sigma_t\to\Sigma_t$ maps $[w:z:1]$ to $[-w:z:1]$, and $[0:t:1]$ and $[0:1:0]$ are its fixed points. The conjugation on $\overline{\CC}$ can be lifted to $\tilde{\iota}:\Sigma_t\to\Sigma_t:[w:z:y]\mapsto [\bar{w}:\bar{z}:\bar{y}]$ or $\ei\circ\tilde{\iota}:[w:z:y]\mapsto [-\bar{w}:\bar{z}:\bar{y}]$. We shall use the former in the following. 

Note that $\tilde{\iota}$ commutes with $\ei$, giving rise to a $\ZT$-representation on each Laplacian eigensubspace of $\osb(\Sigma_t)$. Suppose $\lambda$ is a $\ZT$ eigenvalue of $\lbct$. The corresponding complex eigenspace $V_\lambda$ can be decomposed into $V_\lambda=V_\lambda^+\oplus V_\lambda^-$, where $V_\lambda^\pm$ is the eigenspace of $\tilde{\iota}$ with eigenvalue $\pm1$.

An interesting phenomenon, known as the simple splitting of eigenvalues, occurs when $p_2=0$ is perturbed. The proof presented here serves as a warm-up of the subsequent subsections.

\begin{proposition}\label{mul1twopt}
    For configuration $\vp\in\mathcal{C}_2\setminus\{\{\pm p\}:p\in\sph\}$ and a $\ZT$ eigenvalue $\lambda$ of $\lbc$, it always holds that $\mul\,\lambda=\dim_{\CC}\,V_{\lambda}=1$. Indeed, each $V_\lambda$ is an irreducible $\ZT$-representation.
\end{proposition}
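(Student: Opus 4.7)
The plan is to squeeze $V_\lambda$ between an injection into $\mathbb{C}^2$ (from reading off leading coefficients at $p_2$) and a codimension-one algebraic constraint on the image, so that $\dim_{\mathbb{C}} V_\lambda \le 1$ falls out automatically. Fix the complex coordinate $z_2 = z_{p_2}$ near $p_2 \in \sph$ and the associated coordinate $w_2$ on $\Sigma_t$ with $w_2^2 = z_2$. For every $f \in V_\lambda$, Proposition \ref{nocriticaltwopoints} tells us that $\vn_{p_2}(f) = 0$, so the odd lift admits an expansion
\begin{equation*}
\tilde f(w_2, \bar w_2) = a_2(f)\, w_2 + b_2(f)\, \bar w_2 + O(|w_2|^3).
\end{equation*}
This defines a $\mathbb{C}$-linear map
\begin{equation*}
\Phi : V_\lambda \longrightarrow \mathbb{C}^2, \qquad f \longmapsto (a_2(f), b_2(f)).
\end{equation*}

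I would first argue that $\Phi$ is injective. If $\Phi(f) = 0$, then $\vn_{p_2}(f) \ge 1$, so $f$ would have to be critical; Proposition \ref{nocriticaltwopoints} forbids this for $\vp(t)$ with $t > 0$, forcing $f = 0$. Next, I would constrain the image using Lemma \ref{coefficientn1}, which asserts $a_2(f)^2 = b_2(f)^2$ for every eigensection. Therefore
\begin{equation*}
\mathrm{Im}(\Phi) \subseteq Z := \{(a,b) \in \mathbb{C}^2 : a = b\} \cup \{(a,b) \in \mathbb{C}^2 : a = -b\},
\end{equation*}
a union of two complex lines through the origin. Since the only linear subspaces of $\mathbb{C}^2$ contained in $Z$ are the zero subspace and these two lines, any linear subspace of $\mathbb{C}^2$ sitting inside $Z$ has dimension at most one; in particular $\dim_{\mathbb{C}} \mathrm{Im}(\Phi) \le 1$. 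Combining this with injectivity of $\Phi$ gives $\dim_{\mathbb{C}} V_\lambda \le 1$, and since $\lambda$ is assumed to be an eigenvalue we get equality. A one-dimensional complex representation of $\mathbb{Z}_2$ is automatically irreducible, which settles the second claim.

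There is no serious obstacle here; once one packages the two a priori pieces of information correctly, the bound is immediate. The content is really the combination of (i) the integration-by-parts identity $a_2^2 = b_2^2$ from Lemma \ref{coefficientn1}, which cuts $\mathbb{C}^2$ down to a proper subvariety, and (ii) the non-existence of critical eigensections in Proposition \ref{nocriticaltwopoints}, which prevents any collapse of the leading-coefficient map. An entirely symmetric argument using the coordinate at $p_1$ would work equally well, confirming that both points contribute the same constraint.
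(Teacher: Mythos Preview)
Your argument is correct, and it is a cleaner packaging than the paper's. The paper works through the $\mathbb{Z}_2$-decomposition $V_\lambda = V_\lambda^+ \oplus V_\lambda^-$ under the lifted involution $\tilde\iota$: it first derives $b=\pm a$ on each piece from the $\tilde\iota$-symmetry, then argues separately that $\dim V_\lambda^\pm \le 1$ by a cancellation-of-leading-terms trick, and finally invokes Lemma~\ref{coefficientn1} to exclude $V_\lambda^+$ and $V_\lambda^-$ being simultaneously nontrivial. You collapse these three steps into one: the leading-coefficient map $\Phi$ is injective (by the second clause of Proposition~\ref{nocriticaltwopoints}) and lands in the union of two lines (by Lemma~\ref{coefficientn1}), so its image, being linear, has dimension at most one. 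The inputs are identical; only the organization differs. The paper's route has the minor advantage of making the $\tilde\iota$-equivariant structure explicit, which is reused immediately afterwards when the eigenvalues are split into $\tilde\iota$-invariant and $\tilde\iota$-equivariant parts for the spectral-flow description; your route is shorter and makes the mechanism (injectivity into a degenerate quadric) more transparent.

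One small wording issue: when you write ``$\vn_{p_2}(f)\ge 1$, so $f$ would have to be critical,'' that implication is not literally what you need. Criticality would require $\vn_p(f)\ge 1$ at \emph{both} points. What you are actually using is the stronger second sentence of Proposition~\ref{nocriticaltwopoints}, namely that $\vn_{p_2}(f)=0$ for every nonzero eigensection; this gives injectivity of $\Phi$ directly without passing through criticality. The conclusion is unaffected.
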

\begin{proof}
    Given an eigensection $f\in V_\lambda$, where $\lambda$ is some $\ZT$ eigenvalue of $\lbct$. Then locally, we can expand the corresponding odd function $\tilde{f}$ as $\tilde{f}(w,\bar{w})=aw^{2n+1}+b\bar{w}^{2n+1}+\mathcal{O}(r^{2n+3})$ near $p_2=[0:t:1]\in\Sigma_t$, for $w$ a component of homogeneous coordinate $[w:z:y]$.
    
    If $f\in V_\lambda^{\pm}$, then $\pm \tilde{f}(w,\bar{w})=\tilde{\iota}.\tilde{f}(w,\bar{w})=\tilde{f}\circ\tilde{\iota}(w,\bar{w})=bw^{2n+1}+a\bar{w}^{2n+1}+\mathcal{O}(r^{2n+3})$. Thus $b=\pm a$.

    Suppose $\dim\, V_\lambda^+>1$. One can choose $f_1,f_2\in V_\lambda^{+}$ that are linearly independent. By Lemma \ref{coefficientn1}, $\vn_{p_2}(f_1)=\vn_{p_1}(f_2)=0$. By multiplying some complex numbers, we may assume the leading coefficients of both $f_1$ and $f_2$ at $p_2$ are $(1,1)$. Then $h_1:=f_1-f_2\in V_\lambda^+$ and $\vn_{p_2}(h_1)>0$, which contradicts Proposition \ref{nocriticaltwopoints}. Therefore, $\dim\, V_\lambda^+\le 1$. The same holds for $V_\lambda^-$. 

    If both $V_\lambda^+$ and $V_\lambda^-$ are nontrivial, then we can select $f_+\in V_\lambda^+$ and $f_-\in V_\lambda^-$ with leading coefficients at $p_2$ being $(1,1)$ and $(i,-i)$ respectively. Note also that if $f\in V_\lambda^{\pm}$ with $\vn_{p_2}(f)>0$, then $J_3f$ is still an eigensection in $V_\lambda^{\pm}$. Thus we may assume $\vn_{p_2}(f_+)=\vn_{p_2}(f_-)$. Let $h_2:=f_++f_-$, then its leading coefficients at $p_2$ is $(1+i,1-i)$, contradicting Lemma \ref{coefficientn1}. Consequently, $\mul\,\lambda =1$.
\end{proof}

It follows that on the space $\mathcal{C}_2$, outside the subset $\{\{\pm p\}:p\in\overline{\CC}\}\cong \overline{\CC}$, each configuration has simple $\ZT$ eigenvalues only. This fact serves as an example of Theorem \ref{residualsimple}.

Consider the path $\vp(t)=\{\infty,t\}$. As configuration varies on this path, the spectrum varies. We can describe this spectral flow more precisely. Firstly, by Lemma \ref{TWformula}, the eigenvalue function $\lambda_k(t)$, i.e. the $k$-th $\ZT$ eigenvalue of $\lbct$, is differentiable in $t$. 

Secondly, the two types of $\ZT$-symmetries given by $\tilde{\iota}$, divide the $\ZT$-spectrum of $\lbct$ into two parts. These correspond to two eigenvalue problems on the disk $\overline{\mathbb{H}}=\{z\in \overline{\CC}: \mathrm{Im}~z\ge 0\}$.

The set of fixed points of $\tilde{\iota}$ on $\Sigma_t$ is $\Gamma_t:=\{[w:x:1]:w\in\RR, x\ge t, w^2=x-t\}$. And $\Gamma_t^c:=\{[w:x:1]:w\in i\RR, x\le t, w^2=t-x\}$ is the set of fixed points of $\ei\circ\tilde{\iota}$. 
\begin{proposition}
    Each $\ZT$ eigenvalue of $\lbct$, such that $V_\lambda=V_\lam^+$, is an eigenvalue of spherical Laplacian on $\overline{\mathbb{H}}$, with mixed boundary conditions:
    \begin{align}\frac{\partial}{\partial \nu}u|_{\Gamma_t}\equiv0\text{ while } u|_{\Gamma_t^c}\equiv 0.\label{twoptbdcond1}\end{align}
    Similarly, each $\ZT$ eigenvalue $\lam$ such that $V_\lam=V_\lam^-$ is that with mixed boundary conditions:  
    \begin{align}\frac{\partial}{\partial \nu}u|_{\Gamma_t^c}\equiv 0\text{ while }u|_{\Gamma_t}\equiv 0. \label{twoptbdcond2}\end{align}
\end{proposition}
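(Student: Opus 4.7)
The plan is to set up an explicit correspondence between $\tilde\iota$-eigensections of $\lbct$ and eigenfunctions of the spherical Laplacian on $\overline{\mathbb{H}}$ satisfying the required mixed boundary conditions. The key point is that $\pi_t\circ\tilde\iota=\iota\circ\pi_t$, so $\tilde\iota$ covers complex conjugation on $\overline{\CC}$, whose quotient is precisely $\overline{\mathbb{H}}$. I would choose a branch of the square root $w=\sqrt{z-t}$ on $\overline{\mathbb{H}}\setminus\{t,\infty\}$ taking values in the closed first quadrant $\{a+bi:a\ge 0,\ b\ge 0\}$; this defines a section $s:\overline{\mathbb{H}}\setminus\{t,\infty\}\to\Sigma_t$ of $\pi_t$. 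For $\tilde f\in V_\lambda^{\pm}$, set $u:=\tilde f\circ s$. Because $\pi_t$ is a local isometry away from the branch locus, $u$ automatically satisfies $-\Delta u=\lambda u$ on $\mathbb{H}$.

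The Dirichlet parts of the boundary conditions come from combining the two symmetries. A point of $\Gamma_t^c\cong(-\infty,t]$ lifts under $s$ to $q=(iy,z)$ with $y\ge 0$ and $z=t-y^2\in\RR$, and direct computation gives $\ei(q)=(-iy,z)=\tilde\iota(q)$. Hence for $\tilde f\in V_\lambda^+$, $\tilde f(q)=\tilde f(\tilde\iota q)=\tilde f(\ei q)=-\tilde f(q)$, forcing $u|_{\Gamma_t^c}\equiv 0$. On $\Gamma_t\cong[t,\infty]$, a lift $q=(w,z)$ has $w,z\in\RR$ and is fixed by $\tilde\iota$, so any $\tilde f\in V_\lambda^-$ must vanish there, yielding the Dirichlet condition on $\Gamma_t$ for the $V_\lambda^-$ problem.

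For the Neumann halves I would use local $z$-coordinates on $\Sigma_t$ away from branch points. Near an interior point $z_0\in(t,\infty)$ of $\Gamma_t$, $z$ itself is a holomorphic coordinate on $\Sigma_t$ and in it $\tilde\iota$ is literally $z\mapsto\bar z$, so the $V_\lambda^+$ condition $\tilde f(\bar z)=\tilde f(z)$ translates to $u(\bar z)=u(z)$ and hence $\partial_\nu u|_{\Gamma_t}=0$. Near $z_0\in(-\infty,t)$ the local $z$-coordinate covers one sheet only and $\tilde\iota$ swaps the two sheets; combining this sheet swap with $\ei$-anti-invariance turns $\tilde\iota$-anti-invariance into $u(z)=u(\bar z)$, giving Neumann on $\Gamma_t^c$ for $V_\lambda^-$.

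The main obstacle will be the behaviour of $u$ at the two branch points $t$ and $\infty$, where $s$ is singular. To confirm that $\lambda$ is genuinely an eigenvalue of the mixed BVP, rather than a spurious value, one has to invoke the half-integer expansion $\tilde f(w,\bar w)=aw^{2n+1}+b\bar w^{2n+1}+O(|w|^{2n+3})$ recorded in Section~\ref{4.1} and check that $u$ has finite Dirichlet energy near $t,\infty$, placing it in the correct Sobolev space for the mixed problem. The converse direction—lifting a mixed BVP eigenfunction back to a $\tilde\iota$-($\pm$)-invariant, $\ei$-anti-invariant section on $\Sigma_t$ using the four connected components of $\Sigma_t\setminus(\Gamma_t\cup\Gamma_t^c)$—then goes through by the same case analysis, with the boundary conditions exactly ensuring consistency of the resulting definition along $\Gamma_t\cup\Gamma_t^c$.
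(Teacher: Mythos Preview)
Your proposal is correct and follows essentially the same approach as the paper: both arguments rest on the two identities $\tilde f\circ\tilde\iota=\pm\tilde f$ and $\tilde f\circ(\ei\tilde\iota)=\mp\tilde f$, applied on the respective fixed-point sets $\Gamma_t$ and $\Gamma_t^c$ to read off Neumann and Dirichlet conditions. The paper's proof is terser---it simply invokes these symmetries and asserts the boundary conditions follow, then sketches the converse extension---whereas you unpack the same mechanism in explicit coordinates and additionally raise the (legitimate but ultimately routine) issue of regularity at the branch points, which the paper leaves implicit.
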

\begin{proof}
    For an odd function $\tilde{f}\in V_\lambda^+$, defined on $\Sigma_t$, since $\tilde{f}\circ\tilde{\iota}=f$, while $\tilde{f}\circ\ei\circ\tilde{\iota}=-\tilde{f}$, the boundary conditions of $\tilde{f}$ on $\partial\overline{\mathbb{H}}=\Gamma_t\cup\Gamma_t^c$ follow. 

    Conversely, consider an eigenfunction of the spherical Laplacian $\Delta$, with boundary conditions $\frac{\partial}{\partial \nu}u|_{\Gamma_t}\equiv0\text{ while } u|_{\Gamma_t^c}\equiv 0$. We can extend it to be an odd function $\tilde{f}$ on $\Sigma_t$, by setting $\tilde{f}|_{\overline{\mathbb{H}}}=u$, $\tilde{\iota}.\tilde{f}=\tilde{f}$ and $\tilde{f}\circ\ei=-\tilde{f}$.

    Similar arguments are applied to $ V_\lambda^-$. 
\end{proof}

As an illustration, when $t=0$, and $\lambda=(l-\frac12)(l+\frac12)$, $\dim\, V_\lambda^+=\dim\, V_\lambda^-=l$. The basis of $V_\lambda^+$ is given by sections $\cos (j-\frac{1}{2})\varphi G_l^j(\theta)$, as presented in Proposition \ref{antipodalspectrum}. The remaining sections form a basis of $V_\lambda^-$. These sections satisfy the boundary conditions given above. We name $\ZT$ eigenvalues corresponding to boundary condition \eqref{twoptbdcond1}, in other words, with trivial $\ZT$ (generated by $\tilde{\iota}$) symmetric condition, as the $\tilde{\iota}$-invariant part. On the other hand, those corresponding to \eqref{twoptbdcond2} or the alternating $\ZT$ symmetric condition, are named as the $\tilde{\iota}$-equivariant part.

\begin{proposition}
    As $t\to +\infty$, the $\tilde{\iota}$-invariant part of $\ZT$ eigenvalues strictly increases, while the $\tilde{\iota}$-equivariant part strictly decreases.
\end{proposition}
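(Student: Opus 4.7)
The plan is to apply the Taubes--Wu derivative formula of Lemma \ref{TWformula} along the one-parameter family $\vp(t)=\{\infty,t\}$, exploiting the simplicity of all $\ZT$ eigenvalues guaranteed by Proposition \ref{mul1twopt}. Since $\mul\,\lam=1$ for every eigenvalue of $\lbct$ with $t>0$, each eigenvalue function $\lam(t)$ is smooth in $t$, and its derivative is captured entirely by the leading coefficient at the moving point $p_2=t$; the point $p_1=\infty$ is stationary, so $v(p_1)=0$ and drops out of the Taubes--Wu sum.

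First I would compute $v(p_2)$ in the canonical chart $z_{p_2}=A_{p_2}^{\ast}z$. The $SU(2)$-lift of the rotation $A_{p_2}$ represents this chart as the Mobius transformation $z_{p_2}=(z-t)/(tz+1)$ on $\overline{\CC}$, so that $\left.dz_{p_2}/dz\right|_{z=t}=1/(1+t^2)$. Hence $v(p_2)=1/(1+t^2)\in\RR_{>0}$, and the derivative reduces to
\[
\frac{d\lam}{dt}=\frac{\pi}{2(1+t^2)}\,\re\bigl(a_{p_2}^2\bigr).
\]

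Next I would pin down the sign of $a_{p_2}^2$ from the $\tilde\iota$-symmetry type. Working in the coordinate $w_{p_2}$ on $\Sigma_t$ with $w_{p_2}^2=z_{p_2}$, the projective formula $\tilde\iota([w:z:y])=[\bar w:\bar z:\bar y]$, combined with $t\in\RR$, shows that $\tilde\iota$ acts as $w_{p_2}\mapsto \bar w_{p_2}$ near $p_2$. Expanding $\tilde f=a_{p_2}w_{p_2}+b_{p_2}\bar w_{p_2}+\MO(|w_{p_2}|^3)$ and matching, the condition $f\in V_\lam^{+}$ forces $a_{p_2}=b_{p_2}$, while $f\in V_\lam^{-}$ forces $a_{p_2}=-b_{p_2}$. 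Because $\dim_{\CC}V_\lam=1$ the eigenspace contains a real-valued section, which imposes the additional reality condition $\bar a_{p_2}=b_{p_2}$. In the invariant case the two relations together give $a_{p_2}\in\RR$, hence $a_{p_2}^2\ge 0$; in the equivariant case $a_{p_2}\in i\RR$, hence $a_{p_2}^2\le 0$. Proposition \ref{nocriticaltwopoints} guarantees $\vn_{p_2}(f)=0$ so that $(a_{p_2},b_{p_2})\ne(0,0)$, and under either parity relation this upgrades to $a_{p_2}\ne 0$ and strict inequality. Plugging into the displayed derivative formula yields $d\lam/dt>0$ uniformly on $(0,+\infty)$ for $\tilde\iota$-invariant eigenvalues and $d\lam/dt<0$ for $\tilde\iota$-equivariant ones, which gives the desired strict monotonicity.

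The main obstacle I anticipate is the coordinate bookkeeping: the tangent-vector component $v(p_2)$ must be computed in the precise chart $A_{p_2}^{\ast}z$ prescribed in Lemma \ref{TWformula}, and the sign of $a_{p_2}^2$ becomes definite only after simultaneously imposing the $\tilde\iota$-parity relation on $\Sigma_t$ and the reality condition coming from $\dim_{\CC}V_\lam=1$. Once these conventions are in place, the conclusion is immediate from Lemma \ref{TWformula}, and no further variational or Hadamard-type argument is required.
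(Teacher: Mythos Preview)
Your proposal is correct and follows the same route the paper takes for strict monotonicity: invoke Lemma~\ref{TWformula}, use the simplicity from Proposition~\ref{mul1twopt}, and read off the sign of $a_{p_2}^2$ from the $\tilde\iota$-parity together with reality of the eigensection (exactly the leading-coefficient dichotomy $(1,1)$ versus $(i,-i)$ worked out in the proof of Proposition~\ref{mul1twopt}). The paper's proof prefaces this with a one-line observation that weak monotonicity already follows from the mixed-boundary interpretation on $\overline{\mathbb{H}}$ (the Dirichlet arc $\Gamma_t^c$ grows while the Neumann arc $\Gamma_t$ shrinks as $t\to+\infty$), but that step is redundant once the Taubes--Wu derivative is shown to be strictly signed, so your direct approach loses nothing.
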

\begin{proof}
    The fact that the $\tilde{\iota}$-invariant (resp. $\tilde{\iota}$-equivariant) part is increasing (resp. decreasing) can be deduced from the observation that the Dirichlet (resp. Neumann) boundary $\Gamma_t^c$ broadens while the Neumann (resp. Dirichlet) boundary $\Gamma_t$ contracts as $t\to+\infty$.

    The strictly monotonicity can be derived from Lemma \ref{TWformula} and leading coefficients calculated in the proof of Proposition \ref{mul1twopt}.
\end{proof}

By Theorem \ref{continuousztdegenerate}, as $t\to +\infty$, $\lambda_k(t)$ converges to the $k$-th Laplacian eigenvalue of the round sphere. 

Suppose $\lambda_k(0)=(l-\frac12)(l+\frac12)$ is an eigenvalue of $\mathcal{I}_{\vp(0)}^\CC$, where $\vp(\infty)$ is the antipodal configuration. The function $\lambda_k(t)$ is strictly monotonic in $t$. If $\lambda_k(0)$ belongs to $\tilde{\iota}$-invariant part, it increases and converges to $l(l+1)$ as $t\to+\infty$. If $\lam_k(0)$ belongs to $\tilde{\iota}$-equivariant part, it decreases and converges to $(l-1)l$ as $t\to+\infty$.

Additionally, when $t$ is perturbed, the $2l$ dimensional eigenspace $V_{(l-1/2)(l+1/2)}$ splits into two parts: the $\tilde{\iota}$-invariant and the $\tilde{\iota}$-equivariant part. Each part contains $l$ simple eigensections, which converge to the nearby spherical eigenvalues as $t\to +\infty$. 

On the other hand, for the spherical eigenvalue $l(l+1)$, which has multiplicity $2l+1$, there are $l$ eigenfunctions come from eigensections with eigenvalue $(l-\frac12)(l+\frac12)$, and the remaining $l+1$ come from those with eigenvalue $(l+\frac12)(l+\frac32)$.  

This description of spectral flow on $\mathcal{C}_2$ is summarized in the Figure \ref{spectralflowc2}. It should be emphasized that most of this description have been provided in \cite[Appendix]{taubeswu2020examples}.

\begin{figure}[!h]
    \centering

\tikzset{every picture/.style={line width=0.75pt}} 

\begin{tikzpicture}[x=0.75pt,y=0.75pt,yscale=-1,xscale=1]

\draw    (221.47,69) .. controls (234.47,56) and (243.47,48) .. (260.23,47.3) ;
\draw    (361.23,195.3) .. controls (316.47,159) and (276.47,93) .. (260.23,47.3) ;
\draw [color={rgb, 255:red, 74; green, 144; blue, 226 }  ,draw opacity=0.68 ][line width=1.5]    (52.8,137.6) .. controls (57.47,162.6) and (74.47,193.6) .. (108.6,194) .. controls (142.73,194.4) and (168.47,154.6) .. (167.47,118.6) .. controls (166.47,82.6) and (142.47,44.6) .. (108.6,44.8) ;
\draw   (34,119.4) .. controls (34,78.2) and (67.4,44.8) .. (108.6,44.8) .. controls (149.8,44.8) and (183.2,78.2) .. (183.2,119.4) .. controls (183.2,160.6) and (149.8,194) .. (108.6,194) .. controls (67.4,194) and (34,160.6) .. (34,119.4) -- cycle ;
\draw [color={rgb, 255:red, 208; green, 2; blue, 27 }  ,draw opacity=0.81 ][line width=2.25]    (52.8,137.6) .. controls (46.47,92.6) and (66.47,44.6) .. (108.6,44.8) ;
\draw  [fill={rgb, 255:red, 74; green, 144; blue, 226 }  ,fill opacity=0.49 ] (104.3,44.8) .. controls (104.3,42.43) and (106.23,40.5) .. (108.6,40.5) .. controls (110.97,40.5) and (112.9,42.43) .. (112.9,44.8) .. controls (112.9,47.17) and (110.97,49.1) .. (108.6,49.1) .. controls (106.23,49.1) and (104.3,47.17) .. (104.3,44.8) -- cycle ;
\draw [color={rgb, 255:red, 74; green, 144; blue, 226 }  ,draw opacity=0.81 ][line width=2.25]    (52.8,137.6) .. controls (56.47,158.6) and (70.47,192.6) .. (108.6,194) ;
\draw [line width=1.5]    (221,47) -- (503.47,47.6) ;
\draw [line width=1.5]    (220,195) -- (502.47,195.6) ;
\draw  [color={rgb, 255:red, 0; green, 0; blue, 0 }  ,draw opacity=0.65 ][fill={rgb, 255:red, 74; green, 144; blue, 226 }  ,fill opacity=0.49 ] (104.3,194) .. controls (104.3,191.63) and (106.23,189.7) .. (108.6,189.7) .. controls (110.97,189.7) and (112.9,191.63) .. (112.9,194) .. controls (112.9,196.37) and (110.97,198.3) .. (108.6,198.3) .. controls (106.23,198.3) and (104.3,196.37) .. (104.3,194) -- cycle ;
\draw  [fill={rgb, 255:red, 143; green, 216; blue, 60 }  ,fill opacity=0.55 ] (48.5,137.6) .. controls (48.5,135.23) and (50.43,133.3) .. (52.8,133.3) .. controls (55.17,133.3) and (57.1,135.23) .. (57.1,137.6) .. controls (57.1,139.97) and (55.17,141.9) .. (52.8,141.9) .. controls (50.43,141.9) and (48.5,139.97) .. (48.5,137.6) -- cycle ;
\draw    (361.23,195.3) .. controls (269.47,193) and (258.47,80) .. (260.23,47.3) ;
\draw    (361.23,195.3) .. controls (295.47,182.6) and (263.47,98.6) .. (260.23,47.3) ;
\draw    (361.23,195.3) .. controls (353.47,156) and (298.47,47) .. (260.23,47.3) ;
\draw    (360.07,195.3) .. controls (451.83,193) and (465.47,80) .. (465.8,47.3) ;
\draw    (360.07,195.3) .. controls (425.83,182.6) and (456.47,99) .. (465.8,47.3) ;
\draw    (360.07,195.3) .. controls (367.83,156) and (422.2,48) .. (465.8,47.3) ;
\draw    (360.07,195.3) .. controls (404.83,159) and (443.47,88) .. (465.8,47.3) ;
\draw [color={rgb, 255:red, 126; green, 211; blue, 33 }  ,draw opacity=1 ]   (227.47,138) -- (501.47,138) ;
\draw [color={rgb, 255:red, 126; green, 211; blue, 33 }  ,draw opacity=1 ] [dash pattern={on 4.5pt off 4.5pt}]  (52.8,137.6) -- (227.47,138) ;
\draw  [fill={rgb, 255:red, 255; green, 255; blue, 255 }  ,fill opacity=1 ] (355.33,195.3) .. controls (355.33,192.69) and (357.45,190.57) .. (360.07,190.57) .. controls (362.68,190.57) and (364.8,192.69) .. (364.8,195.3) .. controls (364.8,197.91) and (362.68,200.03) .. (360.07,200.03) .. controls (357.45,200.03) and (355.33,197.91) .. (355.33,195.3) -- cycle ;
\draw  [fill={rgb, 255:red, 255; green, 255; blue, 255 }  ,fill opacity=1 ] (272.53,138.28) .. controls (272.53,136.23) and (274.2,134.57) .. (276.25,134.57) .. controls (278.3,134.57) and (279.97,136.23) .. (279.97,138.28) .. controls (279.97,140.34) and (278.3,142) .. (276.25,142) .. controls (274.2,142) and (272.53,140.34) .. (272.53,138.28) -- cycle ;
\draw  [fill={rgb, 255:red, 255; green, 255; blue, 255 }  ,fill opacity=1 ] (286.53,138.28) .. controls (286.53,136.23) and (288.2,134.57) .. (290.25,134.57) .. controls (292.3,134.57) and (293.97,136.23) .. (293.97,138.28) .. controls (293.97,140.34) and (292.3,142) .. (290.25,142) .. controls (288.2,142) and (286.53,140.34) .. (286.53,138.28) -- cycle ;
\draw  [fill={rgb, 255:red, 255; green, 255; blue, 255 }  ,fill opacity=1 ] (305.53,138.28) .. controls (305.53,136.23) and (307.2,134.57) .. (309.25,134.57) .. controls (311.3,134.57) and (312.97,136.23) .. (312.97,138.28) .. controls (312.97,140.34) and (311.3,142) .. (309.25,142) .. controls (307.2,142) and (305.53,140.34) .. (305.53,138.28) -- cycle ;
\draw  [fill={rgb, 255:red, 255; green, 255; blue, 255 }  ,fill opacity=1 ] (335.53,138.28) .. controls (335.53,136.23) and (337.2,134.57) .. (339.25,134.57) .. controls (341.3,134.57) and (342.97,136.23) .. (342.97,138.28) .. controls (342.97,140.34) and (341.3,142) .. (339.25,142) .. controls (337.2,142) and (335.53,140.34) .. (335.53,138.28) -- cycle ;
\draw  [fill={rgb, 255:red, 255; green, 255; blue, 255 }  ,fill opacity=1 ] (379.53,138.28) .. controls (379.53,136.23) and (381.2,134.57) .. (383.25,134.57) .. controls (385.3,134.57) and (386.97,136.23) .. (386.97,138.28) .. controls (386.97,140.34) and (385.3,142) .. (383.25,142) .. controls (381.2,142) and (379.53,140.34) .. (379.53,138.28) -- cycle ;
\draw  [fill={rgb, 255:red, 255; green, 255; blue, 255 }  ,fill opacity=1 ] (408.53,138.28) .. controls (408.53,136.23) and (410.2,134.57) .. (412.25,134.57) .. controls (414.3,134.57) and (415.97,136.23) .. (415.97,138.28) .. controls (415.97,140.34) and (414.3,142) .. (412.25,142) .. controls (410.2,142) and (408.53,140.34) .. (408.53,138.28) -- cycle ;
\draw  [fill={rgb, 255:red, 255; green, 255; blue, 255 }  ,fill opacity=1 ] (428.53,138.28) .. controls (428.53,136.23) and (430.2,134.57) .. (432.25,134.57) .. controls (434.3,134.57) and (435.97,136.23) .. (435.97,138.28) .. controls (435.97,140.34) and (434.3,142) .. (432.25,142) .. controls (430.2,142) and (428.53,140.34) .. (428.53,138.28) -- cycle ;
\draw  [fill={rgb, 255:red, 255; green, 255; blue, 255 }  ,fill opacity=1 ] (442.53,138.28) .. controls (442.53,136.23) and (444.2,134.57) .. (446.25,134.57) .. controls (448.3,134.57) and (449.97,136.23) .. (449.97,138.28) .. controls (449.97,140.34) and (448.3,142) .. (446.25,142) .. controls (444.2,142) and (442.53,140.34) .. (442.53,138.28) -- cycle ;
\draw [line width=1.5]  [dash pattern={on 5.63pt off 4.5pt}]  (191.47,195) -- (220,195) ;
\draw [line width=1.5]  [dash pattern={on 5.63pt off 4.5pt}]  (502.47,195.6) -- (525,195.6) ;
\draw [line width=1.5]  [dash pattern={on 5.63pt off 4.5pt}]  (192.47,47) -- (221,47) ;
\draw [line width=1.5]  [dash pattern={on 5.63pt off 4.5pt}]  (503.47,47.6) -- (525,47.6) ;
\draw [color={rgb, 255:red, 126; green, 211; blue, 33 }  ,draw opacity=1 ][line width=0.75]  [dash pattern={on 4.5pt off 4.5pt}]  (501.47,138) -- (525,138) ;
\draw  [dash pattern={on 4.5pt off 4.5pt}]  (195.47,108) .. controls (204.47,90) and (208.47,83) .. (221.47,69) ;
\draw    (224.47,79) .. controls (235.47,65) and (243.47,55) .. (260.23,47.3) ;
\draw  [dash pattern={on 4.5pt off 4.5pt}]  (195.47,122) .. controls (204.47,105) and (214.47,93) .. (224.47,79) ;
\draw    (224.47,127) .. controls (234.47,115) and (255.47,74) .. (260.23,47.3) ;
\draw  [dash pattern={on 4.5pt off 4.5pt}]  (199.47,158) .. controls (212.47,143) and (213.47,141) .. (224.47,127) ;
\draw  [fill={rgb, 255:red, 255; green, 255; blue, 255 }  ,fill opacity=1 ] (255.5,47.3) .. controls (255.5,44.69) and (257.62,42.57) .. (260.23,42.57) .. controls (262.85,42.57) and (264.97,44.69) .. (264.97,47.3) .. controls (264.97,49.91) and (262.85,52.03) .. (260.23,52.03) .. controls (257.62,52.03) and (255.5,49.91) .. (255.5,47.3) -- cycle ;
\draw    (510.47,75) .. controls (497.47,62) and (482.23,48) .. (465.47,47.3) ;
\draw  [dash pattern={on 4.5pt off 4.5pt}]  (535.47,110) .. controls (526.47,92) and (523.47,89) .. (510.47,75) ;
\draw    (510.47,94) .. controls (499.47,80) and (482.23,55) .. (465.47,47.3) ;
\draw  [dash pattern={on 4.5pt off 4.5pt}]  (534.47,129) .. controls (525.47,112) and (520.47,108) .. (510.47,94) ;
\draw    (509.47,146.6) .. controls (496.47,133.6) and (472.47,95.6) .. (465.47,47.3) ;
\draw  [dash pattern={on 4.5pt off 4.5pt}]  (532.47,165.6) .. controls (525.47,162.6) and (519.47,156.6) .. (509.47,146.6) ;
\draw    (506.47,63) .. controls (494.47,55) and (491.47,48) .. (465.47,47.3) ;
\draw  [dash pattern={on 4.5pt off 4.5pt}]  (535.47,93) .. controls (528.47,79) and (516.47,73) .. (506.47,63) ;
\draw    (465.8,47.3) .. controls (486.47,97.6) and (497.47,112.6) .. (509.47,126.6) ;
\draw  [dash pattern={on 4.5pt off 4.5pt}]  (532.47,149.6) .. controls (520.47,138.6) and (517.47,135.6) .. (509.47,126.6) ;
\draw  [fill={rgb, 255:red, 255; green, 255; blue, 255 }  ,fill opacity=1 ] (461.07,47.3) .. controls (461.07,44.69) and (463.19,42.57) .. (465.8,42.57) .. controls (468.42,42.57) and (470.53,44.69) .. (470.53,47.3) .. controls (470.53,49.91) and (468.42,52.03) .. (465.8,52.03) .. controls (463.19,52.03) and (461.07,49.91) .. (461.07,47.3) -- cycle ;

\draw (104.3,202.4) node [anchor=north west][inner sep=0.75pt]    {$0$};
\draw (98,19.73) node [anchor=north west][inner sep=0.75pt]    {$\infty $};
\draw (60,121.73) node [anchor=north west][inner sep=0.75pt]    {$\textcolor[rgb]{0.25,0.46,0.02}{t}$};
\draw (161,96.4) node [anchor=north west][inner sep=0.75pt]    {$\overline{\mathbb{H}}$};
\draw (70,70.4) node [anchor=north west][inner sep=0.75pt]    {$\textcolor[rgb]{0.82,0.01,0.11}{\Gamma }\textcolor[rgb]{0.82,0.01,0.11}{_{t}}$};
\draw (125,154.4) node [anchor=north west][inner sep=0.75pt]    {$\textcolor[rgb]{0.29,0.56,0.89}{\Gamma }\textcolor[rgb]{0.29,0.56,0.89}{_{t}^{c}}$};
\draw (286,208.4) node [anchor=north west][inner sep=0.75pt]    {$\lambda =( l-1/2)( l+1/2)$};
\draw (232,20.4) node [anchor=north west][inner sep=0.75pt]    {$( l-1) l$};
\draw (442,20.4) node [anchor=north west][inner sep=0.75pt]    {$l( l+1)$};
\draw (435,161.4) node [anchor=north west][inner sep=0.75pt]    {$V_{\lambda }^{+}$};
\draw (268,159.4) node [anchor=north west][inner sep=0.75pt]    {$V_{\lambda }^{-}$};
\draw (550,185.4) node [anchor=north west][inner sep=0.75pt]    {$\mathrm{Spec}(\lb)$};
\draw (544,36.4) node [anchor=north west][inner sep=0.75pt]    {$\mathrm{Spec}\left(\mathbb{S}^{2}\right)$};
\draw (544.47,128.4) node [anchor=north west][inner sep=0.75pt]    {$\mathrm{Spec}\big(\mathcal{I}_{\mathfrak{p}(\textcolor[rgb]{0.25,0.46,0.02}{t})}\big)$};

\end{tikzpicture}

    \caption{The spectral flow on $\mathcal{C}_2$.}
    \label{spectralflowc2}
\end{figure}
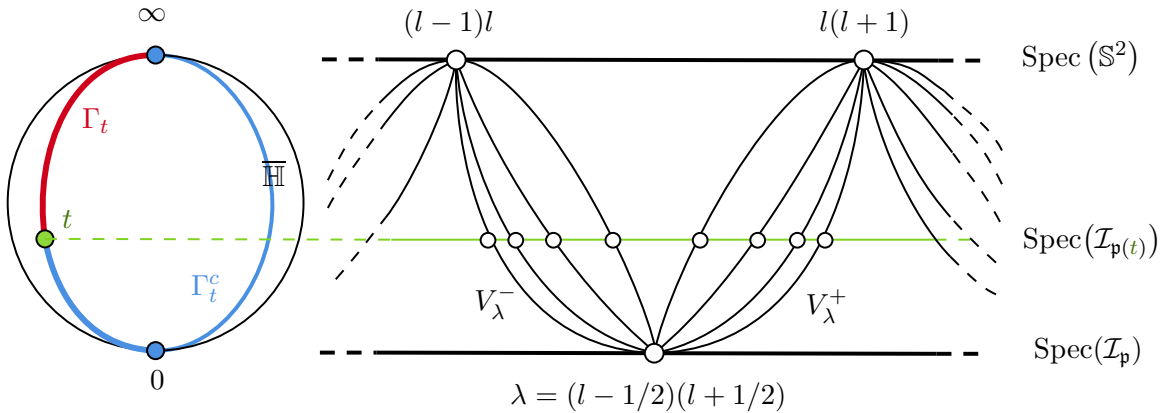

\subsection{Finite group actions on $\lb$}\label{grpaction}
We summarize some basic facts of lifting of finite group action from $\sph$ to $\Sigma_\vp$. 

\begin{lemma}
    Given a (anti-)holomorphic automorphism $g:\sph\to\sph$ that preseves the set $\vp$, there exists a (anti-)holomorphic automorphism of $\Sigma_\vp\setminus\vp$, such that the following commutes 
\[
\begin{tikzcd}
\Sigma_\vp\setminus\vp \arrow[r,"\tilde{g}"]\arrow[d,"\pi"]
&
\Sigma_\vp\setminus\vp \arrow[d,"\pi"]\\
\sph\setminus\vp \arrow[r,"g"]&\sph\setminus\vp
\end{tikzcd}
\]
\end{lemma}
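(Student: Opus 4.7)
The plan is to view $\pi:\Sigma_\vp\setminus\vp\to\sph\setminus\vp$ as an ordinary (unbranched) double cover and invoke the classical lifting criterion for covering spaces. Since this covering is classified by the kernel of the monodromy representation $\rho:\pi_1(\sph\setminus\vp)\to\ZT$, the essential step is to verify that the induced automorphism $g_\ast$ of $\pi_1(\sph\setminus\vp)$ preserves $\ker\rho$; then lifting is automatic, and the (anti-)holomorphic character of $\tilde g$ is inherited from $g$ through the local biholomorphism $\pi$.

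First I would record that $g$ restricts to a diffeomorphism of $\sph\setminus\vp$, because $g(\vp)=\vp$. Fix a basepoint $x_0\in\sph\setminus\vp$ and a lift $\tx_0\in\pi^{-1}(x_0)$; after precomposing with a path connecting $g(x_0)$ back to $x_0$, we obtain an induced homomorphism $g_\ast:\pi_1(\sph\setminus\vp,x_0)\to\pi_1(\sph\setminus\vp,x_0)$. The key observation is that for every $p\in\vp$, the small positively oriented loop $\gamma_p$ around $p$ is sent by $g$ to a loop freely homotopic to $\gamma_{g(p)}^{\pm 1}$, because $g$ is a local (anti-)holomorphic diffeomorphism preserving $\vp$. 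Since $\rho(\gamma_q)=-1$ for every $q\in\vp$, and since these loops generate $\pi_1(\sph\setminus\vp)$, it follows that $\rho\circ g_\ast=\rho$, so $g_\ast(\ker\rho)\subset\ker\rho$.

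Next, by the standard lifting criterion for covering spaces, the map $g\circ\pi:\Sigma_\vp\setminus\vp\to\sph\setminus\vp$ lifts through $\pi$ to a continuous map $\tg:\Sigma_\vp\setminus\vp\to\Sigma_\vp\setminus\vp$ with $\pi\circ\tg=g\circ\pi$; the lift is unique once we specify $\tg(\tx_0)\in\pi^{-1}(g(x_0))$ (the other choice differs by composition with the deck involution $\ei$). Applying the same construction to $g^{-1}$ and invoking uniqueness of lifts gives $\widetilde{g^{-1}}\circ\tg=\id$, so $\tg$ is a homeomorphism.

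Finally, for the (anti-)holomorphicity: away from $\vp$, the projection $\pi$ is a local biholomorphism, so locally we may write $\tg=\pi^{-1}\circ g\circ\pi$ on suitable open sets; this exhibits $\tg$ as (anti-)holomorphic whenever $g$ is. The main subtlety, rather than an obstacle, is the $\ZT$ ambiguity in the choice of lift, which is harmless: both lifts satisfy the required commuting square, and they are exchanged by the canonical involution $\ei$. No hard analysis is required; the argument is a direct application of covering space theory combined with the observation that the monodromy takes the same value on every meridian.
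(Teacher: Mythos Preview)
Your proof is correct and takes essentially the same approach as the paper: both use the covering-space lifting criterion to produce $\tilde g$ and then check (anti-)holomorphicity locally via $\tilde g|_U=\pi^{-1}\circ g\circ\pi$. If anything, your version is more explicit: the paper simply asserts that $\pi_\ast(\pi_1(\Sigma_\vp\setminus\vp))=(g\circ\pi)_\ast(\pi_1(\Sigma_\vp\setminus\vp))$, whereas you verify the underlying fact $g_\ast(\ker\rho)\subset\ker\rho$ by checking it on meridian generators.
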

\begin{proof}
    Note that both $\pi:\Sigma_\vp\setminus\vp\to\sph\setminus\vp$ and $g\circ\pi:\Sigma_\vp\setminus\vp\to\sph\setminus\vp$ are $2$-fold coverings. Moreover, $\pi_\ast(\pi_1(\Sigma_\vp\setminus\vp))=(g\circ\pi)_\ast(\pi_1(\Sigma_\vp\setminus\vp))$, hence an isomorphism $\tilde{g}$ between these two covering spaces exists. It remains to show that $\tilde{g}$ is a (anti-)holomorphic isomorphism. This can be done locally. Since for any point $x\in\Sigma_{\vp}\setminus\vp$, there is a neighborhood $U$ of $x$ such that $\pi|_U$ is biholomorphic, it follows that $\tilde{g}|_U=(\pi^{-1}\circ g\circ\pi)|_U$ is a (anti-)holomorphic isomorphism as long as $g|_{\pi(U)}$ is. 
\end{proof}

$\tilde{g}$ naturally extends to the entire $\Sigma_\vp$ and the resulting map is a (anti-)holomorphic automorphism of $\Sigma_\vp$. Now consider two different choices of the lift of $g$, namely $\tilde{g}_1$ and $\tilde{g}_2$. These satisfy $\tilde{g}_1\circ \tilde{g}_2^{-1}=\ei$, where $\ei$ is the canonical involution on $\Sigma_\vp$. In particular, the identity $\mathrm{id}_{\sph}$ can be lifted to $\mathrm{id}_{\Sigma_\vp}=:\mathrm{id}$ or $\ei$. It follows that:

\begin{lemma}
For any isomorphism $g:\sph\to \sph$ preserving $\vp$, there are exactly two different lifts on $\Sigma_\vp$, namely $\tilde{g}_1$ and $\tilde{g}_2$. Both of them commute with $\ei$.\label{commuteswithinv}
\end{lemma}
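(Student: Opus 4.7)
The plan is to combine the construction in the previous lemma with standard deck-transformation theory. For the count of lifts, I would take two (anti-)holomorphic lifts $\tilde{g}_1$ and $\tilde{g}_2$ of $g$ and consider the composition $\tilde{g}_1 \circ \tilde{g}_2^{-1}$, which is an automorphism of $\Sigma_\vp$ projecting to $\mathrm{id}_{\sph}$ under $\pi$. Its restriction to $\Sigma_\vp \setminus \vp$ therefore lies in the deck transformation group of the unbranched double cover $\pi : \Sigma_\vp \setminus \vp \to \sph \setminus \vp$. Since this deck group has order two, namely $\{\mathrm{id}, \ei\}$, we conclude $\tilde{g}_2 \in \{\tilde{g}_1,\, \ei \circ \tilde{g}_1\}$ on $\Sigma_\vp \setminus \vp$, and by continuity on all of $\Sigma_\vp$. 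Combined with the fact that both $\tilde{g}_1$ and $\ei \circ \tilde{g}_1$ are genuine lifts (the latter because $\pi \circ \ei = \pi$), this yields exactly two distinct lifts.

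To prove commutativity with $\ei$, I would apply the same principle: for any lift $\tilde{g}$ of $g$, the conjugate $\tilde{g} \circ \ei \circ \tilde{g}^{-1}$ projects to $g \circ \mathrm{id}_{\sph} \circ g^{-1} = \mathrm{id}_{\sph}$, so it must also be a deck transformation and hence belongs to $\{\mathrm{id}, \ei\}$. It cannot equal $\mathrm{id}$, because conjugation is a group automorphism and $\ei \neq \mathrm{id}$. Therefore $\tilde{g} \circ \ei \circ \tilde{g}^{-1} = \ei$, which is the desired commutation relation. Since this argument applies equally to both lifts, both commute with $\ei$.

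There is no substantive obstacle here; the statement is essentially a routine consequence of the branched double cover structure. The only minor point worth verifying explicitly is that identifications on the ramification locus $\vp$ follow by continuity once they are established on the dense open complement $\Sigma_\vp \setminus \vp$, which is automatic since all maps involved are (anti-)holomorphic on the whole surface.
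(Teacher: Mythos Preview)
Your proof is correct and follows essentially the same approach as the paper: both arguments reduce to the fact that the only lifts of $\mathrm{id}_{\sph}$ are $\mathrm{id}$ and $\ei$. The paper establishes the two-lift count in the paragraph preceding the lemma (exactly as you do, via $\tilde g_1\circ\tilde g_2^{-1}$), and for commutativity it argues by contradiction that $\tilde g_j\circ\ei$ must equal $\ei\circ\tilde g_j$ among the two available lifts; your conjugation formulation $\tilde g\circ\ei\circ\tilde g^{-1}\in\{\mathrm{id},\ei\}$ is a slightly cleaner packaging of the same idea.
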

\begin{proof}
    Assume $\tilde{g}_j\circ\ei\ne \ei\circ\tilde{g}_j$ for one of $j=1,2$. These automorphisms are both lifts of $g$. We conclude that either $\tilde{g}_j\circ\ei=\tilde{g}_j$ or $\ei\circ\tilde{g}_j=\tilde{g}_j$, both implies $\ei=\mathrm{id}$, leading to a contradiction.
\end{proof}

Let $G$ be a subgroup of $\mathrm{O}(3)$ preserving $\vp$. By the previous two lemmas, there exists a group $\hat{G}$ consisting of holomorphic and anti-holomorphic automorphisms on $\Sigma_\vp$, that preserves $\vp$. Furthermore, we have:
\begin{lemma}
    There exists an exact sequence $0\to\ZT\to\hat{G}\to G\to 1$, where $\ZT$ is generated by $\ei$. The homomorphism $\hat{G}\to G$ is given by $g\mapsto \pi\circ g\circ \pi^{-1}$, which is well-defined by construction of $\hat{G}$.
\end{lemma}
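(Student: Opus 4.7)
The plan is to verify that the assignment $q:\hat{G}\to G$, $\hat{g}\mapsto \pi\circ \hat{g}\circ \pi^{-1}$, satisfies four properties: (i) it is well-defined, (ii) it is a group homomorphism, (iii) it is surjective, and (iv) its kernel is exactly $\langle \ei\rangle \cong \ZT$. Together with the fact that this kernel is normal (in fact central) in $\hat{G}$, these assemble the claimed short exact sequence. No analytic input is needed beyond the two preceding lemmas; the argument is purely a formal check on the definition of $\hat{G}$.

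For (i) and (ii): by construction of $\hat{G}$, every $\hat{g}\in \hat{G}$ arises as a lift of some $g\in G$, meaning $\pi\circ \hat{g} = g\circ \pi$ on $\Sigma_\vp\setminus \vp$. Because $\pi$ is surjective, such a $g$ is uniquely determined by $\hat{g}$, so setting $q(\hat{g}):=g$ is unambiguous. To see $q$ is a homomorphism, if $q(\hat{g}_i)=g_i$ for $i=1,2$, then
\[
\pi\circ(\hat{g}_1\hat{g}_2) \;=\; g_1\circ\pi\circ\hat{g}_2 \;=\; (g_1g_2)\circ\pi,
\]
so $q(\hat{g}_1\hat{g}_2)=g_1g_2$.

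For (iii) and (iv): surjectivity is immediate from the first lemma of the subsection, which produces a lift of any $g\in G$. For the kernel, $\hat{g}\in \ker q$ iff $\pi\circ \hat{g}=\pi$, i.e., $\hat{g}$ is a lift of $\mathrm{id}_{\sph}$. The second lemma of the subsection lists exactly two such lifts, namely $\mathrm{id}_{\Sigma_\vp}$ and $\ei$, so $\ker q=\{\mathrm{id},\ei\}\cong \ZT$. Normality of $\langle \ei\rangle$ follows from the assertion in Lemma preceding this one that both lifts of any $g\in G$ commute with $\ei$; thus $\ei$ commutes with every element of $\hat{G}$, placing $\langle\ei\rangle$ in the center. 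This gives the exact sequence $0\to \ZT \to \hat{G}\to G\to 1$.

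I do not anticipate any genuine obstacle: the only minor subtlety is that elements of $\hat{G}$ may be anti-holomorphic, but the above manipulations use only that they are diffeomorphisms of $\Sigma_\vp$ compatible with $\pi$, so holomorphicity plays no role in the argument.
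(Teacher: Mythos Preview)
Your proof is correct and follows exactly the approach the paper intends: the lemma is stated in the paper without an explicit proof, as it is a direct formal consequence of the two preceding lemmas (existence of lifts and the fact that any $g$ has exactly two lifts, namely $\tilde g$ and $\ei\tilde g$, both commuting with $\ei$). You have simply written out these implicit verifications in full.
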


The group $\hat{G}$, along with the sequence, is known as a central extension by $\ZT$ of $G$. Isomorphic classes of such extensions correspond one-to-one with cohomology classes in $H^2(G;\ZT)$. Unfortunately, in general, this cohomology group could be nontrivial, and hence it's possible that $\hat{G}\ne G\times\ZT$. When $\hat{G}$ is isomorphic with $G\times\ZT$, we say that the $G$-action on $\sph$ can be lifted to $\Sigma_\vp$. Note that if the exact sequence $0\to\ZT\to\hat{G}\to G\to 1$ is split, then $\hat{G}\cong G\times\ZT$.

Since $\ei$ lies in the center of $\hat{G}$, the group $\hat{G}$ acts on the space of odd functions $H^1_-(\Sigma_\vp)$ and hence $\sobc$. On the other hand, we have:
\begin{lemma}
    For an isometry $g\in\mathrm{O}(3)$, any lift $\tilde{g}$ is an isometry under the pullback metric $\pi^\ast\mathrm{ds}^2$ on $\Sigma_\vp$.
\end{lemma}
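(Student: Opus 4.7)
The plan is to deduce this directly from the defining commutative diagram $\pi\circ\tilde{g}=g\circ\pi$ on $\Sigma_\vp\setminus\vp$, together with the fact that $g$ is an isometry of $(\sph,\mathrm{ds}^2)$. The argument is essentially a one-line pullback calculation, so the work consists mostly of checking that nothing breaks at the conical points $\vp\subset\Sigma_\vp$.

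First I would work on the smooth locus $\Sigma_\vp\setminus\vp$, where $\pi$ is a local diffeomorphism onto $\sph\setminus\vp$. There the metric $\pi^\ast\mathrm{ds}^2$ is honestly smooth, and $\tilde{g}$ is a diffeomorphism of $\Sigma_\vp\setminus\vp$. Using the commutation relation $\pi\circ\tilde{g}=g\circ\pi$ and the functoriality of pullback, one computes
\begin{equation*}
\tilde{g}^{\ast}(\pi^{\ast}\mathrm{ds}^2)=(\pi\circ\tilde{g})^{\ast}\mathrm{ds}^2=(g\circ\pi)^{\ast}\mathrm{ds}^2=\pi^{\ast}(g^{\ast}\mathrm{ds}^2)=\pi^{\ast}\mathrm{ds}^2,
\end{equation*}
where the last equality uses that $g\in\mathrm{O}(3)$ is an isometry of the round metric. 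This shows $\tilde{g}$ is an isometry of $(\Sigma_\vp\setminus\vp,\pi^\ast\mathrm{ds}^2)$.

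Next I would extend the conclusion across the conical locus. Since $g$ permutes the points of $\vp$ and $\tilde{g}$ is a lift of $g$, $\tilde{g}$ permutes the preimage points $\vp\subset\Sigma_\vp$. Near a cone point $p\in\vp$, in a branched coordinate $w$ with $w^2$ a stereographic coordinate of $\pi(p)$ on $\sph$, the metric $\pi^\ast\mathrm{ds}^2$ is a smooth positive multiple of $|w|^2|dw|^2$ on the punctured disk and extends to the standard cone of angle $4\pi$ at $w=0$. The map $\tilde{g}$, being a holomorphic or anti-holomorphic local isomorphism sending $p$ to another cone point, extends continuously across $p$, and since isometry of the smooth conical metric on the punctured neighborhoods is an equality of continuous tensor fields on the complement of a single point, it passes to the closure by continuity.

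I do not expect a genuine obstacle here; the only subtlety worth flagging is the conical-point extension, and that is handled by continuity together with the fact that $\tilde{g}(\vp)=\vp$ as a set. The statement is essentially formal once the commuting square has been set up in the preceding lemmas.
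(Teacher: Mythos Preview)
Your proof is correct and follows exactly the same approach as the paper: the one-line pullback computation $\tilde{g}^{\ast}(\pi^{\ast}\mathrm{ds}^2)=(\pi\circ\tilde{g})^{\ast}\mathrm{ds}^2=(g\circ\pi)^{\ast}\mathrm{ds}^2=\pi^{\ast}(g^{\ast}\mathrm{ds}^2)=\pi^{\ast}\mathrm{ds}^2$ is precisely what the paper does. Your additional care about extending across the conical locus is not in the paper's proof, which simply treats the identity of pullback metrics as holding wherever defined.
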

\begin{proof}
    Since $g\circ\pi=\pi\circ \tilde{g}$, $\pi^\ast\mathrm{ds}^2=(g\circ\pi)^\ast\mathrm{ds}^2=(\pi\circ \tilde{g})^\ast\mathrm{ds}^2=\tilde{g}^\ast(\pi^\ast\mathrm{ds}^2)$. It follows that $\tilde{g}$ is isometric.
\end{proof}

From this lemma, we conclude that $\hat{G}$ commutes with the singular Laplacian $\Delta$ on $\Sigma_\vp$. Therefore, each eigenspace $V_\lambda\subset \sobc$ of $\Delta$ serves as a finite dimensional complex representation of $\hat{G}$. If $G$ can be lifted to $\Sigma_\vp$, i.e. $\hat{G}= G\times\ZT$, then $V_\lam$ can be regarded as a representation of $G$. From classical representation theory, $V_\lam$ can be decomposed into direct sum of irreducible $G$-representation.

Given an eigensection $f\in V_\lambda$, assume that $f$ lies in one of the irreducible summands of the decomposition of $V_\lambda$. If irreducible representations of $G$ are classified completely, and suppose it is known that how $G$ acts on the summand in which $f$ lies, then the action of $G$ on $f$ would be clear. Consequently, we may discuss the relations between the leading coefficients of $f$ near points in $\vp$. Combining this with algebraic relations that were proved in the previous sections, leads to the main results of this article, which will be discussed in the following.

\subsection{The tetrahedral configuration}\label{tetrahedralcase}
In this subsection, we consider the tetrahedral configuration $\vp$, i.e. the configurations comprising vertices of the regular tetrahedron on $\sph$. We first discuss the finite group action on $\sph$ that preserves $\vp$, and lift it to the surface $\Sigma_\vp$. Next, using this group action, we provide another description of the Taubes-Wu tetrahedral eigensections, which were introduced in \cite{taubeswu2020examples}. As previously mentioned, we shall combine the algebraic identities proved in the preceding section, with a representation-theoretic viewpoint, to study these tetrahedral eigensections. We aim to establish relations between leading coefficients of eigensections with certain symmetric conditions. It follows from this that different types of irreducible representations are mutually exclusive. This leads to the deformation rigidity of Taubes-Wu tetrahedral eigensections.

The tetrahedron subgroup of $\SO$, denoted as $A_4$, acts on $\sph\setminus\vp$ when $\vp$ represents the vertices of a regular tetrahedron. To be precise, let $\vp=\{p_1=(\frac{2\sqrt{2}}{3},0,\frac{1}{3}),p_2=(-\frac{\sqrt{2}}{3},-\frac{\sqrt{2}}{\sqrt{3}},\frac{1}{3}),p_3=(-\frac{\sqrt{2}}{3},\frac{\sqrt{2}}{\sqrt{3}},\frac{1}{3}),p_4=(0,0,-1)\}$. Designate $\sigma_l\in A_4$ as $\frac{2\pi}{3}$ rotations in the clockwise direction about the oriented line from the origin to $p_l~(1\le l\le 4)$. 
We aim to show that this $A_4$-action can be lifted to $\Sigma_\vp$.

It follows from Lemma \ref{commuteswithinv} that, for $\sigma_l\in A_4$, one can choose a unique lift $\tilde{\sigma}_l$ satisfying $\tilde{\sigma_l}^3=\mathrm{id}$. We fix these choices from now on. Next, we provide a more detailed discussion about the lifts $\tilde{\sigma}_l$ $(1\le l\le 4)$.

In local coordinates $z_l$ near $p_l\in\vp$ on $\sph$, that was introduced in Section \ref{4.1}, $\sigma_l$ corresponds to multiplication with $e^{\frac{4\pi}{3}i}$. Consider a complex coordinate $w_l$ near $p_l\in\Sigma_\vp$, in which the projection $\pi:\Sigma_\vp\to\sph$ can be interpreted as $w_l\mapsto w_l^2=z_l$. The lift of $\sigma_l$ must be multiplication with either $e^{\frac{2\pi}{3}i}$ or $-e^{\frac{2\pi}{3}i}$ in the $w_l$-coordinate. Recall the condition $\tilde{\sigma}_l^3=\mathrm{id}$, the only possible choice is the former. 

The points in $\vp$ and geodesics connecting them establish a triangulation of $\sph$ (refer to Figure \ref{trionsph}). We will denote the geodesic triangular domains by $A$, $B$, $C$ and $D$. Each of these domains has a preimage in $\Sigma_\vp$ consisting of two isometric components, which we distinguish using the superscript $+$ and $-$.

\begin{figure}[!h]
    \centering
\tikzset{every picture/.style={line width=0.75pt}} 

\begin{tikzpicture}[x=0.75pt,y=0.75pt,yscale=-1,xscale=1]

\draw [color={rgb, 255:red, 208; green, 2; blue, 27 }  ,draw opacity=1 ][line width=0.75]    (149.8,64.6) -- (150.41,41.53) ;
\draw [shift={(150.47,39.53)}, rotate = 91.52] [color={rgb, 255:red, 208; green, 2; blue, 27 }  ,draw opacity=1 ][line width=0.75]    (9.84,-2.96) .. controls (6.25,-1.25) and (2.97,-0.27) .. (0,0) .. controls (2.97,0.27) and (6.25,1.26) .. (9.84,2.96)   ;
\draw [color={rgb, 255:red, 208; green, 2; blue, 27 }  ,draw opacity=1 ][line width=0.75]    (149.8,64.6) -- (136.46,63.67) ;
\draw [shift={(134.47,63.53)}, rotate = 3.98] [color={rgb, 255:red, 208; green, 2; blue, 27 }  ,draw opacity=1 ][line width=0.75]    (7.65,-2.3) .. controls (4.86,-0.97) and (2.31,-0.21) .. (0,0) .. controls (2.31,0.21) and (4.86,0.98) .. (7.65,2.3)   ;
\draw [color={rgb, 255:red, 208; green, 2; blue, 27 }  ,draw opacity=1 ][line width=0.75]    (227.8,93.6) -- (220.1,70.43) ;
\draw [shift={(219.47,68.53)}, rotate = 71.61] [color={rgb, 255:red, 208; green, 2; blue, 27 }  ,draw opacity=1 ][line width=0.75]    (9.84,-2.96) .. controls (6.25,-1.25) and (2.97,-0.27) .. (0,0) .. controls (2.97,0.27) and (6.25,1.26) .. (9.84,2.96)   ;
\draw [color={rgb, 255:red, 208; green, 2; blue, 27 }  ,draw opacity=1 ][line width=0.75]    (112.8,106.6) -- (115.27,81.52) ;
\draw [shift={(115.47,79.53)}, rotate = 95.63] [color={rgb, 255:red, 208; green, 2; blue, 27 }  ,draw opacity=1 ][line width=0.75]    (9.84,-2.96) .. controls (6.25,-1.25) and (2.97,-0.27) .. (0,0) .. controls (2.97,0.27) and (6.25,1.26) .. (9.84,2.96)   ;
\draw [color={rgb, 255:red, 208; green, 2; blue, 27 }  ,draw opacity=1 ][line width=0.75]    (112.8,106.6) -- (128.47,107.43) ;
\draw [shift={(130.47,107.53)}, rotate = 183.02] [color={rgb, 255:red, 208; green, 2; blue, 27 }  ,draw opacity=1 ][line width=0.75]    (7.65,-2.3) .. controls (4.86,-0.97) and (2.31,-0.21) .. (0,0) .. controls (2.31,0.21) and (4.86,0.98) .. (7.65,2.3)   ;
\draw [color={rgb, 255:red, 208; green, 2; blue, 27 }  ,draw opacity=1 ][line width=0.75]    (169.8,169.6) -- (145.44,173.24) ;
\draw [shift={(143.47,173.53)}, rotate = 351.5] [color={rgb, 255:red, 208; green, 2; blue, 27 }  ,draw opacity=1 ][line width=0.75]    (9.84,-2.96) .. controls (6.25,-1.25) and (2.97,-0.27) .. (0,0) .. controls (2.97,0.27) and (6.25,1.26) .. (9.84,2.96)   ;
\draw [color={rgb, 255:red, 208; green, 2; blue, 27 }  ,draw opacity=1 ][line width=0.75]    (169.8,169.6) -- (179.19,180.99) ;
\draw [shift={(180.47,182.53)}, rotate = 230.49] [color={rgb, 255:red, 208; green, 2; blue, 27 }  ,draw opacity=1 ][line width=0.75]    (7.65,-2.3) .. controls (4.86,-0.97) and (2.31,-0.21) .. (0,0) .. controls (2.31,0.21) and (4.86,0.98) .. (7.65,2.3)   ;
\draw [color={rgb, 255:red, 74; green, 144; blue, 226 }  ,draw opacity=1 ] [dash pattern={on 4.5pt off 4.5pt}]  (106.47,69.13) .. controls (110.47,59.13) and (127.47,44.13) .. (149.8,64.6) ;
\draw [color={rgb, 255:red, 74; green, 144; blue, 226 }  ,draw opacity=1 ] [dash pattern={on 4.5pt off 4.5pt}]  (149.8,64.6) .. controls (172.47,31.13) and (215.47,38.13) .. (227.47,59.13) ;
\draw [color={rgb, 255:red, 74; green, 144; blue, 226 }  ,draw opacity=1 ]   (169.8,169.6) .. controls (197.47,170.73) and (227.47,136.73) .. (227.8,93.6) ;
\draw [color={rgb, 255:red, 74; green, 144; blue, 226 }  ,draw opacity=1 ]   (227.8,93.6) .. controls (241.47,76.13) and (228.47,60.13) .. (227.47,59.13) ;
\draw [color={rgb, 255:red, 74; green, 144; blue, 226 }  ,draw opacity=1 ]   (112.8,106.6) .. controls (136.47,75.73) and (191.47,67.73) .. (227.8,93.6) ;
\draw [color={rgb, 255:red, 74; green, 144; blue, 226 }  ,draw opacity=1 ]   (112.8,106.6) .. controls (113.47,130) and (128.47,166.6) .. (169.8,169.6) ;
\draw  [fill={rgb, 255:red, 74; green, 144; blue, 226 }  ,fill opacity=0.49 ] (108.5,106.6) .. controls (108.5,104.23) and (110.43,102.3) .. (112.8,102.3) .. controls (115.17,102.3) and (117.1,104.23) .. (117.1,106.6) .. controls (117.1,108.97) and (115.17,110.9) .. (112.8,110.9) .. controls (110.43,110.9) and (108.5,108.97) .. (108.5,106.6) -- cycle ;
\draw  [color={rgb, 255:red, 0; green, 0; blue, 0 }  ,draw opacity=0.64 ][fill={rgb, 255:red, 74; green, 144; blue, 226 }  ,fill opacity=0.3 ] (145.5,64.6) .. controls (145.5,62.23) and (147.43,60.3) .. (149.8,60.3) .. controls (152.17,60.3) and (154.1,62.23) .. (154.1,64.6) .. controls (154.1,66.97) and (152.17,68.9) .. (149.8,68.9) .. controls (147.43,68.9) and (145.5,66.97) .. (145.5,64.6) -- cycle ;
\draw  [fill={rgb, 255:red, 74; green, 144; blue, 226 }  ,fill opacity=0.49 ] (165.5,169.6) .. controls (165.5,167.23) and (167.43,165.3) .. (169.8,165.3) .. controls (172.17,165.3) and (174.1,167.23) .. (174.1,169.6) .. controls (174.1,171.97) and (172.17,173.9) .. (169.8,173.9) .. controls (167.43,173.9) and (165.5,171.97) .. (165.5,169.6) -- cycle ;
\draw [color={rgb, 255:red, 74; green, 144; blue, 226 }  ,draw opacity=1 ] [dash pattern={on 4.5pt off 4.5pt}]  (149.8,64.6) .. controls (146.47,98) and (146.47,136) .. (169.8,169.6) ;
\draw   (99.07,98.87) .. controls (99.07,59.8) and (130.74,28.13) .. (169.8,28.13) .. controls (208.86,28.13) and (240.53,59.8) .. (240.53,98.87) .. controls (240.53,137.93) and (208.86,169.6) .. (169.8,169.6) .. controls (130.74,169.6) and (99.07,137.93) .. (99.07,98.87) -- cycle ;
\draw [color={rgb, 255:red, 74; green, 144; blue, 226 }  ,draw opacity=1 ]   (112.8,106.6) .. controls (99.47,89.13) and (103.47,75.13) .. (106.47,69.13) ;
\draw [color={rgb, 255:red, 208; green, 2; blue, 27 }  ,draw opacity=1 ][line width=0.75]    (227.8,93.6) -- (239.8,85.64) ;
\draw [shift={(241.47,84.53)}, rotate = 146.44] [color={rgb, 255:red, 208; green, 2; blue, 27 }  ,draw opacity=1 ][line width=0.75]    (7.65,-2.3) .. controls (4.86,-0.97) and (2.31,-0.21) .. (0,0) .. controls (2.31,0.21) and (4.86,0.98) .. (7.65,2.3)   ;
\draw  [fill={rgb, 255:red, 74; green, 144; blue, 226 }  ,fill opacity=0.49 ] (223.5,93.6) .. controls (223.5,91.23) and (225.43,89.3) .. (227.8,89.3) .. controls (230.17,89.3) and (232.1,91.23) .. (232.1,93.6) .. controls (232.1,95.97) and (230.17,97.9) .. (227.8,97.9) .. controls (225.43,97.9) and (223.5,95.97) .. (223.5,93.6) -- cycle ;

\draw (206.1,87) node [anchor=north west][inner sep=0.75pt]    {$p_{2}$};
\draw (128.8,41.7) node [anchor=north west][inner sep=0.75pt]    {$p_{3}$};
\draw (161,175.4) node [anchor=north west][inner sep=0.75pt]    {$p_{4}$};
\draw (90.1,88) node [anchor=north west][inner sep=0.75pt]    {$p_{1}$};
\draw (162,118.4) node [anchor=north west][inner sep=0.75pt]    {$A$};
\draw (228.72,102.08) node [anchor=north west][inner sep=0.75pt]  [rotate=-358.72]  {$B$};
\draw (102.04,114.36) node [anchor=north west][inner sep=0.75pt]  [rotate=-0.35]  {$C$};
\draw (165.79,45.62) node [anchor=north west][inner sep=0.75pt]  [rotate=-358.25]  {$D$};

\end{tikzpicture}

    \caption{A triangulation on $\sph$.}
    \label{trionsph}
\end{figure}

Since $\Sigma_\vp$ is an elliptic curve, it can be equipped with a flat metric compatible with the conformal structure, and consequently there is an isometric covering $\CC\to\Sigma_\vp$, where $\CC$ is the flat plane. This triangulation on $\sph$ can be lifted to $\Sigma_\vp$ and hence $\CC$. Note that a holomorphic automorphism of $\Sigma_\vp$ can also be lifted to $\CC$ as an isometry in flat metric. For instance, assume the origin lies in the preimage of a point in $\vp$, the elliptic involution $\ei$ can be lifted to the map $-\mathbbm{1}:\CC\to\CC:w\mapsto-w$. Furthermore, the $\tilde{\sigma}_l$ can be lifted to a $2\pi/3$-rotation.

In the case under consideration, determining whether the $A_4$-action on $\sph$ can be lifted to $\Sigma_\vp$ is simplified by verifying that the relations between generators $\tilde{\sigma}_l~(1\le l\le 4)$ persist in the automorphism group of $\Sigma_\vp$. Given that the covering $\CC\to\Sigma_\vp$ is isometric (in flat metrics), this verification can be efficiently conducted over the plane.

Taking it a step further, we aim to prove that the symmetry group $S_4$ can be lifted to $\Sigma_\vp$. Note that $S_4$ can be generated by $A_4$ and $\sigma_0$, where $\sigma_0$ represents the reflection of $\sph$ along great circle passing through $p_1$ and $p_4$. 

The $\sigma_0$ can be represented as the transposition $(2\,3)$ in $S_4$, and serves as an anti-holomorphic as well as an isometric involution. Locally, in the $z_4$-coordinate, it can be interpreted as $z_4\mapsto \bar{z}_4$. Thus when lifted to $\Sigma$, it maps $w_4$ to either $\bar{w}_4$ or $-\bar{w}_4$, implying the involution condition $\tilde{\sigma}_0^2=\mathrm{id}$. We choose the latter, $w_4\mapsto -\bar{w}_4$, as our $\tilde{\sigma}_0$.

The $\tilde{\sigma}_0$ is an anti-conformal automorphism of $\Sigma_\vp$ and, as a result, an orientation-reversing isometric involution under the chosen flat metric on $\Sigma_\vp$. This holds for the other choice of the lift of $\sigma_0$ as well. The map $w_4\mapsto \bar{w}_4$ fixes the set $\{\Im\,w_4=0\}$, which is the preimage of a geodesics in the triangulation of $\sph$, connecting $p_1$ and $p_4$. Consequently, this fixed point set itself forms a geodesic on the flat torus. Therefore, the geodesic triangulation of $\sph$ is lifted to a family of straight lines in $\CC$, with their intersection points corresponding the preimages of $p_l~(1\le l\le 4)$.  

Let's explain the Figure \ref{THEFIGURE}. For a sake of simplicity in notations, we will continue to denote the preimages of $p_l$ in $\CC$ as $p_l$. Alternatively, one can view this figure as the triangulation on the flat torus. The projection $\CC\to\Sigma_\vp$ is realized through a quotient by the lattice with vertices $p_4$. It's worth noting that, with such a lattice, the flat torus here is the equilateral one, realizing the maximal conformal area among all conformal structure on $T^2$ (see \cite{Nadirashvili}). In the figure, the longer arrow, colored in red, represents the real direction of a chosen coordinate $w_l$ near each $p_l$, while the shorter one represents the imaginary direction. The choices of $w_l$-coordinates is arbitrary in sign, but we will fix them using these red arrows henceforth.

\begin{figure}[!h]
    \centering

\tikzset{every picture/.style={line width=0.75pt}} 



    \caption{The triangulation on $\CC$.}
    \label{THEFIGURE}
\end{figure}

The $\sigma_0=(2\,3)\in S_4$ is lifted to a reflection of the plane along the specified imaginary direction of one of the preimages of $p_4$. Therefore, it can be interpreted either as $w_4\mapsto -\bar{w}_4$, or as $w_1\mapsto \bar{w}_1$ near $p_1$. There are some subtleties here. As complex coordinates of points, we have the action $\tilde{\sigma}_l. w_l=w_l\circ \tilde{\sigma}_l=e^{\frac{2\pi}{3}i}w_l$. But $w_l$ can also be regarded as complex-valued functions defined near $p_l$. Thus, rather than the former definition, we may define the action on these functions by pushforward, namely $\tilde{\sigma}_k. w_l:=w_l\circ \tilde{\sigma}_k^{-1}$, ensuring it to be a left action. The blue arced arrow represents the rotation $\tilde{\sigma}_4$, which is a $2\pi/3$-rotation centered in $p_4\in \CC$. However, under this definition of actions, we have $\tilde{\sigma}_4.w_4=e^{-\frac{2\pi}{3}i}w_4$. Furthermore, considering the $w_l$-coordinates $(1\le l\le 4)$ presented in the Figure \ref{THEFIGURE}, we can establish the following lemma:
\begin{lemma}\label{rotacoorrel}
\begin{itemize}
    \item $\tilde{\sigma}_l.w_l=e^{-\frac{2\pi}{3}i}w_l$ for $l=1,2,3,4$.
    \item $\tilde{\sigma}_0.w_4=-\bar{w}_4$, $\tilde{\sigma}_0.w_1=\bar{w}_1$ and $\tilde{\sigma}_0.w_2=\bar{w}_3$.
    \item $\tilde{\sigma}_4.w_1=w_3$, $\tilde{\sigma}_4.w_3=w_2$ and $\tilde{\sigma}_4.w_2=w_1$. 
    \item $\tilde{\sigma}_3.w_1=e^{\frac{2\pi}{3}i}w_2$, $\tilde{\sigma}_3.w_2=e^{\frac{\pi}{2}i}w_4$ and $\tilde{\sigma}_3.w_4=e^{\frac{2\pi}{3}i}w_1$.
    \item $\tilde{\sigma}_2.w_1=e^{-\frac{\pi}{6}i}w_4$, $\tilde{\sigma}_2.w_4=e^{-\frac{\pi}{2}i}w_3$ and $\tilde{\sigma}_2.w_3=e^{\frac{2\pi}{3}i}w_1$.
    \item $\tilde{\sigma}_1.w_4=e^{\frac{\pi}{6}i}w_2$, $\tilde{\sigma}_1.w_3=-e^{\frac{\pi}{6}i}w_4$ and $\tilde{\sigma}_1.w_2=e^{\frac{2\pi}{3}i}w_3$.
\end{itemize}
\end{lemma}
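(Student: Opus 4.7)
The plan is to verify the lemma by working on the universal cover $\CC \to \Sigma_\vp$ with the flat equilateral metric already in place, where each lifted symmetry $\tilde{\sigma}_l$ becomes a Euclidean isometry of the plane fixing (a lift of) $p_l$, and the red arrows in Figure \ref{THEFIGURE} pin down the $w_l$-coordinates unambiguously. With these conventions, every assertion reduces to a finite Euclidean calculation; the main obstacle is bookkeeping of signs and phases, since one wrong orientation on a single red arrow would cascade through the whole table.

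For the first item I would argue directly from the defining property $\tilde{\sigma}_l^3 = \mathrm{id}$: in the $w_l$-coordinate (with $w_l^2 = z_l$), the clockwise $2\pi/3$-rotation $\sigma_l$ in $z_l$ is $z_l \mapsto e^{4\pi i/3} z_l$, whose two lifts are $w_l \mapsto \pm e^{2\pi i/3} w_l$, and only the $+$ choice cubes to the identity. Hence $\tilde{\sigma}_l$ acts on points by $w_l \mapsto e^{2\pi i/3} w_l$, and by the pushforward convention $\tilde{\sigma}_l.w_l = w_l \circ \tilde{\sigma}_l^{-1} = e^{-2\pi i/3} w_l$. For the second item, the choice $\tilde{\sigma}_0.w_4 = -\bar{w}_4$ was made by fiat; the fixed point set of $\tilde{\sigma}_0$ is then a geodesic in the flat metric passing through the lifts of both $p_1$ and $p_4$, and the $w_1$-axis (oriented by its red arrow) lies along this same geodesic, forcing $\tilde{\sigma}_0.w_1 = \bar{w}_1$ without extra sign. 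The relation $\tilde{\sigma}_0.w_2 = \bar{w}_3$ then follows because $\sigma_0 = (2\,3)$ swaps $p_2$ and $p_3$, and the remaining sign is again determined by comparing the red arrows through the reflection.

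For item three I would observe that $\tilde{\sigma}_4$ is a Euclidean $2\pi/3$-rotation of the plane centered at a lift of $p_4$; the cyclic permutation $(p_1\, p_3\, p_2)$ on the three neighbouring lifts reads off Figure \ref{THEFIGURE}, and the equilateral symmetry of the triangulation means the red arrow at $p_1$ maps exactly onto the red arrow at $p_3$ (likewise cyclically), so no phase is introduced and $\tilde{\sigma}_4.w_1 = w_3$, etc. Items four through six follow the same template: each $\tilde{\sigma}_l$ for $l = 1, 2, 3$ is a Euclidean $2\pi/3$-rotation about a lift of $p_l$, and for any other vertex $p_k$ one first identifies which lift of which vertex is the image, and then measures the angle between the red arrow at the image and the rotated red arrow at $p_k$; the listed phases $e^{2\pi i/3}$, $e^{\pi i/2}$, $e^{-\pi i/6}$, $\pm e^{\pi i/6}$ are precisely these angles.

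The hard part is not any single case but ensuring internal consistency across the table. I would therefore treat one representative entry from each of items 4, 5, 6 as the computation of record, and derive the remaining entries through the conjugation relations in $\hat{A}_4$, e.g.\ $\tilde{\sigma}_4 \tilde{\sigma}_1 \tilde{\sigma}_4^{-1} = \tilde{\sigma}_3$ and $\tilde{\sigma}_0 \tilde{\sigma}_2 \tilde{\sigma}_0 = \tilde{\sigma}_3$, together with $\tilde{\sigma}_l^3 = \mathrm{id}$, as automatic cross-checks. Any inconsistency would immediately flag a misoriented red arrow, and consistency of all relations uniquely pins down every phase in the statement.
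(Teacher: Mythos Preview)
Your approach is essentially the same as the paper's: both work in the flat universal cover $\CC\to\Sigma_\vp$ with the $w_l$-coordinates pinned down by the red arrows of Figure~\ref{THEFIGURE}, and reduce each bullet to reading off a Euclidean rotation angle. The paper simply carries out one representative case ($\tilde{\sigma}_2.w_1=e^{-\pi i/6}w_4$, with an auxiliary picture) and declares the rest ``similar and straightforward''; your write-up is more explicit about the first three bullets (which indeed follow directly from the defining choices of $\tilde{\sigma}_l$ and $\tilde{\sigma}_0$) and proposes conjugation relations in $\hat{A}_4$ as consistency checks for items 4--6, which is a sound and helpful addition but not a different method.
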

\begin{proof}
    We shall verify part of the fifth bullet; the remaining steps are similar and straightforward. In the Figure \ref{sigma2proof}, we depict $w_l$-coordinates at $p_l$ using red arrows, where the longer arrow represents the real direction and the shorter arrow represents the imaginary direction. Recall the definition $\tilde{\sigma}_2.w_1=w_1\circ \tilde{\sigma}_2^{-1}$. Let's denote $w_4=u_4+iv_4$. Then the longer blue arrow in the figure represents the direction $(\tilde{\sigma}_2^{-1})_\ast\partial/\partial u_4$ and the shorter one represents the direction $(\tilde{\sigma}_2^{-1})_\ast\partial/\partial v_4$. As a result, for a point $q$ with $w_4(q)=1$, under the map $\tilde{\sigma}_2^{-1}$, there must be $w_1\circ \tilde{\sigma}_2^{-1}(q)=e^{-\frac{\pi}{6}i}$. Consequently, we can conclude that $\tilde{\sigma}_2.w_1=e^{-\frac{\pi}{6}i}w_4$.

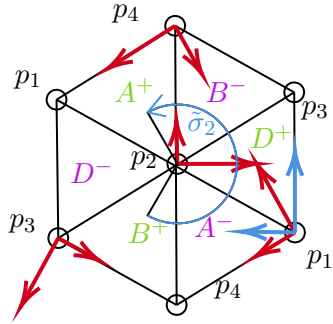
\begin{figure}[!hbp]
        \centering
        
\tikzset{every picture/.style={line width=0.75pt}} 

\begin{tikzpicture}[x=0.75pt,y=0.75pt,yscale=-1,xscale=1]

\draw    (183.04,198.13) -- (182.3,57.22) ;
\draw    (183.04,198.13) -- (122.92,164.61) ;
\draw    (242.71,161.53) -- (183.04,198.13) ;
\draw    (242.33,90.8) -- (123.34,164.35) ;
\draw    (123.34,164.35) -- (122.46,94.51) ;
\draw    (242.71,161.53) -- (242.33,90.8) ;
\draw    (242.71,161.53) -- (122.46,94.51) ;
\draw    (182.3,57.22) -- (122.46,94.51) ;
\draw    (242.33,90.8) -- (182.3,57.22) ;
\draw   (187.3,195.51) .. controls (188.75,197.87) and (188.01,200.94) .. (185.65,202.39) .. controls (183.3,203.83) and (180.22,203.09) .. (178.78,200.74) .. controls (177.33,198.39) and (178.07,195.31) .. (180.43,193.86) .. controls (182.78,192.42) and (185.86,193.16) .. (187.3,195.51) -- cycle ;
\draw   (246.97,158.92) .. controls (248.42,161.27) and (247.68,164.35) .. (245.33,165.79) .. controls (242.97,167.24) and (239.89,166.5) .. (238.45,164.14) .. controls (237.01,161.79) and (237.74,158.71) .. (240.1,157.27) .. controls (242.45,155.82) and (245.53,156.56) .. (246.97,158.92) -- cycle ;
\draw   (187.67,124.32) .. controls (189.11,126.67) and (188.37,129.75) .. (186.02,131.19) .. controls (183.67,132.64) and (180.59,131.9) .. (179.14,129.54) .. controls (177.7,127.19) and (178.44,124.11) .. (180.79,122.67) .. controls (183.15,121.22) and (186.23,121.96) .. (187.67,124.32) -- cycle ;
\draw   (246.59,88.18) .. controls (248.03,90.54) and (247.29,93.61) .. (244.94,95.06) .. controls (242.59,96.5) and (239.51,95.76) .. (238.06,93.41) .. controls (236.62,91.06) and (237.36,87.98) .. (239.71,86.53) .. controls (242.07,85.09) and (245.14,85.83) .. (246.59,88.18) -- cycle ;
\draw   (127.6,161.74) .. controls (129.05,164.09) and (128.31,167.17) .. (125.96,168.61) .. controls (123.6,170.06) and (120.52,169.32) .. (119.08,166.97) .. controls (117.64,164.61) and (118.37,161.53) .. (120.73,160.09) .. controls (123.08,158.65) and (126.16,159.38) .. (127.6,161.74) -- cycle ;
\draw   (186.56,54.61) .. controls (188.01,56.96) and (187.27,60.04) .. (184.92,61.48) .. controls (182.56,62.93) and (179.48,62.19) .. (178.04,59.84) .. controls (176.6,57.48) and (177.33,54.4) .. (179.69,52.96) .. controls (182.04,51.52) and (185.12,52.25) .. (186.56,54.61) -- cycle ;
\draw   (126.73,91.89) .. controls (128.17,94.25) and (127.43,97.32) .. (125.08,98.77) .. controls (122.72,100.21) and (119.65,99.47) .. (118.2,97.12) .. controls (116.76,94.77) and (117.5,91.69) .. (119.85,90.24) .. controls (122.2,88.8) and (125.28,89.54) .. (126.73,91.89) -- cycle ;
\draw [color={rgb, 255:red, 208; green, 2; blue, 27 }  ,draw opacity=1 ][line width=1.5]    (182.2,57.28) -- (194.4,79.08) ;
\draw [shift={(195.87,81.7)}, rotate = 240.77] [color={rgb, 255:red, 208; green, 2; blue, 27 }  ,draw opacity=1 ][line width=1.5]    (12.79,-3.85) .. controls (8.13,-1.64) and (3.87,-0.35) .. (0,0) .. controls (3.87,0.35) and (8.13,1.64) .. (12.79,3.85)   ;
\draw [color={rgb, 255:red, 208; green, 2; blue, 27 }  ,draw opacity=1 ][line width=1.5]    (182.3,57.22) -- (150.65,77.28) ;
\draw [shift={(148.12,78.89)}, rotate = 327.63] [color={rgb, 255:red, 208; green, 2; blue, 27 }  ,draw opacity=1 ][line width=1.5]    (12.79,-3.85) .. controls (8.13,-1.64) and (3.87,-0.35) .. (0,0) .. controls (3.87,0.35) and (8.13,1.64) .. (12.79,3.85)   ;
\draw [color={rgb, 255:red, 208; green, 2; blue, 27 }  ,draw opacity=1 ][line width=1.5]    (183.41,126.93) -- (220.44,126.85) ;
\draw [shift={(223.44,126.85)}, rotate = 179.88] [color={rgb, 255:red, 208; green, 2; blue, 27 }  ,draw opacity=1 ][line width=1.5]    (12.79,-3.85) .. controls (8.13,-1.64) and (3.87,-0.35) .. (0,0) .. controls (3.87,0.35) and (8.13,1.64) .. (12.79,3.85)   ;
\draw [color={rgb, 255:red, 208; green, 2; blue, 27 }  ,draw opacity=1 ][line width=1.5]    (183.41,126.93) -- (182.97,108.07) ;
\draw [shift={(182.9,105.07)}, rotate = 88.68] [color={rgb, 255:red, 208; green, 2; blue, 27 }  ,draw opacity=1 ][line width=1.5]    (12.79,-3.85) .. controls (8.13,-1.64) and (3.87,-0.35) .. (0,0) .. controls (3.87,0.35) and (8.13,1.64) .. (12.79,3.85)   ;
\draw [color={rgb, 255:red, 208; green, 2; blue, 27 }  ,draw opacity=1 ][line width=1.5]    (123.34,164.35) -- (105.85,196.33) ;
\draw [shift={(104.41,198.96)}, rotate = 298.69] [color={rgb, 255:red, 208; green, 2; blue, 27 }  ,draw opacity=1 ][line width=1.5]    (12.79,-3.85) .. controls (8.13,-1.64) and (3.87,-0.35) .. (0,0) .. controls (3.87,0.35) and (8.13,1.64) .. (12.79,3.85)   ;
\draw [color={rgb, 255:red, 208; green, 2; blue, 27 }  ,draw opacity=1 ][line width=1.5]    (123.34,164.35) -- (141.51,174.32) ;
\draw [shift={(144.14,175.76)}, rotate = 208.75] [color={rgb, 255:red, 208; green, 2; blue, 27 }  ,draw opacity=1 ][line width=1.5]    (12.79,-3.85) .. controls (8.13,-1.64) and (3.87,-0.35) .. (0,0) .. controls (3.87,0.35) and (8.13,1.64) .. (12.79,3.85)   ;
\draw [color={rgb, 255:red, 208; green, 2; blue, 27 }  ,draw opacity=1 ][line width=1.5]    (242.71,161.53) -- (224.9,129.47) ;
\draw [shift={(223.44,126.85)}, rotate = 60.94] [color={rgb, 255:red, 208; green, 2; blue, 27 }  ,draw opacity=1 ][line width=1.5]    (12.79,-3.85) .. controls (8.13,-1.64) and (3.87,-0.35) .. (0,0) .. controls (3.87,0.35) and (8.13,1.64) .. (12.79,3.85)   ;
\draw [color={rgb, 255:red, 208; green, 2; blue, 27 }  ,draw opacity=1 ][line width=1.5]    (242.71,161.53) -- (225.03,172.31) ;
\draw [shift={(222.47,173.87)}, rotate = 328.64] [color={rgb, 255:red, 208; green, 2; blue, 27 }  ,draw opacity=1 ][line width=1.5]    (12.79,-3.85) .. controls (8.13,-1.64) and (3.87,-0.35) .. (0,0) .. controls (3.87,0.35) and (8.13,1.64) .. (12.79,3.85)   ;
\draw  [draw opacity=0] (168.4,100.94) .. controls (172.82,98.39) and (177.94,96.93) .. (183.41,96.93) .. controls (199.98,96.93) and (213.41,110.36) .. (213.41,126.93) .. controls (213.41,143.5) and (199.98,156.93) .. (183.41,156.93) .. controls (177.94,156.93) and (172.82,155.47) .. (168.4,152.91) -- (183.41,126.93) -- cycle ; \draw  [color={rgb, 255:red, 74; green, 144; blue, 226 }  ,draw opacity=1 ] (168.4,100.94) .. controls (172.82,98.39) and (177.94,96.93) .. (183.41,96.93) .. controls (199.98,96.93) and (213.41,110.36) .. (213.41,126.93) .. controls (213.41,143.5) and (199.98,156.93) .. (183.41,156.93) .. controls (177.94,156.93) and (172.82,155.47) .. (168.4,152.91) ;  
\draw [color={rgb, 255:red, 74; green, 144; blue, 226 }  ,draw opacity=1 ]   (168.4,100.94) -- (175.87,91.9) ;
\draw [color={rgb, 255:red, 74; green, 144; blue, 226 }  ,draw opacity=1 ]   (168.4,100.94) -- (177.87,102.9) ;
\draw [color={rgb, 255:red, 74; green, 144; blue, 226 }  ,draw opacity=1 ][line width=1.5]    (242.71,161.53) -- (242.67,123.1) ;
\draw [shift={(242.67,120.1)}, rotate = 89.94] [color={rgb, 255:red, 74; green, 144; blue, 226 }  ,draw opacity=1 ][line width=1.5]    (12.79,-3.85) .. controls (8.13,-1.64) and (3.87,-0.35) .. (0,0) .. controls (3.87,0.35) and (8.13,1.64) .. (12.79,3.85)   ;
\draw [color={rgb, 255:red, 74; green, 144; blue, 226 }  ,draw opacity=1 ][line width=1.5]    (242.71,161.53) -- (218.67,161.15) ;
\draw [shift={(215.67,161.1)}, rotate = 0.91] [color={rgb, 255:red, 74; green, 144; blue, 226 }  ,draw opacity=1 ][line width=1.5]    (12.79,-3.85) .. controls (8.13,-1.64) and (3.87,-0.35) .. (0,0) .. controls (3.87,0.35) and (8.13,1.64) .. (12.79,3.85)   ;

\draw (201.04,184.53) node [anchor=north west][inner sep=0.75pt]    {$p_{4}$};
\draw (247.33,169.19) node [anchor=north west][inner sep=0.75pt]    {$p_{1}$};
\draw (158.43,119) node [anchor=north west][inner sep=0.75pt]    {$p_{2}$};
\draw (100.02,78.1) node [anchor=north west][inner sep=0.75pt]  [rotate=-0.83]  {$p_{1}$};
\draw (149.37,44.75) node [anchor=north west][inner sep=0.75pt]  [rotate=-0.02]  {$p_{4}$};
\draw (244.69,93.87) node [anchor=north west][inner sep=0.75pt]  [rotate=-359.72]  {$p_{3}$};
\draw (97.28,151.15) node [anchor=north west ][inner sep=0.75pt]    {$p_{3}$};
\draw (190.43,150.74) node [anchor=north west][inner sep=0.75pt]  [color={rgb, 255:red, 189; green, 16; blue, 224 }  ,opacity=1 ]  {$A^{-}$};
\draw (156.91,153.56) node [anchor=north west][inner sep=0.75pt]  [color={rgb, 255:red, 126; green, 211; blue, 33 }  ,opacity=1 ]  {$B^{+}$};
\draw (218.41,105.9) node [anchor=north west][inner sep=0.75pt]  [color={rgb, 255:red, 126; green, 211; blue, 33 }  ,opacity=1 ,rotate=-359]  {$D^{+}$};
\draw (127.77,123.3) node [anchor=north west][inner sep=0.75pt]  [color={rgb, 255:red, 189; green, 16; blue, 224 }  ,opacity=1 ]  {$D^{-}$};
\draw (196.2,82.28) node [anchor=north west][inner sep=0.75pt]  [color={rgb, 255:red, 189; green, 16; blue, 224 }  ,opacity=1 ]  {$B^{-}$};
\draw (150.12,82.29) node [anchor=north west][inner sep=0.75pt]  [color={rgb, 255:red, 126; green, 211; blue, 33 }  ,opacity=1 ]  {$A^{+}$};
\draw (187,99) node [anchor=north west][inner sep=0.75pt]  [color={rgb, 255:red, 74; green, 144; blue, 226 }  ,opacity=1 ]  {$\tilde{\sigma }_{2}$};
\end{tikzpicture}
        \caption{The $\tilde{\sigma}_2$-action.}
        \label{sigma2proof}
    \end{figure}
\end{proof}

From classical group theory, we know that the symmetry group $S_4$ has the group presentation $\langle a, b| a^2=b^3=(ab)^4=\mathrm{id}\rangle$. On the other hand, with the branched covering $\Sigma_\vp\to \sph$ being given, recall that we have the central extension of $S_4$, namely $0\to\ZT\to \hat{S}_4\to S_4\to 1$. Here the group $\hat{S}_4$ is a subgroup of the group of automorphisms of $\Sigma_\vp$, say $\mathrm{Aut}(\Sigma_\vp)$. If we can find a section of this sequence, i.e. a homomorphism $S_4\to \hat{S}_4$ that is a right inverse of the epimorphism $\hat{S}_4\to S_4$, then we can conclude $\hat{S}_4\cong S_4\times \ZT$, and consequently the $S_4$-action (and hence the $A_4$-action) can be lifted to $\Sigma_\vp$. In the previous discussions, we associated each generators of $S_4$, denoted as $\sigma_l$, an element $\tilde{\sigma}_l\in\mathrm{Aut}(\Sigma_\vp)$. It's sufficient to prove the following lemma:

\begin{lemma}
    $\tilde{\sigma}_0^2=(\tilde{\sigma}_2^2)^3=(\tilde{\sigma}_0 \tilde{\sigma}_2^2)^4=\mathrm{id}$.
\end{lemma}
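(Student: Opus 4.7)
The plan is to prove the three relations separately. For $\tilde{\sigma}_0^2 = \mathrm{id}$, I would compute directly from the construction $\tilde{\sigma}_0.w_4 = -\bar{w}_4$: applying the action twice gives $\tilde{\sigma}_0^2.w_4 = -\overline{-\bar{w}_4} = w_4$, so $\tilde{\sigma}_0^2$ fixes $p_4$ with derivative $1$ in the coordinate $w_4$. Since $\tilde{\sigma}_0^2$ covers $\mathrm{id}_{\mathbb{S}^2}$, it must equal either $\mathrm{id}_{\Sigma_\vp}$ or the hyperelliptic involution $\ei$, and the latter acts as $w_4 \mapsto -w_4$, forcing $\tilde{\sigma}_0^2 = \mathrm{id}$. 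The second relation is immediate: by construction $\tilde{\sigma}_2^3 = \mathrm{id}$, so $(\tilde{\sigma}_2^2)^3 = \tilde{\sigma}_2^6 = \mathrm{id}$.

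The substantive statement is the third. The plan is the same in spirit: since $\sigma_0\sigma_2^2$ is the $4$-cycle $(1\,2\,3\,4)$ in $S_4$, the automorphism $g^4$ with $g := \tilde{\sigma}_0\tilde{\sigma}_2^2$ covers $\mathrm{id}_{\mathbb{S}^2}$, hence equals $\mathrm{id}$ or $\ei$. To pin it down, I will iterate the action of $g$ on the single coordinate $w_1$ using Lemma \ref{rotacoorrel} and verify that the final scalar is $+1$ rather than $-1$; this is enough because $\ei$ acts on every local coordinate $w_l$ by $w_l \mapsto -w_l$.

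Concretely, I would carry out the following chain. First, $\tilde{\sigma}_2^2.w_1 = e^{-\pi i/6}\,\tilde{\sigma}_2.w_4 = e^{-2\pi i/3}w_3$, and then applying $\tilde{\sigma}_0.w_3 = \bar{w}_2$ (which follows from $\tilde{\sigma}_0.w_2 = \bar{w}_3$ together with $\tilde{\sigma}_0^2 = \mathrm{id}$) gives $g.w_1 = e^{-2\pi i/3}\bar{w}_2$. Iterating three more times and carefully carrying the conjugations across the antiholomorphic factor, I expect the sequence
\[
w_1 \;\longmapsto\; e^{-2\pi i/3}\bar{w}_2 \;\longmapsto\; e^{2\pi i/3}w_3 \;\longmapsto\; e^{\pi i/6}\bar{w}_4 \;\longmapsto\; w_1.
\]
Once $g^4.w_1 = w_1$ is established, I conclude by the rigidity of holomorphic automorphisms of the elliptic curve $\Sigma_\vp$: any such map fixing $p_1$ with derivative $1$ at $p_1$ must be the identity.

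The main obstacle is bookkeeping: because $g$ is antiholomorphic, each application swaps $w_l$ with $\bar{w}_l$ and conjugates all previously accumulated scalar factors, so a single sign or phase error would flip the final answer between $+1$ and $-1$ and falsely produce $\ei$. An alternative that avoids some of this bookkeeping is to lift everything to the flat universal cover $\mathbb{C}$: $\tilde{\sigma}_0$ lifts to a reflection $R$ and $\tilde{\sigma}_2^2$ to a rotation by $-2\pi/3$ around a preimage of $p_2$, so $RT^2$ has the form $z \mapsto \beta\bar{z} + \gamma$ with $|\beta|=1$, and $(RT^2)^2(z) = z + \beta\bar{\gamma}+\gamma$ is automatically a translation, so $(RT^2)^4$ is a translation by $2(\beta\bar{\gamma}+\gamma)$; the task then reduces to checking this vector lies in the covering lattice. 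I would present the coordinate computation since it uses only the data already tabulated in Lemma \ref{rotacoorrel}.
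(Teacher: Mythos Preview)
Your argument is correct, and the four-step coordinate chain you outline checks out exactly as written (I verified each step against Lemma~\ref{rotacoorrel}, including the conjugation of scalars across the antiholomorphic factor $\tilde{\sigma}_0$). The conclusion $g^4.w_1=w_1$ then distinguishes $g^4=\mathrm{id}$ from $g^4=\ei$, since $\ei.w_1=-w_1$.

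Your route differs from the paper's. The paper does not iterate on coordinates; instead it rewrites $(\tilde{\sigma}_0\tilde{\sigma}_2^2)^4$ algebraically. Using that $\tilde{\sigma}_0\tilde{\sigma}_2^2\tilde{\sigma}_0$ is a lift of $\sigma_3=(1\,2\,4)$ whose cube is $\tilde{\sigma}_0\tilde{\sigma}_2^6\tilde{\sigma}_0=\mathrm{id}$, the uniqueness of the order-$3$ lift forces $\tilde{\sigma}_0\tilde{\sigma}_2^2\tilde{\sigma}_0=\tilde{\sigma}_3$; similarly $\tilde{\sigma}_2\tilde{\sigma}_3\tilde{\sigma}_2^{-1}=\tilde{\sigma}_1$. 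This yields $(\tilde{\sigma}_0\tilde{\sigma}_2^2)^4=\tilde{\sigma}_3\tilde{\sigma}_2\tilde{\sigma}_1$, and the paper finishes by reading off from the triangulation picture (Figure~\ref{THEFIGURE}) that each $\tilde{\sigma}_l$ preserves the $\pm$ labelling of the triangular domains, so the product cannot be $\ei$. Your approach is more computational but entirely self-contained from the tabulated data in Lemma~\ref{rotacoorrel}; the paper's is more structural (it exploits the characterisation of $\tilde{\sigma}_l$ as the \emph{unique} order-$3$ lift) but ultimately defers the sign to a visual check on the figure. Either is a complete proof.
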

\begin{proof}
    The first two relations, $\tilde{\sigma}_0^2=(\tilde{\sigma}_2^2)^3=\mathrm{id}$, are satisfied by definition. In $S_4$, $\sigma_0=(2\,3)$ and $\sigma_2=(1\,4\,3)$. Thus we have $\sigma_0 \sigma_2^2=(1\,2\,3\,4)$ and hence $(\sigma_0 \sigma_2^2)^4=\mathrm{id}_{\sph}$. As a result, if $(\tilde{\sigma}_0 \tilde{\sigma}_2^2)^4\ne \mathrm{id}$, then $(\tilde{\sigma}_0 \tilde{\sigma}_2^2)^4=\ei$. When the latter case holds, it can be easily verified, from the Figure \ref{THEFIGURE}, that the map $(\tilde{\sigma}_0 \tilde{\sigma}_2^2)^4$ would maps $C^+$ to $C^-$.

    Note that $\sigma_0 \sigma_2^2\sigma_0=(1\,2\,4)=\sigma_3$ and $\sigma_2 \sigma_3\sigma_2^{-1}=(2\,3\,4)=\sigma_1$. But $(\tilde{\sigma}_0 \tilde{\sigma}_2^2\tilde{\sigma}_0)^3=\tilde{\sigma}_0\tilde{\sigma}_2^6\tilde{\sigma}_0=\mathrm{id}$, and$(\tilde{\sigma}_2 \tilde{\sigma}_3 \tilde{\sigma}_2^{-1})^3=\mathrm{id}$. Thus by definition of $\tilde{\sigma}_l$, we must have $\tilde{\sigma}_0  \tilde{\sigma}_2^2 \tilde{\sigma}_0=\tilde{\sigma}_3$ and $\tilde{\sigma}_2  \tilde{\sigma}_3 \tilde{\sigma}_2^{-1}=\tilde{\sigma}_1$. Consequently, $(\tilde{\sigma}_0  \tilde{\sigma}_2^2)^4=(\tilde{\sigma}_0 \tilde{\sigma}_2^2 \tilde{\sigma}_0) \tilde{\sigma}_2^2 (\tilde{\sigma}_0 \tilde{\sigma}_2^2 \tilde{\sigma}_0) \tilde{\sigma}_2^{-1}=\tilde{\sigma}_3 \tilde{\sigma}_2 \tilde{\sigma}_1$. None of maps in the composition $\tilde{\sigma}_3 \tilde{\sigma}_2 \tilde{\sigma}_1$ would map a domain with positive superscript in the Figure \ref{THEFIGURE}, to one with negative superscript. Consequently, $(\tilde{\sigma}_0  \tilde{\sigma}_2^2)^4=\mathrm{id}$; otherwise $(\tilde{\sigma}_0  \tilde{\sigma}_2^2)^4$ would maps $C^+$ to $C^-$.
\end{proof}

Now we can lift the $S_4$-action to $\Sigma_\vp$, since by the previous lemma, the map $S_4\to \hat{S}_4: \sigma_l\mapsto\tilde{\sigma}_l$ does define a homomorphism. For simplicity of notations, we use $\sigma_l$ instead of $\tilde{\sigma}_l$ to denote the generators of $S_4< \mathrm{Aut}(\Sigma_\vp)$ henceforth. As dicussed in Section \ref{grpaction}, for a $\ZT$ eigenvalue of $\lbc$, the eigenspace $V_\lam$ can be interpreted as a complex $S_4$-representation. If we identify $A_4$ as a subgroup of $S_4$, then $V_\lam$ is a $A_4$-representation either.

Consider an eigensection $f\in V_\lambda$. Near each $p_l$ we assume $f(z_l,\bar{z}_l)=a_lz_l^{n_l+1/2}+b_l\bar{z}_l^{n_l+1/2}+\mathcal{O}(r^{n_l+3/2})$, where $n_l=\vn_{p_l}(f)$ and $r=|z_l|$. The odd function $\tilde{f}$ lifted from $f$ then has an expansion $\tilde{f}(w_l,\bar{w}_l)=a_{l}w_l^{2n_l+1}+b_l\bar{w}_l^{2n_l+1}+\mathcal{O}(r^{2n_l+3})$. Given an element $\sigma\in S_4$, which maps $p_l$ to $p_k$, we then have
\begin{align}\label{sigmafw}
(\sigma. \tilde{f})(w_k,\bar{w}_k)=a_l(\sigma.w_l)^{2n_l+1}+b_l(\sigma.\bar{w}_l)^{2n_l+1}+\mathcal{O}(r^{2n_l+3}),
\end{align}
where $\sigma.w_l=w_l\circ \sigma^{-1}$ and $\sigma.\bar{w}_l=\bar{w}_l\circ\sigma^{-1}$. There is something subtle here. In the left hand side of the equation, we use $w_k$ as a coordinate, but in the right hand side we use $w_l$ as a complex-valued function. And the $\sigma.w_l$ and $\sigma.\bar{w}_l$ are complex-valued functions defined near $p_k$. This could be understood as a decomposition of $\tilde{f}$ near $p_k$, into series of functions $\sigma.w_l$ and $\sigma.\bar{w}_l$. It follows from \eqref{sigmafw} that $\vn_{p_k}(\sigma.f)=\vn_{p_l}(f)$. Furthermore, combining \eqref{sigmafw} with Lemma \ref{rotacoorrel} we deduce some convenient formula, such as $\sigma_3.\tilde{f}(w_2,\bar{w}_2)= a_1(e^{\frac{2\pi}{3}i}w_2)^{2n_1+1}+b_1(e^{\frac{-2\pi}{3}i}\bar{w}_2)^{2n_1+1}+\mathcal{O}(r^{2n_1+3})$, where $(a_1,b_1)$ is the leading coefficients of $f$ at $p_1$, and the order of the leading terms at this point is $n_1+\frac12$, for $n_1=\vn_{p_1}(f)$.

\begin{lemma}\label{irreS4rep}
It is well-known that irreducible $S_4$-representations can be listed as follows:
\begin{enumerate}[label=(\roman*)]
    \item the trivial representation $U=(\CC,\rho)$, where $\rho:\sigma_l\mapsto 1$ for $0\le l\le 4$;
    \item the alternating representation $U'=(\CC,\rho')$, where $\rho'(\sigma_0)=-1$ and $\rho'(\sigma_l)=1$ for $1\le l\le 4$;
    \item the standard representation $V=(\CC^4/\{e_1+e_2+e_3+e_4\},\rho_1)$, where $\{e_i\}_{1\le i\le 4}$ denote the standard basis of $\CC^4$ and $S_4$ acts on $\CC^4$ through permutations on the subscripts of the base vectors;
    \item $V':=V\otimes U'$;
    \item A two dimensional representation $W$, with a basis $\{v,\sigma_0.v\}$, such that $\sigma_l.v=e^{\frac{2\pi}{3}i}v$ and $\sigma_l.(\sigma_0.v)=\sigma_0.(\sigma_{l-1}^2.v)=-e^{\frac{\pi}{3}i}\sigma_0.v$ for $1\le l\le 4$.
\end{enumerate}
\end{lemma}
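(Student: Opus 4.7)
This is a standard classification from finite group representation theory, so the plan is to assemble the familiar pieces rather than produce anything novel; the only genuine work is verifying that the two-dimensional representation $W$ admits the concrete description given in the statement.

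First I would recall that $S_4$ has five conjugacy classes (the identity, transpositions, products of two disjoint transpositions, 3-cycles, and 4-cycles), hence exactly five isomorphism classes of complex irreducible representations. Since $|S_4|=24$ and the sum of squares of the dimensions of the irreducibles must equal $24$, the only possibility is $1^2+1^2+2^2+3^2+3^2=24$. Thus it suffices to exhibit five pairwise non-isomorphic irreducibles of these dimensions, which then must exhaust the list. The trivial representation $U$ and the sign representation $U'$ (trivial on $A_4$, acting by $-1$ on odd permutations such as $\sigma_0$) are clearly one-dimensional and distinct. For the standard representation $V$, I would take the permutation action of $S_4$ on $\mathbb{C}^4=\mathrm{span}\{e_1,e_2,e_3,e_4\}$, decompose it as $\mathbb{C}\{e_1+e_2+e_3+e_4\}\oplus V$, and verify irreducibility of $V$ either by computing $\langle \chi_V,\chi_V\rangle=1$ from the character table, or by observing that any nonzero invariant subspace is forced to be all of $V$ using transitivity of the $S_4$-action on $\{e_i-e_j\}$. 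Then $V':=V\otimes U'$ is automatically irreducible of dimension three, and is not isomorphic to $V$ since their characters differ on $\sigma_0$.

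For the two-dimensional irreducible $W$, the cleanest route is via the quotient. The Klein four subgroup $K:=\{e,(12)(34),(13)(24),(14)(23)\}\lhd S_4$ has $S_4/K\cong S_3$, and $S_3$ has a unique two-dimensional irreducible (its standard representation on $\{(x_1,x_2,x_3):x_1+x_2+x_3=0\}\subset \mathbb{C}^3$). Pulling this back along the quotient map $S_4\to S_3$ produces a two-dimensional $S_4$-representation, irreducible because $K$ acts trivially and the $S_3$-representation is irreducible. Denote it by $W$. To match the explicit description in the statement, I would identify a vector $v\in W$ on which the 3-cycle image of $\sigma_4$ (which projects to a 3-cycle in $S_3$) acts by the primitive cube root $e^{2\pi i/3}$, and then check using the formulas $\sigma_l=\sigma_0\sigma_{l-1}\sigma_0$-type conjugation relations in $S_4$ that $\sigma_l.v=e^{2\pi i/3}v$ for all $l=1,\dots,4$ (all four of the order-three generators $\sigma_l$ are conjugate in $S_4$ and project to conjugate 3-cycles in $S_3$, so they share an eigenvalue on $v$). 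Finally, since $\sigma_0\notin\langle\sigma_1,\dots,\sigma_4\rangle$-eigenspace of $v$ and $W$ is two-dimensional, $\{v,\sigma_0.v\}$ is a basis; the action of $\sigma_l$ on $\sigma_0.v$ is then computed by $\sigma_l.(\sigma_0.v)=\sigma_0.(\sigma_0\sigma_l\sigma_0.v)$ and using that $\sigma_0\sigma_l\sigma_0=\sigma_{l-1}^{\pm 1}$ inside $S_4/K\cong S_3$ so that it acts on $v$ by $e^{\pm 2\pi i/3}$; working out the sign and the factor of $-1$ reproduces the formula $\sigma_l.(\sigma_0.v)=-e^{\pi i/3}\sigma_0.v$ as claimed.

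The main obstacle, such as it is, is purely bookkeeping: keeping the conjugation relations among the $\sigma_l$ consistent with the specific normalization of $v$ so that the eigenvalue on $\sigma_0.v$ comes out as $-e^{\pi i/3}$ rather than some other sixth root of unity. This is settled by choosing $v$ once and for all as an eigenvector of $\sigma_4$ and then propagating through the explicit $S_3$-action; no new ideas beyond character theory are required.
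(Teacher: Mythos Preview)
The paper gives no proof of this lemma; it is stated as well known and used as a black box. Your outline via the conjugacy-class count, the dimension formula $1^2+1^2+2^2+3^2+3^2=24$, and the pullback of the two-dimensional $S_3$-irreducible along $S_4\to S_4/K\cong S_3$ is the standard route and is entirely adequate here.

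There is one genuine slip in your verification of the explicit form of $W$. You write that the four $\sigma_l$ ``are conjugate in $S_4$ and project to conjugate 3-cycles in $S_3$, so they share an eigenvalue on $v$.'' Conjugate elements share the \emph{set} of eigenvalues, not the eigenvalue on a fixed vector; your argument as stated would only give $\sigma_l.v\in\{e^{2\pi i/3}v',e^{-2\pi i/3}v''\}$ for possibly different eigenvectors. What you actually need, and what is true, is stronger: the four $\sigma_l$ lie in the \emph{same} coset of $K$ and hence project to the same element of $S_3$. Indeed, with $\sigma_1=(2\,3\,4)$, $\sigma_2=(1\,4\,3)$, $\sigma_3=(1\,2\,4)$, $\sigma_4=(1\,3\,2)$ as in the paper, one checks directly that $\sigma_l\sigma_k^{-1}\in K=\{e,(12)(34),(13)(24),(14)(23)\}$ for all $l,k\in\{1,\dots,4\}$. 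Since $K$ acts trivially on $W$, all $\sigma_l$ act identically on $W$, so once $\sigma_4.v=e^{2\pi i/3}v$, the same holds for $\sigma_1,\sigma_2,\sigma_3$. The formula for $\sigma_l.(\sigma_0.v)$ then follows immediately: $\sigma_0.v$ must be the other eigenvector, with eigenvalue $e^{-2\pi i/3}=-e^{\pi i/3}$, and the middle expression $\sigma_0.(\sigma_{l-1}^2.v)$ is just the computation $\sigma_l\sigma_0=\sigma_0(\sigma_0\sigma_l\sigma_0)$ together with the fact that $\sigma_0\sigma_l\sigma_0$ lies in the inverse coset $\sigma_4^{-1}K=\sigma_4^2K$.
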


It is worth noting that $V_\lambda^c$, the subspace of $V_\lambda$ that consisting of critical eigensections, forms an $S_4$-subrepresentation. From the Proposition \ref{rk1} and \ref{irreS4rep} we conclude:
\begin{lemma}
    If $\dim_{\mbC}V_\lambda^c>0$, then it is either the trivial or alternating representation of $S_4$.
\end{lemma}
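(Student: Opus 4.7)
The plan is to combine the rank bound from Proposition \ref{rk1} with the classification of irreducible $S_4$-representations in Lemma \ref{irreS4rep}. The key observation is that the tetrahedral configuration $\vp = \{p_1,p_2,p_3,p_4\}$ lies in $\mathcal{C}_4$ and contains no antipodal pairs (since the vertices of a regular tetrahedron are never antipodal to each other on $\sph$). Therefore Proposition \ref{rk1} directly applies and yields $\dim_{\CC} V_\lambda^c \le 1$.

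First I would verify briefly that $V_\lambda^c \subset V_\lambda$ is indeed $S_4$-stable: given a critical eigensection $f$ and $\sigma \in S_4$, the action formula \eqref{sigmafw} shows that $\vn_{p_k}(\sigma.f) = \vn_{\sigma^{-1}(p_k)}(f) \ge 1$ for every $p_k \in \vp$, so $\sigma.f$ is again critical. (This is the statement just before the lemma, so it can be invoked directly.) Hence $V_\lambda^c$ is a finite-dimensional complex subrepresentation of the $S_4$-representation $V_\lambda$.

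Next, if $\dim_{\CC} V_\lambda^c > 0$, then by Proposition \ref{rk1} we have $\dim_{\CC} V_\lambda^c = 1$. Any one-dimensional representation is automatically irreducible, and inspecting the list in Lemma \ref{irreS4rep}, the only one-dimensional irreducible $S_4$-representations are the trivial representation $U$ and the alternating representation $U'$. Hence $V_\lambda^c \cong U$ or $V_\lambda^c \cong U'$, completing the proof.

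There is no real obstacle here: the heavy lifting was done in Proposition \ref{rk1}, whose proof uses the algebraic identity \eqref{alrforpole} at $p_4$ together with the non-degeneracy Lemma \ref{npf==1} to force linear dependence of any two critical eigensections sharing the same eigenvalue. The only thing to be careful about in writing up is to state clearly that the tetrahedral vertices contain no antipodal points, so that the hypothesis of Proposition \ref{rk1} is satisfied.
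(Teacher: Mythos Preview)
Your proposal is correct and matches the paper's approach exactly: the paper states that the lemma follows directly from Proposition~\ref{rk1} (giving $\dim_{\CC} V_\lambda^c \le 1$, since the tetrahedral configuration has no antipodal points) and Lemma~\ref{irreS4rep} (classifying the one-dimensional irreducibles as $U$ and $U'$). Your verification that $V_\lambda^c$ is $S_4$-stable is a helpful addition, though the paper already records this observation just before the lemma.
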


 Next, we shall show the existence of infinitely many eigenvalues $\lambda$ with $V_\lambda^c$ nontrivial, i.e. existence of infinitely many critical eigenvalues.

A fundamental domain of $A_4$ on $\sph$ is given by the spherical triangle with verticies $p_1,p_4$ and $-p_3$. With one more symmetry $\sigma_0$, the group $S_4$ possesses a smaller fundamental domain. This domain is the right triangle with vertices $p_4,-p_3$ and $p_5$, for which we denote by $\Omega$ (see Figure \ref{geodesictriangular}). Importantly, $\lbc|_{\Omega}$ is trivial, allowing us to consider a section of $\lbc$ as a complex-valued function when restricted to $\Omega$. 

\begin{figure}[!h]
    \centering

\tikzset{every picture/.style={line width=0.75pt}} 

\begin{tikzpicture}[x=0.75pt,y=0.75pt,yscale=-1,xscale=1]

\draw [color={rgb, 255:red, 208; green, 2; blue, 27 }  ,draw opacity=1 ][line width=0.75]    (228.8,183.6) -- (204.44,187.24) ;
\draw [shift={(202.47,187.53)}, rotate = 351.5] [color={rgb, 255:red, 208; green, 2; blue, 27 }  ,draw opacity=1 ][line width=0.75]    (9.84,-2.96) .. controls (6.25,-1.25) and (2.97,-0.27) .. (0,0) .. controls (2.97,0.27) and (6.25,1.26) .. (9.84,2.96)   ;
\draw [color={rgb, 255:red, 208; green, 2; blue, 27 }  ,draw opacity=1 ][line width=0.75]    (228.8,183.6) -- (238.19,194.99) ;
\draw [shift={(239.47,196.53)}, rotate = 230.49] [color={rgb, 255:red, 208; green, 2; blue, 27 }  ,draw opacity=1 ][line width=0.75]    (7.65,-2.3) .. controls (4.86,-0.97) and (2.31,-0.21) .. (0,0) .. controls (2.31,0.21) and (4.86,0.98) .. (7.65,2.3)   ;
\draw [color={rgb, 255:red, 74; green, 144; blue, 226 }  ,draw opacity=1 ] [dash pattern={on 4.5pt off 4.5pt}]  (165.47,83.13) .. controls (169.47,73.13) and (192.47,49.33) .. (221.8,73.6) ;
\draw [color={rgb, 255:red, 74; green, 144; blue, 226 }  ,draw opacity=1 ] [dash pattern={on 4.5pt off 4.5pt}]  (221.8,73.6) .. controls (238.47,54.33) and (256.47,50.33) .. (275.47,61.33) ;
\draw [color={rgb, 255:red, 74; green, 144; blue, 226 }  ,draw opacity=1 ]   (228.8,183.6) .. controls (256.47,184.73) and (286.47,150.73) .. (286.8,107.6) ;
\draw [color={rgb, 255:red, 74; green, 144; blue, 226 }  ,draw opacity=1 ]   (286.8,107.6) .. controls (301.47,77.33) and (268.47,52.53) .. (275.47,61.33) ;
\draw [color={rgb, 255:red, 74; green, 144; blue, 226 }  ,draw opacity=1 ]   (171.8,120.6) .. controls (195.47,89.73) and (250.47,81.73) .. (286.8,107.6) ;
\draw [color={rgb, 255:red, 74; green, 144; blue, 226 }  ,draw opacity=1 ]   (171.8,120.6) .. controls (172.47,144) and (187.47,180.6) .. (228.8,183.6) ;
\draw  [color={rgb, 255:red, 0; green, 0; blue, 0 }  ,draw opacity=0.64 ][fill={rgb, 255:red, 74; green, 144; blue, 226 }  ,fill opacity=0.3 ] (217.5,73.6) .. controls (217.5,71.23) and (219.43,69.3) .. (221.8,69.3) .. controls (224.17,69.3) and (226.1,71.23) .. (226.1,73.6) .. controls (226.1,75.97) and (224.17,77.9) .. (221.8,77.9) .. controls (219.43,77.9) and (217.5,75.97) .. (217.5,73.6) -- cycle ;
\draw  [fill={rgb, 255:red, 74; green, 144; blue, 226 }  ,fill opacity=0.49 ] (224.5,183.6) .. controls (224.5,181.23) and (226.43,179.3) .. (228.8,179.3) .. controls (231.17,179.3) and (233.1,181.23) .. (233.1,183.6) .. controls (233.1,185.97) and (231.17,187.9) .. (228.8,187.9) .. controls (226.43,187.9) and (224.5,185.97) .. (224.5,183.6) -- cycle ;
\draw [color={rgb, 255:red, 74; green, 144; blue, 226 }  ,draw opacity=1 ] [dash pattern={on 4.5pt off 4.5pt}]  (221.8,73.6) .. controls (218.47,91.33) and (205.47,132.33) .. (228.8,183.6) ;
\draw   (158.07,112.87) .. controls (158.07,73.8) and (189.74,42.13) .. (228.8,42.13) .. controls (267.86,42.13) and (299.53,73.8) .. (299.53,112.87) .. controls (299.53,151.93) and (267.86,183.6) .. (228.8,183.6) .. controls (189.74,183.6) and (158.07,151.93) .. (158.07,112.87) -- cycle ;
\draw [color={rgb, 255:red, 74; green, 144; blue, 226 }  ,draw opacity=1 ]   (171.8,120.6) .. controls (158.47,103.13) and (162.47,89.13) .. (165.47,83.13) ;
\draw  [dash pattern={on 4.5pt off 4.5pt}]  (221.8,73.6) -- (230.47,147.33) ;
\draw [color={rgb, 255:red, 74; green, 144; blue, 226 }  ,draw opacity=1 ]   (171.8,120.6) .. controls (184.47,133.33) and (208.47,146.63) .. (230.47,151.63) ;
\draw [color={rgb, 255:red, 74; green, 144; blue, 226 }  ,draw opacity=1 ]   (190.47,164.33) .. controls (222.47,156.33) and (272.47,144.33) .. (286.8,107.6) ;
\draw [color={rgb, 255:red, 74; green, 144; blue, 226 }  ,draw opacity=1 ]   (228.8,183.6) .. controls (231.47,170.33) and (231.47,157.33) .. (230.47,147.33) ;
\draw  [fill={rgb, 255:red, 74; green, 144; blue, 226 }  ,fill opacity=0.49 ] (186.17,164.33) .. controls (186.17,161.96) and (188.09,160.03) .. (190.47,160.03) .. controls (192.84,160.03) and (194.77,161.96) .. (194.77,164.33) .. controls (194.77,166.71) and (192.84,168.63) .. (190.47,168.63) .. controls (188.09,168.63) and (186.17,166.71) .. (186.17,164.33) -- cycle ;
\draw  [fill={rgb, 255:red, 74; green, 144; blue, 226 }  ,fill opacity=0.49 ] (167.5,120.6) .. controls (167.5,118.23) and (169.43,116.3) .. (171.8,116.3) .. controls (174.17,116.3) and (176.1,118.23) .. (176.1,120.6) .. controls (176.1,122.97) and (174.17,124.9) .. (171.8,124.9) .. controls (169.43,124.9) and (167.5,122.97) .. (167.5,120.6) -- cycle ;
\draw  [fill={rgb, 255:red, 74; green, 144; blue, 226 }  ,fill opacity=0.49 ] (282.5,107.6) .. controls (282.5,105.23) and (284.43,103.3) .. (286.8,103.3) .. controls (289.17,103.3) and (291.1,105.23) .. (291.1,107.6) .. controls (291.1,109.97) and (289.17,111.9) .. (286.8,111.9) .. controls (284.43,111.9) and (282.5,109.97) .. (282.5,107.6) -- cycle ;
\draw  [fill={rgb, 255:red, 74; green, 144; blue, 226 }  ,fill opacity=0.49 ] (226.17,151.63) .. controls (226.17,149.26) and (228.09,147.33) .. (230.47,147.33) .. controls (232.84,147.33) and (234.77,149.26) .. (234.77,151.63) .. controls (234.77,154.01) and (232.84,155.93) .. (230.47,155.93) .. controls (228.09,155.93) and (226.17,154.01) .. (226.17,151.63) -- cycle ;
\draw [color={rgb, 255:red, 208; green, 2; blue, 27 }  ,draw opacity=1 ]   (200,162) -- (209.47,169.33) ;
\draw [color={rgb, 255:red, 208; green, 2; blue, 27 }  ,draw opacity=1 ]   (209.47,169.33) -- (198.47,173.33) ;
\draw    (490.47,150.33) -- (372.47,151.33) ;
\draw    (490.47,81.33) -- (372.47,151.33) ;
\draw  [draw opacity=0] (402.47,151.33) .. controls (402.47,145.87) and (401.01,140.74) .. (398.45,136.33) -- (372.47,151.33) -- cycle ; \draw   (402.47,151.33) .. controls (402.47,145.87) and (401.01,140.74) .. (398.45,136.33) ;  
\draw    (490.47,81.33) -- (490.47,150.33) ;

\draw (265.1,101) node [anchor=north west][inner sep=0.75pt]    {$p_{2}$};
\draw (187.8,55.7) node [anchor=north west][inner sep=0.75pt]    {$p_{3}$};
\draw (220,189.4) node [anchor=north west][inner sep=0.75pt]    {$p_{4}$};
\draw (137.1,111.07) node [anchor=north west][inner sep=0.75pt]    {$p_{1}$};
\draw (239.8,139.7) node [anchor=north west][inner sep=0.75pt]    {$-p_{3}$};
\draw (167,162.4) node [anchor=north west][inner sep=0.75pt]    {$p_{5}$};
\draw (212,159.4) node [anchor=north west][inner sep=0.75pt]    {$\Omega $};
\draw (349,139) node [anchor=north west][inner sep=0.75pt]    {$p_{4}$};
\draw (486,60.4) node [anchor=north west][inner sep=0.75pt]    {$-p_{3}$};
\draw (497,139) node [anchor=north west][inner sep=0.75pt]    {$p_{5}$};
\draw (448,116.4) node [anchor=north west][inner sep=0.75pt]    {$\Omega $};
\draw (433.47,154.23) node [anchor=north west][inner sep=0.75pt]    {$E_{1}$};
\draw (496,103.4) node [anchor=north west][inner sep=0.75pt]    {$E_{2}$};
\draw (420,85.4) node [anchor=north west][inner sep=0.75pt]    {$E_{3}$};

\end{tikzpicture}

    \caption{fundamental domains.}
    \label{geodesictriangular}
\end{figure}
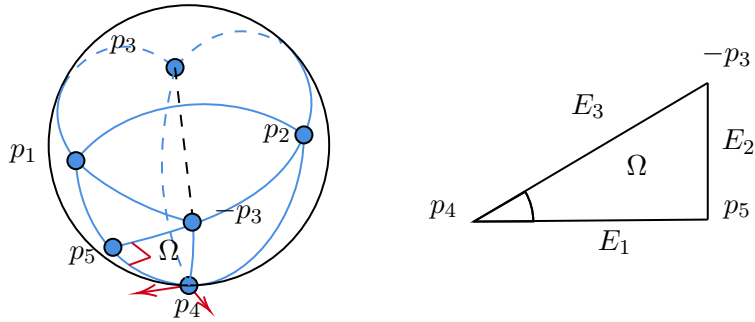

Given a critical eigensection $f$, as mentioned before $V_\lambda^c=\CC\{f\}$ is either a trivial or an alternating representation of $S_4$. We can see from the Figure \ref{THEFIGURE} that, $\ei  \sigma_0$ fixes $E_1$, and $\sigma_0 \sigma_4$ fixes $E_3$. Additionally, $\sigma_4 \sigma_1 \sigma_0$, which is a lift of $(1\,4)\in S_4< \mathrm{O}(3)$, preserves $E_2$. The actions of these maps on $f$ can be listed as the following table:
\begin{align*}
    \begin{array}{|c|c|c|c|}
    \hline
         &\ei \sigma_0 & \sigma_0 \sigma_4 & \sigma_4 \sigma_1 \sigma_0\\
         \cline{1-4}
    f\in U & -1      & 1    &  1 \\
    \cline{1-4}
    f\in U'&  1     &  -1    &  -1\\
    \cline{1-4}
\text{Fixed point set} & E_1 & E_2 & E_3\\
    \cline{1-4}
    \end{array}
\end{align*}

It follows that a critical eigensection $f$ has the following mixed boundary condition on $\Omega$:
\begin{align*}
    \begin{array}{|c|c|c|c|}
    \hline
         & E_1 & E_2 & E_3\\     
         \cline{1-4}
    f\in U & \text{Dirichlet}      & \text{Neumann}    &  \text{Neumann} \\
    \cline{1-4}
    f\in U'&  \text{Neumann}     &  \text{Dirichlet}    &  \text{Dirichlet}\\
    \cline{1-4}
    \end{array}
\end{align*}

Conversely, consider the Laplacian $\Delta_\Omega: C^{\infty}(\Omega)\to C^{\infty}(\Omega)$ on the spherical domain $\Omega$, with two kinds of mixed boundary conditions presented in the following, it has two different Friedrich extensions:  
\begin{align}
    &f|_{E_1}\equiv 0, ~~\frac{\partial f}{\partial \nu}|_{E_2\cup E_3}\equiv 0,\label{bdycond1}\\
    &f|_{E_2\cup E_3}\equiv 0, ~~\frac{\partial f}{\partial \nu}|_{E_1}\equiv 0.\label{bdycond2}
\end{align}
Consider an eigenfunction $f$ of $\Delta_\Omega$ of eigenvalue $\lambda$, with boundary condition \eqref{bdycond1} (resp. \eqref{bdycond2}). By employing the maps $\ei$ and $\sigma_l$ for $0\le l\le 4$, and conditions $\sigma_0.\tilde{f}=\tilde{f}$ (resp. $\sigma_0.\tilde{f}=-\tilde{f}$), $\sigma_l.\tilde{f}=\bar{f}$ for $l=1,\cdots,4$ and $\ei.\tilde{f}=-\tilde{f}$, we can extend $f$ to be an odd function on $\Sigma_\vp$. Moreover, by construction, such an extension $\tilde{f}$ satisfies $\Delta \tilde{f}+\lambda \tilde{f}=0$ on $\Sigma_\vp$ weakly. Therefore, $\tilde{f}$ can be characterized as an eigensection of $\lbc$, for which we still denote by $f$. And finally, $\CC\{f\}$ is the trivial (resp. alternating) representation of $S_4$.

To establish that $f$ is a critical eigensection, we consider the expansion $\tilde{f}(w_l,\bar{w}_l)=a_lw_l^{2n_l+1}+b_l\bar{w}_l^{2n_l+1}+\mathcal{O}(r^{2n_l+3})$. From Lemma \ref{rotacoorrel} and \eqref{sigmafw}, we deduce that $\tilde{f}(w_l,\bar{w}_l)=\sigma_l.\tilde{f}(w_l,\bar{w}_l)=a_le^{2(2n_l+1)\pi i/3}w_l^{2n_l+1}+b_le^{-2(2n_l+1)\pi i/3}\bar{w}_l^{2n_l+1}+\mathcal{O}(r^{2n_l+3})$. This implies that $2n_l+1\equiv 0 \mod 3$, and consequently, $n_l\ge 1$.

In summary, we can conclude the following:

\begin{proposition}\label{tetracriuni}
    There are infinitely many critical eigenvalues of $\Delta$ on $\lbc$. A $\ZT$ eigenvalue $\lambda$ is critical if and only if $V_\lambda^c$ is nontrivial and serves as an one dimensional $S_4$-representation. $V_\lambda^c$ is isomorphic with the trivial representation $U$ if and only if $\lambda$ is an eigenvalue of $\Delta_\Omega$ on $\Omega$ with boundary conditions \eqref{bdycond1}. Similarly, $V_\lambda^c$ is isomorphic with the alternating representation $U'$ if and only if $\lambda$ serves as an eigenvalue of $\Delta_\Omega$ with boundary condition \eqref{bdycond2} on $\Omega$. 
\end{proposition}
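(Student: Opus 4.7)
The plan is to reduce the whole statement to a pair of mixed boundary value problems on the spherical triangle $\Omega$, using the $S_4$-symmetry to move between invariant/anti-invariant eigensections on $\Sigma_{\vp}$ and eigenfunctions on the fundamental domain. The ``only if'' direction of the equivalence is essentially restriction to $\Omega$, while the ``if'' direction is an extension-by-symmetry construction followed by a check of criticality using Lemma \ref{rotacoorrel}.

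First I would establish the classification half. Assume $\lambda$ is a critical eigenvalue and pick any nonzero $f\in V_\lambda^c$. Since the tetrahedral configuration contains no antipodal points and lies in $\mathcal{C}_4$, Proposition \ref{rk1} forces $\dim_{\CC} V_\lambda^c\le 1$; being a nonzero $S_4$-subrepresentation of dimension one, Lemma \ref{irreS4rep} says $V_\lambda^c\cong U$ or $V_\lambda^c\cong U'$. Inspecting the $S_4$-action on the covering picture (Figure \ref{THEFIGURE}) one sees that $\ei\sigma_0$ fixes $E_1$ pointwise, $\sigma_0\sigma_4$ fixes $E_2$, and $\sigma_4\sigma_1\sigma_0$ fixes $E_3$; these three involutions realize the table already displayed in the text. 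Consequently the restriction $f|_\Omega$ satisfies \eqref{bdycond1} when $V_\lambda^c\cong U$ and \eqref{bdycond2} when $V_\lambda^c\cong U'$, and is clearly an eigenfunction of $\Delta_\Omega$ with eigenvalue $\lambda$.

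Next I would run the extension construction to get the converse. Given an eigenfunction $u$ of $\Delta_\Omega$ on $\Omega$ with boundary condition \eqref{bdycond1} (resp. \eqref{bdycond2}) and eigenvalue $\lambda$, define $\tilde f$ on $\Sigma_\vp$ by translating $u$ around using the $S_4\times \langle\ei\rangle$-action, with the transformation rule $\sigma_0.\tilde f=\tilde f$ (resp. $-\tilde f$), $\sigma_l.\tilde f=\tilde f$ for $l=1,\ldots,4$, and $\ei.\tilde f=-\tilde f$. The Neumann boundary condition on an edge guarantees $C^1$ matching of $\tilde f$ across the image of that edge, while the Dirichlet boundary condition forces the anti-invariant extension across the other edges to vanish on the seam and hence remain continuous and weakly differentiable; this is a standard reflection-principle check, and it shows $\tilde f\in H^1_-(\Sigma_\vp)$ and solves $\Delta\tilde f+\lambda\tilde f=0$ weakly, so it descends to an eigensection $f$ of $\lbc$. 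Because $V_\lambda^c$ is $S_4$-invariant of dimension at most one, the resulting $f$ lies in $V_\lambda^c$ provided we verify criticality.

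For criticality I would expand $\tilde f(w_l,\bar w_l)=a_l w_l^{2n_l+1}+b_l\bar w_l^{2n_l+1}+\mathcal{O}(r^{2n_l+3})$ near each $p_l$ and impose $\sigma_l.\tilde f=\tilde f$. By Lemma \ref{rotacoorrel}, $\sigma_l$ acts by $w_l\mapsto e^{-2\pi i/3}w_l$, so the expansion of $\sigma_l.\tilde f$ equals $a_l e^{2(2n_l+1)\pi i/3}w_l^{2n_l+1}+b_l e^{-2(2n_l+1)\pi i/3}\bar w_l^{2n_l+1}+\mathcal{O}(r^{2n_l+3})$; matching against the invariance of $\tilde f$ forces $(2n_l+1)\equiv 0\pmod 3$, hence $n_l\ge 1$. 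Thus $\vn_{p_l}(f)\ge 1$ for every $l$, which is precisely the criticality condition. This closes the ``if and only if'' statements of the proposition. Finally, the infinitude claim follows from standard spectral theory: $\Omega$ is a compact Lipschitz domain (in fact a smooth geodesic triangle away from its vertices), so either mixed boundary value problem \eqref{bdycond1} or \eqref{bdycond2} has a discrete spectrum accumulating at $+\infty$, yielding infinitely many critical eigenvalues.

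The main technical obstacle I anticipate is the regularity of the reflected extension $\tilde f$ at the boundary edges of the $S_4$-tiling and, in particular, at the corners of $\Omega$, which correspond to cone points or to intersections of a Dirichlet and a Neumann arc. These have to be handled carefully: the group-theoretic reflection ensures $\tilde f\in H^1$ across the interiors of the edges, and the cone-point singularity is absorbed by the $\sob$ framework of Section 2 together with the constraint $n_l\ge 1$ just derived. Once this is checked, the rest of the argument is a mechanical combination of representation theory, the explicit coordinate identities of Lemma \ref{rotacoorrel}, and classical Friedrichs extension theory on $\Omega$.
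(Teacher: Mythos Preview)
Your proposal is correct and follows essentially the same approach as the paper: the paper likewise uses Proposition~\ref{rk1} to force $\dim_{\CC}V_\lambda^c\le 1$, reads off the mixed boundary conditions from the same three edge-fixing involutions, extends an $\Omega$-eigenfunction by the $S_4\times\langle\ei\rangle$-action, and proves criticality via the congruence $2n_l+1\equiv 0\pmod 3$ from Lemma~\ref{rotacoorrel}. One harmless slip: you interchanged which of $E_2$ and $E_3$ is fixed by $\sigma_0\sigma_4$ versus $\sigma_4\sigma_1\sigma_0$ (the paper has them the other way), but this does not affect the argument.
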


\begin{lemma}
    Suppose $\lam$ is a critical eigenvalue of $\lb$, then there is a real section $f\in V_\lam^c$, such that $V_\lam^c=\CC\{f\}$.
\end{lemma}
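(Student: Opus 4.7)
The plan is to combine two facts already in the paper: the \emph{existence} of a real critical eigensection (which is exactly what it means for $\lambda$ to be a critical eigenvalue of the real bundle $\lb$), together with the \emph{uniqueness} statement $\dim_{\mathbb{C}} V_\lambda^c \le 1$ coming from Proposition~\ref{rk1}. Since the tetrahedral configuration lies in $\mathcal{C}_4$ and contains no antipodal pair, Proposition~\ref{rk1} applies and gives the required upper bound on the complex dimension of $V_\lambda^c$.

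First I would unwind the hypothesis: ``$\lambda$ is a critical eigenvalue of $\lb$'' means there is a real section $f\in\sob$ with $-\Delta f=\lambda f$ and $\vp_f=\varnothing$; this $f$ is automatically an element of $V_\lambda^c\subset\sobc$ via the inclusion $\sob\hookrightarrow\sobc$. Then I would invoke Proposition~\ref{rk1} to conclude $\dim_{\mathbb{C}} V_\lambda^c\le 1$, so that the nonzero element $f$ must span: $V_\lambda^c=\mathbb{C}\{f\}$.

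As an alternative (self-contained) route in case one wanted to avoid appealing to the definitional existence, one could start from any $\mathbb{C}$-basis vector $g\in V_\lambda^c$, decompose $g=g_1+ig_2$ into real and imaginary parts with $g_1,g_2$ sections of $\lb$, and use that $\Delta$ is a real operator and that the local expansion \eqref{fourier} has real radial functions $u_n,u_n'$ to conclude that both $g_1,g_2$ lie in $V_\lambda\cap\sob$ and are critical (their leading coefficients at each $p$ are the real/imaginary parts of those of $g$, hence all vanish iff those of $g$ do). Applying Proposition~\ref{rk1} again shows at least one of $g_1,g_2$ is a nonzero real spanning vector.

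There is no serious obstacle: the content of the lemma is purely the interplay between the real structure on $\lbc$ and the one-dimensionality of $V_\lambda^c$, both of which have already been established. The only point deserving care is ensuring that criticality is preserved under taking real/imaginary parts, which follows immediately from the form of the expansion \eqref{fourier}.
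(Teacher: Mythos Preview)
Your proof is correct. Your first approach is actually more direct than the paper's: you exploit the hypothesis ``$\lambda$ is a critical eigenvalue of $\lb$'' to immediately produce a real critical eigensection $f$, then invoke Proposition~\ref{rk1} to force $V_\lambda^c=\mathbb{C}\{f\}$. The paper instead argues from the complex side: it considers the conjugation involution $h\mapsto\bar h$ on $V_\lambda^c$, finds an eigenvector $f$ with $\bar f=\pm f$, replaces $f$ by $if$ if necessary to get $\bar f=f$, and then uses $\dim_\mathbb{C} V_\lambda^c=1$. Your alternative route (splitting an arbitrary $g\in V_\lambda^c$ into real and imaginary parts) is essentially this same conjugation argument in different clothing. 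The paper's formulation has the minor advantage that it would go through verbatim even if one only assumed $V_\lambda^c\neq 0$ in $\lbc$, but under the stated hypothesis your first approach is cleaner.
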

\begin{proof}
    Consider the involution $V_\lam^c\to V_\lam^c:h\mapsto \bar{h}$. This is a real linear map, and satisfies $||h||_{L^2}=||\bar{h}||_{L^2}$. Therefore, it has a nonzero eigenvector $f$, such that $\bar{f}=\pm f$. And note that $\overline{if}=\mp i f$. Thus we can assume $\bar{f}=f$, and hence $f=\re\, f$ is real. Since $\dim_{\CC}\, V_\lam^c=1$, $V_\lam^c=\CC\{f\}$. 
\end{proof}

This lemma shows that critical eigensections in Proposition \ref{tetracriuni}, which are constructed from eigenfunctions on $\Omega$, are complexifications of real eigensections actually. And we name the family of complex eigensections, as well as the corresponding real eigensections as \textbf{Taubes-Wu tetrahedral eigensections}.

It's important to emphasize that this construction of eigensections is equivalent to the construction demonstrated in \cite{taubeswu2020examples}. Furthermore, Proposition \ref{tetracriuni} implies that these are the only potential families of critical eigensections when the underlying configuration $\vp$ is the tetrahedral one.

\begin{proposition}
    All Taubes-Wu tetrahedral eigensections are non-degenerate.
\end{proposition}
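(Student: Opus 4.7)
The plan is to deduce the proposition as an immediate consequence of Lemma \ref{npf==1}, after observing that the tetrahedral configuration $\vp = \{p_1, p_2, p_3, p_4\}$ lies in $\mathcal{C}_4$ and contains no pair of antipodal points: the antipode $-p_l$ of any vertex of the regular tetrahedron is the centroid of the opposite face, which is distinct from every vertex in $\vp$. Since a Taubes-Wu tetrahedral eigensection is by construction a critical eigensection of $\lbc$ (a real critical eigensection of $\lb$ is simultaneously a critical eigensection of $\lbc$ via complexification), Lemma \ref{npf==1} applies directly and forces $\vn_p(f) = 1$ at every $p \in \vp$, which is precisely the non-degeneracy condition.

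For a proof that stays within the $S_4$-representation framework developed in this section, I would argue as follows. By the transitivity of the $S_4$-action on $\vp$, the order $n := \vn_{p_l}(f)$ is independent of $l$; combining $\sigma_4$-invariance of $f$ at $p_4$ with Lemma \ref{rotacoorrel} and the transformation rule \eqref{sigmafw} shows that $2n+1 \equiv 0 \pmod{3}$, so $n \in \{1,4,7,\dots\}$. Suppose for contradiction that $n \geq 4$. Choose $q \in \sph$ with $q \notin \pm\vp$ and set
\[
g := (J_3^{q})^{\,n-1}\, f.
\]
Since $J_3^{q}$ commutes with $\Delta$, $g$ satisfies $\Delta g + \lambda g = 0$ pointwise on $\sph \setminus \vp$; iterating Proposition \ref{orderminusone} yields $\vn_p(g) = 1$ for every $p \in \vp$. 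A short computation using the asymptotic expansion of $f$ shows that $g \in \sob$ and that $|dg|$ is H\"older continuous and vanishes on $\vp$, so $g$ is itself a critical eigensection of $\lbc$ with eigenvalue $\lambda$. Proposition \ref{rk1} then forces $g = cf$ for some $c \in \mathbb{C}$, but this is incompatible with $\vn_p(g) = 1 \neq n = \vn_p(f)$. Hence $n = 1$.

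The only point requiring care in the alternative argument is verifying that $g := (J_3^{q})^{n-1} f$ genuinely qualifies as a critical eigensection, i.e.\ that it lies in $\sob$ and that $|dg|$ is H\"older regular and vanishes on $\vp$. One uses the pointwise bound $|J_3^{q} h| \lesssim |dh|$ together with the termwise differentiation of the asymptotic expansion of $f$ to verify both properties. This is routine, and the direct appeal to Lemma \ref{npf==1} in the first paragraph bypasses it entirely, which is why I would present the proof in the short form.
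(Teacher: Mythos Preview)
Your first paragraph is correct and is essentially the paper's own proof: both invoke Lemma \ref{npf==1} directly, the paper adding only the explicit observation that for a real section $f=\bar f$ the complex leading coefficients satisfy $a=\bar b$, so the real expansion $f=\re\,2az^{3/2}+\mathcal{O}(r^{5/2})$ has nonzero leading term. Your alternative argument via Proposition \ref{rk1} is also valid but not logically independent, since the proof of Proposition \ref{rk1} already relies on Lemma \ref{npf==1}.
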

\begin{proof}
    It directly follows from Lemma \ref{npf==1}, that a complex Taubes-Wu tetrahedral eigensection $f$ is non-degenerate. Consider $V_\lam^c=\CC\{f\}$, with $f=\bar{f}$. We shall show that $f$ is non-degenerate.

     Near a point $p\in\vp$, according to Lemma \ref{npf==1}, $f$ can be expanded as $f(z,\bar{z})=az^{3/2}+b \bar{z}^{3/2}+\mathcal{O}(r^{5/2})$. Since $f=\bar{f}$, $\bar{b}=a$, and hence $f(z,\bar{z})=\re\,2az^{3/2}+\mathcal{O}(r^{5/2})$. Thus $f$ is non-degenerate.
\end{proof}

Next, we delve into the study of the entire eigenspace $V_\lambda\subset\lbc$. According to the classical representation theory, $V_\lambda$ is a direct sum of irreducible $S_4$-representations, which have been listed in Lemma \ref{irreS4rep}. 

\begin{lemma}\label{noVV}
Both the standard representation $V$ and $V'=V\otimes U'$ can appear at most once in the decomposition of irreducible $S_4$-representations of $V_\lambda$.
\end{lemma}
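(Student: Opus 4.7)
The plan is to construct an $S_4$-equivariant linear map $\Phi_0: V_\lambda \to T$, where $T$ is an explicit $8$-dimensional $S_4$-representation, show that its kernel is exactly the critical subspace $V_\lambda^c$, and then identify $T \cong V \oplus V' \oplus W$ as $S_4$-representations. The lemma then follows because each of $V$ and $V'$ appears with multiplicity one in $T$ and does not appear in $V_\lambda^c$.

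First I would define $\Phi_0$ by extracting the lowest possible leading coefficients. Using the coordinates $w_p$ fixed in Figure \ref{THEFIGURE}, any $f \in V_\lambda$ has an expansion $\tilde f(w_p,\bar w_p) = a_p w_p + b_p \bar w_p + O(|w_p|^3)$ near each $p \in \vp$ (with possibly vanishing $a_p,b_p$). Set
$$\Phi_0(f) := \bigl((a_p,b_p)\bigr)_{p \in \vp} \in T := \bigoplus_{p \in \vp}\bigl(\CC w_p \oplus \CC \bar w_p\bigr).$$
Turning $T$ into an $S_4$-module via the action on the generators $w_p,\bar w_p$ recorded in Lemma \ref{rotacoorrel} and extending linearly, equation \eqref{sigmafw} shows that $\Phi_0$ is $S_4$-equivariant. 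By construction, $\Phi_0(f) = 0$ if and only if $\vn_p(f) \ge 1$ for every $p$, i.e.\ $f \in V_\lambda^c$; so $\ker \Phi_0 = V_\lambda^c$, which by Proposition \ref{tetracriuni} is either $0$ or $1$-dimensional and isomorphic to $U$ or $U'$.

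Next I would decompose $T$ as an $S_4$-representation via a character calculation. On the identity, $\chi_T(e) = 8$. On a $3$-cycle $\sigma_l$ only $w_l$ and $\bar w_l$ can contribute (the other three $p_k$ are permuted cyclically), so
$$\chi_T(\sigma_l) = e^{-2\pi i/3} + e^{2\pi i/3} = -1.$$
For the three remaining conjugacy classes, namely $(ij)$, $(ij)(kl)$, and $(ijkl)$, the corresponding permutation of $\{p_1,\dots,p_4\}$ has no fixed point, and since the local action on each summand $\CC w_p \oplus \CC \bar w_p$ is described by one of the anti-diagonal or permutation formulas of Lemma \ref{rotacoorrel}, the trace is zero in each case. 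The character $\chi_T = (8,0,-1,0,0)$ on conjugacy classes $(e, (ij), (ijk), (ij)(kl), (ijkl))$ coincides with $\chi_V + \chi_{V'} + \chi_W$, and hence $T \cong V \oplus V' \oplus W$.

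Finally, since $S_4$ is finite the short exact sequence $0 \to V_\lambda^c \to V_\lambda \to \mathrm{Im}(\Phi_0) \to 0$ of $S_4$-modules splits, and $\mathrm{Im}(\Phi_0)$ is a sub-representation of $V \oplus V' \oplus W$. Therefore $V_\lambda$ embeds as an $S_4$-module in $V_\lambda^c \oplus V \oplus V' \oplus W$, a representation in which each of $V$ and $V'$ appears exactly once, yielding the claimed multiplicity bounds. The main point to verify is the $S_4$-equivariance of $\Phi_0$, which amounts to checking that the transformation rules in Lemma \ref{rotacoorrel} are compatible with the asymptotic expansion; the character computation itself is routine.
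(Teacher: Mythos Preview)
Your argument is correct and takes a genuinely different route from the paper.  The paper argues by contradiction: assuming two copies of $V$ sit inside $V_\lambda$, it picks in each copy the unique line fixed by the subgroup $\langle\sigma_0,\sigma_4\rangle\cong S_3$, writes down the order-$\tfrac12$ expansions at $p_1,\dots,p_4$, and produces a nontrivial linear combination that is critical yet lies in $V^{\oplus2}$, a contradiction.  Your approach is cleaner and more structural: the equivariant ``leading coefficient'' map $\Phi_0$ identifies $V_\lambda/V_\lambda^c$ with a subrepresentation of the explicit $8$-dimensional module $T$, and the character computation does the rest.  A pleasant by-product is that your method simultaneously yields Lemma~\ref{noWW} (that $W$ appears at most once), since $W$ also has multiplicity one in $T$; the paper proves this separately.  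On the other hand, your method does not immediately give Lemma~\ref{noVV'} (that $V$ and $V'$ cannot coexist), because both occur in $T$.

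One small correction: your sentence ``the corresponding permutation of $\{p_1,\dots,p_4\}$ has no fixed point'' is false for transpositions --- a transposition $(ij)$ fixes the other two vertices.  The character value on transpositions is nonetheless zero, for the reason you give in the next clause: at each fixed vertex $p$, Lemma~\ref{rotacoorrel} shows that $\sigma_0$ swaps $w_p$ with $\pm\bar w_p$, so the local action on $\CC w_p\oplus\CC\bar w_p$ is anti-diagonal with trace zero.  So the conclusion $\chi_T=(8,0,-1,0,0)$ stands, but you should rephrase the justification for the transposition class.
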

\begin{proof}
    Let's assume there are two copies of $V$ within $V_\lambda$. This implies the existence of two linear independent eigensections, denoted as $f_1$ and $f_2$, such that $\sigma_k. f_l=f_l$ for $k=0,\cdots,4$ and $l=1,2$. We deduce from the $\sigma_4$-symmetry that $\vn_{p_4}(f_l)>0(l=1,2)$. 
    
    Since $\sigma_4.f_l=f_l$, $\vn_{p_i}(f_l)=\vn_{\sigma_4(p_i)}(f_l)$ for $l=1,2$, and consequently, $\vn_{p_i}(f_l)=0$ $(i=1,2,3)$ for $l=1,2$. Otherwise, at least one of $f_l$ would be critical, but $\sigma_3.f_l\ne f_l$ for $l=1,2$, contradicting with the assertion that a critical eigensection spans a one dimensional $S_4$-representation. Next, we expand the odd functions $\tilde{f}_l$ near $p_1$ as $\tilde{f}_l(w_1,\bar{w}_1)=a_lw_1+b_l\bar{w}_1+\mathcal{O}(r^{3})$ $(l=1,2)$. 
    
    Recalling that near $p_1\in\Sigma_\vp$, $\sigma_0$ maps $w_1$ to $\bar{w}_1$, as presented in Lemma \ref{rotacoorrel}, we conclude that $a_l=b_l$ for $l=1,2$. Let $f_3=a_{2}f_1-a_{1}f_2$, then $\vn_{p_1}(f_3)\ge 1$. However, since $\sigma_4.f_3=f_3$,we have $\vn_{p_l}(f_3)\ge 1$ for $l=1,2,3,4$. Thus it must be critical, belonging to either $U$ or $U'$. However, intersection between two different irreducible subrepresentations must be trivial, implying $f_3=0$. This indicates that $f_1$ and $f_2$ are linearly dependent, contradicting with our initial assumption.
\end{proof}

\begin{lemma}\label{noVV'}
The representations $V$ and $V'$ cannot simultaneously appear in the irreducible $S_4$-representation decomposition of $V_\lambda$.
\end{lemma}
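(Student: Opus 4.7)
The plan is to assume that both $V$ and $V'$ appear in the decomposition of $V_\lambda$ and derive a contradiction by applying Lemma \ref{lemmalocal} to a ``mixed'' test pair, one factor coming from $V$ and the other from a derivative of a section in $V'$, so that the opposite $\sigma_0$-signs of the two summands break the tetrahedral cancellation that would otherwise trivialize the identity.

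First I would let $v_4 \in V_\lambda$ and $v_4' \in V_\lambda$ denote the images of the basis elements $e_4 \in V$ and $e_4' \in V'$ under the postulated embeddings. Since $\sigma_4$ is an even permutation fixing $4$, both $v_4$ and $v_4'$ are $\sigma_4$-fixed. Arguing exactly as in the proof of Lemma \ref{noVV}, the $\sigma_4$-symmetry at $p_4$ forces $\vn_{p_4}(v_4) \geq 1$ and $\vn_{p_4}(v_4') \geq 1$; moreover, neither section can be critical, since otherwise its $S_4$-orbit would embed the entire $V$ or $V'$ into $V_\lambda^c$, violating $\dim_{\CC} V_\lambda^c \leq 1$ from Proposition \ref{rk1}. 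Hence $\vn_{p_i}(v_4) = \vn_{p_i}(v_4') = 0$ for $i=1,2,3$. Combining $\sigma_0.v_4 = v_4$, $\sigma_0.v_4' = -v_4'$ with the relation $\sigma_0.w_1 = \bar{w}_1$ from Lemma \ref{rotacoorrel}, one obtains asymptotic expansions $\tilde{v}_4 = a\, w_1 + a\, \bar{w}_1 + O(r^3)$ and $\tilde{v}_4' = a'\, w_1 - a'\, \bar{w}_1 + O(r^3)$ at $p_1$, with both $a, a' \neq 0$; by $\sigma_4$-transport the same forms hold at $p_2$ and $p_3$.

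Next I would apply Lemma \ref{lemmalocal} with $\phi_1 = v_4$ and $\phi_2 = J_3 v_4'$. The latter is smooth on $\sph \setminus \vp$ and satisfies $\Delta \phi_2 + \lambda \phi_2 = 0$ because $J_3$ commutes with $\Delta$. Using $J_3 z_i|_{p_i} = \tfrac{1}{2}\sin\theta_i$ and $J_3 \bar{z}_i|_{p_i} = -\tfrac{1}{2}\sin\theta_i$ from Lemma \ref{Liealgetrans} and \eqref{Jcplxcoor}, direct computation gives
\[
\gamma_{p_i}(v_4) = \delta_{p_i}(v_4) = a, \qquad \gamma'_{p_i}(J_3 v_4') = \delta'_{p_i}(J_3 v_4') = \tfrac{a'}{4}\sin\theta_i
\]
for $i=1,2,3$, while the contributions at $p_4$ vanish because $\vn_{p_4}(v_4), \vn_{p_4}(v_4') \geq 1$ and $J_3 = J_3^{p_4}$ preserves the vanishing order at $p_4$. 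Plugging in and using $\sin\theta_1 = \sin\theta_2 = \sin\theta_3 = \tfrac{2\sqrt{2}}{3}$ for the tetrahedral configuration, Lemma \ref{lemmalocal} collapses to
\[
0 = \sum_{p \in \vp}\bigl(\gamma'_p \gamma_p + \delta'_p \delta_p\bigr) = \frac{aa'}{2}\sum_{i=1}^{3} \sin\theta_i = \sqrt{2}\, aa',
\]
contradicting $aa' \neq 0$.

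The main subtlety lies in the choice of test pair. The symmetric choices $(v_4, J_3 v_4)$ and $(v_4', J_3 v_4')$ produce integrands that vanish pointwise at every $p_i$, because within a single summand $\sigma_0$-symmetry forces the holomorphic and antiholomorphic contributions to cancel exactly. Mixing $v_4 \in V$ with $J_3 v_4' \in V'$ flips the sign of the antiholomorphic piece precisely at $p_1, p_2, p_3$, turning the pointwise cancellation into a reinforcing sum, and the nonvanishing of $\sum_i \sin\theta_i$ then provides the obstruction.
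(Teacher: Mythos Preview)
Your proof is correct and follows essentially the same route as the paper's: pick the $\sigma_4$-fixed vectors in $V$ and $V'$, use the $\sigma_0$-action at $p_1$ together with $\sigma_4$-transport to pin down the leading coefficients as $(a,a)$ and $(a',-a')$ at $p_1,p_2,p_3$, then feed $\phi_1=v_4$ and $\phi_2=J_3 v_4'$ into Lemma \ref{lemmalocal} to force $aa'=0$. The only differences are cosmetic (notation and the overall constant in the final identity).
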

\begin{proof}
    Suppose that there is a subrepresentation $V\oplus V'$ being contained in $V_\lambda$. Let's take $f_1$ and $f_2$ from $V$ and $V'$ respectively, so that $\sigma_4.f_s=f_s$ for $s=1,2$. Expand $\tilde{f}_1$ and $\tilde{f}_2$ in $w_l$-coordinates near $p_l$ as 
        $\tilde{f}_{s}(w_l,\bar{w}_l)=a_{l,s} w_l+b_{l,s}\bar{w}_l+\mathcal{O}(r^3)$, for $s=1,2$ and $l=1,2,3$. 
    By definition, $\sigma_0.\tilde{f}_1=\tilde{f}_1$ while $\sigma_0.\tilde{f}_2=-\tilde{f}_2$, implying that $a_{1,1}=b_{1,1}$ while $a_{1,2}=-b_{1,2}$. Furthermore, from the $\sigma_4$-symmetry, we deduce $a_{1,s}=a_{2,s}=a_{3,s}$ for $s=1,2$. Next, let's consider the eigensections $J_3f_s(s=1,2)$. By \eqref{Jcplxcoor} and Lemma \ref{Liealgetrans}, we conclude that the leading coefficients of sections $J_3f_s$ at $p_l$ are $\frac{\sqrt{2}}{3}a_{l,s}$ and $-\frac{\sqrt{2}}{3}b_{l,s}$ for $s=1,2$ and $l=1,2,3$. Set $\phi_1=f_1$ and $\phi_2=J_3f_2$ in Lemma \ref{lemmalocal} we then have 
    \begin{align*}
        0=\frac{\sqrt{2}}{3}\sum_{l=1}^3(a_{l,1}a_{l,2}-b_{l,1}b_{l,2})=2\sqrt{2} a_{1,1}a_{1,2}.
    \end{align*}
    As a consequence, either $\vn_{p_l}(f_1)$ or $\vn_{p_l}(f_2)$ must be positive for all $1\le l\le 3$. Additionally, by the $\sigma_4$-symmetry, $\vn_{p_4}(f_s)>0$ for $l=1,2$. However, since neither $f_1$ nor $f_2$ is critical, these lead to a contradiction.  
\end{proof}

\begin{lemma}\label{noWW}
    There is at most one irreducible summand that isomorphic with $W$ within the direct sum decomposition of the $S_4$-representation $V_\lambda$.
\end{lemma}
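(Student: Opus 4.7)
Suppose for contradiction that $V_\lambda$ contains two linearly independent copies of $W$. The plan is to extract a two-dimensional space of eigensections on which every rotation $\sigma_l$ ($1\le l\le 4$) acts by the scalar $e^{2\pi i/3}$, and then use a mod-$3$ congruence on leading orders at each $p_l$ together with Proposition \ref{rk1} to force this space to be at most one-dimensional.

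First I will set $E := \{f\in V_\lambda : \sigma_l.f = e^{2\pi i/3}f,\ l=1,2,3,4\}$. Reading off the action of $\sigma_l$ on the distinguished vector $v$ in the description of $W$ in Lemma \ref{irreS4rep}, each copy of $W$ contributes exactly one dimension to $E$, so under our assumption $\dim_{\mathbb{C}} E \ge 2$. For any nonzero $f\in E$, the relation $\sigma_4.f=e^{2\pi i/3}f$ forces $\vn_{p_1}(f)=\vn_{p_2}(f)=\vn_{p_3}(f)$ (since $\sigma_4$ cycles $p_1,p_2,p_3$ and fixes $p_4$), and the relation $\sigma_1.f=e^{2\pi i/3}f$ forces $\vn_{p_2}(f)=\vn_{p_3}(f)=\vn_{p_4}(f)$; so all four orders agree, and I write $n(f)$ for this common value.

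Next I will pin down which orders are actually permitted. Expanding $\tilde{f}$ in the $w_l$-coordinate from Lemma \ref{rotacoorrel} (in which $\sigma_l.w_l=e^{-2\pi i/3}w_l$) and applying the recipe \eqref{sigmafw} to $\sigma_l.\tilde{f}=e^{2\pi i/3}\tilde{f}$, the coefficient of $w_l^{2k+1}$ survives only when $k\equiv 2 \pmod 3$ and the coefficient of $\bar{w}_l^{2k+1}$ survives only when $k\equiv 0 \pmod 3$. Consequently $n(f)\in\{0,2,3,5,6,\dots\}$, and when $n(f)=0$ the leading pair at each $p_l$ takes the form $(0,b_l(f))$ with $b_l(f)\ne 0$.

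The rigidity now closes in the style of Lemma \ref{noVV}. Since the tetrahedral configuration lies in $\mathcal{C}_4$ and contains no antipodal points, Proposition \ref{rk1} gives $\dim_{\mathbb{C}} V_\lambda^c\le 1$; any critical eigensection spans either $U$ or $U'$, on each of which $\sigma_l$ ($l\ge 1$) acts trivially, so $E\cap V_\lambda^c=0$. Hence no nonzero element of $E$ is critical, and therefore $n(f)=0$ for every nonzero $f\in E$. The linear functional $b_1\colon E\to\mathbb{C}$, $f\mapsto b_1(f)$, is thus injective: any $g\in\ker b_1$ has $\vn_{p_1}(g)\ge 2$ by the gap in permitted orders, hence $n(g)\ge 2$, which combined with the previous sentence forces $g=0$. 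This yields $\dim_{\mathbb{C}} E\le 1$, contradicting $\dim_{\mathbb{C}} E\ge 2$. The principal technical step is the mod-$3$ order dichotomy at each $p_l$; once it is in place, the equality of orders across $\vp$ and the linear-algebra reduction at $p_1$ are routine.
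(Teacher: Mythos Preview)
Your proof is correct and follows essentially the same route as the paper's: isolate the common $e^{2\pi i/3}$-eigenvectors of the $\sigma_l$, use the mod-$3$ congruence from $\sigma_l.w_l=e^{-2\pi i/3}w_l$ to kill the $z_l^{1/2}$-coefficient and the entire order-$\tfrac32$ layer, then cancel the $\bar z_l^{1/2}$-coefficient at one point to force criticality and reach a contradiction with the classification of $V_\lambda^c$. The only cosmetic differences are that the paper works with two explicit sections $f_1,f_2$ and forms $f_3=b_{4,2}f_1-b_{4,1}f_2$ (using $p_4$ rather than your $p_1$), and invokes Proposition~\ref{tetracriuni} in place of your Proposition~\ref{rk1}; your packaging via the injective linear functional $b_1$ on $E$ is a clean way to say the same thing.
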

\begin{proof}
    Suppose there are two copies of $W$ within $V_\lambda$. Referred to Lemma \ref{irreS4rep}, we can find two linearly independent eigensections $f_1$ and $f_2$ being contained in different copies of $W$, such that $\sigma_l.f_s=e^{\frac{2\pi}{3}i}f_s$ for $s=1,2$. By \eqref{sigmafw} and Proposition \ref{tetracriuni}, for each $f_s$, $\vn_{p_l}(f_s)$ are all the same, and hence must be zero since $f_s$ is non-critical. Expand $f_s$ near $p_l$ as $f(z_l,\bar{z}_l)=a_{l,s}z_l^{1/2}+b_{l,s}\bar{z}_l^{1/2}+\mathcal{O}(r^{3/2})$. Since $\sigma_l.f_s=e^{\frac{2\pi}{3}i}f_l$, combining Lemma \ref{rotacoorrel} with \eqref{sigmafw}, we have 
    $e^{\frac{2\pi}{3}i}a_{l,s}w_l+e^{\frac{2\pi}{3}i}b_{l,s}\bar{w}_l+\mathcal{O}(r^{\frac32})=e^{\frac{2\pi}{3}i}\tilde{f}_s(w_l,\bar{w}_l)=(\sigma_l.\tilde{f}_s)(w_l,\bar{w}_l)
    =a_{l,s}e^{-\frac{2\pi}{3}i}w_l+b_{l,s}e^{\frac{2\pi}{3}i}b_{l,s}\bar{w}_l+\mathcal{O}(r^{\frac32})$.
    Thus $(a_{l,s},b_{l,s})=e^{\frac{2\pi}{3}i}(e^{\frac{2\pi}{3}i}a_{l,s},e^{-\frac{2\pi}{3}i}b_{l,s})$. It follows that $a_{l,s}=0$. Moreover, by comparing the coefficients of terms of order $3/2$, we deduce that such terms are vanishing. Thus $f(z_l,\bar{z}_l)=b_{l,s}\bar{z}_l^{1/2}+\mathcal{O}(r^{5/2})$.

    Therefore, the section $f_3:=b_{4,2}f_1-b_{4,1}f_2$ satisfies $\vn_{p_4}(f_3)\ge 1$. However, $\sigma_l.f_3=b_{4,2}\sigma_l.f_1-b_{4,1}\sigma_l.f_2=e^{\frac{2\pi}{3}i}f_3$, as a result, $\vn_{p_l}(f_3)\ge 1$ for any $l=1,2,3,4$, and hence $f_3$ is critical. According to Proposition \ref{tetracriuni}, such a section spans a one-diemsional $S_4$-representation, but such a subspace intersects with $W^{\oplus 2}$ trivially. Consequently, $f_3=0$ and $f_1,f_2$ are linearly dependent, leading to a contradiction.  
\end{proof}

\begin{lemma}\label{noW}
    The $S_4$-representation $V_\lambda$ cannot have two irreducible summands such that one of which is isomorphic with $W$ and the other is isomorphic with either $V$ or $V'$. 
\end{lemma}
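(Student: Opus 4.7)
The plan is to argue by contradiction and adapt the strategy of Lemma~\ref{noVV'}, replacing $J_3$ by a vector field that transforms nontrivially under $\sigma_4$ so that the resulting residue sum is not killed by an automatic phase cancellation. Suppose $V_\lambda$ contains summands isomorphic to both $W$ and $V$; the case with $V'$ in place of $V$ will be identical up to a sign. Choose $f_1\in W$ with $\sigma_l.f_1=\omega f_1$ for $l=1,\dots,4$, where $\omega=e^{2\pi i/3}$, and choose a nonzero $f_2$ in the one-dimensional $\sigma_4$-fixed subspace of $V$. Since $e_4\in V$ is fixed by the entire stabilizer $\langle\sigma_0,\sigma_4\rangle$ of $p_4$ in $S_4$, this $f_2$ automatically satisfies $\sigma_0.f_2=+f_2$ in the $V$-case and $\sigma_0.f_2=-f_2$ in the $V'$-case.

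I then extract the local expansions at each $p_l$. From the $\sigma_l$-eigenvalue constraint on $f_1$ at $p_l$ itself and the $\sigma_4$-equivariance of Lemma~\ref{rotacoorrel}, one gets $a_{l,1}=0$ at every $l$ and $b_{l,1}=\beta\,\omega^{l-1}$ for $l=1,2,3$ with $\beta:=b_{1,1}$. The $\sigma_4$-invariance of $f_2$ at its fixed point $p_4$ forces $\vn_{p_4}(f_2)\ge 1$, while $\sigma_0.w_1=\bar w_1$ combined with $\sigma_0.f_2=\pm f_2$ gives $(a_{l,2},b_{l,2})=(\alpha,\pm\alpha)$ at $p_1,p_2,p_3$ for some $\alpha$, with sign $+$ for $V$ and $-$ for $V'$. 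Non-criticality of $f_1$ and $f_2$, together with Lemma~\ref{morethanonepointleastorder} and the fact (Proposition~\ref{tetracriuni}) that any critical subspace of $V_\lambda$ lies inside $U\oplus U'$ and so cannot meet $W$ or $V$ or $V'$, forces $\alpha\beta\neq 0$.

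Finally, I apply Lemma~\ref{lemmalocal} with $\phi_1=f_2$ and $\phi_2=J_+f_1$, where $J_+:=J_1+iJ_2$. A direct computation gives $(J_+\bar z_p)|_p=\tfrac12 e^{i\varphi}(\cos\theta-1)$, which at each of $p_1,p_2,p_3$ (with $\cos\theta=1/3$) reduces to $-\tfrac13 e^{i\varphi_l}$. Since $a_{l,1}=0$, the leading term of $\widetilde{J_+f_1}$ at $p_l$ is a pure $\bar w_l^{-1}$ term with coefficient $-\tfrac{1}{6}b_{l,1}e^{i\varphi_l}$, and the contribution at $p_4$ vanishes because $\gamma_4=\delta_4=0$. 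The residue identity of Lemma~\ref{lemmalocal} then reads
\begin{equation*}
0=\sum_{p\in\vp}\bigl(\gamma_p\gamma'_p+\delta_p\delta'_p\bigr)=\mp\frac{\alpha}{6}\sum_{l=1}^{3}e^{i\varphi_l}b_{l,1}=\mp\frac{\alpha\beta}{6}\sum_{l=1}^{3}\omega^{l-1}e^{i\varphi_l}.
\end{equation*}
With the tetrahedral values $\varphi_1=0$, $\varphi_2=-2\pi/3$, $\varphi_3=2\pi/3$, each summand $\omega^{l-1}e^{i\varphi_l}$ equals $1$, so the sum equals $\mp\alpha\beta/2\neq 0$, a contradiction. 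The main obstacle to the plan is precisely the choice of vector field: the natural candidate $J_3$ is $\sigma_4$-invariant and produces the factor $\sum_{l=1}^{3}\omega^{l-1}=0$, yielding no information; replacing $J_3$ by $J_+$ (or $J_1$) twists the phases in exactly the way needed for the $\sigma_4$-cancellation to fail.
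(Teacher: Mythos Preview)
Your proof is correct and takes a genuinely different (and shorter) route than the paper's.

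The paper works with both basis vectors $h_1,h_2$ of $W$, passes to the real sections $\zeta_1,\zeta_2$, and then brings in \emph{two} elements $f_1,f_4$ of $V$ (fixed by $\sigma_1$ and $\sigma_4$ respectively). After compiling a full table of order-$\tfrac12$ coefficients at all four vertices for all four sections, it forms the specific combination $\psi=f_1+f_4-\zeta_2$, engineered so that the leading terms at $p_1$ and $p_4$ cancel; only then does it apply Lemma~\ref{lemmalocal} with $\phi_1=\psi$ and $\phi_2=\tfrac{3\sqrt2}{2}J_3\zeta_1$, relying on the fact that $J_3$ preserves the order at $p_4$ and on the hand-built cancellation at $p_1$. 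Your approach uses only a single vector from each of $W$ and $V$ (respectively $V'$) and avoids the combinatorial bookkeeping altogether: the crucial observation is that replacing $J_3$ by $J_+=J_1+iJ_2$ inserts the extra phase $e^{i\varphi_l}$, which exactly compensates the $\omega^{l-1}$ coming from the $\sigma_4$-equivariance of $f_1$, so that the residue sum is $\mp\alpha\beta/2$ rather than $0$. This is precisely the obstruction you identify in your last paragraph, and it is what makes your argument one step rather than several. The paper's longer approach does have one incidental benefit: the real basis $\zeta_1,\zeta_2$ and the full coefficient table are reused later in Lemma~\ref{bilinearform} to compute the Taubes--Wu bilinear form, so some of that work is not wasted; but purely as a proof of Lemma~\ref{noW}, your argument is cleaner.

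A minor remark on the $\alpha\beta\neq0$ step: your invocation of Lemma~\ref{morethanonepointleastorder} for $\beta\neq0$ is exactly right---if $\beta=0$ then $\vn_{p_l}(f_1)\ge1$ for $l=1,2,3$, and since $a_{4,1}=0$ (same $\sigma_4$-eigenvalue argument at $p_4$) the only way $\ord(f_1)=0$ is via $b_{4,1}\ne0$ alone, contradicting that lemma; hence $f_1$ would be critical, which is impossible as you say. Everything checks.
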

\begin{proof}
    Suppose there is a $W$ contained in $V_\lambda$. Recall that there is a basis $\{v,\sigma_0.v\}$ of $W$, such that $\sigma_l.v=e^{\frac{2\pi}{3}i}v$ and $\sigma_l.(\sigma_0.v)=-e^{\frac{\pi}{3}i}\sigma_0.v$ for $1\le l\le 4$. Let's denote $v$ as $h_2$ and $\sigma_0.v$ as $h_1$. From the non-critical nature of $h_1$ and $h_2$ as well as Lemma \ref{rotacoorrel} and the $S_4$-symmetries, it follows that $\vn_{p_l}(h_s)=0$ for $s=1,2$ and $1\le l\le 4$. Combining Lemma \ref{rotacoorrel} with \eqref{sigmafw}, as in the proof of the previous lemma, we deduce that $h_1(z_l,\bar{z}_l)=a_lz_l^{1/2}+\mathcal{O}(r^{5/2})$ and $h_2(z_l,\bar{z}_l)=b_l\bar{z}_l^{1/2}+\mathcal{O}(r^{5/2})$ for $1\le l\le 4$. By definition, $\sigma_0.h_1=h_2$, as a result, we conclude that $a_4=-b_4$ and $a_1=b_1$. Let's set $a_4=1$ from now on. Combining $\sigma_2. w_1=e^{-\frac{\pi}{6}i}w_4$ with $\sigma_2. h_1=e^{-\frac{2\pi}{3}i}h_1$ yields $e^{-\frac{2\pi}{3} i}\tilde{h}_1(w_4,\bar{w}_4)=(\sigma_2. \tilde{h}_1)(w_4,\bar{w}_4)=a_1(\sigma_2. w_1)+\mathcal{O}(r^5)=e^{-\frac{\pi}{6}i}a_1w_4+\mathcal{O}(r^5)$, where $\tilde{h}_1$ is the odd function associated to $h_1$. By comparing coefficients, we conclude $a_1=e^{(\frac{\pi}{6}-\frac{2\pi}{3})i}a_4=-i$.

    Since $\sigma_l.\bar{h}_1=\overline{\sigma_l.h_1}=e^{2\pi i/3}\bar{h}_1$ for $l=1,2,3,4$, we deduce from this as well as Lemma \ref{noWW} that $\bar{h}_1=ah_2$ for some $a\in\mathbb{C}^\ast$. Comparing the leading coefficients of $\bar{h}_1$ and $ah_2$ at $p_4$, yields $-a=ab_4=\bar{a}_4=1$, and hence $\bar{h}_1=-h_2$. Therefore $\zeta_1:=h_1-h_2$ and $\zeta_2:=i(h_1+h_2)$ are real eigensections, i.e. eigensections of $\lb$. Additionally, they satisfy $\sigma_0.\zeta_1=-\zeta_1$ and $\sigma_0.\zeta_2=\zeta_2$. 

    Suppose that there is one copy of $V$ contained in $V_\lambda$. We shall derive a contradiction next.
    
    The leading coefficients of $\zeta_2$ at $p_1$ are $(ia_1,ia_1)=(1,1)$. From the symmetric conditions $\sigma_4.h_1=e^{\frac{4\pi}{3}i}h_1$ and $\sigma_4.h_2=e^{\frac{2\pi}{3}i}h_2$, we calculate the leading coefficients of $\zeta_2$ at $p_2$ and $p_3$ are $(e^{\frac{4\pi}{3}i},e^{\frac{2\pi}{3} i})$ and $(e^{\frac{2\pi}{3}i},e^{\frac{4\pi}{3}i})$. Similarly, $\zeta_1$ has leading coefficients $(-i,i)$, $(-ie^{\frac{4\pi}{3}i},ie^{\frac{2\pi}{3}i})$, $(-ie^{\frac{2\pi}{3}i},ie^{\frac{4\pi}{3}i})$ and $(1,1)$ at $p_1$, $p_2$, $p_3$ and $p_4$  respectively. 

    Take $f_1$ and $f_4$ from $V$ such that $\sigma_l.f_l=f_l(l=1,4)$. The complex conjugation $f_l\mapsto \bar{f}_l(l=1,4)$ keeps the $S_4$-symmetric conditions. According to Lemma \ref{noVV} and \ref{noVV'} there is only one copy of $V$ in $V_\lambda$, thus we can assume both $f_1$ and $f_4$ are real. Moreover, since $\sigma_0. f_l=f_l$ by definition, we can assume $f_1$ and $f_4$ take the same leading coefficients with $\zeta_2$ at $p_4$ and $p_1$ respectively. From $\sigma_4. f_4=f_4$, we deduce that the leading coefficients of $f_4$ at $p_2$ and $p_3$ are both $(1,1)$. With the leading coefficients at $p_4$ being given, combining with $\sigma_1. f_1=f_1$, $\eqref{sigmafw}$ as well as the final bullet in Lemma \ref{rotacoorrel}, we conclude that $f_1$ takes leading coefficients $(e^{\frac{2\pi}{3} i},e^{\frac{4\pi}{3} i})$ at $p_2$ and takes $(-e^{\frac{\pi}{3}i},-e^{-\frac{\pi}{3} i})$ at $p_3$.

    We summarize the coefficients of these eigensections of the terms of order $\frac{1}{2}$ in the following table:
   \begin{spacing}{1.2} \[\begin{array}{|c|c|c|c|c|}
     \hline
           & p_1 & p_2 & p_3 & p_4\\
         \cline{1-5}
         \zeta_1 & (-i,i) & (-ie^{4\pi i/3},ie^{2\pi i/3}) & (-ie^{2\pi i/3},ie^{4\pi i/3}) & (1,1)\\
         \cline{1-5}
         \zeta_2 & (1,1) & (e^{4\pi i/3},e^{2\pi i/3}) & (e^{2\pi i/3},e^{4\pi i/3}) & (i,-i)\\
         \cline{1-5}
         f_1 & (0,0) & (e^{2\pi i/3},e^{4\pi i/3}) & (-e^{\pi i/3},-e^{-\pi i/3}) & (i,-i)\\
         \cline{1-5}
         f_4 & (1,1) & (1,1) & (1,1) & (0,0)\\
         \cline{1-5}
\end{array}\]\end{spacing}

    Define a new eigensection as $\psi:=f_1+f_4-\zeta_2$. We note first that $\vn_{p_l}(\psi)>0$ for $l=1,4$. The leading coefficients of $\psi$ at $p_2$ and $p_3$ are $(-2e^{\frac{4\pi}{3}i},-2e^{\frac{2\pi}{3} i})$ and $(-2e^{\frac{2\pi}{3}i},-2e^{\frac{4\pi}{3}i})$. The section $\frac{3\sqrt{2}}{2}J_3\zeta_1$ has leading coefficients $(-ie^{\frac{4\pi}{3}i},-ie^{\frac{2\pi}{3}i})$ and $(-ie^{\frac{2\pi}{3}i},-ie^{\frac{4\pi}{3}i})$ at $p_2$ and $p_3$.    
    Setting $\phi_1:=\psi$ and $\phi_2:=\frac{3\sqrt{2}}{2}J_3\zeta_1$ in Lemma \ref{lemmalocal} yields $0=4i(e^{\frac{2\pi}{3}i}+e^{\frac{4\pi}{3}i})\ne 0$, a contradiction. This shows that $W$ and $V$ cannot appear simultaneously. 
    
    Next assume $V'\subset V_\lam$, and consider eigensections $f_1',f_4'\in V'$, such that $\sigma_l.f_l'=f_l'$ for $l=1,4$, and $f_1',f_4'$ have similar leading coefficients with $\zeta_1$ at $p_4,p_1$ respectively. We obtain the following table of leading coefficients: 
    \begin{spacing}{1.2}\[\begin{array}{|c|c|c|c|c|}
     \hline
           & p_1 & p_2 & p_3 & p_4\\
         \cline{1-5}
         f'_1 & (0,0) & (e^{\pi i/6},e^{-\pi i/6}) & (e^{5\pi i/6},-e^{\pi i/6}) & (1,1)\\
         \cline{1-5}
         f'_4 & (-i,i) & (-i,i) & (-i,i) & (0,0)\\
         \cline{1-5}
\end{array}\]\end{spacing}
    Consider $\phi_1=f'_1+f_4'-\zeta_1$ and $\phi_2=\frac{3\sqrt{2}}{2}J_3\zeta_2$ in Lemma \ref{lemmalocal}, similar calculations lead to another contradiction.
\end{proof}

\begin{proposition}
    If $\lambda$ is a critical eigenvalue, then $V_\lambda$ is isomorphic with $U\oplus V'$ or $U'\oplus V$. If $\lambda$ is not critical, $V_\lambda$ is isomorphic either with $V$, $V'$ or $W$.
\end{proposition}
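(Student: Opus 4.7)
The plan is to combine the non-coexistence results of Lemmas \ref{noVV}, \ref{noVV'}, \ref{noWW}, \ref{noW} with two further observations: applying the infinitesimal rotations $J_1, J_2, J_3$ to a critical eigensection produces a copy of $V$ or $V'$ inside $V_\lambda$, and any $U$- or $U'$-subrepresentation of $V_\lambda$ must consist entirely of critical eigensections.

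First I would establish the latter auxiliary fact. If $h$ lies in a subrepresentation of $V_\lambda$ isomorphic to $U$ or $U'$, then for each $l=1,2,3,4$ the element $\sigma_l\in S_4$ is a $3$-cycle (hence even), so $\sigma_l\cdot h=h$ in either case. Expanding the lifted odd function as $\tilde h(w_l,\bar w_l)=a_l w_l+b_l\bar w_l+O(r^3)$ and using the identity $\sigma_l.w_l=e^{-2\pi i/3}w_l$ from Lemma \ref{rotacoorrel}, invariance forces $a_l=b_l=0$, so $\vn_{p_l}(h)\ge 1$ for each $l$, i.e.\ $h\in V_\lambda^c$. Combined with Proposition \ref{rk1} (which applies since the tetrahedral $\vp\in\mathcal{C}_4$ contains no antipodal pairs), this gives $\dim_{\CC}V_\lambda^c\le 1$.

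Second I would identify the representation type of $\mathrm{span}_{\CC}\{J_1 f, J_2 f, J_3 f\}$ when $f$ spans $V_\lambda^c$. Because the $J_i$ generate infinitesimal rotations, they transform as pseudovectors: for $g\in S_4$ acting on $\RR^3$ as a matrix $M$, the induced action on $\mathrm{span}\{J_1, J_2, J_3\}$ is by $(\det M)\,M$, a representation whose character is $\mathrm{sgn}(g)\,\chi_{\RR^3}(g)$; comparison with Lemma \ref{irreS4rep} identifies this as $V'$. Consequently $\mathrm{span}\{J_i f\}$ carries $V'\otimes U=V'$ when $f\in U$ and $V'\otimes U'=V$ when $f\in U'$. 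Linear independence of $J_1f,J_2f,J_3f$ follows from Proposition \ref{orderminusone}: any nontrivial combination equals $\eta J_3^{q}$ for some $q\in\sph\setminus(\pm\vp)$ and $\eta\neq 0$, and criticality of $f$ yields $\vn_p(J_3^q f)=\vn_p(f)-1\ge 0$ at every $p\in\vp$, so $J_3^q f\not\equiv 0$. Hence this subrepresentation has complex dimension exactly three.

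With these tools in place the statement follows. If $\lambda$ is critical, Proposition \ref{tetracriuni} gives $V_\lambda^c\cong U$ or $U'$, and the previous step supplies the corresponding copy of $V'$ or $V$ inside $V_\lambda$; Lemmas \ref{noVV'} and \ref{noW} then exclude further $V$, $V'$, or $W$ summands, while the auxiliary fact combined with $\dim_{\CC}V_\lambda^c\le 1$ excludes any further $U$ or $U'$ summand, giving $V_\lambda\cong U\oplus V'$ or $U'\oplus V$. If $\lambda$ is not critical, then $V_\lambda^c=0$, so the auxiliary fact rules out all $U$- and $U'$-summands, and Lemmas \ref{noVV}, \ref{noVV'}, \ref{noWW}, \ref{noW} (with the $V$-exclusion argument applied verbatim to $V'$ by tensoring with $U'$) leave the nonzero space $V_\lambda$ as exactly one of $V$, $V'$, or $W$. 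The main obstacle is the pseudovector character identification: since the reflections in $S_4\subset\mathrm{O}(3)$ introduce an extra determinant factor relative to their action on ordinary vectors, one must carefully distinguish $V$ from $V'$ for this geometric representation; once that is settled, the rest is formal assembly of the preceding lemmas.
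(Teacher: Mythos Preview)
Your argument is correct and matches the paper's approach: produce a copy of $V'$ (resp.\ $V$) inside $V_\lambda$ from $\mathrm{span}\{J_1 f, J_2 f, J_3 f\}$ when $V_\lambda^c\cong U$ (resp.\ $U'$), then apply Lemmas \ref{noVV}--\ref{noW} together with $\dim_{\CC} V_\lambda^c\le 1$. The paper identifies this three-dimensional representation via the permutation-with-sign action of $S_4$ on $\{J_3^{p_l}\}_{l=1}^4$ rather than your pseudovector character computation on $\mathfrak{so}(3)$, but the two agree; one small slip is that Proposition \ref{orderminusone} gives $\vn_p(J_3^q f)=\vn_p(f)-1$ only for $p$ with $q\ne\pm p$, not at every $p\in\vp$, though since the tetrahedral configuration has no antipodal pair there is always such a $p$ and your nonvanishing conclusion stands.
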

\begin{proof}
    Suppose that $\lambda$ is critical and the $V_\lambda^c$ is isomorphic with the $S_4$-representation $U$. Consider the subspace $JV:=\{Jf:f\in U\text{ and }J\text{ is a vector field commutes with }\Delta\}$ and the four vector fields $J_3^{p_l}(l=1,2,3,4)$, where $J_3^{p_4}=J_3$. It is easy to see that the $\{J_3^{p_1}f:l=1,2,3,4\}$ spans a space consisting of vector fields that commute with the Laplacian. Moreover, the group $A_4$ acts on this vector fields by permuting the subscripts, and $\sigma_0^\ast J_3=-J_3$. Thus $\dim_\CC\, JV=3$, and as a consequence, $JV\cong U\otimes V'\cong V'$. If $V_{\lambda}^c\cong U'$ the subspace $JV$ is isomorphic with $U'\otimes V'\cong V$.

    Therefore, there is a copy of either $U\oplus V'$ or $U'\oplus V$ within $V_\lambda$. It directly follows from Lemmas \ref{noVV}, \ref{noVV'} and \ref{noW} that $V_\lambda$ is isomorphic with either $U\oplus V'$ or $U'\oplus V$. If $\lambda$ is not critical, the decomposition of $V_{\lambda}$ consists only of summands that are isomorphic with $V$, $V'$ or $W$. Apply the lemmas above once again we deduce the second claim. 
\end{proof}

The actual eigensections we require are real sections. According to the series of lemma presented above, each irreducible $S_4$-representation can appear at most once in the direct sum decomposition of $V_\lambda$. Consequently, the map $f\mapsto \bar{f}$ induces linear automorphism on each summand. For $U$ (resp. $U'$), there must be a real section $f$ such that $U\cong\CC\{f\}$ (resp. $U'\cong\CC\{f\} $). For $V$ (resp. $V'$), there is a basis $\{f_l\}_{1\le l\le3}$ (resp. $\{f'_l\}_{1\le l\le3}$), with $\sigma_l.f_l=f_l$ and $\sigma_0.f_l=f_l$ (resp. $\sigma_0.f'_l=-f'_l$). Note that the map of conjugation keeps these symmetric conditions, thus $\bar{f}_l=a_lf_l$ for some $a_l\in\CC^\ast$. We can choose the basis $\{f_l\}_{1\le l\le 3}$ so that $a_l=1$, and this spans a real subspace consisting of real sections. For the $2$-dimensional representation $W$, we have constructed a basis of real sections $\{\zeta_1,\zeta_2\}$ in the proof of Lemma \ref{noW}. Denote these real representations of $S_4$ by $\re\,U,\re\,U', \re\,V$, $\re\,V'$ and $\re\,W$. We conclude:
\begin{proposition}
    For the space of real sections $\sob$, an eigenspace is isomorphic with one of $\re\,W$, $\re\,V$, $\re\,V'$, $\re\,U\oplus\re\,V'$ and $\re\,U'\oplus \re\,V$, as a real $S_4$-representation.  
\end{proposition}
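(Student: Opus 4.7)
The plan is to descend the complex decomposition established in the preceding proposition to the real eigenspace $V_\lambda^{\RR} := V_\lambda \cap \sob \subset \sobc$, using the real structure on $\sobc$ given by complex conjugation $c: f \mapsto \bar f$. Since the $S_4$-action is defined over $\RR$ and commutes with the Laplacian, the map $c$ is a real-linear involution of $V_\lambda$ that commutes with the $S_4$-action, and $V_\lambda^{\RR}$ is precisely its $+1$-eigenspace. Moreover, $V_\lambda = V_\lambda^{\RR} \otimes_{\RR} \CC$, so understanding the $S_4$-action on $V_\lambda^{\RR}$ is equivalent to understanding how $c$ permutes the irreducible summands of $V_\lambda$.

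The first step is to observe that, by Lemmas \ref{noVV}, \ref{noVV'}, \ref{noWW}, \ref{noW}, each complex irreducible $S_4$-representation occurs at most once in $V_\lambda$. Since $c$ commutes with the $S_4$-action, it sends each isotypic component to itself; combined with uniqueness, $c$ preserves each irreducible summand individually. In particular, $c$ cannot interchange copies of non-isomorphic irreducibles (which would be forbidden anyway since $c$ is $\CC$-antilinear, not $\CC$-linear, and distinct characters of $S_4$ over $\CC$ remain distinct after composing with complex conjugation of values). The second step is to exhibit a real basis for each irreducible summand appearing:

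\begin{itemize}
    \item For $U$ (resp.\ $U'$), pick any nonzero $f$; then $\bar f = \alpha f$ with $|\alpha|=1$ (since $c^2=\mathrm{id}$), and rescaling $f$ by a square root of $\bar\alpha$ makes $\bar f = f$. The resulting real line is $\re\,U$ (resp.\ $\re\,U'$).
    \item For $V$, select the basis $\{f_l\}_{l=1,2,3}$ characterized by $\sigma_l.f_l = f_l$ and $\sigma_0.f_l = f_l$ (transitively permuted by $S_4$). Conjugation preserves these symmetry conditions, so $\bar f_l = a_l f_l$ with $|a_l|=1$, and a simultaneous rescaling (using the $S_4$-action to reduce to one scalar) produces a real basis spanning $\re\,V$. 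The same argument applies to $V'$ (using $\sigma_0.f'_l = -f'_l$).
    \item For $W$, we already did the work inside Lemma \ref{noW}: choosing $h_1, h_2$ as in that proof, one finds $\bar h_1 = -h_2$ (the crucial input being $\sigma_l.\bar h_1 = e^{2\pi i/3}\bar h_1$, forcing $\bar h_1$ to land in the $e^{2\pi i/3}$-eigenspace of $\sigma_l$, which, by the character relations, is spanned by $h_2$). Then $\zeta_1 = h_1 - h_2$ and $\zeta_2 = i(h_1+h_2)$ are real, span a $2$-dimensional $S_4$-invariant real subspace $\re\,W$, and $\re\,W \otimes_{\RR} \CC \cong W$.
\end{itemize}

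Combining these two steps, if the complex decomposition of $V_\lambda$ is $W$, $V$, $V'$, $U \oplus V'$, or $U' \oplus V$, then $V_\lambda^{\RR}$ is respectively $\re\,W$, $\re\,V$, $\re\,V'$, $\re\,U \oplus \re\,V'$, or $\re\,U' \oplus \re\,V$, which is exactly the claimed list. The only substantive content is the verification that each irreducible has a real form, and the single delicate case is $W$; but this is already handled explicitly by the computations in the proof of Lemma \ref{noW} (the identity $\bar h_1 = -h_2$), so no genuinely new obstacle appears.
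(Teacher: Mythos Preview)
Your proposal is correct and follows essentially the same approach as the paper: use that each complex irreducible appears at most once (from the lemmas) so that conjugation preserves each summand, then exhibit real bases case by case—scalar rescaling for $U$, $U'$; the symmetry-adapted basis $\{f_l\}$ for $V$, $V'$; and the sections $\zeta_1,\zeta_2$ constructed in Lemma~\ref{noW} for $W$. The paper's argument is exactly this, written in the paragraph immediately preceding the proposition.
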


The real dimensions of representations are given as: $\dim_{\Rr}\,\re\,U=\dim_{\Rr}\,\re\,U'=1$, $ \dim_{\Rr}\,\re\,V=\dim_{\Rr}\,\re\,V'=3$ and $\dim_{\Rr}\,\re\,W=2$. In summary, we conclude the following
\begin{corollary}
For an eigenvalue $\lambda$ of $\Delta$ on $\lb$, we observe that the multiplicity of $\lambda$ can be either $2$, $3$, or $4$. Moreover, $\lambda$ has multiplicity $4$ if and only if it is critical.
\end{corollary}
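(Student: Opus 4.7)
The proof is essentially a direct bookkeeping on top of the preceding structure theorem, so my plan is to simply read off both assertions from the list of possible real $S_4$-representation types for $V_\lam \subset \sob$.

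The plan is as follows. First, I would invoke the previous proposition, which enumerates the five possibilities for $V_\lam$ as a real $S_4$-representation: $\re\,W$, $\re\,V$, $\re\,V'$, $\re\,U\oplus\re\,V'$, and $\re\,U'\oplus\re\,V$. Since $\mul\,\lam = \dim_\Rr V_\lam$ in this convention, computing real dimensions $1+3=4$, $3$, $3$, $2$, $1+3=4$ gives the first claim that the multiplicity lies in $\{2,3,4\}$.

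For the second claim, I would combine this list with Proposition \ref{tetracriuni}, which says that $\lam$ is critical if and only if $V_\lam^c \ne 0$, and that in this case $V_\lam^c$ is a one-dimensional $S_4$-subrepresentation isomorphic to $U$ or $U'$. Looking at the five possibilities, the summands $U$ and $U'$ appear precisely in $\re\,U\oplus\re\,V'$ and $\re\,U'\oplus\re\,V$; these are exactly the two cases of real dimension $4$. Conversely, none of $\re\,W$, $\re\,V$, $\re\,V'$ contain a copy of $U$ or $U'$ (they are distinct irreducibles), so no critical eigensection can exist in those cases. This yields the equivalence $\mul\,\lam = 4 \Longleftrightarrow \lam$ critical.

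There is no real obstacle here beyond being careful that the $\mul$ convention on $\lb$ (as opposed to $\lbc$) matches the real dimension of the eigenspace, which is the case by Definition \ref{leadterm} together with the fact that for real sections the complexification doubles dimensions only when a real irreducible remains irreducible over $\CC$ --- all five irreducible $S_4$-summands listed are already realizable over $\Rr$ (indeed the preceding discussion explicitly exhibited a real basis for each), so no factor of $2$ is lost. With this observation the proof is immediate from the preceding classification.
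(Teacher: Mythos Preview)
Your proposal is correct and follows essentially the same route as the paper: the corollary is stated immediately after the proposition classifying the real eigenspace $V_\lam\subset\sob$ as one of $\re\,W$, $\re\,V$, $\re\,V'$, $\re\,U\oplus\re\,V'$, $\re\,U'\oplus\re\,V$, together with the note that these have real dimensions $2,3,3,4,4$; the ``critical $\Leftrightarrow$ multiplicity $4$'' direction is then read off from the earlier complex proposition stating that $\lam$ is critical iff $V_\lam\cong U\oplus V'$ or $U'\oplus V$. Your list of dimensions is written out of order relative to the list of representations, but the set $\{2,3,4\}$ is of course unaffected.
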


For an eigenspace $V_\lambda$, the preceeding proposition enables us to choose a basis for $V_\lambda$ and calculate the bilinear form as presented in Lemma \ref{TWformula}. Given an real eigensection $f$, we expand it near $p_l$ as $f(z_l,\bar{z}_l)=2\re\,a_lz_l^{n_l+1/2}+\mathcal{O}(r^{n_l+3/2})=a_lz_l^{n_l}+\bar{a}_l\bar{z}_l^{n_l+1/2}+\mathcal{O}(r^{n_l+3/2})$, where $n_l=\vn_{p_l}(f)$. We summarize the choices of basis and the corresponding leading coefficients $a_{l}(1\le l\le 4)$ in the following table:

\begin{spacing}{1.2}

    \[\begin{array}{|c|c|c|c|c|c|}
    \hline
        &          & a_1 & a_2 & a_3 & a_4\\ \cline{1-6}
\re W & \zeta_1 &  -i  &  -ie^{4\pi i/3}  &  -ie^{2\pi i/3} & 1 \\ 
\cline{2-6}
        & \zeta_2 &  1   &  e^{4\pi i/3}  &  e^{2\pi i/3}  &  i \\  \cline{1-6}
        &  f_1  &  0  &  e^{2\pi i/3}  &  e^{4\pi i/3}  &  i  \\ 
        \cline{2-6}
 \re V  &  f_2  &  e^{4\pi i/3}  &  0  &  -e^{-\pi i/3}  &  -e^{\pi i/6} \\ 
 \cline{2-6}
        & f_3 & -e^{-\pi i/3} & e^{4\pi i/3} & 0 &  -e^{2\pi i/3} \\ \cline{1-6}
        & f'_1 & 0 & -e^{\pi i/6} & e^{-\pi i/6} & -1 \\ 
        \cline{2-6}
\re V' & f'_2 & e^{-\pi i/6} & 0 & -e^{\pi i/6} & -e^{2\pi i/3} \\ 
\cline{2-6}
        & f'_3 & -e^{\pi i/6} & e^{-\pi i/6} & 0 & e^{\pi i/3} \\ 
        \cline{1-6}
    \end{array}\]

\end{spacing}

Given this, we can calculate the Taubes-Wu bilinear form in this tetrahedral case.

\begin{lemma}\label{bilinearform}
Consider a tangent vector $\vec{v}\in T_\vp \Cn$, with $\vec{v}=(v(p))_{p\in\vp}=(r_le^{i\theta_l})_{1\le l\le 4}$, and $r_4=0$, $\theta_1=0$, $r_1\in\RR$ and $r_2,r_3\ge 0$. The corresponding Taubes-Wu bilinear form on an eigenspace $V_\lambda$ defined in Lemma \ref{TWformula} can be listed as follows:
\begin{enumerate}[label=(\roman*)]
    \item If $V_\lambda\cong \re\,W$, then 
    \[\frac{1}{2\pi}B_{\vec{v}}=\left(\begin{array}{cc}
        -r_1-r_2\cos(\theta_2+\frac{2\pi}{3})-r_3\cos(\theta_3+\frac{4\pi}{3}) & r_2\cos(\theta_2+\frac{\pi}{6})+r_3\cos(\theta_3+\frac{5\pi}{6})\\
        r_2\cos(\theta_2+\frac{\pi}{6})+r_3\cos(\theta_3+\frac{5\pi}{6}) & 
        r_1+r_2\cos(\theta_2+\frac{2\pi}{3})+r_3\cos(\theta_3+\frac{4\pi}{3})
    \end{array}\right).\]
    \item If $V_\lambda\cong \re\, V$, then
     \[\frac{1}{2\pi}B_{\vec{v}}=\left(\begin{array}{ccc}
      r_2\cos(\theta_2+\frac{4\pi}{3})+r_3\cos(\theta_3+\frac{2\pi}{3})  & r_3\cos\theta_3  & r_2\cos\theta_2  \\
       r_3\cos\theta_3 & r_3\cos(\theta_3-\frac{2\pi}{3})-\frac{1}{2}r_1  & r_1  \\
       r_2\cos\theta_2 &  r_1 & r_2\cos(\theta_2+\frac{2\pi}{3})-\frac{1}{2}r_1
     \end{array}\right).\]
     \item If $V_\lambda\cong \re\, V'$, then
     \[\frac{1}{2\pi}B_{\vec{v}}=\left(\begin{array}{ccc}
      r_2\cos(\theta_2+\frac{\pi}{3})+r_3\cos(\theta_3-\frac{\pi}{3})  & -r_3\cos\theta_3  & -r_2\cos\theta_2  \\
       -r_3\cos\theta_3 & \frac{1}{2}r_1+r_3\cos(\theta_3+\frac{\pi}{3})  & -r_1  \\
       -r_2\cos\theta_2 &  -r_1 & \frac{1}{2}r_1+r_2\cos(\theta_2+\frac{\pi}{3})
     \end{array}\right).\]
     \item If $V_\lambda\cong \re\, U'\oplus \re\, V$ or $\re\, U\oplus \re\, V'$, the corresponding $B_{\vec{v}}$ has the matrix similar to $(2)$ or $(3)$ but with one additional row and column filled with zero. 
\end{enumerate}
\end{lemma}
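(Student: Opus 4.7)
The plan is to evaluate the Taubes--Wu bilinear form \eqref{TWblf} directly on the ordered bases assembled in the table preceding the statement, using the fact that the $p_4$ contribution is killed by the hypothesis $r_4=0$.

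First I would record the simplification: because $v(p_l) = r_l e^{i\theta_l}$ with $r_4 = 0$, the sum $\sum_{p\in\vp_f\cap\vp_{f'}}\re(v(p)a_p(f)a_p(f'))$ reduces to a sum over $l\in\{1,2,3\}$. With the normalization $\theta_1 = 0$ built in, the $l=1$ terms contribute only real multiples of $r_1$. For each case $V_\lambda\cong \re W, \re V, \re V'$ in turn I would substitute the tabulated leading coefficients into the formula of Lemma \ref{TWformula}(iii) and compute each entry $B_{\vec v}(g,g')$ as a product $a_l(g)\,a_l(g')$ that is either a power of $e^{i\pi/3}$ (in the $\re V, \re V'$ cases) or a sixth root of unity times $(-1)$ (in the $\re W$ case). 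For instance, $\zeta_1$ has $a_l^2 = (-1, -e^{2\pi i/3}, -e^{4\pi i/3}, -1)$, so the $(1,1)$ entry of the $\re W$ block instantly becomes $-r_1 - r_2\cos(\theta_2 + 2\pi/3) - r_3\cos(\theta_3 + 4\pi/3)$ after taking real parts, matching the stated formula (up to the common normalizing factor $2\pi$ that has been pulled out on the left-hand side). The remaining five independent entries for $\re W$, and the six independent entries each for $\re V$ and $\re V'$, are obtained by the same mechanical substitution combined with the trigonometric identity $\re(r e^{i\theta}\cdot e^{i\phi}) = r\cos(\theta + \phi)$.

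For part (iv), I would invoke Proposition \ref{tetracriuni}: whenever $V_\lambda$ contains an irreducible summand isomorphic to $\re U$ or $\re U'$, the eigenvalue $\lambda$ is critical and the corresponding section $f\in \re U$ (or $\re U'$) is a Taubes--Wu tetrahedral eigensection. By Lemma \ref{npf==1} such an $f$ satisfies $\vn_p(f) = 1$ at every $p\in\vp$, hence $\vp_f = \varnothing$. The Taubes--Wu bilinear form definition \eqref{TWblf} restricts the sum to $\vp_f\cap\vp_{f'}$, so $B_{\vec v}(f,g) = B_{\vec v}(g,f) = 0$ for every $g\in V_\lambda$. This precisely produces an extra zero row and column adjoined to the $3\times 3$ block computed in (ii) or (iii).

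The main technical obstacle is bookkeeping rather than conceptual: the leading coefficients in the table were determined only after fixing the local coordinates $w_l$ on $\Sigma_\vp$ via the red-arrow conventions of Figure \ref{THEFIGURE} and the transformation rules of Lemma \ref{rotacoorrel}, and the intermediate sign ambiguity in $z_l^{1/2}$ must cancel in every product $a_l(g)\,a_l(g')$. I would therefore check that the tabulated entries are internally consistent by verifying on one representative off-diagonal pair that $B_{\vec v}(g,g') = B_{\vec v}(g',g)$ (symmetry) and that $B_{A.\vec v}$ at a $\sigma_l$-rotated tangent vector matches the value predicted by the $S_4$-equivariance of the form --- this is an efficient sanity check given how much trigonometric rewriting the full calculation requires. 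Once these normalisations are secured, the verification of each of (i)--(iv) is a routine (if lengthy) direct computation.
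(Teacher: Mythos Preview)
Your proposal is correct and matches the paper's approach: the paper presents this lemma without a separate proof, treating it as a direct computation from the table of leading coefficients immediately preceding it, which is exactly what you do. One bookkeeping point worth making explicit is the source of the factor $2\pi$: the table records $a_l$ in the convention $f = 2\,\re(a_l z_l^{n_l+1/2}) + \cdots$, so the coefficient entering \eqref{TWblf} is $a_p(f) = 2a_l$, and the product $\tfrac{\pi}{2}\cdot (2a_l)(2a_{l}') = 2\pi\, a_l a_l'$ is what produces the $2\pi$ that gets divided out on the left.
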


\begin{proposition}\label{Misolated}
    The critical configuration $\vp$, which is the set of vertices of a regular tetrahedron, is $M$-isolated for any $0<M<\infty$.
\end{proposition}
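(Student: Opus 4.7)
The plan is to combine continuity of the $\ZT$ spectrum (Proposition \ref{contiofeig}), the Taubes--Wu deformation formula (Lemma \ref{TWformula}), and the explicit bilinear forms of Lemma \ref{bilinearform}. The key observation is that every critical eigenvalue of $\mathcal{I}_\vp$ has complex multiplicity exactly $4$, with eigenspace $\cong U\oplus V'$ or $U'\oplus V$, so persistence of criticality under a small deformation $\vq$ would require this rigid $4$-fold multiplicity to survive, which I will show forces $\vq\in\SO.\vp$.

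First I would restrict to the finitely many eigenvalues $\lam^{(1)}<\cdots<\lam^{(N)}$ of $\mathcal{I}_\vp$ in $(0,M]$ (Proposition \ref{prop_descrte_spectrum}). Proposition \ref{contiofeig} gives a neighborhood $U_0$ of $\vp$ in which every eigenvalue of $\mathcal{I}_\vq$ in $(0,M)$ lies near some $\lam^{(i)}$, and since $\vp$ has no antipodal pair we may shrink $U_0$ so that no $\vq\in U_0$ does either; then Proposition \ref{mulge4} forces any critical eigenvalue of $\mathcal{I}_\vq$ to have complex multiplicity at least $4$. The non-critical $\lam^{(i)}$ are then immediate: $V_{\lam^{(i)}}$ is isomorphic to $V$, $V'$, or $W$, all of complex dimension $\leq 3$, and by the upper semi-continuity of multiplicity implicit in Lemma \ref{TWformula}, eigenvalues of $\mathcal{I}_\vq$ near such $\lam^{(i)}$ have complex multiplicity $\leq 3$, hence are non-critical.

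For the critical $\lam^{(i)}$, by $\SO$-equivariance I would pass to a local slice $\MS$ through $\vp$ transverse to $\SO.\vp$: for $\vq\in B_\epsilon(\vp)\setminus\SO.\vp$ an $\SO$-rotate lies on $\MS\setminus\{\vp\}$. Along any smooth path $\vp(t)=\vp+t\vec{v}$ in $\MS$ with $\vec{v}\neq 0$, Lemma \ref{TWformula} gives four continuous eigenvalue branches $\mu_1(t)\leq\cdots\leq\mu_4(t)$ of $\mathcal{I}_{\vp(t)}$ near $\lam^{(i)}$, with one-sided derivatives at $0$ forming, as a multiset, the eigenvalues $\eta_j(\vec{v})$ of the Taubes--Wu form $B_{\vec{v}}$ on $V_{\lam^{(i)}}$. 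Persistence of a multiplicity-$4$ eigenvalue at some small $t>0$ requires $\mu_1(t)=\cdots=\mu_4(t)$; the expansion $\mu_4(t)-\mu_1(t)=t(\max_j\eta_j-\min_j\eta_j)+o(t)$ then forces all $\eta_j(\vec{v})$ to coincide. Lemma \ref{bilinearform}(iv) presents $B_{\vec{v}}$ as block-diagonal with a zero $1\times 1$ block on $\re\,U$ (resp.\ $\re\,U'$) and a $3\times 3$ block equal to matrix (iii) (resp.\ (ii)), so the condition collapses to $B_{\vec{v}}|_{\re\,V'}=0$ (resp.\ $B_{\vec{v}}|_{\re\,V}=0$).

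The main obstacle is ruling out this degeneracy for nonzero $\vec{v}\in\MS$. The slice constraints from Lemma \ref{bilinearform} ($r_4=0$, $v(p_1)=r_1\in\RR$, $r_2,r_3\geq 0$) already gauge away $\SO$, so it suffices to inspect matrix (iii) entry by entry: the $(2,3)$-entry $-r_1$ handles $r_1\neq 0$; when $r_1=0$ and $r_2\neq 0$, the $(1,3)$-entry $-r_2\cos\theta_2$ and the $(3,3)$-entry $r_2\cos(\theta_2+\pi/3)$ cannot simultaneously vanish since $\cos\theta_2$ and $\cos(\theta_2+\pi/3)$ share no common zero; the $r_3$ case is symmetric. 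Uniformity in $\vec{v}$ follows by compactness, as $\vec{v}\mapsto B_{\vec{v}}-\tfrac14\tr(B_{\vec{v}})\,\mathrm{Id}$ is linear and nowhere zero on the unit sphere of $\MS$, hence bounded below; combined with the expansion $\mu_4(t)-\mu_1(t)\geq c_0 t+o(t)$ this rules out a multiplicity-$4$ eigenvalue uniformly in a slice-ball, and taking the minimum $\epsilon$ over the finitely many critical $\lam^{(i)}\in(0,M]$ yields the required neighborhood.
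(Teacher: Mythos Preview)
Your proposal is correct and follows essentially the same line as the paper's proof: both reduce via Proposition \ref{mulge4} to ruling out multiplicity-$4$ eigenvalues for nearby $\vq$, use Lemma \ref{bilinearform}(iv) to see that for every nonzero slice direction $\vec v$ the Taubes--Wu form $B_{\vec v}$ has a forced zero eigenvalue (from the $U$ or $U'$ block) together with a nonzero $3\times 3$ block, hence splits the four branches to first order, and then pass to uniformity by compactness of the unit direction sphere. The paper carries out that last step as an explicit open-cover argument on $\mathfrak{C}_0(\vp)$ rather than via your lower bound on the traceless part of $B_{\vec v}$, but the two packagings are interchangeable.
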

\begin{proof}
    Recall that we should modulus the group of rotations. For this, we only consider configurations $\vq=\{q_1,q_2,q_3,q_4\}$ near $\vp$, such that $p_4=q_4$ and $q_1$ lies on the great circle passing through $p_1$ and $p_4$. 

    Choose a smooth path $\gamma$ on $\mathcal{C}_{4}$ with $\gamma(0)=\vp$ and $\gamma(1)=\vq$. Suppose additionally that $\gamma$ consists of geodesics on $\sph$. The tangent vector $\vec{v}=\gamma'(0)$ at $\vp$ satisfies $v(p_4)=0$ and $v(p_1)=r_1\in\RR$, as the assumption of Lemma \ref{bilinearform}. Let $\lambda<M$ be an eigenvalue of $\lb$, $V_\lambda$ the corresponding eigenspace and $\mul\,\lambda=N=2,3,4$. For the smooth path $\gamma(t)$, according to Lemma \ref{TWformula}, there are $C^1$-functions $\mu_i(t)(1\le i\le N=\mul\,\lambda)$ such that $\mu_i(t)$ are eigenvalues of $\mathcal{I}_{\gamma(t)}$ and $\mu_i'(0)$ are eigenvalues of the bilinear form $B_{\vec{v}}$ defined on $V_\lambda$. We want to show that none of $\mu_i(t)$ can be critical if $t>0$ is small enough. By Lemma \ref{mulge4}, it suffices to show $\mul\,\mu_i(t)<4$ for sufficiently small $t$.
    
   From Lemma \ref{bilinearform}, one can deduce that for any nonzero $\vec{v}$ satisfying the assumption there, $B_{\vec{v}}$ is non-zero. If 
 $V_\lambda\cong \re\, U'\oplus\re\, V$ or $\re\, U\oplus \re\, V'$, then $B_{\vec{v}}$ must have one zero eigenvalue and a nonzero one. Therefore, for some $i\ne j$, $\mu_i'(0)=0$ while $\mu_j'(0)\ne 0$. As a result, $\mu_i(t)\ne\mu_j(t)$ for $t\ne 0$ small enough. For the fixed $M>0$, consider $\psi_1<\cdots<\psi_k< M$ so that the set $\{\psi_s\}_{1\le s\le k}$ exhausts eigenvalues of $\lb$ less than $M$. Choose $\delta>0$ sufficiently small so that $C(\delta_0,\delta)$ given in Lemma \ref{contiofeig} satisfies $C^2<\psi_{i+1}/\psi_i$. An eigenvalue $\mu_i(t)$ of $\mathcal{I}_{\gamma(t)}$, splitting from an eigenvalue $\psi_s$ of $\lb$, cannot be one that comes from $\psi_{j}(j\ne s)$ at the same time. Therefore, for $t$ small enough, $\mul\,\mu_i(t)<\mul\,\psi_s\le 4$. 

Now consider the set of tangent vectors at $\vp$, $\mathfrak{C}_0(\vp)=\{\vec{v}\in T_\vp \mathcal{C}_4:v(p_4)=0,v(p_1)\in\RR, 1=\sum_i|v(p_i)|^2\}$. For each $\vec{v}\in \mathfrak{C}_0(\vp)$, consider the path $\vp_{v}(t)$ with each of its components being geodesic on $\sph$ and $\vp_{v}'(0)=\vec{v}$. There is $t_{v}>0$, so that each eigenvalue of $\mathcal{I}_{\vp_{v}(t)}$ that less than $M$, has multiplicity no more than $3$, provided $0<|t|<t_{v}$. By the continuity of eigenvalues there is a neighborhood of $\vec{v}$ on $\mathfrak{C}_0(\vp)$, say $U_{v}$, such that for any $\vec{w}\in U_{v}$, $\mathcal{I}_{\vp_{w}(t)}$ has no eigenvalues with multiplicity more than $3$ below $M$ if $0<|t|<t_{v}/2$. 

Since $\mathfrak{C}_0(\vp)$ is compact, it contains finitely many vectors, say $\vec{v}_1,\cdots,\vec{v}_m$, such that $U_{v_1},\cdots, U_{v_m}$ cover $\mathfrak{C}_0(\vp)$. Consequently, there exists $t_0>0$, such that for any $\vec{v}\in\mathfrak{C}_0(\vp)$, if $t<t_0$, then $\vp_v(t)$ is non-critical.

Consider a configuration $\vq\in\Cn$, such that $A.\vq\ne\vp$ for any $A\in\SO$, and $\dist(\vp,\vq)$ is small enough. There exists a vector $\vec{v}\in \mathfrak{C}_0(\vp)$, such that $\vq=\vp_v(t)$ for some $t$ dominated by $\dist(\vp,\vq)$. Therefore, $\vq$ is non-critical if $\dist(\vp,\vq)$ is sufficiently small. 
\end{proof}

As a consequence, we conclude one of the main theorems of this article:
\begin{theorem}
All Taubes-Wu tetrahedral eigensections are deformation rigid.
\end{theorem}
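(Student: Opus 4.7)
The plan is to deduce this theorem essentially as a formal consequence of Proposition \ref{Misolated} together with the structure of the tetrahedral configuration's stabilizer in $\SO$. Let $f$ be a Taubes-Wu tetrahedral eigensection with eigenvalue $\lam$, and let $\vp$ be the tetrahedral configuration. I would start by fixing any $M>\lam$ and appealing to Proposition \ref{Misolated}, which produces $\ep>0$ such that every $\vq\in\mathcal{C}_4$ with $\dist(\vp,\vq)<\ep$ and $\vq\notin\SO\cdot \vp$ has no critical eigenvalue of $\mathcal{I}_{\vq}$ in $(0,M)$.

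Next I would invoke the definition of a critical deformation. Suppose $(\vp(t),\lam(t),f_t)$ is any critical deformation of $(\vp,\lam,f)$. By the continuity of eigenvalues (Proposition \ref{contiofeig} and Theorem \ref{continuousztdegenerate}), $\lam(t)<M$ for all sufficiently small $|t|$, and by the continuity of $\vp(t)$ we also have $\dist(\vp(t),\vp)<\ep$ for small $|t|$. Since by assumption $\lam(t)$ is a critical eigenvalue of $\mathcal{I}_{\vp(t)}$, the $M$-isolation of $\vp$ forces $\vp(t)\in \SO\cdot \vp$ for every such $t$. Thus the path $\vp(t)$ lies entirely in the $\SO$-orbit of $\vp$ for small $t$.

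It then remains to upgrade the pointwise statement $\vp(t)\in\SO\cdot\vp$ to a smooth family of rotations $A_t\in\SO$ with $\vp(t)=A_t\vp$, which is exactly what the definition of trivial deformation requires. For this I would use that the stabilizer of $\vp$ in $\SO$ is the finite rotational tetrahedral group $A_4$, so the orbit map $\SO\to \SO\cdot \vp$ is a smooth covering of a smooth submanifold. A smooth path into the orbit therefore admits a smooth lift to $\SO$, once we fix $A_0=\mathrm{id}$. This gives the desired $A_t$ and shows the deformation is trivial, completing the proof.

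I do not anticipate a serious obstacle: the heavy lifting is already contained in Proposition \ref{Misolated}, whose proof used the representation-theoretic identification of eigenspaces of $V_\lam$, the explicit Taubes-Wu bilinear forms computed in Lemma \ref{bilinearform}, and the multiplicity bound from Lemma \ref{mulge4}. The only small point to be careful about is the regularity of the lift $A_t$, which follows immediately from the fact that $\SO$-action on $\vp$ has finite stabilizer.
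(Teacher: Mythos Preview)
Your proposal is correct and matches the paper's approach: the theorem is stated as an immediate consequence of Proposition \ref{Misolated}, using the earlier remark that $M$-isolation for $M>\lambda$ implies deformation rigidity of $(\vp,\lambda,f)$. Your write-up simply unwinds that remark explicitly, including the only nontrivial detail the paper leaves implicit---namely, that once $\vp(t)\in\SO\cdot\vp$ for small $t$, the finiteness of the stabilizer of $\vp$ lets you lift the smooth path in the orbit to a smooth path $A_t$ in $\SO$.
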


\subsection{The cross configuration}\label{5.4}
In this subsection, applying a $\ZT^{\oplus 3}$-action on $\sph$, we shall show that the cross configurations $\vp\in \mathcal{C}_4$, i.e. configurations that consist of two pair of antipodal points, are non-critical. As before, we shall prove that the $\mathbb{Z}_2^{\oplus 3}$-action on $\sph$ can be lifted to $\Sigma_{\vp}$. Next we discuss the relations between eigensections. Applying a shifting operator borrowed from quantum mechanics (cf. \cite[Chapter VI, Complements A]{quantum}), we can derive a contradiction provided there exists a critical eigensection.

Consider $\vp=\{p_1,\cdots,p_4\}$. Assume $p_4=(0,0,-1)$, $p_2=-p_4$, $p_1=(\sin\theta,0,\cos\theta)$,where $\theta\in (0,\pi)$, and $p_3=-p_1$. As the previous subsection, for convenient, we consider the complexification $\lbc$ of $\lb$. 

Define $\sigma_0$ as the reflection along $xOz$-plane, $\sigma_1$ as the reflection exchanging $p_1$ and $p_2$ and $\sigma_2$ as one that exchanges $p_1$ and $p_4$. With these as generators, a $\ZT^{\oplus 3}$-action on $\sph$ preserving $\vp$ can be defined. The great circle passing through $\vp$ can be lifted to the torus $\Sigma_\vp$ and hence to $\CC$ as a lattice. Let $\tilde{\sigma}_0,\tilde{\sigma_1}$ and $\tilde{\sigma}_2$ be chosen lifts of $\sigma_0,\sigma_1$ and $\sigma_2$, respectively, see Figure \ref{Lattice}. All of these lifts are reflections along some straight lines in $\CC$, for which we represent by green dashed lines in Figure \ref{Lattice}. We verify firstly that these involutions generate a $\mathbb{Z}_2^{\oplus 3}$-action on $\Sigma_\vp$. 

\begin{lemma}
$\tilde{\sigma}_i(i=0,1,2)$ generate a group isomorphic with $\ZT^{\oplus 3}$.
\end{lemma}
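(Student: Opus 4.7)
The plan is to verify the $\ZT^{\oplus 3}$-relations by working on the universal cover $\CC\to\Sigma_\vp$, where each $\tilde{\sigma}_i$ lifts to a reflection along a straight line $\ell_i$, as depicted by the green dashed lines in Figure \ref{Lattice}. Since $\ZT^{\oplus 3}$ has the presentation $\langle a_0,a_1,a_2 \mid a_i^2 = 1,\; [a_i,a_j] = 1\rangle$, and the composition $\langle \tilde{\sigma}_0,\tilde{\sigma}_1,\tilde{\sigma}_2\rangle \hookrightarrow \hat{G} \twoheadrightarrow \ZT^{\oplus 3}$ is surjective by construction, it is enough to check that each $\tilde{\sigma}_i$ is an involution on $\Sigma_\vp$ and that the $\tilde{\sigma}_i$ pairwise commute. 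The resulting subgroup then has order at most $8$ and surjects onto $\ZT^{\oplus 3}$, hence must equal $\ZT^{\oplus 3}$.

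For the involution property, I will verify that each line $\ell_i$ is preserved by the lattice $\Lambda$ with $\Sigma_\vp = \CC/\Lambda$ (equivalently, $\ell_i$ is a symmetry axis of $\Lambda$), so the reflection descends to a well-defined map on $\Sigma_\vp$; since a reflection on $\CC$ is tautologically an involution, so is the induced map on $\Sigma_\vp$. This rules out the \emph{a priori} alternative $\tilde{\sigma}_i^2 = \ei$ allowed by the central extension $0 \to \langle\ei\rangle \to \hat{G} \to G \to 1$.

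For pairwise commutativity, I will read off from Figure \ref{Lattice} that the three axes $\ell_0, \ell_1, \ell_2$ pass through a common intersection point (a preimage in $\CC$ of one of the configuration points) and are pairwise perpendicular in the flat metric. Two reflections along perpendicular axes through a common point both compose to the point-reflection about that point; in particular their compositions agree, so the lifts commute on $\CC$ and hence on the quotient $\Sigma_\vp$. This rules out the remaining \emph{a priori} alternative $[\tilde{\sigma}_i,\tilde{\sigma}_j] = \ei$.

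The main obstacle will be the bookkeeping of lattice compatibility: I must choose the three lifts so that their axes genuinely pass through the same lattice point and lie along lattice-preserving directions. Once the configuration of axes is properly pinned down from the geometry of the cross configuration in Figure \ref{Lattice}, the verification reduces to three planar involution-and-commutativity checks, each of which is immediate. Combining this with the order bound $|\langle\tilde{\sigma}_0,\tilde{\sigma}_1,\tilde{\sigma}_2\rangle| \le 8$ and the surjectivity onto $\ZT^{\oplus 3}$ then forces the desired isomorphism.
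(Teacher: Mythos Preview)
Your argument has a concrete error in the geometric claim. From Figure \ref{Lattice}, the axes $\ell_0$ and $\ell_1$ are both \emph{horizontal} lines: $\ell_0$ passes through the row containing $p_4,p_1$, while $\ell_1$ is the horizontal midline between the $p_4,p_1$ row and the $p_3,p_2$ row. They are parallel, not perpendicular. (Only $\ell_2$ is vertical.) Moreover, neither $\ell_1$ nor $\ell_2$ passes through a preimage of a configuration point; they pass through edge-midpoints of the half-period lattice. So the picture of ``three perpendicular axes through a common lattice point'' is simply not what the figure shows.

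This matters because two reflections along distinct parallel lines in $\CC$ never commute: their two compositions are translations by opposite vectors. The point is that the commutator $\tilde{\sigma}_0\tilde{\sigma}_1\tilde{\sigma}_0\tilde{\sigma}_1$ is a translation by \emph{four times} the gap between $\ell_0$ and $\ell_1$, and in this specific setup that gap is exactly a quarter of the vertical period, so the commutator lies in $\Lambda$ and becomes trivial on $\Sigma_\vp=\CC/\Lambda$. Your perpendicular-axes argument does not see this lattice cancellation. The paper's proof avoids the issue entirely by tracking where the labelled fundamental domains $A^\pm,B^\pm$ go under each composition: since $\tilde{\sigma}_i\tilde{\sigma}_j$ and $\ei\,\tilde{\sigma}_j\tilde{\sigma}_i$ are the only two candidates (both being lifts of $\sigma_i\sigma_j=\sigma_j\sigma_i$), one just checks that $\tilde{\sigma}_i\tilde{\sigma}_j$ and $\tilde{\sigma}_j\tilde{\sigma}_i$ send $A^+$ to the \emph{same} sheet, ruling out the extra $\ei$. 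If you want to keep your universal-cover approach, you must replace the perpendicularity claim for the pair $(0,1)$ by the lattice-translation argument just described.
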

\begin{proof}
    It suffices to demonstrate that $\tilde{\sigma}_k$ commutes with each other.

    Since $\sigma_i \sigma_j=\sigma_j \sigma_i$, $\tilde{\sigma}_i \tilde{\sigma}_j$ is identical with either $\tilde{\sigma}_j \tilde{\sigma}_i$ or $\ei \tilde{\sigma}_j \tilde{\sigma}_i$. As a reminder, recall that by Lemma \ref{commuteswithinv}, $\ei$ commutes with any $\tilde{\sigma}_k$.

    As an illustration, consider $i=0$ and $j=1$. On $\Sigma_\vp$, as one can see from the Figure \ref{Lattice}, $\tilde{\sigma}_1$ maps $A^+$ to itself, and $\tilde{\sigma}_0$ maps $A^+$ onto $B^-$, hence $\tilde{\sigma}_0 \tilde{\sigma}_1$ maps $A^+$ onto $B^-$. However, $\tilde{\sigma}_1$ maps $B^-$ onto itself while $\ei$ maps $B^-$ to $B^+$. As a result, $\ei\tilde{\sigma}_j \tilde{\sigma}_i$ maps $A^+$ onto $B^-$, which evidently can not coincide with $\tilde{\sigma}_0 \tilde{\sigma}_1$. Consequently, $\tilde{\sigma}_0 \tilde{\sigma}_1=\tilde{\sigma}_1 \tilde{\sigma}_0$. 

    Similar arguments can be applied to the remaining cases.
\end{proof}

\begin{figure}[!h]
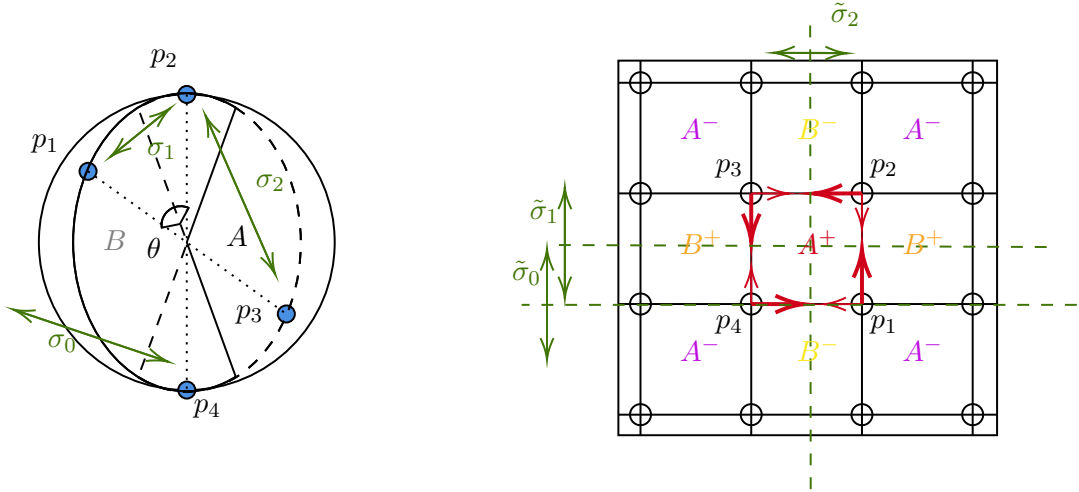

    \centering
\tikzset{every picture/.style={line width=0.75pt}} 


    \caption{The great circle being lifted to a lattice on $\CC$.}
    \label{Lattice}
\end{figure}

Now we have defined a $\mathbb{Z}_2^{\oplus 3}$-action on the surface $\Sigma_\vp$ commuting with $\ei$. Therefore, according to Section \ref{grpaction}, given an eigenvalue $\lambda$ of $\Delta$ on $\lbc$, the eigenspace $V_\lambda\subset \sobc$ can be regarded as a finite dimensional $\mathbb{Z}_2^{\oplus 3}$-representation. For simplicity, we denote $\tilde{\sigma}_k$ by $\sigma_k$ $(0\le k\le 2)$ henceforth.

It follows that $V_\lambda$ can be decomposed into direct sum of irreducible $\mathbb{Z}_2^{\oplus 3}$-representations, in where each summand is spanned by an eigensection that is eigenvector of the three generators of $\mathbb{Z}_2^{\oplus 3}$ simutaneously. An irreducible $\mathbb{Z}_2^{\oplus 3}$-representation is determined by the eigenvalues of reflections $\sigma_k$ $(0\le k\le 2)$ on it, which can only take values in $\pm 1$. 

Next we fix a choice of complex coordinates near $p_l$ in $\Sigma_\vp$. In Figure \ref{Lattice}, each bolder red arrow represents the direction of real axis of the coordinate at that point, while each lighter red arrow red arrow represents the imaginary direction. As before, we consider the action of $\sigma_k$ on these coordinates. And this can be derived from the Figure \ref{Lattice} immediately.
\begin{lemma}\label{Z23trans}
Let $w_l$ be the corresponding complex coordinate near $p_l$ for $l=1,2,3,4$. Define $\sigma_k.w_l$ as $w_l\circ\sigma_k^{-1}$, we then have:
    \begin{enumerate}[label=(\roman*)]
        \item $\sigma_0.w_l=-\bar{w}_l$ for $l=1,3$ and $\sigma_0.w_k=\bar{w}_k$ for $k=2,4$.
        \item $\sigma_1.w_l=i\bar{w}_{l+1}$ for $l=1,3$. 
        \item $\sigma_2.w_1=i\bar{w}_4$ and $\sigma_2.w_2=iw_3$.
    \end{enumerate}
\end{lemma}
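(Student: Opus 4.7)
The statement is a direct verification based on Figure \ref{Lattice}, completely analogous in spirit to Lemma \ref{rotacoorrel} in the tetrahedral case. The plan is to lift everything to the universal cover $\CC \to \Sigma_\vp$, where each $\sigma_k$ becomes a Euclidean reflection along a straight line (the green dashed lines in the figure), and then read off how these reflections act on the chosen local coordinates.

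First, I would recall the conventions: the complex coordinates $w_l$ near $p_l \in \Sigma_\vp$ are fixed by the red arrows in Figure \ref{Lattice}, with the longer arrow marking the positive real direction $\partial/\partial u_l$ and the shorter arrow marking the positive imaginary direction $\partial/\partial v_l$ (where $w_l = u_l + iv_l$). The action convention is the pushforward $\sigma_k.w_l := w_l \circ \sigma_k^{-1}$, which makes the assignment $\sigma \mapsto (f \mapsto \sigma.f)$ a genuine left action on functions.

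The core computation is then purely geometric. For each pair $(k,l)$, I would identify from Figure \ref{Lattice} the image point $\sigma_k(p_l)$, and track how the reflection $\sigma_k$ carries the oriented real axis $\partial/\partial u_l$ at $p_l$ to a unit tangent vector at $\sigma_k(p_l)$; this unit vector, expressed in the $w_{\sigma_k(l)}$-coordinate via a unit complex number $e^{i\alpha}$, determines $\sigma_k.w_l$ up to the complex conjugation that always appears because $\sigma_k$ is orientation-reversing. Concretely, if at the image point the pushed-forward real axis makes the coordinate value of a point $q$ with $w_l(\sigma_k^{-1}(q))=1$ equal to some $e^{i\alpha}$, then $\sigma_k.w_l = e^{i\alpha}\bar{w}_{\sigma_k(l)}$. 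For instance, $\sigma_0$ is a reflection along a horizontal line through $p_2$ and $p_4$, fixing the real axis orientation at $p_2, p_4$ (yielding $w \mapsto \bar w$) but reversing it at $p_1,p_3$ (yielding $w \mapsto -\bar w$, after accounting for the orientations shown in the figure). The vertical reflection $\tilde\sigma_1$ through the midpoint of $p_1 p_2$ rotates the coordinate frame by $\pi/2$ before conjugating, producing the factor $i$. The same principle gives $\tilde\sigma_2$.

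The main (only) obstacle is bookkeeping of signs and factors of $i$: every such assertion depends on the specific alignment of the red arrows in Figure \ref{Lattice} relative to the green reflection lines, and on the convention that $\sigma_k.w_l = w_l \circ \sigma_k^{-1}$ rather than the naive $w_l \circ \sigma_k$. To keep the argument clean, I would treat one representative case in detail, e.g.\ verifying $\sigma_2.w_1 = i\bar w_4$ in the manner of Figure \ref{sigma2proof}: reflect $\partial/\partial u_1$ through the vertical green line of $\tilde\sigma_2$, observe that the image at $p_4$ coincides with $\partial/\partial v_4$ (hence $e^{i\alpha} = i$), and conclude $\sigma_2.w_1 = i\bar w_4$; the remaining cases are checked in the same way. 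No further analytic input is needed beyond Lemma \ref{commuteswithinv} and the fact that reflections on the flat universal cover act as Euclidean isometries.
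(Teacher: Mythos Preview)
Your proposal is correct and matches the paper exactly: the paper gives no proof beyond the remark preceding the lemma that it ``can be derived from the Figure \ref{Lattice} immediately,'' and your plan to read off each reflection from the lattice picture in the style of Lemma \ref{rotacoorrel} is precisely that derivation spelled out. Your observation that a complex conjugation must always appear (since each lifted $\tilde\sigma_k$ is a reflection, hence anti-holomorphic) is in fact sharper than the printed item (iii), where the formula $\sigma_2.w_2=iw_3$ appears to be a typo for $i\bar w_3$.
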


\begin{lemma}
    Let $V\subset V_\lambda$ be an irreducible summand. Consider a nonzero $f\in V$, it follows that $\vn_{p}(f)$ are all the same for $p\in\vp$. 
\end{lemma}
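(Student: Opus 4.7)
The proof proceeds by exploiting the transitivity of the $\mathbb{Z}_2^{\oplus 3}$-action on the configuration $\vp=\{p_1,p_2,p_3,p_4\}$. Since $V\subset V_\lambda$ is an irreducible complex representation of the abelian group $\mathbb{Z}_2^{\oplus 3}$, it is one-dimensional, so $V=\CC\{f\}$ and each generator acts on $f$ by multiplication by $\pm 1$: there exist $\epsilon_0,\epsilon_1,\epsilon_2\in\{\pm 1\}$ with $\sigma_k.f=\epsilon_k f$ for $k=0,1,2$.

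The key observation is an equivariance property for the vanishing orders: if $\sigma\in \langle\sigma_0,\sigma_1,\sigma_2\rangle$ satisfies $\sigma(p_l)=p_m$, then $\vn_{p_m}(\sigma.f)=\vn_{p_l}(f)$. This follows from formula \eqref{sigmafw} together with Lemma \ref{Z23trans}: indeed, $\sigma.w_l$ is, near $p_m$, a complex unit times either $w_m$ or $\bar{w}_m$, so the leading order of the local expansion of $\sigma.\tilde f$ at $p_m$ matches the leading order of $\tilde f$ at $p_l$. Combined with $\sigma.f=\pm f$ (a scalar action preserves vanishing orders), we obtain
\[
\vn_{p_m}(f)=\vn_{p_m}(\sigma.f)=\vn_{p_l}(f).
\]

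It therefore suffices to verify that the subgroup generated by $\sigma_1$ and $\sigma_2$ acts transitively on the four-element set $\vp$. Reading Lemma \ref{Z23trans} as an action on points (via $\sigma_k^{-1}$ on coordinates corresponds to $\sigma_k$ on points), we see $\sigma_1$ induces the double transposition $(p_1\,p_2)(p_3\,p_4)$ and $\sigma_2$ induces $(p_1\,p_4)(p_2\,p_3)$. Together with the identity and $\sigma_1\sigma_2=(p_1\,p_3)(p_2\,p_4)$, these form a Klein four-group acting simply transitively on $\vp$.

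Consequently, for every pair $p_l,p_m\in\vp$ there is some $\sigma$ in the group with $\sigma(p_l)=p_m$, and the equivariance step gives $\vn_{p_l}(f)=\vn_{p_m}(f)$. Thus $\vn_{p}(f)$ is constant as $p$ ranges over $\vp$. No step presents a genuine obstacle here; the content is essentially an orbit-counting argument, and the only point requiring care is the bookkeeping that verifies $\sigma_1,\sigma_2$ indeed generate a transitive action on $\vp$ when read off from Lemma \ref{Z23trans}.
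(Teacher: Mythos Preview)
Your proof is correct and follows essentially the same approach as the paper's. The paper's argument is the two-line version of yours: pick $g\in\mathbb{Z}_2^{\oplus 3}$ exchanging any two given points (transitivity), note $g.f=\pm f$ since $V$ is an irreducible summand, and conclude $\vn_{p_l}(f)=\vn_{p_l}(g.f)=\vn_{p_k}(f)$; you have simply unpacked the transitivity check and the reason why each generator acts by a scalar.
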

\begin{proof}
    For any two points $p_l,p_k\in\vp$, one can always find $g\in\mathbb{Z}_2^{\oplus 3}$ that exchanges this two points. By definition, $\sigma_lf=\pm f$ for each $l$ and hence $g.f=\pm f$. Thus $\vn_{p_l}(f)=\vn_{p_l}(g.f)=\vn_{p_k}(f)$.
\end{proof}

Suppose $\lambda$ is critical, then by the previous lemma, there is an irreducible summand $V$ of $V_\lambda$, that spanned by an eigensection $f$ with $\vn_p(f)\ge 1,\forall p\in\vp$. Note that $J_2f$ satisfies $\vn_p(J_2f)=\vn_p(f)-1$ for any $p\in\vp$, while $J_2$ commutes or anti-commutes with $\sigma_l$. Thus there must be a summand $V$ within the direct sum decomposition of $V_\lambda$, with $\vn_p(f)=1$ for $f\in V\setminus\{0\}$ any $p\in\vp$, provided $\lambda$ is critical.  

\begin{definition}
    Define $V^+_{\pm,\mp}(1)=\{\alpha f:\alpha\in\CC, f\in V_{\lambda}\setminus\{0\},\sigma_0.f=+f,\sigma_1.f=\pm f, \sigma_2.f=\mp f,\vn_p(f)=1\text{ for any }p\in\vp\}$. Definition of $V^-_{\pm,\mp}(1)$ is the same but with $\sigma_0.f=-f$.
\end{definition}

\begin{lemma}
    For any eigenvalue $\lambda$ of $\Delta$ on $\lb$, there can not exists summand $V^+_{\pm,\mp}(1)$ or $V^-_{\pm,\mp}(1)$ in the decomposition of irreducible $\mathbb{Z}_2^{\oplus 3}$ representations of $V_\lambda$.
\end{lemma}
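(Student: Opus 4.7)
The plan is to assume such a summand exists and derive a contradiction via a single application of the algebraic identity of Lemma \ref{4.10}, with the choice of test vector fields $V_1 = V_2 = V_3 = J_+ := J_1 + iJ_2$. Throughout, let $f$ span the assumed summand, write $\sigma_k. f = \epsilon_k f$ (so $\epsilon_0 \in \{\pm 1\}$ and $\epsilon_1 \epsilon_2 = -1$), and expand the odd lift $\tilde{f}$ as $\tilde{f}(w_l, \bar{w}_l) = a_l w_l^3 + b_l \bar{w}_l^3 + O(|w_l|^5)$ near each $p_l$, with $(a_l, b_l) \ne (0, 0)$.

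First I would use Lemma \ref{Z23trans} together with \eqref{sigmafw} to express every $a_l, b_l$ in terms of $a_1$. The reflection $\sigma_0$ fixes every $p_l$ and immediately yields $b_l = -\epsilon_0 a_l$ for $l = 1, 3$ and $b_l = \epsilon_0 a_l$ for $l = 2, 4$ (since $\sigma_0. w_l = -\bar{w}_l$ or $\bar{w}_l$ accordingly). The involutions $\sigma_1$ and $\sigma_2$ respectively swap the pairs $\{p_1, p_2\}, \{p_3, p_4\}$ and $\{p_1, p_4\}, \{p_2, p_3\}$; comparing leading coefficients then gives $a_2 = -i\epsilon_0\epsilon_1 a_1$, $a_3 = \epsilon_1\epsilon_2 a_1 = -a_1$ and $a_4 = i\epsilon_0\epsilon_1 a_1$. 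Setting $\alpha_l := a_l^2$ and $\beta_l := b_l^2$, these relations collapse to the crucial identities $\alpha_1 = \alpha_3 = \beta_1 = \beta_3 = a_1^2$ and $\alpha_2 = \beta_4 = -a_1^2$.

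Next I would evaluate $(J_+ z_l)(p_l)$ and $(J_+ \bar{z}_l)(p_l)$. Direct use of \eqref{Jcplxcoor} gives $(J_+ z)(p_4) = 0$ and $(J_+ \bar{z})(p_4) = -1$; the formula $z_2 = -1/z$ for the antipodal pole then yields the swapped values $(J_+ z_2)(p_2) = -1$ and $(J_+ \bar{z}_2)(p_2) = 0$. At $p_1$ and $p_3 = -p_1$, Lemma \ref{Liealgetrans} expresses $J_1, J_2$ in terms of the rotations based at $p_l$, and a short computation produces $(J_+ z_1)(p_1) = -(1+\cos\theta)/2$, $(J_+ \bar{z}_1)(p_1) = -(1-\cos\theta)/2$, together with their swapped counterparts at $p_3$. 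Substituting into the identity \eqref{flocal}, the $p_1, p_3$ contributions take the form $-\alpha_1 A - \beta_1 B + \alpha_3 B + \beta_3 A$ with $A = (1+\cos\theta)^3/8$, $B = (1-\cos\theta)^3/8$, which vanishes because $\alpha_1 = \alpha_3$, $\beta_1 = \beta_3$ and $\alpha_1 = \beta_1$; the $p_2$ and $p_4$ contributions collapse to $-\alpha_2 - \beta_4 = 2 a_1^2$. The identity therefore forces $a_1 = 0$, which combined with $b_1 = -\epsilon_0 a_1$ contradicts $\vn_{p_1}(f) = 1$.

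The main subtlety lies in the choice of test vector fields for Lemma \ref{4.10}. Natural first attempts such as $V_l = J_3$ or $V_l = J_3^{p_l}$ yield identities that are automatically satisfied by the $\ZT^{\oplus 3}$-symmetry constraints on $(a_l, b_l)$, and so give no information. The key observation is that $J_+$ acts as a raising operator at $p_4$ but as a lowering operator at the antipodal pole $p_2$, so its leading contribution at the two poles sits in different Fourier components (the $z_2$ term at $p_2$ but the $\bar{z}$ term at $p_4$); these two contributions cannot be absorbed into the cancellation arising from the $p_1 \leftrightarrow p_3$ symmetry, which is what allows the identity to become informative.
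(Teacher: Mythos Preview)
Your approach is exactly the paper's: it too applies the identity with $\phi_1=J_+f$, $\phi_2=J_+^2f$ (equivalently $V_1=V_2=V_3=J_+$ in Lemma~\ref{4.10}), after using Lemma~\ref{Z23trans} to express all leading coefficients in terms of $a_1$. One computational slip: the $p_3$ contribution is $-\alpha_3 B-\beta_3 A$, not $+\alpha_3 B+\beta_3 A$ (all four factors $(J_+z_l)(p_l),(J_+\bar z_l)(p_l)$ at $l=1,3$ are negative), so the $p_1,p_3$ terms do \emph{not} cancel; the full identity reads $0=\tfrac32(1-\cos^2\theta)a_1^2$, and since $\theta\ne0,\pi$ you still get $a_1=0$ and the same contradiction.
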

\begin{proof}
    Suppose there is a $V^+_{\pm,\mp}(1)$ within $V_{\lambda}$ and $f\in V^+_{\pm,\mp}(1)\setminus\{0\}$. Consider the lift $\tilde{f}$ of $f$, which is defined on $\Sigma_\vp$ as an odd function. Write the asymptotic expansion of $\tilde{f}$ near $p_l$ as $\tilde{f}(w_l,\bar{w}_l)=a_lw_l^3+b_l\bar{w}^3_l+\mathcal{O}(r^{5})$ for $1\le l\le 4$.

    Now applying the $\mathbb{Z}_2^{\oplus 3}$-action to $\tilde{f}$. We first deduce from $\sigma_0.\tilde{f}=\tilde{f}$ and $(1)$ of Lemma \ref{Z23trans} that
    \begin{align*}
        (-1)^lb_lw_l^3+(-1)^la_l\bar{w}_l^3+\mathcal{O}(r^{5})
        =\sigma_0.\tilde{f}(w_l,\bar{w}_l)=\tilde{f}(w_l,\bar{w}_l)=a_lw_l^3+b_l\bar{w}_l^3+\mathcal{O}(r^{5})
    \end{align*}
    and hence $a_l=(-1)^lb_l$ for $l=1,2,3,4$.

We also have
    \begin{align*}
        &i(-1)^{l}a_{l+1}w_l^3+ia_{l+1}\bar{w}_l^3+\mathcal{O}(r^{5})
        =\sigma_1.\tilde{f}(w_l,\bar{w}_l)=\pm\tilde{f}(w_l,\bar{w}_l)=\pm a_lw_1^3\pm (-1)^la_l\bar{w}_l^3+\mathcal{O}(r^{5})
    \end{align*}
 for $l=1,3$, and hence $a_{l+1}=\pm ia_l$ $(l=1,3)$. Similarly, from $\sigma.\tilde{f}=\mp\tilde{f}$, we deduce $a_4=\mp ia_1$ and $a_3=\mp(-i)a_2$. We summarize these leading coefficients of $\tilde{f}$ in the following table:
\begin{spacing}{1.2}\[
\begin{array}{|c|c|c|c|c|}
     \hline
         V^+_{\pm,\mp}(1) & p_1 & p_2 & p_3 & p_4\\
         \cline{1-5}
         \tilde{f} & (a_1,-a_1) & (\pm ia_1,\pm ia_1) & (\mp\pm a_1,-\mp\pm a_1) & (\mp ia_1,\mp ia_1)\\ \cline{1-5}
\end{array}\]\end{spacing}
Consider the vector field $J_+=J_1+iJ_2$, where $J_1$ and $J_2$ are defined before \eqref{Jcplxcoor}. Let $z_l=w_l^2$ be the complex coordinate near each $p_l$, then we can calculate $(J_+z_l)_{r=0}$ and $(J_+\bar{z}_l)_{r=0}$ from \eqref{Jcplxcoor} and Lemma \ref{Liealgetrans}. We present the result in the following table:
\begin{spacing}{1.2}
\[\begin{array}{|c|c|c|c|c|}
     \hline
          l & 1 & 2 & 3 & 4\\
         \cline{1-5}
         (J_+z_l)|_{r=0} & -\frac{1}{2}(\cos\theta+1) & -1 & \frac{1}{2}(\cos\theta-1) & 0\\ \cline{1-5}
         (J_+\bar{z}_l)|_{r=0} & \frac{1}{2}(\cos\theta-1) & 0 & -\frac{1}{2}(\cos\theta+1) & -1\\ \cline{1-5}
\end{array}\]\end{spacing}
 Set $\phi_1=J_+f$ and $\phi_2=J_+^2f$ as in Lemma \ref{lemmalocal}, then we conclude $0=\frac{3}{2}(1-\cos^2\theta)a_1^2$, which implies $a_1=0$ for $\theta\ne 0,\pi$. Consequently, $\vn_p(f)>1$, contradicting with our assumption before. Similar argument can be applied to $V^-_{\pm,\mp}(1)$.
\end{proof}
\begin{corollary}
    Suppose $\vp=\{p_1,-p_1,p_2,-p_2\}$ is the cross configuration in $\mathcal{C}_4$, then any $\ZT$ eigenvalue of $\lb$ is non-critical. 
\end{corollary}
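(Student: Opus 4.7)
The plan is to argue by contradiction, extending the previous lemma to cover all irreducible $\ZT^{\oplus 3}$ sign patterns. Suppose some $\ZT$ eigenvalue $\lambda$ of $\lb$ is critical, with critical eigensection $f_0\ne 0$.

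First I extract a nonzero $g$ lying inside a single irreducible summand $V^{\epsilon_0}_{\epsilon_1,\epsilon_2}(1)$ of $V_\lambda$. Decompose $V_\lambda$ into its one-dimensional $\ZT^{\oplus 3}$-isotypic summands. The subspace of critical sections inside $V_\lambda$ (those whose leading order-$\tfrac12$ coefficients vanish at every $p\in\vp$) is the kernel of a $\ZT^{\oplus 3}$-equivariant evaluation map, hence a $\ZT^{\oplus 3}$-invariant subspace, and so is a direct sum of some of the irreducible summands. Since $f_0$ is critical and nonzero, at least one such summand $V$ consists entirely of critical sections; combined with the lemma that orders are constant within an irreducible summand, every nonzero element of $V$ has common order $n\ge 1$ at every $p\in\vp$. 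Because all four points of $\vp$ lie in the $xz$-plane, $(J_2 z_p)|_{r=0}$ and $(J_2 \bar z_p)|_{r=0}$ are nonzero at each $p$, so iterating $J_2$ strictly lowers $\vn_p$ by one at every point while keeping the section in $\sobc$. Moreover, $J_2$ commutes with $\sigma_0$ (the reflection $y\mapsto -y$ fixes rotations around the $y$-axis) and anti-commutes with $\sigma_1,\sigma_2$ (reflections through planes containing the $y$-axis invert such rotations). Thus $J_2^{n-1}$ applied to a generator of $V$ is a nonzero element $g\in V^{\epsilon_0}_{\epsilon_1',\epsilon_2'}(1)$ for some signs satisfying $\epsilon_1'\epsilon_2'=\epsilon_1\epsilon_2$.

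If $\epsilon_1'\epsilon_2'=-1$, then $g$ lies in one of the summands $V^{\pm}_{\pm,\mp}(1)$ already ruled out by the previous lemma. For the remaining case $\epsilon_1'\epsilon_2'=+1$, I repeat the previous lemma's computation. Using Lemma \ref{Z23trans} and the eigenvalue equations $\sigma_k.g=\epsilon_k' g$, one expresses all leading coefficients of $g$ in terms of $a_1$; the key observation is that, since each $\epsilon_k'$ is a sign, the squares are determined independently of the choice of signs and equal $a_2^2=-a_1^2$, $a_3^2=a_1^2$, $a_4^2=-a_1^2$, $b_l^2=a_l^2$. The bilinear identity of Lemma \ref{lemmalocal} applied to $\phi_1=J_+ g$, $\phi_2=J_+^2 g$ involves only these squares together with $((J_+ z_l)|_{r=0})^3$ and $((J_+\bar z_l)|_{r=0})^3$; substituting the values listed in the proof of the previous lemma yields the same identity $\tfrac{3}{2}\sin^2\theta\cdot a_1^2=0$, and since $\theta\in(0,\pi)$ this forces $a_1=0$, contradicting $\vn_{p_1}(g)=1$. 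The main obstacle is the first reduction step, which exploits $\ZT^{\oplus 3}$-equivariance of the leading-coefficient data to isolate a critical section inside a single irreducible summand; once that is done, the contradiction follows from the same algebraic identity as in the previous lemma.
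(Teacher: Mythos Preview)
Your argument is correct and follows the same overall strategy as the paper: identify the critical subspace $V_\lambda^c$ as a $\ZT^{\oplus 3}$-subrepresentation, pick an irreducible summand, use $J_2$ to reduce the common order to $1$, and then apply the algebraic identity from Lemma~\ref{lemmalocal} with $\phi_1=J_+g$, $\phi_2=J_+^2 g$ to force the leading coefficient to vanish.

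The one point where you are more explicit than the paper is the sign bookkeeping. The paper's lemma is stated for the summands $V^{\pm}_{\pm,\mp}(1)$, and the corollary's proof simply says the resulting order-one summand must be of this type; you instead split into $\epsilon_1'\epsilon_2'=-1$ (already covered) and $\epsilon_1'\epsilon_2'=+1$, and verify that the same computation goes through. Your observation is exactly right: the identity in Lemma~\ref{4.10} only involves the \emph{squares} $a_l^2,b_l^2$, and from the $\sigma_0,\sigma_1,\sigma_2$ relations one always gets $a_2^2=a_4^2=-a_1^2$, $a_3^2=a_1^2$, $b_l^2=a_l^2$ regardless of the signs $\epsilon_k$. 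So the contradiction $\tfrac32\sin^2\theta\, a_1^2=0$ holds for every sign pattern. In effect, the paper's proof of the lemma already works for all eight patterns (its $\pm,\mp$ notation is meant loosely); you have simply made this independence explicit. Your check that $J_2$ commutes with $\sigma_0$ and anti-commutes with $\sigma_1,\sigma_2$ (so that $\epsilon_1'\epsilon_2'=\epsilon_1\epsilon_2$), and that $J_2$ strictly lowers the order at every configuration point because none of them lies on the $y$-axis, are both correct and match the paper's use of $J_2$ in the paragraph preceding the definition of $V^{\pm}_{\pm,\mp}(1)$.
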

\begin{proof}
    Consider the space of critical eigensections $V_\lambda^c=\{f\in V_\lambda: \vn_p(f)\ge 1,\,\forall p\in\vp\}$ in $\sobc$, which contains the space real critical sections by construction. Note that $V_\lambda^c$ is a $\ZT^{\oplus 3}$-subrepresentation of $V_\lambda$, and hence it is a direct sum of irreducible $\ZT^{\oplus 3}$ representations if not trivial. Thus the existence of critical eigensections leads to the existence of summands $V^+_{\pm,\mp}(1)$ or $V^-_{\pm,\mp}(1)$, contradicting with the previous lemma. 
\end{proof}

 Combining this corollary with Proposition \ref{partialanti}, we can conclude:
\begin{theorem}
    If a configuration $\vp\in\mathcal{C}_4$ contains at least one pair of antipodal points, then $\lb^{\CC}$ admits no critical eigensections. 
\end{theorem}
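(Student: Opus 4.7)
The plan is to split the proof into two mutually exclusive cases based on how many antipodal pairs the configuration $\vp\in\mathcal{C}_4$ contains. Since $|\vp|=4$, there are only two possibilities under the hypothesis: $\vp$ contains exactly one antipodal pair, or $\vp$ contains exactly two antipodal pairs (the cross configuration). No further subcases arise.

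In the first case, write $\vp=\{p_1,-p_1,p_2,p_3\}$ with $p_2\neq -p_3$. Here I would directly quote Proposition \ref{partialanti}, which was proved using the Taubes--Wu algebraic identity \eqref{flocal} with the specific choice $V_1=J_3^{p_1}$, $V_2=J_3^{p_2}$, $V_3=J_3^{q}$ for a generic auxiliary point $q$, yielding that the leading coefficients $(a_4,b_4)$ of any putative critical eigensection at $p_4$ must vanish. That proposition is stated for $\lb$, but since the identities used there are $\CC$-linear in the leading coefficients, they apply verbatim to $\lbc$ as well.

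In the second case, $\vp$ is a cross configuration, i.e., $\vp=\{p_1,-p_1,p_2,-p_2\}$. Here I would invoke the corollary just established via the $\mathbb{Z}_2^{\oplus 3}$-representation theory argument: the eigenspace would have to contain a summand of type $V^{\pm}_{\pm,\mp}(1)$, but the integration-by-parts identity applied to $J_+f$ and $J_+^2 f$ forces the leading coefficient $a_1$ to vanish, contradicting non-degeneracy.

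The main obstacle is essentially already resolved by the two preceding results; the only thing to verify is that the two cases are genuinely exhaustive under the hypothesis and that both conclusions (stated originally for $\lb$ and for the cross configuration respectively) upgrade to the claimed statement for $\lbc$. The former is an elementary combinatorial observation on a four-point set; the latter follows because all the algebraic relations in Section 4 were set up from the start over $\lbc$. Combining the two cases yields the theorem. Since this is essentially a ``combine and conclude'' argument, no new analytic input is needed beyond what has already been developed.
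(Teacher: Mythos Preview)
Your proposal is correct and matches the paper's approach exactly: the paper's proof is the single sentence ``Combining this corollary with Proposition \ref{partialanti}, we can conclude,'' which is precisely your two-case split into exactly one antipodal pair (Proposition \ref{partialanti}, already stated for $\lbc$) and the cross configuration (the preceding corollary, whose proof is carried out in $\sobc$). Your remarks on exhaustiveness and on the complexification being automatic are accurate and require no further input.
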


\section{Spectral variation and dihedral groups}\label{newfamily}
In this section, we consider a path of configurations in $\Cn$ preserved by dihedral groups for each $n>1$. Our goal is to demonstrate that within this path, there exist infinitely many critical eigensections.

In the compactified space $\ccn$, the closure of this path has two endpoints: the empty configuration and the antipodal configuration. The corresponding $\ZT$ eigenvalues of these two configurations are known explicitly. As the configuration varies along this path, the $\ZT$ spectrum also varies, resulting in an interesting spectral flow. Additionally, different types of irreducible representations intersect at some points along this path. As before, different types of irreducible representations (of the same vanishing order) are mutually exclusive, and hence critical eigensections appear at those points. The number of intersections of different representations are unbounded. Thus, for each $n>1$, there are infinitely many critical configurations in $\Cn$.
\subsection{The Dihedral Group Action}
Identify $\sph$ with $\overline{\CC}$ by stereographic projection. Take $2n=m+1$, for $0<t<\infty$, we define a path of configurations $\vp(t)\in \Cn$ as $$\vp(t):=\{p_{2n}:=0,p_1(t):=t,p_2(t):=te^{\frac{2\pi}{m}i},\cdots,p_m(t):=te^{\frac{2(m-1)\pi}{m}i}\},$$ where $p_1(t),\cdots,p_m(t)$ are vertices of a polygon. For each configuration point $p_l(t)$, the complex coordinate $z_l:=z_{p_l(t)}$ is given as in Section \ref{4.1}. We define $\vp(0)$ to be the empty configuration and $\vp(\infty)$ the antipodal configuration. When $m=3$ and $n=2$, $\vp(\sqrt{2})$ is the tetrahedral configuration. For general $0<t<+\infty$, there is a dihedral group $D_{2m}$ acts on $\sph$, preserving $\vp(t)$. 
Let $\Sigma_t:=\Sigma_{\vp(t)}$ be the double branched covering surface branched at $\vp(t)$, and we first consider the lifting of the dihedral group action.

Consider the rotation $\tau_m:\sph\to\sph:(\varphi,\theta)\mapsto (\varphi-\frac{2\pi}{m},\theta)$ and reflection $\sigma:\sph\to\sph:(\varphi,\theta)\mapsto(-\varphi,\theta)$. This two isometrics generate a subgroup of $\mathrm{O}(3)$ that isomorphic with the dihedral group $D_{2m}=\langle \tau_m,\sigma|\sigma^2=\mathrm{id},\tau_m^m=\mathrm{id},\tau_m\sigma\tau_m=\sigma\rangle$. 

On the Riemann surface $\Sigma_t$, given that a conformal structure is fixed by the branched covering $\pi:\Sigma_t\to \sph$, one can choose a unique constant curved metric both on $\rst$ and its universal covering. If $n=2$, the universal covering of $\Sigma_t$ is the flat plane $\CC$; if $n>2$, then the universal covering is the Poincar\'{e} disk $\mathbb{D}^2$. After an isometry on the universal covering, we may assume one of the preimage of $p_{2n}$ lies at the origin. Additionally, assume that the complex coordinate on this space induces one near $p_{2n}$ on $\rst$, for which we denote as $w_{2n}$, such that $w^2_{2n}$ is identical to the complex coordinate $z_{2n}$ that defined by stereographic projection near $p_{2n}$ on $\sph$.

The isometries $\tau_m$ and $\sigma$ can be lifted to $\rst$ and its universal covering $\widetilde{\Sigma}_t$, hence we can interpreted these lifts in the complex coordinate $w_{2n}$. The $\tau_m$ has two lifts $\tilde{\tau}_m:w_{2n}\mapsto e^{{2(n-1)\pi i}/{m}}w_{2n}$ and $\ei\tilde{\tau}_m$, where $\ei$ is the canonical involution of $\rst$, which can also be presented as $w_{2n}\mapsto-w_{2n}$ on $\widetilde{\Sigma}_t$. For $\sigma$, we consider the lift $\tilde{\sigma}:w_{2n}\mapsto -\bar{w}_{2n}$, another lift is $\ei\tilde{\sigma}:w_{2n}\mapsto \bar{w}_{2n}$. Note that $n-1$ and $m=2n-1$ are coprime, for $-2(n-1)+m=1$. Consequently, $\tilde{\tau}_m$ generates a cyclic group $\mathbb{Z}_{m}$, acting on $\Sigma_t$ and $\widetilde{\Sigma}_t=\CC$ or $\mathbb{D}^2$.

\begin{lemma}
    The automorphisms $\tilde{\tau}_m$ and $\tilde{\sigma}$ defined on $\rst$ generate a group that isomorphic with $D_{2m}$. 
\end{lemma}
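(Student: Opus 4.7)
The plan is to verify that the three defining relations of $D_{2m}=\langle \tau_m,\sigma\mid \tau_m^m=\sigma^2=(\tau_m\sigma)^2=\mathrm{id}\rangle$ hold verbatim for the chosen lifts $\tilde{\tau}_m$ and $\tilde{\sigma}$ as automorphisms of $\rst$. Once this is done, the natural surjection $\langle \tilde{\tau}_m,\tilde{\sigma}\rangle\twoheadrightarrow D_{2m}$ induced by $\pi:\rst\to\sph$, together with the order bound $|\langle\tilde{\tau}_m,\tilde{\sigma}\rangle|\le 2m$ coming from the $D_{2m}$ presentation, forces the generated subgroup to be isomorphic to $D_{2m}$.

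Since a (anti-)holomorphic automorphism of $\rst$ is determined by its behavior on any open neighborhood of $p_{2n}$, it suffices to check the relations in the coordinate $w_{2n}$ (equivalently, on the universal cover near the origin). Direct computation gives
\[
\tilde{\tau}_m^m(w_{2n})=e^{2(n-1)\pi i}w_{2n}=w_{2n},\qquad \tilde{\sigma}^2(w_{2n})=-\overline{(-\bar{w}_{2n})}=w_{2n},
\]
both because $n-1\in\mathbb{Z}$. For the braid-type relation, composing yields
\[
\tilde{\tau}_m\tilde{\sigma}\tilde{\tau}_m(w_{2n})=\tilde{\tau}_m\tilde{\sigma}\bigl(e^{2(n-1)\pi i/m}w_{2n}\bigr)=\tilde{\tau}_m\bigl(-e^{-2(n-1)\pi i/m}\bar{w}_{2n}\bigr)=-\bar{w}_{2n}=\tilde{\sigma}(w_{2n}),
\]
so $(\tilde{\tau}_m\tilde{\sigma})^2=\mathrm{id}$. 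Hence the three $D_{2m}$-relations hold exactly, not merely modulo the elliptic involution $\ei$.

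The main (mild) subtlety, and the only nontrivial issue, is the following: for a generic choice of lifts of $\tau_m$ and $\sigma$, each $D_{2m}$-relation can only be asserted up to multiplication by the central involution $\ei$, reflecting the possibility of a nontrivial cohomology class in $H^2(D_{2m};\ZT)$ classifying the central extension $0\to\ZT\to\hat{D}_{2m}\to D_{2m}\to 1$ introduced in Section~\ref{grpaction}. The specific lifts $\tilde{\tau}_m:w_{2n}\mapsto e^{2(n-1)\pi i/m}w_{2n}$ and $\tilde{\sigma}:w_{2n}\mapsto -\bar{w}_{2n}$ are chosen precisely so that this obstruction vanishes; the computations above simultaneously verify the three relations and exhibit a splitting of the extension. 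Consequently $\hat{D}_{2m}\cong D_{2m}\times\ZT$, so the $D_{2m}$-action on $\sph$ lifts to $\rst$, as needed for the representation-theoretic arguments that follow.
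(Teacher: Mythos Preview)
Your proof is correct and follows essentially the same approach as the paper: verify the $D_{2m}$ relations for the chosen lifts by direct computation in the local coordinate $w_{2n}$, exploiting that a lift of the identity is either $\mathrm{id}$ or $\ei$, so the local formula suffices to pin down which one occurs. The paper's proof is terser because the relations $\tilde{\tau}_m^m=\mathrm{id}$ and $\tilde{\sigma}^2=\mathrm{id}$ are already recorded in the paragraph preceding the lemma, so it only checks the braid relation $\tilde{\tau}_m\tilde{\sigma}\tilde{\tau}_m=\tilde{\sigma}$; your added discussion of the surjection $\langle\tilde{\tau}_m,\tilde{\sigma}\rangle\twoheadrightarrow D_{2m}$ and the resulting order bound makes explicit why verifying the relations is enough, which the paper leaves implicit.
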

\begin{proof}
    We only need to verify $\tilde{\tau}_m\tilde{\sigma}\tilde{\tau}_m=\tilde{\sigma}$. Over $\CC$ or $\mathbb{D}^2$, we compute
    \begin{align*}
\tilde{\tau}_m\tilde{\sigma}\tilde{\tau}_m(w_{2n})=\tilde{\tau}_m\tilde{\sigma}(e^{\frac{2(n-1)\pi}{m}i}w_{2n})=\tilde{\tau}_m(-e^{\frac{-2(n-1)\pi}{m}i}\bar{w}_{2n})=-e^{\frac{2(n-1)\pi}{m}i}e^{\frac{-2(n-1)\pi}{m}i}\bar{w}_{2n}=\tilde{\sigma}(w_{2n}).
    \end{align*}
\end{proof}

As before, for simplicity in notations, we will continue to denote the lifts $\tilde{\tau}_m$ and $\tilde{\sigma}$ as $\tau_m$ and $\sigma$. 

Next, we need to fix complex coordinates near $p_1(t),\cdots,p_m(t)$ on $\rst$. We shall do this on the universal covering $\widetilde{\Sigma}_t$.  We require: (a). $w_l^2=z_l$ for $l=1,\cdots,m$; (b). suppose $\tau_{m}(p_l)=p_k$ for some $1\le l,k\le m$, then $\tau_m.w_l=w_l\circ \tau_m^{-1}=w_{k}$. These two requirements do not conflict with each other, and once we fix a choice of complex coordinate near $p_1(t)$, requirement (b) fixes the reminders.

As an illustration, let's consider Figure \ref{m5pcrdisk}, which presents the case when $m=5$.  As before, each longer red arrow represents the positive direction of the real axis of the complex coordinate near its original point, while each shorter red arrow represents the positive direction of imaginary axis. We abbreviate $p_l(t)$ to $p_l$ in this figure.

\begin{figure}[!h]
    \centering

\tikzset{every picture/.style={line width=0.75pt}} 

\begin{tikzpicture}[x=0.75pt,y=0.75pt,yscale=-1,xscale=1]

\draw  [draw opacity=0] (334.78,225.78) .. controls (319.71,212.58) and (311.89,191.93) .. (316.04,170.86) .. controls (321.23,144.51) and (343.5,125.87) .. (369.07,124.17) -- (372.97,182.08) -- cycle ; \draw   (334.78,225.78) .. controls (319.71,212.58) and (311.89,191.93) .. (316.04,170.86) .. controls (321.23,144.51) and (343.5,125.87) .. (369.07,124.17) ;  
\draw  [draw opacity=0] (367.65,152.33) .. controls (347.68,150.75) and (329.07,138.85) .. (319.82,119.48) .. controls (308.24,95.25) and (314.96,67) .. (334.46,50.35) -- (372.17,94.47) -- cycle ; \draw   (367.65,152.33) .. controls (347.68,150.75) and (329.07,138.85) .. (319.82,119.48) .. controls (308.24,95.25) and (314.96,67) .. (334.46,50.35) ;  
\draw  [draw opacity=0] (264.75,266.8) .. controls (259.43,246.37) and (265.5,223.75) .. (282.36,208.77) .. controls (302.16,191.16) and (330.67,189.59) .. (351.98,203.12) -- (320.9,252.14) -- cycle ; \draw   (264.75,266.8) .. controls (259.43,246.37) and (265.5,223.75) .. (282.36,208.77) .. controls (302.16,191.16) and (330.67,189.59) .. (351.98,203.12) ;  
\draw  [draw opacity=0] (125.75,202.47) .. controls (142.91,192.14) and (164.96,190.98) .. (183.77,201.36) .. controls (207.27,214.33) and (218.24,241.21) .. (212.07,266.09) -- (155.73,252.16) -- cycle ; \draw   (125.75,202.47) .. controls (142.91,192.14) and (164.96,190.98) .. (183.77,201.36) .. controls (207.27,214.33) and (218.24,241.21) .. (212.07,266.09) ;  
\draw  [draw opacity=0] (109.73,124.34) .. controls (129.69,126.08) and (148.2,138.13) .. (157.29,157.58) .. controls (168.67,181.91) and (161.71,210.1) .. (142.09,226.58) -- (104.73,182.16) -- cycle ; \draw   (109.73,124.34) .. controls (129.69,126.08) and (148.2,138.13) .. (157.29,157.58) .. controls (168.67,181.91) and (161.71,210.1) .. (142.09,226.58) ;  
\draw [color={rgb, 255:red, 208; green, 2; blue, 27 }  ,draw opacity=1 ][line width=1.5]    (238.73,138.73) -- (238.49,110.33) ;
\draw [shift={(238.47,107.33)}, rotate = 89.51] [color={rgb, 255:red, 208; green, 2; blue, 27 }  ,draw opacity=1 ][line width=1.5]    (11.37,-3.42) .. controls (7.23,-1.45) and (3.44,-0.31) .. (0,0) .. controls (3.44,0.31) and (7.23,1.45) .. (11.37,3.42)   ;
\draw    (279.47,15.33) -- (198.47,262.33) ;
\draw    (198.47,15.33) -- (278.47,262.33) ;
\draw    (133.47,214.33) -- (343.47,62.33) ;
\draw    (368.97,138.73) -- (108.5,138.73) ;
\draw    (343.47,214.33) -- (134.47,62.33) ;
\draw  [fill={rgb, 255:red, 255; green, 255; blue, 255 }  ,fill opacity=1 ] (156.98,82.46) .. controls (156.98,80.09) and (158.91,78.16) .. (161.28,78.16) .. controls (163.66,78.16) and (165.58,80.09) .. (165.58,82.46) .. controls (165.58,84.84) and (163.66,86.76) .. (161.28,86.76) .. controls (158.91,86.76) and (156.98,84.84) .. (156.98,82.46) -- cycle ;
\draw  [fill={rgb, 255:red, 255; green, 255; blue, 255 }  ,fill opacity=1 ] (156.98,195) .. controls (156.98,192.63) and (158.91,190.7) .. (161.28,190.7) .. controls (163.66,190.7) and (165.58,192.63) .. (165.58,195) .. controls (165.58,197.38) and (163.66,199.3) .. (161.28,199.3) .. controls (158.91,199.3) and (156.98,197.38) .. (156.98,195) -- cycle ;
\draw [color={rgb, 255:red, 208; green, 2; blue, 27 }  ,draw opacity=1 ][line width=1.5]    (238.73,138.73) -- (284.47,139.3) ;
\draw [shift={(287.47,139.33)}, rotate = 180.71] [color={rgb, 255:red, 208; green, 2; blue, 27 }  ,draw opacity=1 ][line width=1.5]    (15.63,-4.7) .. controls (9.94,-1.99) and (4.73,-0.43) .. (0,0) .. controls (4.73,0.43) and (9.94,1.99) .. (15.63,4.7)   ;
\draw  [fill={rgb, 255:red, 255; green, 255; blue, 255 }  ,fill opacity=1 ] (234.43,138.73) .. controls (234.43,136.36) and (236.36,134.43) .. (238.73,134.43) .. controls (241.11,134.43) and (243.03,136.36) .. (243.03,138.73) .. controls (243.03,141.11) and (241.11,143.03) .. (238.73,143.03) .. controls (236.36,143.03) and (234.43,141.11) .. (234.43,138.73) -- cycle ;
\draw   (108.5,138.73) .. controls (108.5,66.81) and (166.81,8.5) .. (238.73,8.5) .. controls (310.66,8.5) and (368.97,66.81) .. (368.97,138.73) .. controls (368.97,210.66) and (310.66,268.97) .. (238.73,268.97) .. controls (166.81,268.97) and (108.5,210.66) .. (108.5,138.73) -- cycle ;
\draw  [draw opacity=0] (184.35,258.18) .. controls (191.84,239.6) and (208.76,225.41) .. (230.02,222.38) .. controls (256.6,218.6) and (281.54,233.47) .. (291.58,257.06) -- (238.19,279.83) -- cycle ; \draw   (184.35,258.18) .. controls (191.84,239.6) and (208.76,225.41) .. (230.02,222.38) .. controls (256.6,218.6) and (281.54,233.47) .. (291.58,257.06) ;  
\draw  [fill={rgb, 255:red, 255; green, 255; blue, 255 }  ,fill opacity=1 ] (266.21,234.51) .. controls (263.96,233.75) and (262.76,231.31) .. (263.53,229.06) .. controls (264.29,226.81) and (266.74,225.61) .. (268.98,226.37) .. controls (271.23,227.14) and (272.43,229.58) .. (271.67,231.83) .. controls (270.9,234.08) and (268.46,235.28) .. (266.21,234.51) -- cycle ;
\draw  [draw opacity=0] (292.65,20.34) .. controls (285.07,38.89) and (268.09,53) .. (246.81,55.93) .. controls (220.21,59.59) and (195.35,44.61) .. (185.41,20.98) -- (238.9,-1.55) -- cycle ; \draw   (292.65,20.34) .. controls (285.07,38.89) and (268.09,53) .. (246.81,55.93) .. controls (220.21,59.59) and (195.35,44.61) .. (185.41,20.98) ;  
\draw  [draw opacity=0] (352.21,73.38) .. controls (335.17,83.92) and (313.13,85.34) .. (294.21,75.18) .. controls (270.55,62.49) and (259.26,35.74) .. (265.13,10.79) -- (321.64,24.05) -- cycle ; \draw   (352.21,73.38) .. controls (335.17,83.92) and (313.13,85.34) .. (294.21,75.18) .. controls (270.55,62.49) and (259.26,35.74) .. (265.13,10.79) ;  
\draw  [fill={rgb, 255:red, 255; green, 255; blue, 255 }  ,fill opacity=1 ] (311.88,82.46) .. controls (311.88,80.09) and (313.81,78.16) .. (316.18,78.16) .. controls (318.56,78.16) and (320.48,80.09) .. (320.48,82.46) .. controls (320.48,84.84) and (318.56,86.76) .. (316.18,86.76) .. controls (313.81,86.76) and (311.88,84.84) .. (311.88,82.46) -- cycle ;
\draw  [draw opacity=0] (143.13,50.99) .. controls (158.35,64.02) and (166.39,84.59) .. (162.47,105.7) .. controls (157.56,132.1) and (135.5,150.98) .. (109.94,152.96) -- (105.42,95.1) -- cycle ; \draw   (143.13,50.99) .. controls (158.35,64.02) and (166.39,84.59) .. (162.47,105.7) .. controls (157.56,132.1) and (135.5,150.98) .. (109.94,152.96) ;  
\draw  [draw opacity=0] (212.24,11.18) .. controls (216.87,30.67) and (211.27,52.03) .. (195.68,66.79) .. controls (176.17,85.25) and (147.23,87.53) .. (125.4,74.09) -- (155.79,24.65) -- cycle ; \draw   (212.24,11.18) .. controls (216.87,30.67) and (211.27,52.03) .. (195.68,66.79) .. controls (176.17,85.25) and (147.23,87.53) .. (125.4,74.09) ;  
\draw  [fill={rgb, 255:red, 255; green, 255; blue, 255 }  ,fill opacity=1 ] (204.85,47.69) .. controls (204.85,45.31) and (206.78,43.39) .. (209.15,43.39) .. controls (211.52,43.39) and (213.45,45.31) .. (213.45,47.69) .. controls (213.45,50.06) and (211.52,51.99) .. (209.15,51.99) .. controls (206.78,51.99) and (204.85,50.06) .. (204.85,47.69) -- cycle ;
\draw [color={rgb, 255:red, 208; green, 2; blue, 27 }  ,draw opacity=1 ][line width=1.5]    (334.47,138.73) -- (334.47,113.33) ;
\draw [shift={(334.47,110.33)}, rotate = 90] [color={rgb, 255:red, 208; green, 2; blue, 27 }  ,draw opacity=1 ][line width=1.5]    (15.63,-4.7) .. controls (9.94,-1.99) and (4.73,-0.43) .. (0,0) .. controls (4.73,0.43) and (9.94,1.99) .. (15.63,4.7)   ;
\draw [color={rgb, 255:red, 208; green, 2; blue, 27 }  ,draw opacity=1 ][line width=1.5]    (334.47,138.73) -- (318.47,138.4) ;
\draw [shift={(315.47,138.33)}, rotate = 1.21] [color={rgb, 255:red, 208; green, 2; blue, 27 }  ,draw opacity=1 ][line width=1.5]    (8.53,-2.57) .. controls (5.42,-1.09) and (2.58,-0.23) .. (0,0) .. controls (2.58,0.23) and (5.42,1.09) .. (8.53,2.57)   ;
\draw  [fill={rgb, 255:red, 255; green, 255; blue, 255 }  ,fill opacity=1 ] (330.17,138.73) .. controls (330.17,136.36) and (332.09,134.43) .. (334.47,134.43) .. controls (336.84,134.43) and (338.77,136.36) .. (338.77,138.73) .. controls (338.77,141.11) and (336.84,143.03) .. (334.47,143.03) .. controls (332.09,143.03) and (330.17,141.11) .. (330.17,138.73) -- cycle ;
\draw [color={rgb, 255:red, 208; green, 2; blue, 27 }  ,draw opacity=1 ][line width=1.5]    (143,138.73) -- (142.53,114.33) ;
\draw [shift={(142.47,111.33)}, rotate = 88.88] [color={rgb, 255:red, 208; green, 2; blue, 27 }  ,draw opacity=1 ][line width=1.5]    (15.63,-4.7) .. controls (9.94,-1.99) and (4.73,-0.43) .. (0,0) .. controls (4.73,0.43) and (9.94,1.99) .. (15.63,4.7)   ;
\draw [color={rgb, 255:red, 208; green, 2; blue, 27 }  ,draw opacity=1 ][line width=1.5]    (143,138.73) -- (126.47,138.39) ;
\draw [shift={(123.47,138.33)}, rotate = 1.17] [color={rgb, 255:red, 208; green, 2; blue, 27 }  ,draw opacity=1 ][line width=1.5]    (8.53,-2.57) .. controls (5.42,-1.09) and (2.58,-0.23) .. (0,0) .. controls (2.58,0.23) and (5.42,1.09) .. (8.53,2.57)   ;
\draw  [fill={rgb, 255:red, 255; green, 255; blue, 255 }  ,fill opacity=1 ] (138.7,138.73) .. controls (138.7,136.36) and (140.63,134.43) .. (143,134.43) .. controls (145.37,134.43) and (147.3,136.36) .. (147.3,138.73) .. controls (147.3,141.11) and (145.37,143.03) .. (143,143.03) .. controls (140.63,143.03) and (138.7,141.11) .. (138.7,138.73) -- cycle ;
\draw [color={rgb, 255:red, 208; green, 2; blue, 27 }  ,draw opacity=1 ][line width=1.5]    (161.41,82.37) -- (144.38,103.02) ;
\draw [shift={(142.47,105.33)}, rotate = 309.52] [color={rgb, 255:red, 208; green, 2; blue, 27 }  ,draw opacity=1 ][line width=1.5]    (15.63,-4.7) .. controls (9.94,-1.99) and (4.73,-0.43) .. (0,0) .. controls (4.73,0.43) and (9.94,1.99) .. (15.63,4.7)   ;
\draw [color={rgb, 255:red, 208; green, 2; blue, 27 }  ,draw opacity=1 ][line width=1.5]    (161.41,82.37) -- (174.99,91.64) ;
\draw [shift={(177.47,93.33)}, rotate = 214.34] [color={rgb, 255:red, 208; green, 2; blue, 27 }  ,draw opacity=1 ][line width=1.5]    (8.53,-2.57) .. controls (5.42,-1.09) and (2.58,-0.23) .. (0,0) .. controls (2.58,0.23) and (5.42,1.09) .. (8.53,2.57)   ;
\draw  [fill={rgb, 255:red, 255; green, 255; blue, 255 }  ,fill opacity=1 ] (272.08,45.01) .. controls (273.5,46.91) and (273.11,49.6) .. (271.2,51.02) .. controls (269.3,52.44) and (266.61,52.05) .. (265.19,50.15) .. controls (263.76,48.25) and (264.16,45.55) .. (266.06,44.13) .. controls (267.96,42.71) and (270.66,43.1) .. (272.08,45.01) -- cycle ;
\draw  [fill={rgb, 255:red, 255; green, 255; blue, 255 }  ,fill opacity=1 ] (204.85,229.78) .. controls (204.85,227.41) and (206.78,225.48) .. (209.15,225.48) .. controls (211.52,225.48) and (213.45,227.41) .. (213.45,229.78) .. controls (213.45,232.16) and (211.52,234.08) .. (209.15,234.08) .. controls (206.78,234.08) and (204.85,232.16) .. (204.85,229.78) -- cycle ;
\draw  [fill={rgb, 255:red, 255; green, 255; blue, 255 }  ,fill opacity=1 ] (160.06,78.28) .. controls (162.31,77.54) and (164.75,78.76) .. (165.5,81.01) .. controls (166.24,83.27) and (165.02,85.7) .. (162.77,86.45) .. controls (160.52,87.19) and (158.08,85.97) .. (157.33,83.72) .. controls (156.59,81.47) and (157.81,79.03) .. (160.06,78.28) -- cycle ;
\draw [color={rgb, 255:red, 208; green, 2; blue, 27 }  ,draw opacity=1 ][line width=1.5]    (316.18,195) -- (299.14,215.66) ;
\draw [shift={(297.24,217.97)}, rotate = 309.52] [color={rgb, 255:red, 208; green, 2; blue, 27 }  ,draw opacity=1 ][line width=1.5]    (15.63,-4.7) .. controls (9.94,-1.99) and (4.73,-0.43) .. (0,0) .. controls (4.73,0.43) and (9.94,1.99) .. (15.63,4.7)   ;
\draw [color={rgb, 255:red, 208; green, 2; blue, 27 }  ,draw opacity=1 ][line width=1.5]    (316.18,195) -- (329.76,204.28) ;
\draw [shift={(332.24,205.97)}, rotate = 214.34] [color={rgb, 255:red, 208; green, 2; blue, 27 }  ,draw opacity=1 ][line width=1.5]    (8.53,-2.57) .. controls (5.42,-1.09) and (2.58,-0.23) .. (0,0) .. controls (2.58,0.23) and (5.42,1.09) .. (8.53,2.57)   ;
\draw  [fill={rgb, 255:red, 255; green, 255; blue, 255 }  ,fill opacity=1 ] (311.88,195) .. controls (311.88,192.63) and (313.81,190.7) .. (316.18,190.7) .. controls (318.56,190.7) and (320.48,192.63) .. (320.48,195) .. controls (320.48,197.38) and (318.56,199.3) .. (316.18,199.3) .. controls (313.81,199.3) and (311.88,197.38) .. (311.88,195) -- cycle ;
\draw  [draw opacity=0] (193.62,100.73) .. controls (204.49,87.8) and (220.79,79.59) .. (239.01,79.6) .. controls (269.98,79.62) and (295.39,103.41) .. (297.98,133.71) -- (238.97,138.83) -- cycle ; \draw  [color={rgb, 255:red, 74; green, 144; blue, 226 }  ,draw opacity=1 ] (193.62,100.73) .. controls (204.49,87.8) and (220.79,79.59) .. (239.01,79.6) .. controls (269.98,79.62) and (295.39,103.41) .. (297.98,133.71) ;  
\draw [color={rgb, 255:red, 74; green, 144; blue, 226 }  ,draw opacity=1 ]   (193.62,100.73) -- (199.47,87.33) ;
\draw [color={rgb, 255:red, 74; green, 144; blue, 226 }  ,draw opacity=1 ]   (193.62,100.73) -- (205.47,96.33) ;

\draw (205,127) node [anchor=north west][inner sep=0.75pt]    {$p_{2n}$};
\draw (336,146) node [anchor=north west][inner sep=0.75pt]    {$p_{1}$};
\draw (133.5,146) node [anchor=north west][inner sep=0.75pt]    {$p_{1}$};
\draw (320,175) node [anchor=north west][inner sep=0.75pt]    {$p_{m}$};
\draw (158,59) node [anchor=north west][inner sep=0.75pt]    {$p_{m}$};
\draw (318.18,82) node [anchor=north west][inner sep=0.75pt]    {$p_{2}$};
\draw (140.28,174) node [anchor=north west][inner sep=0.75pt]    {$p_{2}$};
\draw (280.18,37.86) node [anchor=north west][inner sep=0.75pt]    {$p_{3}$};
\draw (216.18,226.86) node [anchor=north west][inner sep=0.75pt]    {$p_{3}$};
\draw (217.18,33) node [anchor=north west][inner sep=0.75pt]    {$p_{4}$};
\draw (247.18,225.86) node [anchor=north west][inner sep=0.75pt]    {$p_{4}$};
\draw (259,87.4) node [anchor=north west][inner sep=0.75pt]  [color={rgb, 255:red, 74; green, 144; blue, 226 }  ,opacity=1 ]  {$\tau _{m}$};
\end{tikzpicture}

    \caption{$\vp(t)$ and complex coordinates on the Poincar\'{e} disk $\mathbb{D}^2\to\rst$.}
    \label{m5pcrdisk}
\end{figure}

With complex coordinate $w_1$ and $w_{2n}$ being given as in the Figure \ref{m5pcrdisk}, we can define $w_l(l=2,\cdots,m)$ as above and this yields:
\begin{lemma}
    \begin{enumerate}[label=(\roman*)]
        \item Suppose $\tau_{m}(p_l(t))=p_k(t)$ for some $l,k=1,\cdots,m$, we have $\tau_m. w_l=w_k$. 
        \item $(\tau^{-2l}_m\sigma\tau^{2l}_m). w_{m-l+1}=\bar{w}_{m-l+1}$ for $l=1,\cdots,m$, with $w_{m+1}=w_1$.
    \end{enumerate}
\end{lemma}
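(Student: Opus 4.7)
Part (i) is essentially forced by the construction. By requirement (b) in the setup, for each $l=2,\dots,m$ the coordinate $w_l$ near $p_l$ is obtained from $w_1$ by pullback along the appropriate power of the lift $\tau_m=\tilde\tau_m$; in the action convention $g.w=w\circ g^{-1}$ this is exactly the statement $\tau_m.w_l=w_{l-1}=w_k$ whenever $\tau_m(p_l)=p_k$. Compatibility with requirement (a), $w_l^2=z_l$, is automatic, since on $\sph$ one has $z_l=z_1\circ\tau_m^{-(l-1)}$ and squaring gives $w_l^2=(w_1\circ\tau_m^{-(l-1)})^2=z_l$.

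The plan for part (ii) is to reduce everything, via (i), to the single base identity
\[
\sigma.w_1 \;=\; \bar w_1 .
\]
Once this is in place, the general case is purely formal: for any integer $a$, holomorphicity of $\tau_m^{a}$ forces it to commute with complex conjugation, and so
\[
(\tau_m^{a}\sigma\tau_m^{-a}).(\tau_m^{a}.w_1) \;=\; \tau_m^{a}.(\sigma.w_1) \;=\; \tau_m^{a}.\bar w_1 \;=\; \overline{\tau_m^{a}.w_1}.
\]
Choosing $a$ so that $\tau_m^{a}(p_1)=p_{m-l+1}$, i.e.\ $\tau_m^{a}.w_1=w_{m-l+1}$, one reads off $(\tau_m^{a}\sigma\tau_m^{-a}).w_{m-l+1}=\bar w_{m-l+1}$. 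Matching the exponent $a$ with the form $-2l$ appearing in the statement is then a direct bookkeeping step: because the chosen lift $\tilde\tau_m$ acts on the $w_{2n}$-plane by rotation through $2(n-1)\pi/m$, its square $\tilde\tau_m^{2}$ rotates by $-2\pi/m$, so that $\tilde\tau_m^{-2}$ is the geometrically natural ``unit'' rotation by $2\pi/m$ in the $w$-plane; consequently one unit shift of the reflection index on $\sph$ corresponds to the exponent $\pm 2$ on $\widetilde{\Sigma}_t$, producing exactly the factor $2$ in the displayed formula.

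The genuine obstacle is establishing the base identity $\sigma.w_1=+\bar w_1$ with the correct sign: the defining relation $w_1^2=z_1$ determines $w_1$ only up to $\pm 1$, and only a global geometric comparison on $\widetilde{\Sigma}_t$ can pin the sign down. The argument is that $\tilde\sigma$ is an orientation-reversing isometric involution of $\widetilde{\Sigma}_t$, whose fixed-point set is a geodesic line passing through the chosen preimages of both $p_1$ and $p_{2n}$. The prescribed formula $\tilde\sigma:w_{2n}\mapsto -\bar w_{2n}$ says this fixed axis coincides with the imaginary direction in the $w_{2n}$-chart; transporting the same axis through the covering to the preimage of $p_1$ and comparing with the red-arrow convention of Figure~\ref{m5pcrdisk} shows that it now coincides with the \emph{real} direction of $w_1$, yielding $\tilde\sigma.w_1=+\bar w_1$ (no minus sign). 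With this geometric input in hand, both parts of the lemma follow by the bookkeeping outlined above.
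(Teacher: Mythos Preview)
The paper gives no proof; the lemma is stated as following directly from the construction and Figure~\ref{m5pcrdisk}. Your part (i) is correct---it is requirement (b) verbatim---and your conjugation identity $(\tau_m^{a}\sigma\tau_m^{-a}).(\tau_m^{a}.w_1)=\overline{\tau_m^{a}.w_1}$ for part (ii) is also correct. However, your ``bookkeeping'' to match the exponent $-2l$ is wrong, and the error is substantive. You invoke the rotation angle of $\tilde\tau_m$ in the $w_{2n}$-plane, but the exponent in $\tau_m^{a}$ labels the abstract group element and is the same on $\sph$, on $\Sigma_t$, and on $\widetilde{\Sigma}_t$; the lift's angle is irrelevant to it. Your own argument, combined with $\tau_m^{a}.w_1=w_{1-a}$, actually yields $(\tau_m^{l}\sigma\tau_m^{-l}).w_{m-l+1}=\bar w_{m-l+1}$. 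A direct check on the sphere shows that $\tau_m^{-2l}\sigma\tau_m^{2l}$ fixes $p_{2l+1}$, not $p_{m-l+1}$, unless $6l\equiv 0\pmod m$; so the printed exponent appears to be a typo, and you should have flagged this rather than hand-waved past it. (Only the case $l=m$, i.e.\ $\sigma.w_1=\bar w_1$, is actually used later, in Lemma~\ref{ldcefofonedim}.)

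Your geometric argument for the base sign $\sigma.w_1=+\bar w_1$ also has a gap: the fixed geodesic of $\tilde\sigma:w_{2n}\mapsto-\bar w_{2n}$ on $\widetilde{\Sigma}_t$ is the imaginary $w_{2n}$-axis, whereas the preimages of $p_1$ drawn in the figure lie on the \emph{real} axis and are \emph{swapped} by $\tilde\sigma$, so the fixed axis cannot be ``transported through the covering'' to the displayed copy of $p_1$ as you describe. The sign can be read correctly from the figure by comparing orientations at the right-hand copy of $p_1$: the real direction of $w_1$ is drawn vertical while that of $w_{2n}$ is horizontal, so $dw_1/dw_{2n}$ is purely imaginary there, and conjugating $w\mapsto -\bar w$ from the $w_{2n}$-chart to the $w_1$-chart gives $w_1\mapsto +\bar w_1$.
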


Given the lifting $D_{2m}$-action on $\rst$, by Lemma \ref{commuteswithinv}, we can define an infinite dimensional $D_{2m}$-representation on the space $\osb(\rst)$ and hence $\mathcal{L}_{1}^2(\lbct)$. Consider a $\ZT$ eigenvalue $\lambda$ of $\lbct$, the corresponding eigenspace $V_\lambda\subset \mathcal{L}_{1}^2(\lbct)$ can be interpreted as a finite dimensional $D_{2m}$-representation.

Recall that irreducible finite dimensional $D_{2m}$-representations can be listed as follows:
\begin{enumerate}[label=(\roman*)]
    \item $U$, the trivial representation, on where $\tau_m.v=\sigma.v=v$.
    \item $U'$, the alternating representation, on where $\tau_m.v=-\sigma.v=v$.
    \item $W$, the standard representation, which is two dimensional, with a basis $\{v,\sigma.v\}$, such that $\tau_m.v=e^{{2\pi i}/{m}}v$ while $\tau_m.(\sigma.v)=e^{-{2\pi i}/{m}}\sigma.v$.
\end{enumerate}

Let $V_{\lam}$ be the eigenspaces, recall that we could define a filtration of $V_{\lam}$ by orders:
\begin{definition}
   Let $V_\lambda^j=\{f\in V_\lambda:\ord(f)\ge j\}$. By Lemma \ref{upperorder}, these spaces define a finite filtration $V_\lambda=V^0_\lambda\supset V^1_\lambda\supset\cdots\supset V_\lambda^{n-1}\supset 0$. Define $W_\lambda^{j-1}$ as the orthogonal (with respect to the $L^2$ inner product) complement of $V^j_\lambda$ in $V^{j-1}_\lambda$, for $j\ge 1$.
\end{definition}

In particular, we have $V_{\lam}^{j-1}=V_{\lam}^{j}\oplus W_{\lam}^{j-1}$.

\begin{lemma}\label{ldcefofonedim}
    Suppose $f$ is an eigensection of $\mathcal{I}_{\vp(t)}^\CC$, such that $\CC\{f\}$ is isomorphic with $U$ (resp. $U'$). Then all of the leading coefficients of $f$ at $p_l(t)~(1\le l\le m)$ are $(a,a)$ (resp. $(a,-a)$) for some $a\ne 0$. 
\end{lemma}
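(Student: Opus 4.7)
The plan is to exploit the $D_{2m}$-symmetry via the transformation rule \eqref{sigmafw}. The strategy breaks into three short steps.

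First, I would use the rotational symmetry $\tau_m.f = f$ to propagate leading coefficients across all the points $p_l(t)$, $1 \le l \le m$. By the first bullet of the preceding lemma, $\tau_m.w_l = w_k$ whenever $\tau_m(p_l(t)) = p_k(t)$. Writing the lift of $f$ near $p_l(t)$ as $\tilde{f}(w_l,\bar{w}_l) = a_l w_l^{2n_l+1} + b_l \bar{w}_l^{2n_l+1} + \mathcal{O}(r^{2n_l+3})$ with $n_l := \vn_{p_l(t)}(f)$, the formula \eqref{sigmafw} applied to $\tau_m$ together with $\tau_m.f = f$ forces $n_l = n_k$ and $(a_l, b_l) = (a_k, b_k)$. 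Since $\tau_m$ acts transitively on $\{p_1(t),\dots,p_m(t)\}$, there is a single common pair $(a,b)$ and a single common order $n_0$.

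Second, I would use the reflection $\sigma$ at $p_1(t)$. The point $p_1(t)$ is fixed by $\sigma$, and taking $l = m$ in the second bullet of the preceding lemma (using $\tau_m^m = \mathrm{id}$) gives $\sigma.w_1 = \bar{w}_1$. Applying \eqref{sigmafw} to $\sigma$, one computes $(\sigma.\tilde{f})(w_1,\bar{w}_1) = b\, w_1^{2n_0+1} + a\, \bar{w}_1^{2n_0+1} + \mathcal{O}(r^{2n_0+3})$. If $\CC\{f\} \cong U$, then $\sigma.f = f$ forces $a = b$; if $\CC\{f\} \cong U'$, then $\sigma.f = -f$ forces $a = -b$. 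Combined with Step 1, every leading coefficient equals $(a,a)$ in the $U$ case and $(a,-a)$ in the $U'$ case.

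Finally, $a \ne 0$: by definition of leading coefficients the pair $(a, \pm a)$ cannot vanish, so $a \ne 0$ automatically.

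The only real subtlety — and what I would check most carefully — is the sign convention in the coordinate choice: one must verify that the $w_1$ fixed in the preceding lemma genuinely satisfies $\sigma.w_1 = \bar{w}_1$ rather than $-\bar{w}_1$, and likewise that $\tau_m.w_l = w_k$ on the nose (not up to a sign). Both are ensured by the construction of the $w_l$ on the universal cover described before the lemma, so this amounts to a straightforward bookkeeping check rather than a genuine obstacle.
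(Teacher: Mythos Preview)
Your proposal is correct and follows exactly the same approach as the paper's proof, which simply cites the three ingredients $\tau_m.f=f$ (resp.\ $\sigma.f=\pm f$), $\tau_m.w_j=w_k$, and $\sigma.w_1=\bar{w}_1$ and says the result follows directly. You have merely spelled out the details the paper leaves implicit, including the bookkeeping check that $\sigma.w_1=\bar{w}_1$ follows from the $l=m$ case of the second bullet in the preceding lemma.
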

\begin{proof}
    One can deduce this from $\sigma.f=f$ (resp. $\sigma.f=-f$), $\tau_m.w_j=w_k$ for $j,k$ such that $\tau_m(p_j(t))=p_k(t)$, and $\sigma.w_1=\bar{w}_1$ directly.
\end{proof}
As mentioned before \eqref{mentionsign}, given that we have fixed the choice of complex coordinate at $p_l(t)$ $(l=1,\cdots,m)$, there is no ambiguity in the signs of the leading coefficients $(a,\pm a)$. Next, we discuss the irreducible representation decomposition of $V_\lam$.

\begin{lemma}\label{symmemulres}
    There is an orthogonal decomposition of $V_\lambda=\oplus_{j=0}^{n-1}W_\lambda^j$, where each summand $W_\lambda^j$ is a $D_{2m}$-subrepresentation of $V_\lambda$. For each $W_\lambda^j$, there exists a further orthogonal decomposition of irreducible $D_{2m}$-representations, denoted as $W_\lambda^j\cong U^{\oplus c_j}\oplus (U')^{\oplus d_j}\oplus W_\lambda^{\oplus e_j}$, where $c_j=c_j(t)$, $d_j=d_j(t)$, and $e_j=e_j(t)$ are integers dependent on $t$. Moreover, it holds that $c_j\ge d_{j+1}$, $d_j\ge c_{j+1}$, $c_j+d_j\le 1$ and $c_j+c_{j+1},d_j+d_{j+1}\le 1$. Consequently, $\sum_{j=0}^{n-1}c_j+d_j\le n$ and $\sum_{j=0}^{n-1}c_j,\sum_{j=0}^{n-1}d_j\le \frac{n}{2}+1$. 
\end{lemma}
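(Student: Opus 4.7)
The plan is to adapt the techniques from the tetrahedral case in Section~\ref{tetrahedralcase} to the dihedral setting, exploiting the interplay between the filtration by vanishing order and the $D_{2m}$-symmetry.

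First, I would establish the $D_{2m}$-invariance of the filtration. Since the $D_{2m}$-action on $\sph$ permutes the points of $\vp(t)$ and is isometric, for any $g\in D_{2m}$ one has $\vn_{g(p)}(g.f)=\vn_p(f)$, hence $\ord(g.f)=\ord(f)$. Thus each $V_\lam^j$ is a $D_{2m}$-subrepresentation. Because the $L^2$-inner product is preserved, the orthogonal complement $W_\lam^j$ is also $D_{2m}$-invariant, giving the claimed irreducible decomposition $W_\lam^j\cong U^{\oplus c_j}\oplus (U')^{\oplus d_j}\oplus W^{\oplus e_j}$.

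Next, I would bound $c_j+d_j\le 1$. The one-type bounds $c_j\le 1$ and $d_j\le 1$ proceed as in Lemmas \ref{noVV}--\ref{noWW}: if $f_1,f_2$ span two $U$-summands in $W_\lam^j$, Lemma \ref{ldcefofonedim} provides leading coefficients $(a_s,a_s)$ at each $p_l(t)$, so an appropriate combination $h=a_2 f_1-a_1 f_2$ vanishes to order strictly greater than $j$ at $p_1,\ldots,p_m$. By Lemma \ref{morethanonepointleastorder}, $\ord(h)$ cannot be achieved at a single point, so $\ord(h)>j$ and thus $h\in V_\lam^{j+1}\cap W_\lam^j=\{0\}$, a contradiction. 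For a mixed pair with $f$ of $U$-type and $g$ of $U'$-type, I would use Lemma \ref{lemmalocal}: apply a composition of operators $J_\pm^q$ (for generic basepoints $q$) to one of the sections to reduce its expansion to the order-$w_p^{\pm 1}$ form required at each $p\in\vp(t)$, then pair against the other. The sign mismatch between the $(a,a)$ and $(b,-b)$ patterns at the points $p_l(t)$ forces the leading coefficients to vanish, a contradiction.

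Third, I would construct the shifting maps giving $c_j\ge d_{j+1}$ and $d_j\ge c_{j+1}$. The key operator is $J_3$: it commutes with $\Delta$ and $\tau_m$, while $\sigma_\ast J_3=-J_3$, so $f\mapsto J_3 f$ swaps $U$- and $U'$-types and preserves $W$-type. By Proposition \ref{orderminusone} it preserves $\vn_{p_{2n}}$ and lowers $\vn_{p_l}$ by one for $l=1,\ldots,m$. For a $U'$-summand in $W_\lam^{j+1}$ with generator $g$ of order exactly $j+1$ at $p_1,\ldots,p_m$, the image $J_3 g$ lies in the $U$-isotypic component of $V_\lam^j$ with nonzero projection to $W_\lam^j$; injectivity of the resulting map yields $c_j\ge d_{j+1}$. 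The bound $d_j\ge c_{j+1}$ is analogous.

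Fourth (the main obstacle), I would prove the consecutive-level bounds $c_j+c_{j+1}\le 1$ and $d_j+d_{j+1}\le 1$. Suppose $U$-summands appeared at both levels, spanned by $f_j\in W_\lam^j$ and $f_{j+1}\in W_\lam^{j+1}$. I would apply a composition of the $J_\pm^q$ operators to $f_{j+1}$ to produce a $U$-type eigensection $h$ of order exactly $j$ at $p_1,\ldots,p_m$, with leading coefficients determined by Lemma \ref{ldcefofonedim}. Since $c_j\le 1$, $h$ must be a scalar multiple of $f_j$. Comparing the two sections at $p_{2n}$ -- where the arithmetic constraint $m\mid 2k+1$ forced by $\tau_m$-invariance strictly restricts the allowed Fourier modes -- and combining with Lemma \ref{lemmalocal} applied to suitable pairs of test sections produces incompatible algebraic relations, giving the required contradiction. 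Tracking leading coefficients through the iterated operator action is the technical core of the argument.

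Finally, the numerical estimates follow by routine counting. Summing $c_j+d_j\le 1$ over $j=0,\ldots,n-1$ gives $\sum_j(c_j+d_j)\le n$. Pairing consecutive indices and applying $c_j+c_{j+1}\le 1$ yields $\sum_j c_j\le\lceil n/2\rceil\le n/2+1$, and similarly for $\sum_j d_j$. The hardest step will be the bound on consecutive levels, since the lowering operators act non-uniformly at the distinguished fixed point $p_{2n}$, and one must carefully account for the arithmetic constraints on Fourier modes at $p_{2n}$ imposed by the nontrivial $\tau_m$-stabilizer there.
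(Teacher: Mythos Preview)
Your overall skeleton is correct, but you have missed the key logical shortcut in the paper, and as a result you drastically overcomplicate what you call ``the hardest step.'' The consecutive-level bound $c_j+c_{j+1}\le 1$ is in fact a one-line consequence of the other two inequalities you have already set up: if $c_j\ge 1$ and $c_{j+1}\ge 1$, then your own shifting inequality gives $d_j\ge c_{j+1}\ge 1$, hence $c_j+d_j\ge 2$, contradicting $c_j+d_j\le 1$. No new operator manipulations, no arithmetic at $p_{2n}$, no tracking of coefficients through iterated $J_\pm^q$ are needed. Your proposed route here also has a concrete problem: you want to produce from a $U$-type section at level $j+1$ a $U$-type section at level $j$, but the only $D_{2m}$-equivariant lowering operator available is $J_3^{p_{2n}}$, which \emph{swaps} $U$ and $U'$; the operators $J_\pm^q$ for generic $q$ do not respect the $D_{2m}$-isotypic decomposition, so ``a $U$-type eigensection $h$'' does not come out of them.

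The paper also orders the argument differently, and this matters. It establishes the shifting inequalities $c_j\ge d_{j+1}$, $d_j\ge c_{j+1}$ \emph{first}; these give $c_j+d_j\ge c_{j+1}+d_{j+1}$, so to prove $c_j+d_j\le 1$ for all $j$ it suffices to prove $c_0+d_0\le 1$. At level $0$ the mixed case is a single clean application of Lemma~\ref{lemmalocal} with $\phi_1=f_+$ and $\phi_2=J_3^{p_{2n}}f_-$: the leading coefficients of $\phi_2$ at each $p_l(t)$ are $(\tfrac12\sin\theta\, a,\tfrac12\sin\theta\, a)$, and the identity yields $m\sin\theta\, a^2=0$. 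Your plan to attack $c_j+d_j\le 1$ directly for general $j$ via compositions of $J_\pm^q$ is workable in principle but noticeably messier, and unnecessary once the monotonicity is in hand. Reordering your steps---shifting inequalities first, then reduce to $j=0$, then deduce the consecutive-level bounds formally---collapses the whole argument to a few lines.
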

\begin{proof}
      Given $f\in W^j$, then by definition, $\lan f,h\ran_{L^2}=0$ for any $h\in V^{j+1}_\lambda$. From the fact that $D_{2m}$ acts on $\rst$ isometrically, we conclude $\lan g.f,g.h\ran_{L^2}=0$ for any $g\in D_{2m}$. Consequently, $D_{2m}$ preserves $W^j_{\lambda}$, i.e. each $W^j_\lambda$ is a $D_{2m}$-subrepresentation of $V_\lambda$. Therefore, there is an orthogonal decomposition of the irreducible $D_{2m}$-representation of $W^j_\lambda$.

      We claim that for an eigensection $f\in W^j_\lambda$ such that $\CC\{f\}\cong U$ (resp. $\CC\{f\}\cong U'$), we have $\CC\{J_3^{p_{2n}}f\}\cong U'$ (resp. $\CC\{f\}\cong U$). To see this, note that $\tau_{m}.(J_3^{p_{2n}}f)=((\tau_m^{-1})_{\ast} J_3^{p_{2n}})(\tau_m.f)=J_3^{p_{2n}}f$ while $\sigma.(J_3^{p_{2n}}f)=((\sigma^{-1})_{\ast} J_3^{p_{2n}})(\sigma.f)=-J_3^{p_{2n}}(\sigma.f)$.
      
      It follows that $c_{j}\ge d_{j+1}$ and $d_{j}\ge c_{j+1}$. As a result, $c_{j}+d_{j}\ge c_{j+1}+d_{j+1}$. To show that $c_j+d_j\le 1$, it suffices to show that $c_0+d_0\le 1$.

      Consider two eigensections $f_1,f_2\in W^0_{\lambda}$, spanning two copies of $U$ (or $U'$), with leading coefficients $(a_1,a_1)$ and $(a_2,a_2)$ (resp.  $(a_1,-a_1)$ and $(a_2,-a_2)$) at $p_1(t)$. Then consider a section $f_3:=a_2f_1-a_1f_2$. The vanishing order $\vn_{p_l}(f_3)\ge 1$ for $l=1,\cdots,m$, and hence $f_3\in V_\lambda^1$. However, $f_1,f_2\in W^0_{\lambda}$, as a consequence, $a_2f_1-a_1f_2=f_3=0$. Therefore, $c_0,d_0\le 1$.

      Suppose $c_0=d_0=1$. Then we can select $f_+$ and $f_-$ from $W^0_\lambda$, such that $\tau_m. f_\pm=f_\pm$ while $\sigma.f_\pm=\pm f_{\pm}$. Now, consider two sections $\phi_1=f_+$ and $\phi_2=J_3^{p_{2n}}f_-$. We can assume that the leading coefficients of $f_\pm$ at $p_1(t),\cdots, p_m(t)$ are all $(a,\pm a)$ for some $a\in\CC^\ast$. Suppose $p_l(t)$ has spherical coordinate $(\varphi_l,\theta(t))$ on $\sph$ for each $1\le l\le m$, where $\varphi_l=2(l-1)\pi/m$. The leading coefficients of $\phi_2$ at $p_l(t)$ are $(\frac12\sin\theta(t) a,\frac12\sin\theta(t)a)$. By Lemma \ref{lemmalocal}, we have $0=\sum_{l=1}^m \sin\theta(t)a^2=m\sin\theta(t)a^2$. Thus $a=0$ and $f_\pm\notin W^0_\lambda$, leading to a contradiction.

      Suppose $c_j+c_{j+1}\ge 2$, then $c_j,c_{j+1}\ge 1$. By applying the operator $J_3^{p_{2n}}$, we deduce that $d_j\ge 1$ and hence $c_j+d_j\ge 2$, making a contradiction. Similarly, $d_j+d_{j+1}\le 1$.

      Finally, if $\sum_{j=0}^{n-1}c_j>\frac{n}{2}+1$, then for some $0\le j<n-1$, $c_j,c_{j+1}\ge 1$, which cannot be the case. A similar argument can be applied to $\sum_{j=0}^{n-1}d_j$.
\end{proof}

\begin{lemma}\label{UU'ordb}
Consider an irreducible $D_{2m}$-representation $U\,(\text{resp. }U')\subset W^j_\lam$ for some $0\le j\le n-1$, then we can find a real eigensection $f\in U$, such that $U\,(\text{resp. }U')=\CC\{f\}$, and $\vn_{p_l(t)}(f)=j$ for any $1\le l\le m$. Moreover, the leading coefficients of $f$ at $p_l(t)$ are $(a,a)$ (resp. $(ia,-ia)$ ) for some $a\in\RR\setminus\{0\}$ for any $1\le l\le m$.
\end{lemma}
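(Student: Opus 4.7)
The plan is to combine three ingredients: (i) the uniqueness of the copy of $U$ (resp.\ $U'$) inside $W^j_\lam$ provided by Lemma~\ref{symmemulres}, which lets me choose a \emph{real} representative; (ii) the leading-coefficient shape from Lemma~\ref{ldcefofonedim} together with the reality constraint; and (iii) the $\tau_m$-symmetry combined with Lemma~\ref{morethanonepointleastorder} to pin down the exact vanishing order at each $p_l(t)$.

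First I would observe that complex conjugation $f\mapsto \bar f$ commutes with the $D_{2m}$-action on sections (both with the holomorphic generator $\tau_m$ and the antiholomorphic generator $\sigma$), so it sends $U$-type summands to $U$-type summands and $U'$-type summands to $U'$-type summands. The multiplicity bound $c_j+d_j\le 1$ in Lemma~\ref{symmemulres} says that whenever a copy of $U$ (resp.\ $U'$) appears inside $W^j_\lam$ it is the unique such copy; hence conjugation preserves that one-dimensional summand. Writing $\bar f=cf$ with $|c|=1$, I can rescale $f$ by a suitable unit complex number to arrange $\bar f=f$, i.e.\ $f$ is real.

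Next I would read off the leading coefficients using Lemma~\ref{ldcefofonedim}: in the fixed $w_l$-coordinates the leading pair at every $p_l(t)$ is $(a,a)$ in the $U$ case and $(a,-a)$ in the $U'$ case. Reality of $f$ translates, via a local expansion of type $f\sim a z_l^{j+1/2}+b\bar z_l^{j+1/2}$, into the constraint $b=\bar a$. Combined with $(a,a)$ this forces $a\in\mbR$, while $(a,-a)$ forces $a\in i\mbR$; writing $a=ia'$ with $a'\in\mbR$ recovers the stated $(ia',-ia')$ form.

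It remains to show $\vn_{p_l(t)}(f)=j$ for every $1\le l\le m$ (in particular $a\ne 0$). By construction $f\in W^j_\lam\subset V^j_\lam$ but $f\perp V^{j+1}_\lam$, so $f\notin V^{j+1}_\lam$ and $\ord(f)=j$. The $\tau_m$-invariance together with the relation $\tau_m.w_l=w_k$ whenever $\tau_m(p_l(t))=p_k(t)$ yields $\vn_{p_1(t)}(f)=\cdots=\vn_{p_m(t)}(f)=:k$. If $k>j$ then the minimum $j$ is attained only at $p_{2n}$, contradicting Lemma~\ref{morethanonepointleastorder} which asserts that at least two configuration points realise the minimal vanishing order. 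Hence $k=j$. I do not foresee any substantial obstacle: the only mildly delicate point is justifying that conjugation fixes the specific irreducible summand rather than permuting it with another copy, and this is exactly what the multiplicity bound $c_j+d_j\le 1$ provides.
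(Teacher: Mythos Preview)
Your proposal is correct and follows essentially the same route as the paper: use the uniqueness from Lemma~\ref{symmemulres} to show conjugation preserves the one-dimensional summand and produce a real generator, then combine reality with Lemma~\ref{ldcefofonedim} for the coefficient shape, and finally use the $\tau_m$-invariance together with Lemma~\ref{morethanonepointleastorder} to force $\vn_{p_l(t)}(f)=j$. The paper's proof is slightly terser but identical in substance; your step showing explicitly that $\ord(f)=j$ (via $f\in W^j_\lam\setminus V^{j+1}_\lam$) before invoking Lemma~\ref{morethanonepointleastorder} makes the contradiction cleaner.
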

\begin{proof}
    Consider the involution $W^j_\lam\to W^j_\lam:h\mapsto \bar{h}$. According to Lemma \ref{symmemulres}, $c_j\,(\text{resp. }d_j)\,\le 1$, hence this map restricts to an involution on $U\,(\text{resp. }U')$. Thus we may find a real section $f\in U\,(\text{resp. }U')$, such that $U\,(\text{resp. }U')=\CC\{f\}$. And since $\tau_m.f=f$, all $\vn_{p_l(t)}(f)$ are the same. Suppose $\vn_{p_l(t)}(f)>j$, then we deduce a contradiction from Lemma \ref{morethanonepointleastorder}.

    Since $f$ is real, it holds that the leading coefficients of $f$ at $p_l(t)$, say $(a,b)$, satisfies $a=\bar{b}$. Then the last assertion follows from Lemma \ref{ldcefofonedim}.
\end{proof}

Lemma \ref{symmemulres} yields the following key observation:
\begin{corollary}\label{crossingcri}
    Suppose there are two summands of irreducible $D_{2m}$-representation of $V_\lambda$, each of which is isomorphic with either $U$ or $U'$, then $\lambda$ is critical. 
\end{corollary}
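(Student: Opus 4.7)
The plan is to reduce the existence of a critical eigensection to a combination of the representation-theoretic multiplicity bounds of Lemma~\ref{symmemulres}, the explicit shape of leading coefficients given by Lemma~\ref{UU'ordb}, and the ``two-point minimum'' principle of Lemma~\ref{morethanonepointleastorder}. The key observation is that since $c_j+d_j\le 1$ for every $j$, no two irreducible summands of type $U$ or $U'$ can coexist inside a single piece $W^j_\lambda$ of the orthogonal decomposition $V_\lambda=\bigoplus_{j=0}^{n-1} W^j_\lambda$. Hence under the hypothesis, the two given summands must lie in distinct pieces $W^j_\lambda$ and $W^k_\lambda$, and after relabelling we may assume $j<k$, so in particular $k\ge 1$.

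I would then invoke Lemma~\ref{UU'ordb} to pick a nonzero real eigensection $f$ spanning this $U$ or $U'$ summand sitting inside $W^k_\lambda$. By that lemma, $\vn_{p_l(t)}(f)=k\ge 1$ for every $1\le l\le m$, so the only configuration point at which $f$ could conceivably fail to vanish to positive order is the pole $p_{2n}=0$. To rule that out, apply Lemma~\ref{morethanonepointleastorder} to $f$: the value $\ord(f)$ must be attained at at least two points of $\vp(t)$. If $\vn_{p_{2n}}(f)<k$ held, then $\ord(f)=\vn_{p_{2n}}(f)$ would be attained only at the single point $p_{2n}$, a contradiction. Therefore $\vn_{p_{2n}}(f)\ge k\ge 1$, which means $f$ is a critical eigensection and $\lambda$ is critical, as desired.

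I expect essentially no analytic obstacle here, since all the heavy lifting is already packaged in Lemmas~\ref{symmemulres}, \ref{UU'ordb} and \ref{morethanonepointleastorder}. The mild combinatorial subtlety, distinguishing between the cases of two copies of $U$, two copies of $U'$, or one of each, is dispatched uniformly by the symmetric constraint $c_j+d_j\le 1$, so the three cases need not be treated separately.
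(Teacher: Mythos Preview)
Your proof is correct and follows essentially the same route as the paper: both arguments use the constraint $c_j+d_j\le 1$ from Lemma~\ref{symmemulres} to force at least one of the two one-dimensional summands into some $W^k_\lambda$ with $k\ge 1$, and then observe that a nonzero section there lies in $V^1_\lambda\subset V^c_\lambda$. The paper phrases the intermediate step slightly differently, asserting that in fact either $c_0=d_1=1$ or $d_0=c_1=1$ (which follows from the chain inequalities $c_j\ge d_{j+1}$, $d_j\ge c_{j+1}$), but this sharper localisation is not needed for the conclusion.

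One small simplification: your detour through Lemma~\ref{morethanonepointleastorder} is unnecessary. Once you have $f\in W^k_\lambda$ nonzero with $k\ge 1$, the definition of $W^k_\lambda\subset V^k_\lambda$ already gives $\ord(f)\ge k$, and in particular $\vn_{p_{2n}}(f)\ge k\ge 1$, so $f$ is critical immediately. Lemma~\ref{UU'ordb} and Lemma~\ref{morethanonepointleastorder} are not needed at this point.
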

\begin{proof}
    By assumption, either $c_0+d_1=2$ or $c_1+d_0=2$ in Lemma \ref{symmemulres}. It follows that $V^1_\lam\subset V^c_\lam$ has positive dimension, and hence $\lam$ is critical.
\end{proof}

\subsection{The deformation of $D_{2m}$-representations}
By Theorem \ref{crossingcri}, the existence of a critical configuration is equivalent to the existence of two summands of specific irreducible representations. In this subsection, we will use a spectral flow to prove this existence.

\begin{definition}
    Consider the orthogonal decomposition $L^2\big(\lbct\big) = \widehat{\oplus}_{k=1}^{\infty} V^t_k$, where $V^t_k$ is the eigenspace of $\Delta$ with eigenvalue $\lambda_k(\vp(t))$. Regarding $V^t_k$ as a $D_{2m}$-representation, let's suppose $V^t_k = U^t_k \oplus (U')^t_k \oplus W^t_k$, where $U^t_k := U^{\oplus c_k}$, $(U')^t_k := (U')^{\oplus d_k}$, and $W^t_k := W^{\oplus e_k}$ are subrepresentations of $V^t_k$. We also define $H^t_+:=\widehat{\oplus}_{k=1}^{\infty}U^t_k,\;H^t_-:=\widehat{\oplus}_{k=1}^{\infty}(U')^t_k$, and $H^t_2:=\widehat{\oplus}_{k=1}^{\infty}W^t_k$.
\end{definition}
We should note that the constants $c_k$, $d_k$ and $e_k$ depends on $t$ and might jump as $t$ varies. 

Since $D_{2m}$ commutes with $\Delta$, $\Delta: H^t_\pm\to H^t_\pm$ and $\Delta: H^t_2\to H^t_2$, consequently, we can decompose the spectrum of $\Delta$ on $\mathcal{L}_{1}^2(\lbct)$ into three parts, namely $\mathrm{Spec}(\Delta)=\mathfrak{S}^t_+\cup\mathfrak{S}^t_-\cup\mathfrak{S}^t_2$, where $\mathfrak{S}^t_\pm=\mathrm{Spec}(\Delta|_{H^t_\pm})$ and $\mathfrak{S}^t_2=\mathrm{Spec}(\Delta|_{H^t_2})$. On each of the spaces $H^t_\pm$ and $H^t_2$, eigenvalues of the restriction of $\Delta$ can still be defined by min-max characterization.

Fix $t_0$, and consider another $t\in(0,+\infty)$, the diffeomorphism $\Phi_t:\overline{\CC}\to\overline{\CC}:z\mapsto \frac{t}{t_0}z$ commutes with the $D_{2m}$-action on $\sph$, satisfying $\Phi_t(\vp(t_0))=\vp(t)$. Under the pullback metric $\Phi_t^\ast\mathrm{d}s^2$, the $D_{2m}$-action remains isometric. Such a diffeomorphism identifies $L^2\big(\lbct\big)$ and $\mathcal{L}_{1}^2\big(\lbct\big)$ with 
$L^2\big(\mathcal{I}_{\vp(t_0)}^{\CC}\big)$ and $\mathcal{L}_{1}^2\big(\mathcal{I}_{\vp(t_0)}^{\CC}\big)$ respectively, by sending $f$ to $\Phi_t^\ast f:=f\circ\Phi_t$. And the Laplacian defined on $\lbct$ induces an operator $\Delta_t$ on $\mathcal{I}_{\vp(t_0)}^{\CC}$, with which the previously lifted $D_{2m}$-action still commutes. 

\begin{lemma}\label{fixedspace}
    $\Phi_t^\ast H^t_\pm\equiv H^{t_0}_\pm$ and $\Phi_t^\ast H^t_2\equiv H^{t_0}_2$.
\end{lemma}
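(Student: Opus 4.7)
The plan is to exploit the $D_{2m}$-equivariance of $\Phi_t$ on $\sph$. In stereographic coordinates, $\tau_m$ acts as $z\mapsto e^{2\pi i/m}z$ and $\sigma$ as $z\mapsto \bar z$, while $\Phi_t(z)=(t/t_0)z$ is a real scaling; hence $\Phi_t$ commutes with both generators of $D_{2m}$ and carries $\vp(t_0)$ bijectively onto $\vp(t)$. Since $\Phi_t$ is a diffeomorphism of pairs $(\sph,\vp(t_0))\to(\sph,\vp(t))$ intertwining the monodromy representations, it lifts to a diffeomorphism $\tilde\Phi_t:\Sigma_{\vp(t_0)}\to\Sigma_{\vp(t)}$, unique up to postcomposition with the canonical involution $\ei$.

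First I would arrange that $\tilde\Phi_t$ intertwines the previously fixed lifts of the dihedral action: $\tilde g_t\circ\tilde\Phi_t=\tilde\Phi_t\circ\tilde g_{t_0}$ for every $g\in D_{2m}$, where $\tilde g_t$ denotes the lift to $\Sigma_{\vp(t)}$ used to define the representation on $L^2(\lbct)$. Both $\tilde g_t\circ\tilde\Phi_t$ and $\tilde\Phi_t\circ\tilde g_{t_0}$ are lifts of the single map $g\circ\Phi_t=\Phi_t\circ g$, so they differ at most by $\ei$. Setting $\tilde\Phi_{t_0}=\mathrm{id}$ and choosing $\tilde\Phi_t$ continuously in $t$ (which can be made precise by trivializing the smooth family $\{\Sigma_{\vp(t)}\}$ over the path of configurations), the intertwining relation holds at $t=t_0$; the two-valued discrepancy in $\{\mathrm{id},\ei\}$ then cannot jump continuously, so the relation persists for all $t\in(0,\infty)$. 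Consequently $\Phi_t^\ast$ is a bounded $D_{2m}$-equivariant linear map $L^2(\lbct)\to L^2(\mathcal I^{\CC}_{\vp(t_0)})$.

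Finally, I would invoke the intrinsic description of the three subspaces as isotypic components:
\begin{align*}
H^t_+&=\{f\in L^2(\lbct):\tau_m.f=f,\ \sigma.f=f\},\\
H^t_-&=\{f\in L^2(\lbct):\tau_m.f=f,\ \sigma.f=-f\},\\
H^t_2&=\{f\in L^2(\lbct):\tau_m.f=f\}^\perp,
\end{align*}
the last because on $W$ the generator $\tau_m$ acts with eigenvalues $e^{\pm 2\pi i/m}\ne 1$ (using $m=2n-1\ge 3$), so the $W$-isotypic component is exactly the orthogonal complement of the space of $\tau_m$-invariants. Each of these characterizations is preserved by any bounded $D_{2m}$-equivariant map, yielding $\Phi_t^\ast H^t_\pm=H^{t_0}_\pm$ and $\Phi_t^\ast H^t_2=H^{t_0}_2$.

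The main obstacle is the continuity argument of the second paragraph: without pinning down the sign ambiguity coming from the central $\ZT$-extension $\hat D_{2m}\to D_{2m}$, one could only conclude that $\Phi_t^\ast$ intertwines the group action up to signs, which a priori would allow $\Phi_t^\ast$ to swap $H^t_+$ with $H^t_-$ rather than preserve them. The connectedness of the parameter interval $(0,\infty)$ together with the discrete nature of the ambiguity is precisely what rules this out.
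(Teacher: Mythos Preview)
Your approach is essentially the paper's: use $D_{2m}$-equivariance of $\Phi_t^\ast$ and the intrinsic description of $H^t_\pm,H^t_2$ as isotypic components. The paper simply asserts $\tau_m.(\Phi_t^\ast f)=\Phi_t^\ast(\tau_m.f)$ and $\sigma.(\Phi_t^\ast f)=\Phi_t^\ast(\sigma.f)$ without discussing the lifting ambiguity you raise in your second paragraph, and then handles $H^t_2$ by an orthogonality computation rather than your cleaner characterization as the complement of the $\tau_m$-invariants.

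The sign concern you flag is real and your continuity argument resolves it correctly. There is also a more direct check available here that avoids the continuity argument entirely: since $p_{2n}=0$ is fixed by $\Phi_t$ and by every element of $D_{2m}$, it suffices to verify the intertwining locally in the $w_{2n}$-coordinate on $\Sigma_{\vp(t)}$. The paper's chosen lifts $\tilde\tau_m:w_{2n}\mapsto e^{2(n-1)\pi i/m}w_{2n}$ and $\tilde\sigma:w_{2n}\mapsto-\bar w_{2n}$ visibly commute with the real scaling $\tilde\Phi_t:w_{2n}\mapsto\sqrt{t/t_0}\,w_{2n}$, and once the intertwining holds on a neighborhood the $\{\mathrm{id},\ei\}$-ambiguity forces it globally. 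Either way, your proof is correct and in fact more careful than the paper's on this point.
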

\begin{proof}
    The operator $\Phi_t^\ast: L^2\big(\lbct\big)\to L^2\big(\mathcal{I}_{\vp(t_0)}^{\CC}\big)$ is bounded. Suppose $f\in H_{+}^t$, then $f=\sum_{k=1}^{\infty}f_k$, for $f_k\in U^t_{k}$. It follows that $\tau_m.(\Phi_t^\ast f)=\tau_m.(\sum_{k=1}^\infty \Phi_t^\ast f_k)=\sum_{k=1}^\infty \tau_m.\Phi_t^\ast f_k=\sum_{k=1}^\infty \Phi_t^\ast (\tau_m.f_k)=\Phi_t^\ast f$. Similarly, $\sigma.\Phi_t^\ast f=\Phi_t^\ast f$. Therefore, $\Phi_t^\ast H^t_+=H^{t_0}_+$. By the same token, $\Phi_t^\ast H^t_-=H^{t_0}_-$. Consider an orthonormal basis of $H^t_2$, namely $\{h_{j},\sigma.h_{j}\}_{j=1}^\infty$, such that for each $j$, $\CC\{h_{j},\sigma.h_{j}\}\cong W_2$, and $\tau_m.h_j=e^{\frac{2\pi}{m}i}h_j$. Then for any $f\in H^{t_0}_{\pm}$, $\lan f,\Phi^\ast_t h_j\ran_{L^2}=\lan\tau_m.f,\tau_m.(\Phi^\ast_t h_j)\ran_{L^2}=e^{-\frac{2\pi}{m}i}\lan f,\Phi^\ast_t h_j\ran_{L^2}$. It follows that $\Phi^\ast_t H^t_2\perp (H^{t_0}_+\oplus H^{t_0}_-)$ and hence $\Phi^\ast_t H^t_2=H^{t_0}_2$.
\end{proof}


The perturbation theory of operators $\Delta_t$ on $L^2\big(\mathcal{I}_{\vp(t_0)}^{\CC}\big)$ can be reduced into the three spaces $H^{t_0}_\pm$ and $H^{t_0}_2$, which are fixed according to Lemma \ref{fixedspace}. The following lemma is a direct consequence of Proposition \ref{continuousordinary}.

\begin{lemma}
Consider $\lambda^{\ast}_k(t)$ as the $k$-th eigenvalue of $\Delta_t|_{H^{t_0}_\ast}$, where the superscript and subscript $\ast$ valued in $+,-$ or $2$. The function $\lambda_k^\ast(t)$ is continuous in $t$.
\end{lemma}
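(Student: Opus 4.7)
The plan is to transcribe the proof of Proposition \ref{continuousordinary} verbatim, but restricted to the fixed subspace $H^{t_0}_\ast$ rather than the full Sobolev space. The key feature that makes this possible is the $D_{2m}$-equivariance of the diffeomorphism $\Phi_t$, which by Lemma \ref{fixedspace} identifies the invariant subspaces at different parameter values.

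First I would transport the eigenvalue problem to a fixed Hilbert space. Pulling back by $\Phi_t$, the Laplacian on $\lbct$ becomes the operator $\Delta_t$ on the fixed bundle $\mathcal{I}_{\vp(t_0)}^{\CC}$, which is simply the Laplace operator for the pulled-back metric $g_t := \Phi_t^{\ast}\mathrm{d}s^2$ on $\sph$. Because $\Phi_t$ commutes with the $D_{2m}$-action on $\sph$, the operator $\Delta_t$ commutes with $D_{2m}$; combined with Lemma \ref{fixedspace}, this means that for every $t$ the restriction $\Delta_t|_{H^{t_0}_\ast}$ is well-defined, and its $k$-th eigenvalue coincides with $\lambda_k^\ast(t)$. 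The min-max characterization therefore reads
\begin{equation*}
\lambda_k^\ast(t) \;=\; \inf_{\substack{H \subset H^{t_0}_\ast \\ \dim H = k}} \; \sup_{f \in H \setminus \{0\}} \; \frac{\|df\|^2_{L^2(g_t)}}{\|f\|^2_{L^2(g_t)}},
\end{equation*}
with the $k$-dimensional subspace $H$ ranging over the $t$-independent space $H^{t_0}_\ast$.

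Next I would apply the metric comparison that underlies Proposition \ref{contiofeig}. Because $\Phi_t(\vp(t_0))=\vp(t)$ and the family $g_t$ depends smoothly on $t$ away from the degenerate endpoints $t = 0, \infty$, there is a constant $C = C(t,t_0) = 1 + O(|t-t_0|)$ such that
\begin{equation*}
C^{-1}\|f\|^2_{L^2(g_{t_0})} \le \|f\|^2_{L^2(g_t)} \le C\|f\|^2_{L^2(g_{t_0})}, \qquad C^{-1}\|df\|^2_{L^2(g_{t_0})} \le \|df\|^2_{L^2(g_t)} \le C\|df\|^2_{L^2(g_{t_0})}
\end{equation*}
for every $f \in \mathcal{L}_1^2(\mathcal{I}_{\vp(t_0)}^{\CC})$. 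Plugging these two-sided bounds into the restricted min-max above gives
\begin{equation*}
C^{-2}\lambda_k^\ast(t_0) \le \lambda_k^\ast(t) \le C^2 \lambda_k^\ast(t_0),
\end{equation*}
and letting $t \to t_0$ yields the desired continuity.

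There is no real obstacle: the argument is a direct restriction of the earlier proof to an invariant subspace. The only point requiring care is conceptual rather than technical, namely that the $k$-dimensional trial subspaces in the min-max formula must live in the same ambient space for every $t$; this is exactly the content of Lemma \ref{fixedspace} and is the reason why the equivariance of $\Phi_t$ is used here rather than an arbitrary diffeomorphism of $\sph$ realizing $\vp(t_0) \mapsto \vp(t)$.
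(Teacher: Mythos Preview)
Your proposal is correct and is precisely the argument the paper has in mind: the paper simply asserts that the lemma ``is a direct consequence of Proposition \ref{continuousordinary}'', and your write-up spells out that consequence by restricting the min-max and metric-comparison argument of Proposition \ref{contiofeig} to the $D_{2m}$-invariant subspace, using Lemma \ref{fixedspace} and the equivariance of $\Phi_t$ to keep the trial subspaces in a fixed ambient space.
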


\begin{lemma}\label{monotonicityofpmlambda}
    Suppose $\lambda^{\pm}_k(t_0)$ is the $k$-th eigenvalue of $\Delta_{t_0}|_{H^{t_0}_\pm}$. Then for any $k\ge 1$, $\lambda^+_k(t)$ strictly increases while $\lambda_k^-(t)$ strictly decreases as $t$ varies.
\end{lemma}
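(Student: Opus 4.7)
The plan is to apply the Taubes--Wu variation formula (Lemma \ref{TWformula}) along the one-parameter family $\vp(t)$, exploiting the $D_{2m}$-symmetry so that the Taubes--Wu bilinear form decouples cleanly across representation types. First, at any fixed $t_0>0$, I would compute the tangent vector $\vec{v}=\dot{\vp}(t_0)$ in the local coordinates $z_{p_l(t_0)}$. A direct calculation using the explicit rotation $A_{p_l}$ from Section \ref{4.1} together with the stereographic formulas shows that, for a $\tau_m$-compatible choice of local coordinates,
\[
v(p_l(t_0)) \;=\; \frac{1}{t_0^2+1}\;=:\;v_0 \;>\;0 \qquad \text{for every } l=1,\dots,m.
\]

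Next, given a $\ZT$-eigenvalue $\lam$ of $\mathcal{I}^{\mathbb{C}}_{\vp(t_0)}$, Lemma \ref{symmemulres} decomposes the $U$-isotypic piece $V_\lam\cap H_+^{t_0}$ orthogonally as $\bigoplus_{j:\,c_j=1}U_j$ with $U_j\subset W_\lam^j$. By Lemma \ref{UU'ordb}, each $U_j$ is spanned by a real eigensection $f_j$ with $\vn_{p_l(t_0)}(f_j)=j$ for every $l$ and leading coefficients $(a_j,a_j)$, $a_j\in\RR\setminus\{0\}$. Since $f_j$ and $f_{j'}$ vanish to different orders at every $p_l$ whenever $j\ne j'$, one has $\vp_{f_j}\cap\vp_{f_{j'}}=\varnothing$, so the Taubes--Wu bilinear form $B_{\vec v}$ is diagonal on this piece with entries
\[
B_{\vec v}(f_j,f_j) \;=\; \frac{\pi}{2}\sum_{l=1}^{m}\re\,(v_0\,a_j^2) \;=\; \frac{m\pi\,a_j^2}{2(t_0^2+1)}\;>\;0.
\]
The parallel analysis on $H_-^{t_0}$ uses the other clause of Lemma \ref{UU'ordb}, which supplies leading coefficients $(ia_j,-ia_j)$; hence $a_{p_l}(f_j)^2=-a_j^2$ and the diagonal entries become $-m\pi a_j^2/(2(t_0^2+1))<0$.

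By Lemma \ref{TWformula}(iii), every branch $\mu_i(t)$ of eigenvalues passing through $\lam^+_k(t_0)$ (respectively $\lam^-_k(t_0)$) then has strictly positive (respectively strictly negative) derivative at $t=t_0$; combined with the continuity of $\lam^\pm_k(t)$ established in the preceding lemma, this yields strict monotonicity throughout $(0,+\infty)$. I expect the main obstacle to be the sign computation of $v_0$, and more subtly the verification that a single $\tau_m$-compatible choice of the local coordinates $\{z_{p_l}\}$ can be fixed so that both the tangent vector and the leading coefficients simultaneously take the stated common values at every $p_l$; once this sign is pinned down, the remainder of the argument is purely algebraic and representation-theoretic.
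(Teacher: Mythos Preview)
Your computation of $B_{\vec v}(f_j,f_j)$ is incorrect for $j\ge 1$. The Taubes--Wu bilinear form \eqref{TWblf} sums only over $\vp_{f_j}=\{p\in\vp:\vn_p(f_j)=0\}$ and uses the order-$\tfrac12$ coefficients $a_p(f_j)$, not the leading coefficients at order $j+\tfrac12$. Since $f_j\in W_\lambda^j$ has $\ord(f_j)=j$, we have $\vn_{p}(f_j)\ge j\ge 1$ at \emph{every} $p\in\vp$ (including $p_{2n}$), so $\vp_{f_j}=\varnothing$ and
\[
B_{\vec v}(f_j,f_j)=0\qquad\text{for all }j\ge 1.
\]
Thus the bilinear form on $V_\lambda\cap H_+^{t_0}$ has at most one nonzero eigenvalue (coming from the $j=0$ piece, when it is present), and the remaining branches have zero derivative at $t_0$. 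Lemma~\ref{TWformula} therefore yields only that $\lambda_k^+$ is \emph{nondecreasing} and $\lambda_k^-$ is \emph{nonincreasing}---exactly what the paper obtains at this stage.

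The strict monotonicity requires a further argument, which you are missing. The paper argues by contradiction: if $\lambda_k^+$ were constant on some interval $[t_1,t_2]$, then at any $t_3\in(t_1,t_2)$ with $c_0(t_3)=1$ one branch has strictly positive derivative and leaves the level $\lambda$, so $\dim\bigl(V_\lambda^{t}\cap H_+^{t}\bigr)$ drops; hence $\{t:c_0(t)=1\}$ is finite. On a subinterval where $c_0(t)=0$, the structural inequalities $d_j\ge c_{j+1}$ of Lemma~\ref{symmemulres} force $d_0(t)=1$, and the symmetric finiteness argument on the $U'$-side gives the contradiction. Your proposal contains no substitute for this step, so as written it does not prove strictness.
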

\begin{proof}
   Let $U^{\pm}_{k,t}$ be the corresponding eigenspace of $\lambda^\pm_k(t)$ in $H^{t}_\pm$. By Lemma \ref{symmemulres} and \ref{UU'ordb}, we can decompose $U^{\pm}_{k,t}$ into $\oplus_{j=0}^{n-1} U_{k,t}^{\pm,j}$, where $U_{k,t}^{\pm,j}=\CC\{f_{k,t}^{\pm,j}\}$, with real sections $f_{k,t}^{\pm,j}$ satisfying $||f_{k,t}^{\pm,j}||_{L^2}=1$ and $\vn_{p_l}(f_{k,t}^{\pm,j})=j$ for $1\le l\le m$. 
   
   Now we can apply Lemma \ref{TWformula} to $\re\, U_{k,t}^{\pm}=\oplus_{j=0}^{n-1}\RR\{f_{k,t}^{\pm,j}\}$, and conclude that there are differentiable functions of $\ZT$ eigenvalues of $\lbt$, namely $\mu_i^\pm(t)$ for $1\le i\le N$ ($N=\dim_{\RR}\,\re\, U_{k,t}^{\pm}=\dim_{\CC}\, U_{k,t}^{\pm}$), such that $\frac{d}{dt}\mu_i^{\pm}(t)$ are eigenvalues of the Taubes-Wu bilinear form $B_{\vec{v}}$ on $U_{k,t}^{\pm}$, where $\vec{v}=\frac{d}{dt}\vp(t)$. 

   In the chosen complex coordinates $z_l$ near $p_l$ for $1\le l\le m$, $\vec{v}(p_l(t))=\frac{2}{1+t^2}\frac{\partial}{\partial z_l}+\frac{2}{1+t^2}\frac{\partial}{\partial \bar{z}_l}$, thus $v(p_l(t))=\frac{2}{1+t^2}$ in \eqref{TWblf}. And according to the last assertion of Lemma \ref{UU'ordb}, if $f_{k,t}^{\pm,0}$ is nontrivial, we have $\pm\frac{d}{dt}\mu_i^{\pm}(t)>0$ for some $i$. 

   Consequently, eigenvalues of $B_{\vec{v}}$ on $U_{k,t}^+$ are nonnegative, and one of them is positive if and only if $U_{k,t}^{+,0}$ is nontrivial. Similarly, those of $B_{\vec{v}}$ on $U_{k,t}^-$ are nonpositive, and one of them is negative if and only if $U_{k,t}^{-,0}$ is nontrivial.
   Therefore, $\lam_k^+(t)$ is nondecreasing while $\lam_k^-(t)$ is nonincreasing.

   Next we shall show that $\lam_k^\pm$ are strictly monotonic.
   
   Suppose for some $0<t_1<t_2<+\infty$, $\lam_k^+(t_1)=\lam_k^-(t_2)=:\lam$. Then $\lam_k^+(t)\equiv \lam$ on the interval $[t_1,t_2]$. Let $V_\lam^t$ be the eigenspace of eigenvalue $\lam$ in $\mathcal{L}_1^2\big(\lbct\big)$. By Lemma \ref{symmemulres}, we have 
   \[V_\lam^t=\oplus_{j=0}^{n-1}\big(U^{\oplus c_j(t)}\oplus (U')^{\oplus d_j(t)}\oplus W^{\oplus e_j(t)}\big).\]

   Suppose $c_0(t_3)=1$ for some $t_3\in(t_1,t_2)$. It follows from Lemma \ref{TWformula} and the discussion above that $\dim\, V_\lam^{t_3+\epsilon}\cap H_+^{t_3+\epsilon}\le \dim\,V_\lam^{t_3}\cap H_+^{t_3}-1$ for $\epsilon>0$ sufficiently small. Let $T_+=\{t\in(t_1,t_2): c_0(t)=1\}$. Then $T_+$ must be finite and $|T_+|<\dim V_\lam^{t_1}\cap H_+^{t_1}$, since $\dim\, V_\lam^{t_2}\cap H_+^{t_2}>0$.  

   Consider $V_\lam^t\cap H_-^t$, and assume for some $l$, $\lam_l^-(t)\equiv\lam$ for $t\in [t_1,t_2]$. A similar argument and application of Lemma \ref{TWformula} show that $T_-:=\{t\in (t_1,t_2):d_0(t)=1\}$ is a finite set.

   Now we can choose an interval $(t_4,t_5)\subset [t_1,t_2]$, such that $(t_4,t_5)\cap T_+=\varnothing$. According to Lemma \ref{symmemulres}, there must be $d_0(t)=1$ for any $t\in (t_4,t_5)$. However, this implies that $T_-$ contains an interval, yielding a contradiction.
\end{proof}

\subsubsection{Limit behavior of the spectrum}

We have introduced the eigenvalue functions $\lambda^\pm_k(t)$, where $\lambda^\pm_k(t)\in\mathfrak{S}^t_{\pm}$. According to Theorem \ref{continuousztdegenerate} and Lemma \ref{monotonicityofpmlambda}, the limits $\lim_{t\to 0}\lambda^\pm_k(t)$ and $\lim_{t\to +\infty}\lambda^\pm_k(t)$ exist. We denote these limits as $\lambda_k^\pm(0)$ and $\lambda_k^\pm(\infty)$ respectively.
Moreover, $\lambda^\pm_k(0)$ is a Laplacian eigenvalue of the round sphere $\sph$ while $\lambda^\pm_k(\infty)$ is a $\ZT$ eigenvalue of $\mathcal{I}_{\vq}$, where $\vq=\{0,\infty\}$ is the antipodal configuration. However, it's important to note that these eigenvalues may not correspond to the $k$-th eigenvalue of $\Delta$. We shall explore the locations of these limit eigenvalues in the Laplacian spectra of $\sph$ and $\mathcal{I}_{\vq}$.

Firstly, recall the following results about spherical harmonics (cf. \cite{quantum}):
\begin{example}\label{sphericalspectrum}
    The Laplacian spectrum of $\sph$ is $\{l(l+1):l=0,1,\cdots\}$. The multiplicity of $l(l+1)$ is $2l+1$, and a basis of the eigenspace $V_{l(l+1)}$ is given by $\{\cos (j\varphi) K_l^j(\theta),\;\sin (j\varphi) K_l^j(\theta):j=0,1\cdots,l\},$
where $K_l^j(\theta)=\sin^{-j}\theta \mathrm{d}^{l-j}(\sin^{2l}\theta)/\mathrm{d}(\cos\theta)^{l-j}$. 
\end{example}

The $D_{2m}$-action on $\sph$ induces actions on eigenspaces of $\Delta$ on the round sphere $\sph$ and line bundle $\mathcal{I}_{\vq}$. Indeed, Proposition \ref{antipodalspectrum} and Example \ref{sphericalspectrum} offer an orthogonal decomposition of irreducible $D_{2m}$-representations for each $V_{(l-\frac12)(l+\frac12)}$ or $V_{l(l+1)}$, with $D_{2m}$ acting on the coordinate $\varphi$. It is straightforward to check that $\tau_m. \cos (j\varphi) K_l^j(\theta)=\cos (j(\varphi+\frac{2\pi}{m})) K_l^j(\theta)$.

\begin{lemma}\label{D2mdecomoflimitspectra}
The complexified eigenspace, as a $D_{2m}$-representation have the following decomposition:
\begin{enumerate}[label=(\roman*)]
    \item Let $\lambda=(l-\frac12)(l+\frac12)$ be a $\ZT$ eigenvalue of $\mathcal{I}_{\vq}$, we write $l+1=mk+i$ for $i,k\in\mathbb{N}$ and $0\le i<m$, then $V_\lambda\otimes \CC\cong W^{\oplus l-k}\oplus (U\oplus U')^{\oplus k}$, where $U\cong \CC\{\cos( (j-\frac12)\varphi) G^j_l(\theta)\}$ and $U'\cong\CC\{\sin((j-\frac12)\varphi) G_l^j(\theta)\}$ for each $j\equiv 0\mod m$, $1\le j\le l$. Also, $W\cong \CC\{\cos( (j-\frac12)\varphi) G^j_l(\theta),\sin((j-\frac12)\varphi) G_l^j(\theta)\}$ for other $1\le j\le l$.
    \item  For $\lambda=l(l+1)$ be a Laplacian eigenvalue of the round sphere $\sph$, write $l=mk+i$, where $k,i\in\mathbb{N}$ and $0\le i<m$, then $V_\lambda\otimes \CC\cong U\oplus (U\oplus U')^{\oplus k}\oplus W^{\oplus 2l-k}$, where $U\cong\CC\{\cos (j\varphi) K^j_l(\theta)\}$ and $ U'\cong\CC\{\sin (j\varphi) K_l^j(\theta)\}$ for each $j\equiv 0\mod m$, $0\le j\le l$, and $W\cong\CC\{\cos (j\varphi) K^j_l(\theta),\sin (j\varphi) K_l^j(\theta)\}$ for other $0\le j\le l$.
\end{enumerate}
\end{lemma}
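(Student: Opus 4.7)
The strategy for both parts is the same: take the explicit bases of the Laplacian eigenspaces provided by Proposition~\ref{antipodalspectrum} and Example~\ref{sphericalspectrum}, apply the generators $\tau_m$ and $\sigma$ directly to these bases, and decompose into irreducible $D_{2m}$-representations according to the $\tau_m$-eigenvalues.

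Part (ii) is the simpler case because the eigenfunctions are single-valued on $\sph$. I would complexify the basis to $\{e^{\pm ij\varphi}K_l^j(\theta)\}_{0\le j\le l}$ and observe that the pushforward by $\tau_m$ is diagonal, multiplying $e^{\pm ij\varphi}$ by $e^{\mp 2\pi ij/m}$, while $\sigma$ swaps the two basis vectors in each pair with $j\ge 1$. Each such $2$-dimensional pair is therefore a $D_{2m}$-subrepresentation: it decomposes as $U\oplus U'$ precisely when the $\tau_m$-eigenvalue is trivial, i.e.\ when $j\equiv 0\pmod m$, and otherwise forms a single $2$-dimensional irreducible summand. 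The singleton $j=0$ contributes an additional copy of $U$. Enumerating the indices $1\le j\le l$ that are multiples of $m$ against the division $l=mk+i$ recovers the stated multiplicities.

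For part (i) the same scheme applies, but the sections of $\mathcal{I}_\vq$ are multivalued on $\sph$ and only admit an honest $D_{2m}$-action after being lifted to the double cover $\Sigma_\vq\cong\sph$ as odd functions. Fixing the lift $\tilde\tau_m$ of the paper (the one satisfying $\tilde\tau_m^m=\id$ rather than $\tilde\tau_m^m=\ei$), I would transfer the basis $\{\cos((j-\tfrac12)\varphi)G_l^j,\ \sin((j-\tfrac12)\varphi)G_l^j\}_{1\le j\le l}$ through $w^2=z$ (so $\arg w=\varphi/2$) to the orthogonal family of odd functions $\{e^{\pm i(2j-1)\arg w}G_l^j(\theta)\}$. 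A short computation shows that $\tilde\tau_m$ multiplies each of these by a root of unity whose triviality, using $\gcd(n-1,\,m)=\gcd(n-1,\,2n-1)=1$, reduces to a congruence on $2j-1$ modulo $m$. For admissible $j$ the corresponding $2$-dimensional pair splits as $U\oplus U'$, and otherwise it remains a single $2$-dimensional irreducible summand. The index count against the division $l+1=mk+i$ then yields the claimed multiplicities.

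The main obstacle I anticipate is the careful bookkeeping of the two available lifts of $\tau_m$ to $\Sigma_\vq$: only the one with $\tilde\tau_m^m=\id$ yields a genuine (rather than projective) $D_{2m}$-representation on sections, and pinning this choice down is essential for identifying which complexified basis vectors survive as trivial $\tau_m$-eigenvectors. Once the lift convention and the corresponding coordinate identifications between the base and the cover are fixed, the rest of the argument reduces to an elementary congruence count and a dimension check.
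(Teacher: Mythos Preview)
Your approach is correct and is precisely what the paper intends: no separate proof is given, but the sentence preceding the lemma indicates that the decomposition follows by letting $\tau_m$ and $\sigma$ act through the angular coordinate $\varphi$ on the explicit bases supplied by Proposition~\ref{antipodalspectrum} and Example~\ref{sphericalspectrum}. Your care in part~(i) about fixing the lift $\tilde\tau_m$ with $\tilde\tau_m^m=\id$ is well placed---carrying the computation through yields the triviality condition $2j-1\equiv 0\pmod m$ and recovers the claimed multiplicity~$k$.
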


Consequently, on the spaces $L^2(\sph)\otimes \CC$ and $L^2(\mathcal{I}_{\vq}^{\CC})$, we have the decompositions $L^2(\sph)\otimes\CC=H_+^0\oplus H_-^0\oplus H_2^0$ and $L^2(\mathcal{I}_{\vq}^\CC)=H_+^\infty\oplus H_-^\infty\oplus H_2^\infty$, where each summand is defined in a way similar to those within the decomposition of $L^2\big(\lbct\big)$. And the spectra are decomposed into three part. For example, $\lambda_1^+(0)=0$ and $\lambda_1^-(0)=m(m+1)$, while $\lambda_1^\pm(\infty)=(m-\frac{3}{2})(m-\frac{1}{2})$. 

\begin{lemma}
    Consider the eigenvalue function $\lambda_k^\pm(t)\;(0<t<+\infty)$. The limits $\lambda^\pm(0)$ and $\lambda^\pm(\infty)$ are the $k$-th eigenvalue of $\Delta$ on $H_\pm^0$ and $H_\pm^\infty$ respectively.
\end{lemma}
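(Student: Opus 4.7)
The plan is to prove both a $\limsup$ and a $\liminf$ bound for $\lambda_k^{\pm}(t)$ against the $k$-th eigenvalue of $\Delta$ on the limit space $H_\pm^\infty$ (respectively $H_\pm^0$). By the symmetry of the argument I focus on $t\to+\infty$ and the ``$+$'' case; denote by $\mu_k^+$ the $k$-th eigenvalue of $\Delta|_{H_+^\infty}$ on $\mathcal{I}_\vq^{\CC}$, where $\vq=\{0,\infty\}$ is the $\ZT$-limit of $\vp(t)$ (the $p_l$-group has odd size $m=2n-1$, so $\infty\in\vq$).

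The upper bound $\limsup_{t\to\infty}\lambda_k^+(t)\le\mu_k^+$ will follow from the min-max characterization of $\lambda_k^+(t)$ on $H_+^t$. I will select $k$ eigensections $h_1,\ldots,h_k\in H_+^\infty$ with eigenvalues at most $\mu_k^+$, and cut them off near $\{0,\infty\}$ using a radially symmetric, hence $D_{2m}$-invariant, cutoff $\chi_\delta$ of the shape used in \eqref{cutoff}. For $t$ large, $\mathrm{supp}\,\chi_\delta$ is disjoint from $\vp(t)$, and on the annular region $\sph\setminus B_\delta(\vq)$ the flat line bundles $\lbct$ and $\mathcal{I}_\vq^{\CC}$ both have monodromy $-1$ around each boundary circle (using that $(-1)^m=-1$ since $m$ is odd), hence they are canonically identified in a $D_{2m}$-equivariant way once the lifts of the generators are matched. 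Therefore $\chi_\delta h_i$ sits in $H_+^t$, and the uniform estimate $\int_{\sph}|d\chi_\delta|^2=O(|\ln\delta|^{-1})$ shows that its Rayleigh quotient converges to that of $h_i$ as $\delta\to 0$, uniformly in large $t$. Feeding the span of $\{\chi_\delta h_i\}$ into the min-max on $H_+^t$ then yields the bound.

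For the lower bound $\liminf_{t\to\infty}\lambda_k^+(t)\ge\mu_k^+$, I will imitate the proof of Theorem \ref{continuousztdegenerate}. Given $t_l\to\infty$, pick an $L^2$-orthonormal family of eigensections $f_l^1,\ldots,f_l^k\in H_+^{t_l}$ with eigenvalues $\lambda_1^+(t_l)\le\cdots\le\lambda_k^+(t_l)$, and apply Lemma \ref{limitsectionh} to extract a subsequence converging in $L^2_{\loc}$ and weakly in $\mathcal{L}_1^2$ on every $\mathcal{I}_\vq^{\CC}|_{\Omega_\epsilon}$ to eigensections $h^1,\ldots,h^k$ of $\mathcal{I}_\vq^{\CC}$. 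Unit normalization of the limits in $L^2$ and their pairwise orthogonality follow as in Theorem \ref{continuousztdegenerate}. The new point is that each $h^i\in H_+^\infty$: since the $D_{2m}$-action is inherited from its action on $\sph$, the cutoff $\chi_l$ of \eqref{cutoff} is $D_{2m}$-invariant and commutes with $g.(-)$ for every $g\in D_{2m}$; combining $g.f_l^i=f_l^i$ with the strong $L^2_{\loc}$ convergence gives $g.h^i=h^i$ on every $\Omega_\epsilon$, hence everywhere on $\sph\setminus\vq$. Therefore $\{h^i\}_{1\le i\le k}$ spans a $k$-dimensional subspace of $H_+^\infty$ whose maximal Rayleigh quotient is at most $\lim_l\lambda_k^+(t_l)$, and min-max on $H_+^\infty$ yields $\mu_k^+\le\lim_l\lambda_k^+(t_l)$.

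The main obstacle I anticipate is making precise that the chosen lifts of the $D_{2m}$-action on $\Sigma_{\vp(t)}$ are compatible, in the degeneration limit, with a fixed lift on $\Sigma_\vq$; only once this is settled can the invariance $g.f_l^i=f_l^i$ be transported to the corresponding invariance on $\mathcal{I}_\vq^{\CC}$. This should reduce to the observation that, on any $\Omega_\epsilon$ avoiding $\{0,\infty\}$, the sign ambiguity of Lemma \ref{commuteswithinv} can be resolved once and for all by comparing the actions with respect to a fixed trivialization of $\lbct\cong\mathcal{I}_\vq^{\CC}$ over a simply connected base subset, and that the lifts chosen in Section \ref{newfamily} depend continuously on $t$ in this sense. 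The analogous argument for $t\to 0$, with $\vq$ replaced by the empty configuration and $\mathcal{I}_\vq^{\CC}$ by the trivial bundle on $\sph$, is identical once one observes that the $p_l$-group together with $p_{2n}$ has even size $2n$, so the $\ZT$-limit is empty and Lemma \ref{D2mdecomoflimitspectra}(ii) applies.
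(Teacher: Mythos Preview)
Your proposal is correct and follows essentially the same route as the paper: both adapt the proof of Theorem \ref{continuousztdegenerate} (the $\limsup$ bound via cut-off test sections and the $\liminf$ bound via Lemma \ref{limitsectionh}) and use the single new observation that the cut-off functions of \eqref{cutoff} are $D_{2m}$-invariant, so the symmetry type of $\chi_t f_t$ is preserved and the limit sections land in $H_\pm^0$ or $H_\pm^\infty$. Your explicit worry about the compatibility of the lifted $D_{2m}$-actions under the bundle identification on $\Omega_\epsilon$ is more careful than the paper's terse argument, which simply asserts this compatibility; your suggested resolution (fixing the sign once on a simply connected patch and noting the lifts vary continuously) is exactly what is implicitly being used.
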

\begin{proof}
    The proof is essentially the same with that of Theorem \ref{continuousztdegenerate}. The path $\vp(t)$ we consider here is invariant under the $D_{2m}$ action. And the cut-off function introduced in \eqref{cutoff} is also $D_{2m}$-invariant. Thus, for a family of eigensections $f_t$ of eigenvalue $\lam_k^\pm(t)$, sections $\chi_tf_t$ lie in $H_\pm^0$ and $H_\pm^\infty$ for $t\to 0$ or $t\to +\infty$ respectively. Here, $\chi_t=\chi_{\epsilon(t),\{p_{2n}\}}$ for $t\to 0$ and $\epsilon(t)\to 0$ such that $\vp(t)\subset B_{\epsilon(t)}(\{p_{2n}\})$; $\chi_t=\chi_{\epsilon(t),\vq}$ for $\epsilon(t)\to 0$ as $t\to+\infty$, such that $\vp(t)\subset B_{\epsilon(t)}(\vq)$.
    Thus the limits of $\chi_tf_t$ with $t\to0$ or $t\to +\infty$, are $k$-th eigensections of $\sph$ or $\mathcal{I}_{vq}$, respectively. 
\end{proof}

\begin{proposition}\label{howtheycross}
    Consider the $k$-th eigenvalue $\lambda_k^+(0)$ of $\Delta$ in $H^0_+$. Suppose $\lambda_k^+(0)=l(l+1)$, then $\lambda_k^+(\infty)= (l+m-\frac32)(l+m-\frac12)$. On the other hand, for $\lambda_k^-(0)=l'(l'+1)$, we have $\lambda_k^-(\infty)= (l'-\frac32)(l'-\frac12)$.
\end{proposition}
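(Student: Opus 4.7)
The plan is to reduce the proposition to a careful bookkeeping of multiplicities using the $D_{2m}$-isotypic decompositions of the Laplacian eigenspaces at the two endpoints, combined with the lemma immediately preceding the proposition, which identifies $\lambda_k^\pm(0)$ and $\lambda_k^\pm(\infty)$ as the $k$-th eigenvalues of $\Delta$ on $H_\pm^0$ and $H_\pm^\infty$ respectively. Since these eigenvalue functions are continuous and strictly monotone (Lemma \ref{monotonicityofpmlambda}), once the ordered spectra on the two endpoint spaces are matched index-by-index the conclusion is immediate.

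First I would read off the multiplicities from Lemma \ref{D2mdecomoflimitspectra}. Part (ii) gives that the spherical eigenvalue $l(l+1)$ contributes dimension $\lfloor l/m\rfloor+1$ to $H_+^0$ and dimension $\lfloor l/m\rfloor$ to $H_-^0$. Part (i) gives that the antipodal eigenvalue $(l-\tfrac12)(l+\tfrac12)$ contributes dimension $\lfloor(l+1)/m\rfloor$ to each of $H_+^\infty$ and $H_-^\infty$ (and vanishes for $l<m-1$).

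Next I would introduce the cumulative counts
\[
C_+^0(l)=\sum_{l'=0}^{l}\bigl(\lfloor l'/m\rfloor+1\bigr),\qquad C_+^\infty(L)=\sum_{L'=m-1}^{L}\lfloor (L'+1)/m\rfloor,
\]
and observe, via the substitution $L'=l'+m-1$, that $C_+^\infty(l+m-1)=C_+^0(l)$ for every $l\ge 0$. The condition $\lambda_k^+(0)=l(l+1)$ is precisely $k\in(C_+^0(l-1),C_+^0(l)]$, which by the identity is equivalent to $\lambda_k^+(\infty)=(L-\tfrac12)(L+\tfrac12)$ with $L=l+m-1$, giving the stated value $(l+m-\tfrac32)(l+m-\tfrac12)$. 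The case of $H_-$ is handled identically, with $C_-^0(l)=\sum_{l'=m}^{l}\lfloor l'/m\rfloor$ and $C_-^\infty(L)=\sum_{L'=m-1}^{L}\lfloor(L'+1)/m\rfloor$; now the substitution $L'=l'-1$ yields $C_-^\infty(l-1)=C_-^0(l)$, so the shift becomes $L=l-1$, giving $\lambda_k^-(\infty)=(l-\tfrac32)(l-\tfrac12)$.

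The proof is thus essentially combinatorial, and the only real thing to be careful about is the ordering convention when multiplicities jump (an eigenvalue of multiplicity $\mu$ occupies $\mu$ consecutive indices), which the cumulative-count formulation handles cleanly. The main potential obstacle is making sure that the lemma stated just before the proposition genuinely delivers $\lambda_k^\pm(0)$ and $\lambda_k^\pm(\infty)$ as the $k$-th eigenvalues on the fixed subspaces $H_\pm^0,\,H_\pm^\infty$ rather than merely as limit points of the spectrum; assuming that identification, the argument is a direct index-matching exercise.
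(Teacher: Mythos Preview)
Your argument is correct and in fact somewhat cleaner than the paper's. The paper proceeds by induction on $k$: at the inductive step it invokes the strict monotonicity of $\lambda_k^\pm$ from Lemma~\ref{monotonicityofpmlambda} to get $\lambda_k^+(\infty)>l(l+1)$, and then argues by a dimension-matching contradiction---the $U$-multiplicity of $(l+m-\tfrac32)(l+m-\tfrac12)$ in $H_+^\infty$ equals that of $l(l+1)$ in $H_+^0$, so if $\lambda_k^+(\infty)$ overshot, the indices $1,\dots,k-1$ (already placed by the inductive hypothesis) would not leave enough room. Your cumulative-count identity $C_+^\infty(l+m-1)=C_+^0(l)$ packages both bounds simultaneously and dispenses with the induction; note in particular that you do not actually use the strict monotonicity you cite in your opening sentence---only the endpoint-identification lemma is needed. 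The paper's route makes the dynamics along the path do visible work, whereas yours shows that the proposition is really a static statement about the two endpoint spectra once that lemma is in hand.
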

\begin{proof}
We prove this by induction. Firstly, it is known that $\lambda_1^+(0)=0$ and $\lambda_1^+(\infty)=(m-\frac{3}{2})(m-\frac12)$, as well as $\lambda_1^-(0)=m(m+1)$ and $\lambda_1^-(\infty)=(m-\frac{3}{2})(m-\frac12)$. Assume we have proved the proposition for $\lam_j^\pm$, $1\le j\le k-1$.

According to Lemma \ref{monotonicityofpmlambda}, the eigenvalue function $\lambda^+_k(t)$ strictly increases while $\lambda_k^-(t)$  strictly decreases as $t$ varies. Therefore, $\lambda^+_k(\infty)> l(l+1)$ while $\lambda^-_k(\infty)< l'(l'+1)$. This implies that $\lambda^+_k(\infty)\ge (l+m-\frac32)(l+m-\frac12)$ while $\lambda_k^-(\infty)\le (l'-\frac12)(l'+\frac12)$. 
    
If we write $l$ as $mk+i$, for $0\le i<m$, then $(l+m-1)+1=m(k+1)+i$. Thus $\dim_{\CC}\,H^{\infty}_+\cap V_{\lambda}=\dim_{\CC}\,H^{0}_+\cap V_{l(l+1)}$, where $\lambda=(l+m-\frac32)(l+m-\frac12)$. Since the proposition holds for any $\lam_j^+$ $(1\le j\le k-1)$, if $\lam_k^+(\infty)>(l+m-\frac32)(l+m-\frac12)$, there must be $\dim_{\CC}\,H^{\infty}_+\cap V_{\lambda}<\dim_{\CC}\,H^{0}_+\cap V_{l(l+1)}$, leading to a contradiction. 

Similarly, if we write $l'=mk'+i'$, then $(l'-1)+1=mk+i$, and hence $\dim_{\CC}\,H^{\infty}_+\cap V_{\lambda}=\dim_{\CC}\,H^{0}_+\cap V_{l'(l'+1)}$, where $\lambda=(l'-\frac32)(l'-\frac12)$. And $\lam_k^-(\infty)<(l'-\frac12)(l'+\frac12)$ leads to another contradiction.
\end{proof}

Now, we will discuss the possible intersection points of the spectral flow in a region $((l-\frac32)(l-\frac12),l(l+1))$.

Consider an eigenvalue $l(l+1)$ of $\sph$. Suppose $l+1=mk+i$, where $m=|\vp(t)|-1=2n-1$ and $k,i$ are nonnegative integers such that $0\le i<m$. Then by Lemma \ref{D2mdecomoflimitspectra}, there are $k$ copies of $U'$ within the irreducible $D_{2m}$-representation decomposition of the complexified eigenspace $V_{l(l+1)}\otimes\CC$. Consequently, for some $k_0\ge 1$, $\lambda_{k_0}^-(0)=\lambda_{k_0+1}^-(0)=\cdots=\lambda_{k_0+k-1}^-(0)=l(l+1)$. It follows from the Proposition \ref{howtheycross} that $\lambda_{k_0}^-(\infty)=\lambda_{k_0+1}^-(\infty)=\cdots=\lambda_{k_0+k-1}^-(\infty)= (l-\frac32)(l-\frac12)$.

On the other hand, consider $1\le j\le m-1$. For eigenvalues $(l-j)(l-j+1)$ of $\sph$, there are $k_j+1$ copies of $U$ in the decomposition of complexified eigenspaces $V_{(l-j)(l-j+1)}\otimes \CC$, where $k_j$ are positive integers satisfying $l-j=mk_j+i_j$ for $0\le i_j< m$. Let $n_{m-1}$ be the least integer that $\lam^+_{n_{m-1}}(0)=(l-m+1)(l-m+2)$. Define $n_j$ inductively by $n_j=n_{j+1}+k_{j+1}+1$, for $1\le j\le m-1$.

Now, we have $\lambda^+_{n_j}(0)=\cdots=\lambda^+_{n_j+k_j}(0)=(l-j)(l-j+1)$. It follows from Proposition \ref{howtheycross} that $(l-j+m-\frac32)(l-j+m-\frac12)=\lambda^+_{n_j}(\infty)=\cdots=\lambda^+_{n_j+k_j}(\infty)$ for $j=1,\cdots,m-1$.

Since $(l-j+m-\frac32)(l-j+m-\frac12)>(l-\frac32)(l-\frac12)$, for each pair of integers $(n_1,n_2)$ such that $k_0\le n_1\le k_0+k-1$ and $n_{m-1}\le n_2\le n_1+k_1$, there exists $t_{n_1,n_2}\in(0,+\infty)$ such that $\lambda^-_{n_1}(t_{n_1,n_2})=\lambda^+_{n_2}(t_{n_1,n_2})=:\lambda_{n_1,n_2}$. Define the set of cross times $T_l$ to be $\{t_{n_1,n_2}:(l-\frac32)(l-\frac12)<\lambda_{n_1,n_2}<l(l+1)\}$. According to the strict monotonicity of $\lam_k^\pm$, as proved in Lemma \ref{monotonicityofpmlambda}, $T_l$ must be discrete.

For general $n>1$, we have:

\begin{theorem}
    For each $t_{n_1,n_2}$ defined above, $\lambda_{n_1,n_2}$ is a critical eigenvalue of the line bundle $\mathcal{I}_{\vp(t_{n_1,n_2})}$. Moreover, we have a lower bound of the number of cross times of eigenvalues: $|T_l|\ge\sum_{j=1}^{m-1}2\frac{k_j}{n+2}$, 
     where $k_j~(j=1,\cdots,m)$ are the integers previously defined by $l-j=mk_j+i_j$. Consequently, in $\Cn$, there are more than $[\sum_{j=1}^{m-1}2k_j/(n+2)]$ critical configurations that with critical eigenvalues lying in the interval $\big((l-\frac{3}{2})(l-\frac{1}{2}),l(l+1)\big)$, where $m=2n-1$, $l$ is a positive integer.
\end{theorem}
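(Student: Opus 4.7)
The plan is to treat the theorem in two pieces, both of which rest on the $D_{2m}$-representation theoretic analysis developed in the preceding subsection, together with the algebraic identities from Lemma \ref{symmemulres} and the crossing criterion in Corollary \ref{crossingcri}.

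For the first claim, at the cross time $t = t_{n_1,n_2}$, by construction one has $\lambda^-_{n_1}(t) = \lambda^+_{n_2}(t) = \lambda_{n_1,n_2}$. The superscript $+$ records that the corresponding eigenspace meets $H^t_+$ nontrivially, hence contains an irreducible summand isomorphic to $U$; similarly the $-$ contribution provides a summand isomorphic to $U'$. Consequently $V^t_{\lambda_{n_1,n_2}}$ contains both $U$ and $U'$ as subrepresentations, so Corollary \ref{crossingcri} applies and $\lambda_{n_1,n_2}$ is critical. I would also record why the cross time lies in the open interval $((l-\tfrac{3}{2})(l-\tfrac{1}{2}), l(l+1))$: by Lemma \ref{monotonicityofpmlambda} the function $\lambda^-_{n_1}(t)$ is strictly decreasing from $l(l+1)$ at $t=0$ to $(l-\tfrac{3}{2})(l-\tfrac{1}{2})$ at $t=\infty$, while $\lambda^+_{n_2}(t)$ is strictly increasing starting from $(l-j)(l-j+1)< l(l+1)$ and ending at a value strictly exceeding $(l-\tfrac{3}{2})(l-\tfrac{1}{2})$ for $j\leq m-1$, so the intermediate value theorem produces $t_{n_1,n_2}$ strictly inside the interval.

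For the counting in Part 2, the key observation is that a single cross time $t_\ast \in T_l$ can be shared by multiple pairs $(n_1,n_2)$: precisely, if the eigenspace $V^{t_\ast}_{\lambda_\ast}$ contains $p$ summands isomorphic to $U$ and $q$ summands isomorphic to $U'$, then exactly $p$ of the increasing functions $\lambda^+_{n_2}$ and $q$ of the decreasing functions $\lambda^-_{n_1}$ pass through $\lambda_\ast$ at $t_\ast$, contributing $pq$ pairs. By Lemma \ref{symmemulres} one has $p = \sum_j c_j \leq (n+2)/2$ and $q = \sum_j d_j \leq (n+2)/2$. The strategy is to organise the pairs according to the starting value $(l-j)(l-j+1)$ of $\lambda^+_{n_2}$: for each fixed $j \in \{1,\dots,m-1\}$ there are $k_j+1$ such increasing functions, each of which must cross every one of the $k$ decreasing functions $\lambda^-_{n_1}$, and one then divides by the multiplicity bound to obtain at least $2k_j/(n+2)$ distinct cross times contributed by that value of $j$. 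Summing over $j$ yields the claimed inequality $|T_l| \geq \sum_{j=1}^{m-1} 2k_j/(n+2)$, and dividing by a factor coming from the fact that at a critical configuration each cross time corresponds to a critical eigenvalue whose configuration is determined up to the $\SO$-action.

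The main obstacle is this last combinatorial step. The naive bound $pq \leq ((n+2)/2)^2$ is quadratic and would only yield a weaker lower bound of order $4/(n+2)^2$ times the pair count. To recover the linear factor $2/(n+2)$, one must exploit that the strict monotonicity in Lemma \ref{monotonicityofpmlambda} forces the $k_j+1$ distinct increasing functions starting at $(l-j)(l-j+1)$ to attain any specific value $\lambda_\ast$ with total multiplicity at most $(n+2)/2$ simultaneously, so they contribute at least $(k_j+1)\cdot 2/(n+2)$ distinct cross times independently of the $\lambda^-$ index. Making this one-sided compression rigorous, and verifying that cross times from different $j$ are genuinely distinct (rather than accidentally coinciding and being overcounted), is where I expect the most care to be required; once that is in place, the corollary about infinitely many critical configurations in $\Cn$ follows immediately by letting $l \to \infty$ so that at least one $k_j$ grows without bound.
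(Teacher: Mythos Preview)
Your Part 1 argument is correct and matches the paper exactly: at $t_{n_1,n_2}$ the eigenspace contains both a $U$ and a $U'$ summand, so Corollary \ref{crossingcri} applies.

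For Part 2, you eventually land on the right ingredient---the bound $\dim_{\CC}(V_\lambda\cap H^t_+)\le (n+2)/2$ from Lemma \ref{symmemulres}---but the paper avoids your detour through pair-counting entirely, and this makes the combinatorics vanish. The paper's argument is just: sum $\dim_{\CC}(V_{\lambda_\ast}\cap H^{t_\ast}_+)$ over all distinct $t_\ast\in T_l$. Each summand is at most $(n+2)/2$, so the total is at most $|T_l|\cdot(n+2)/2$. On the other hand, by strict monotonicity (Lemma \ref{monotonicityofpmlambda}), every $\lambda^+_{n_2}$ that starts at some $(l-j)(l-j+1)$ with $1\le j\le m-1$ must meet a $\lambda^-_{n_1}$ somewhere inside the interval, and therefore contributes at least one unit to this dimension sum; the number of such functions is $\sum_{j=1}^{m-1}k_j$ (up to an inessential $\pm 1$ per $j$). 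Dividing gives the bound.

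This dissolves what you call the ``main obstacle'': there is no need to check that cross times arising from different $j$ are distinct, nor to worry about the $pq$ quadratic overcount. If several $\lambda^+$ functions from different $j$-blocks coincide at the same $t_\ast$, they all contribute to the \emph{single} term $\dim_{\CC}(V_{\lambda_\ast}\cap H^{t_\ast}_+)$, and the bound $(n+2)/2$ still controls that term. The ``one-sided compression'' you describe is exactly this, but you frame it as something delicate to verify, whereas in the paper it is a two-line inequality. Your worry about distinctness across $j$ simply does not arise when you sum dimensions rather than count intersections.
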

\begin{proof}
    The fact that eigenvalue $\lambda_{n_1,n_2}$ is critical follows from Corollary \ref{crossingcri}. 
    
    Recall from Lemma \ref{symmemulres} that $\dim_\CC\,V_{\lambda}\cap H_+^t\le\frac{n}{2}+1$. As a consequence, we have $\dim_{\CC}V_{\lam_{n_1,n_2}}\cap H_+^{t_{n_1,n_2}}\le \frac{n}{2}+1.$
    
    It follows that \begin{align*}\sum_{j=1}^{m-1}k_j=\sum_{j=1}^{m-1}\dim_{\CC}\, V_{(l-j)(l-j+1)}\cap H_+^0
    =\sum_{t_{n_1,n_2}\in T_l}\dim_{\CC}V_{\lam_{n_1,n_2}}\cap H_+^{t_{n_1,n_2}}\le |T_l|\times (\frac{n}{2}+1).\end{align*}

    Here, the first identity holds due to the strict monotonicity of $\lam^\pm_k$. Therefore, $|T_l|\ge \sum_{j=1}^{m-1}2\frac{k_j}{n+2}$.
\end{proof}

The following figure illustrates the behavior of the spectrum for $\vp(t)\subset\mathcal{C}_4$, focusing only on several eigenvalues. The red circles highlight the cross times of eigenvalues corresponding to $U$ and $U'$, which signify critical configurations.

\begin{figure}[!h]
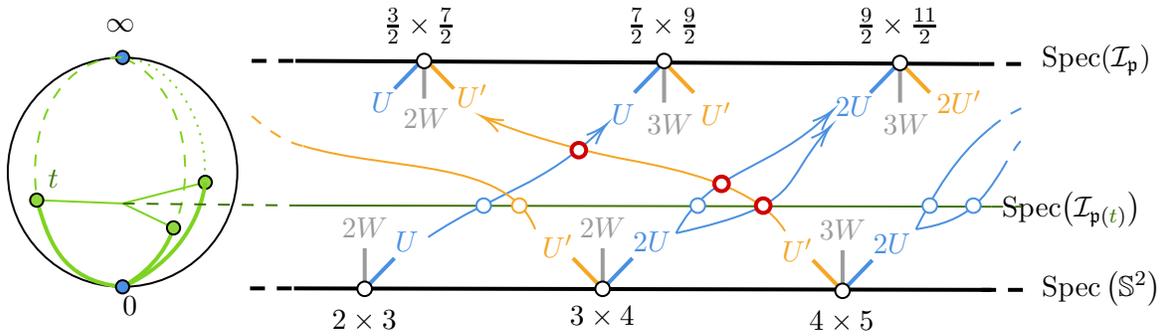

    \centering

\tikzset{every picture/.style={line width=0.75pt}} 



    \caption{Part of the spectral flow on $\mathcal{C}_4$.}
    \label{spectralflowonc4}
\end{figure}

As $\lim_{l\to\infty}|T_l|=\infty$, we conclude:

\begin{theorem}
There are infinitely many critical configurations in $\Cn$ for each $n\geq 1$.
\end{theorem}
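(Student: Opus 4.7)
The plan is to combine the spectral flow along the dihedral family $\vp(t)$ with the crossing criterion of Corollary \ref{crossingcri}, and count how many $U$ vs.\ $U'$ crossings must occur as $t$ varies from $0$ to $\infty$. The key mechanism is already in place: by Lemma \ref{monotonicityofpmlambda}, the eigenvalue functions $\lambda_k^+(t)$ strictly increase while $\lambda_k^-(t)$ strictly decrease, and by Proposition \ref{howtheycross} their limits at the endpoints of $\ccn$ are known explicitly. So whenever an increasing $(+)$-branch starts below a decreasing $(-)$-branch at $t=0$ but ends above it at $t=\infty$, the two branches must cross at some $t\in(0,\infty)$, and at such a crossing the eigenspace contains both a $U$-summand and a $U'$-summand, forcing the configuration to be critical.

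First I would fix a window $\bigl((l-\tfrac{3}{2})(l-\tfrac{1}{2}),\,l(l+1)\bigr)$ and catalogue which $(+)$- and $(-)$-branches traverse it. Using Lemma \ref{D2mdecomoflimitspectra}, the eigenspace $V_{(l-1/2)(l+1/2)}$ of $\mathcal{I}_\vq$ contains $k$ copies of $U'$ (with $l+1=mk+i$), producing $k$ decreasing branches that land at $(l-\tfrac{3}{2})(l-\tfrac{1}{2})$ at $t=\infty$. Symmetrically, for each $1\le j\le m-1$ the spherical eigenspace $V_{(l-j)(l-j+1)}$ contains $k_j+1$ copies of $U$, producing $(+)$-branches that originate at $(l-j)(l-j+1)$ at $t=0$ and, by Proposition \ref{howtheycross}, land at $(l-j+m-\tfrac{3}{2})(l-j+m-\tfrac{1}{2})>(l-\tfrac{3}{2})(l-\tfrac{1}{2})$ at $t=\infty$. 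Thus every such $(+)$-branch must cross every such $(-)$-branch inside the window, and each crossing produces a critical eigenvalue via Corollary \ref{crossingcri}.

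Next I would extract a lower bound on the number of distinct cross times $T_l$ in the window. The total number of pairwise incidences is $k\cdot\sum_{j=1}^{m-1}(k_j+1)$; on the other hand, each single crossing value $\lambda_{n_1,n_2}$ has $\dim_\CC V_\lambda\cap H_+^t\le \tfrac{n}{2}+1$ by Lemma \ref{symmemulres}, so at most $\tfrac{n}{2}+1$ incidences can be concentrated at one crossing time. This yields
\[
|T_l|\;\ge\;\frac{2}{n+2}\sum_{j=1}^{m-1}k_j,
\]
which is exactly the bound already stated. Since $k_j=\lfloor(l-j)/m\rfloor\to\infty$ as $l\to\infty$, we obtain $|T_l|\to\infty$, and hence infinitely many distinct $t\in(0,\infty)$ for which $\vp(t)$ is critical. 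Finally, distinct values of $t$ give configurations that are not $\SO$-equivalent, because the ratio of the polygon radius to the polar distance of $p_{2n}$ is a rotation invariant that depends injectively on $t\in(0,\infty)$; so the resulting critical configurations are genuinely different points of $\Cn$.

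The heavy lifting has already been carried out in the preceding subsections; the only point requiring a moment of care is the distinctness of the critical configurations across different crossings. This could in principle fail if many crossings in $T_l$ accumulated at a single value of $t$, but strict monotonicity of $\lambda_k^\pm(t)$ (Lemma \ref{monotonicityofpmlambda}) prevents two $(+)$-branches from coinciding on an interval and likewise for $(-)$-branches, so each $(+)$-/$(-)$-pair contributes a crossing at its own isolated $t$, and the counting argument above goes through without modification.
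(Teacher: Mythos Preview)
Your proposal is correct and follows essentially the same route as the paper: spectral flow along the dihedral family $\vp(t)$, the crossing criterion from Corollary \ref{crossingcri}, Proposition \ref{howtheycross} to pin down the endpoints, and the multiplicity bound from Lemma \ref{symmemulres} to convert incidences into a lower bound $|T_l|\ge \frac{2}{n+2}\sum_{j=1}^{m-1}k_j$, which tends to infinity with $l$. Two small imprecisions are worth noting. First, after introducing ``pairwise incidences'' $k\cdot\sum_{j}(k_j+1)$, you bound the number of incidences at a single crossing by $\dim_\CC V_\lambda\cap H_+^t\le \tfrac{n}{2}+1$; this bounds the number of $(+)$-branches, not the number of pairs, so the honest inequality you obtain is the paper's $\sum_j k_j\le |T_l|(\tfrac{n}{2}+1)$, exactly what you then write. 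Second, your final paragraph claiming ``each $(+)$/$(-)$-pair contributes a crossing at its own isolated $t$'' overstates matters---several pairs can and do meet at a common $t$, which is precisely why the division by $\tfrac{n}{2}+1$ is needed; strict monotonicity only guarantees that $T_l$ is discrete, not that all crossings are simple. Neither point affects the conclusion.
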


Since $\{\vp(t):0<t<+\infty\}$ is relatively compact in $\ccn$, there exists a convergent sequence $\{t_i\}$ such that each $\vp(t_i)$ is critical, with critical eigenvalue $\lambda_i$, and $\lim_{i\to\infty}\vp(t_i)\in \ccn$. However, it's important to note that, in general, $\{\lambda_i\}_i$ is divergent.

Finally, we make some further remarks:
\begin{enumerate}[label=(\roman*)]
    \item When $n>2$, as the rotations in dihedral group always preserve $p_{2n}(t)$, it holds that $\vn_{p_{2n}}>1$. Consequently, critical eigensections constructed above will not be non-degenerate.
    \item When $n=2$, by Lemma \ref{npf==1}, for any critical configuration and eigenvalue we construct above, the corresponding critical eigensection must be non-degenerate. 
    
    \item When $n=2$, $\vp(t)$ is critical if and only if for some eigenvalue $\lambda$, $\dim_\CC\,V_\lambda\cap H^t_{+}=\dim_\CC\,V_\lambda\cap H^t_{-}=1$.
    \item As shown in Proposition \ref{Misolated}, the tetrahedral configuration, lying on the path $\vp(t)\subset \mathcal{C}_{4}$, is $M$-isolated for any $0<M<+\infty$. However, since number critical configurations lying on $\mathcal{C}_4$ are infinite, it's possible that the tetrahedral configuration is not isolated in $\mathcal{C}_4$.

    \item When $n=2$, combining Proposition \ref{rk1} with Lemma \ref{symmemulres}, we conclude that for any $t\in(0,+\infty)$, when considering the eigenvalue $\lambda$, the eigenspace $V_\lambda\subset \mathcal{L}_1^2\big(\lbct\big)$ satisfies $\dim_\CC\,V_\lambda\cap H^t_{\pm}=1$.  Equivalently, for any $\lambda_k^\pm(t)$, $\mul\,\lambda_k^\pm(t)\equiv1$. This serves as an intriguing example illustrating the phenomenon where eigenvalues within the same symmetry subspaces do not cross each other. For further details, we refer to \cite{Dihedral} and \cite[Appendix 10]{Arnold}. 
\end{enumerate}

\appendix

	\bibliographystyle{alpha}
	\bibliography{references}
\end{document}